\documentclass[a4paper,11pt]{amsart}
\usepackage[utf8]{inputenc}
\usepackage{amsmath}
\usepackage{amssymb}
\usepackage{amsthm}
\usepackage{xcolor}

\usepackage{enumerate}
\usepackage{amssymb, amsmath,graphicx}
\usepackage{todonotes}
\usepackage{verbatim}
\usepackage{float}

\theoremstyle{plain}
\newtheorem{thm}{Theorem}[section]
\newtheorem{theorem}[thm]{Theorem}
\newtheorem{proposition}[thm]{Proposition}
\newtheorem{definition}[thm]{Definition}
\newtheorem{lemma}[thm]{Lemma}
\newtheorem{corollary}[thm]{Corollary}

\newtheorem{challenge}[thm]{Challenge}

\newtheorem{remark}[thm]{Remark}
\newtheorem{example}[thm]{Example}

\newtheorem*{example*}{Example}
\newtheorem*{remark*}{Remark}

\usepackage{enumerate}
\usepackage{amssymb, amsmath}
\usepackage{mathrsfs}
\usepackage{mathbbol}
\usepackage{marginnote}
\usepackage{xcolor}
\usepackage{graphicx}
\usepackage{psfrag}

\usepackage[T1]{fontenc}
\usepackage{graphicx, graphics, wrapfig}
\usepackage{bm}
\usepackage{sidecap}
\usepackage{caption,subcaption}
\def\N{{\mathbb N}}   
 \def\Z{{\mathbb Z}} 
\def\R{{\mathbb R}}

\def\one{{\mathbb 1}}

\def\d{{\sf d}}
\def\X{{\sf X}}

\def\mm{{\sf m}}

\def\c{\mathfrak c}

\def\Dist{{\mathsf D}}
\def\T{{\sf T}}
\def\M{{\sf M}}
\def\g{{\sf g}}
\def\Co{{\mathsf C}}%
\def\Mea{{{\mathcal M}_{0}}}
\def\HK{{\sf H\!\!K}}

\def\Leb{\mathcal{L}}
\def\C{\mathcal{C}}
\def\Lip{\mathrm{Lip}}

\newcommand{\pr}{{\rm{pr}}}

\newcommand{\Ric}{\mathsf{Ric}}
\newcommand{\Sec}{\mathsf{Sec}}
\newcommand{\supp}{\mathrm{supp}}
\newcommand{\Wass}{\mathscr{P}} 

\newcommand{\W}{{\sf{W}}}

\DeclareMathOperator{\diam}{diam}

\newcommand{{\push}}{*}
\DeclareMathOperator{\Cpl}{Cpl}
\DeclareMathOperator{\Id}{Id}

\newcommand{\comments}[1]{}

\newcommand{\bookrefs}[1]{}

\newcommand{\source}[1]{}

\begin{document}
\title[Curvature for Wasserstein and Hellinger-Kantorovich Geometries]{Sectional Curvature\\ for\\ Kantorovich-Wasserstein\\ and\\ Hellinger-Kantorovich Geometries}
\author{Karl-Theodor Sturm}
 \address{University of Bonn\\
 Institute for Applied Mathematics\\
 Endenicher Allee 60\\
 53115 Bonn\\
 Germany}
  \email{sturm@uni-bonn.de}

\maketitle

\begin{abstract} We derive an explicit formula for the sectional curvature of the space $\Mea(\M)$ of finite measures on a Riemannian manifold $\M$. The space $\Mea(\M)$ is equipped with the Hellinger-Kantorovich metric $\HK$. Even in the case $\M=\R^n$, the curvature is comprised of two parts: the `lifted part' is negative, and  the `twisted part'  is positive. It will be analyzed in detail for the multidimensional torus.

Our general approach to sectional curvature in geodesic spaces also leads to new insights into the curvature of the space $\Wass_2(\M)$ of probability measures on $\M$ equipped with the    Kantorovich-Wasserstein metric $\W_2$.
\end{abstract}

	\tableofcontents

\section{Introduction and summary of main results}

We will analyze in detail curvature properties of the space $\Mea=\Mea(\M)$ of finite measures on a (Euclidean or Riemannian) space $\M$. Equipped with the Hellinger-Kantorovich metric $\HK$, the space of measures is a geodesic metric space  of fundamental importance which has attracted a lot of attention in recent years, both from theoretical perspectives  in unbalanced optimal transport problems and from applications, including
computational optimal transport, statistical optimal transport, applications to data science and machine learning,
modeling cellular dynamics, birth-death dynamics for sampling, and
interaction-force transport gradient flows.

Most importantly,  the Hellinger-Kantorovich metric  admits an intuitive interpretation as a relaxation of the Kantorovich-Wasserstein metric $\W_2$ with optional creation and annihilation of mass, the cost of the latter being measured in terms of the Kakutani-Hellinger metric.
For a measure $\mu\in\Mea$, the `analytic tangent space' is given by the Sobolev space  $\T_\mu\Mea=H^1(\mu)$ and the `extended tangent space' is
$\T^{\sf ext}_\mu\Mea=H^1(\mu)\otimes 
\Mea\big({\M\setminus B_{\pi/2}(\supp[\mu])}\big)$.

\medskip

\paragraph{\bf a} We will show that for every $\mu\in\Mea(\M)$ and every pair of tangent vectors $\varphi,\psi\in\mathcal C^\infty_c(\M)\subset H^{1}(\mu)$, the sectional curvature is given as \begin{align*}
 \Sec_\mu(\varphi,\psi)&:=\lim_{t\to0}
\frac3{\Lambda t^4}\Bigg[t^2 |\varphi-\psi|_{H^1(\mu)}^2-
 \HK^2\Big(\exp_\mu^\Mea(t\varphi),\exp_\mu^\Mea(t\psi)\Big)
\Bigg]
\end{align*}
with
$\Lambda:=|\varphi|_{H^1(\mu)}^2\cdot|\psi|_{H^1(\mu)}^2-\langle \varphi,\psi\rangle_{H^1(\mu)}^2$, and 
decomposes
into a lifted and a twisted part
$$
 \Sec_\mu(\varphi,\psi)= \Sec_\mu^\uparrow(\varphi,\psi)+ \Sec_\mu^\nabla(\varphi,\psi).$$
\paragraph{\bf b} The lifted part vanishes if $\dim\M=1$ and otherwise it is given in terms of the sectional curvature of the underlying Riemannian manifold $\M$: 
$$\Sec_\mu^\uparrow\big(\varphi,\psi \big)=\frac1\Lambda \int_{\M}\Big(\Sec^\M_{x}\big(\nabla\varphi,\nabla\psi\big)-1\Big)\cdot \Big[|\nabla\varphi|_x^2\, |\nabla\psi|_x^2-\langle \nabla\varphi,\nabla\psi\rangle_x^2
\Big]\,d\mu(x).$$
\paragraph{\bf c} The twisted part is always nonnegative:
$$\Sec_\mu^\nabla\big(\varphi,\psi \big)\ge0.$$
In the Euclidean case with absolutely continuous $\mu$, it is explicitly given as
\begin{align*}
\Sec_\mu^\nabla(\varphi,\psi)
 &=\frac3\Lambda\, \inf\bigg\{
\int_\M\Big[
|F-\nabla \eta|^2+4\eta^2
\Big]\,d\mu: \ \eta\in
H^1(\R^n,\mu)
\bigg\}\\
&=\frac{12}{\Lambda}\int\Big|\Big(-\Delta_\mu(4-\Delta_\rho)\Big)^{-1/2}\mathrm{div}_{\mu}  F\Big|^2\,d\mu
\end{align*}
with 
$F:=\frac12\big(\nabla^2\varphi\nabla\psi-\nabla^2\psi\nabla\varphi\big)$. Even for 1-dimensional spaces, 
$\Sec_\mu^\nabla$ does not vanish.

\medskip

\paragraph{\bf d} In particular for $\M=\R^n$, there will be a competition between the two parts: $\Sec_\mu^\nabla(\varphi,\psi)\ge0$ and $\Sec_\mu^\uparrow(\varphi,\psi)\le0$.
So far, it was only known that $(\Mea(\R^n),\HK)$ for $n\ge2$ does not have nonnegative curvature in the sense of Alexandrov. We show that it also does not have nonpositive curvature in the sense of Alexandrov.

\medskip

\paragraph{\bf e} More detailed insights are obtained  for the multi-dimensional torus $\M=\mathbb T^n$ and for its normalized volume measure $\mu$. Let $(e_k)_{k\in \Z^n}$ denote the Fourier basis of $L^2(\mathbb T^n,\mu)$. Then 
\begin{align*}\Sec_\mu(e_k,e_\ell)&=\frac{1}{2^n}\,\sum_{\sigma\in\{-1,1\}} S(k^\sigma,\ell)
\end{align*}
with $k^\sigma=(\sigma_pk_p)_{p=1,\ldots,n}\in\Z^n$ and
\begin{align*}S(k,\ell)&=\frac{1}{(4+|k|^2)\,(4+|\ell|^2)}
\bigg(-|k|^2\, |\ell|^2+\langle k^2,\ell^2\rangle
+\frac34\,
\bigg[| k-\ell|^2
-
\frac{\big(|k|^2-|\ell|^2
\big)^2}{4+|k+\ell|^2}
\bigg]
\, 
 \langle k,\ell\rangle^2 \bigg)\end{align*}
 for any pair $k,\ell\in\Z^n$ which is `generic'  in the sense that $k_p\not=\pm\ell_p, k_p\ell_p\not=0$ for all $p=1,\ldots,n$.
 Moreover, $$-\Big(\frac32\Big)^{n}\le \Sec_\mu\big(e_{k},e_{\ell} \big)\le \frac32\, 4^n\, \Big(|k|^2+|\ell|^2\Big)$$
for arbitrary $k,\ell$ with $k\not=\ell$. 
In particular, we prove that
\begin{itemize}
\item there are plenty of cases  with negative sectional curvature:
if $n\ge 2$, $k,\ell\not=0$, and $k_p\ell_p=0$ ($\forall p=1,\ldots,n$), then
$$\Sec_\mu(e_k,e_\ell)=-\frac{|k|^2\,|\ell|^2}{(4+|k|^2)\,(4+|\ell|^2)}\ \in \ \bigg(-1,-\frac1{25}\bigg];$$

\item in average, the sectional curvature will be positive:
for all sufficiently large $C,L$ and  all $k\in\Z^2$ with $|k_1|,|k_2|\ge C$,
$$\sum_{\ell\in \Z^2, |\ell|\le L}\Sec_\mu(e_k,e_\ell)\ge \frac1{1000} \,C^2\,L^2;$$

\item
for all $k\in\Z^2$ with sufficiently large $|k_1|,|k_2|$, the Ricci curvature diverges
$$\mathsf{Ric}_\mu(e_{k}, e_{k}):=\|e_k\|_{H^1(\mu)}^2\cdot\lim_{L\to\infty}\sum_{\ell\in\in \Z^2, |\ell|\le L}\Sec_\mu(e_{k},e_{\ell})=+\infty;$$

\item  for every $k\in\Z^n$ and every $s>2+\frac n2$, the re-normalized Ricci curvature
 $\mathsf{Ric}^{(s)}_\mu(e_{k},e_{k})$
 exists and is finite. 
\end{itemize}

\medskip

\paragraph{\bf f} Our approach to sectional curvature on the space of finite measures is  based on the observation that 
in the case of a smooth Riemannian manifold $(\M,\g)$,  the sectional curvature $\Sec^\M_z(v,w)$  is given in purely metric terms as
\begin{align*}\Sec^\M_z(v,w)&=3\lim_{t\to0}\frac{t^2|v-w|^2-\d^2\big(\exp_z(tv),\exp_z(tw)\big)}{t^4\big(|v|^2|w|^2-\langle v,w\rangle^2\big)}\end{align*}
for every $z\in\M$ and every linearly independent pair of tangent vectors $v,w\in \T_z\M$.
Inspired by this observation, therefore for each geodesic metric space $(\X,\d)$, each $z\in\X$ and each `independent' pair of geodesics $\alpha,\beta$ emanating from $z$ we define
$$\Sec^\X_z(\alpha,\beta):=3\,\lim_{t\to0}
\frac{t^2\d_z^2(\alpha,\beta)-\d^2\big(\alpha_t,\beta_t\big)}{t^4\big(|\dot\alpha|^2|\dot\beta|^2-\langle \alpha,\beta\rangle_z^2\big)}
$$ where
$\d_z(\alpha,\beta):=\limsup_{t\to0}\frac1t \d(\alpha_t,\beta_t)$ denotes the directional distance of the geodesics,
$|\dot\alpha|, |\dot\beta|$ their metric speed,
and
$\langle\alpha,\beta\rangle_z:=\frac12\big(
|\dot\alpha|^2+|\dot\beta|^2-\d_z^2(\alpha,\beta)\big)$.
One easily verifies that for every complete geodesic space $(\X,\d)$,  
\begin{itemize}
\item
$\Sec(\X,\d)\ge\kappa\text{ in Alexandrov sense}\quad \Longrightarrow \quad\forall z\in\X, \alpha,\beta\in  \mathring\T^{\sf dir}_z\X: \ \Sec_z(\alpha,\beta)\ge \kappa,
$
\item
$\Sec(\X,\d)\le\lambda\text{ in Alexandrov sense}\quad \Longrightarrow \quad\forall z\in\X, \alpha,\beta\in  \mathring\T^{\sf dir}_z\X: \ \Sec_z(\alpha,\beta)\le \lambda.
$
\end{itemize}

\medskip

\paragraph{\bf g}

This concept of sectional curvature for geodesic metric spaces will be analyzed in detail for two spaces: the Hellinger-Kantorovich space $(\Mea(\M),\HK)$ of finite measures on $\M$
and the Kantorovich-Wasserstein space $(\Wass_2(\M),\W_2)$ of probability measures on $\M$. In both cases, for convenience $\M$ is assumed to be a smooth Riemannian manifold.

In the latter case, which is significantly easier than the former, we can indeed confirm and extend fundamental insights  of Otto \cite{Ot01} and  Lott \cite{Lott07}. Based on a formal 
 differential-geometric  calculus  on the Kantorovich-Wasserstein space $(\Wass_2(\R^n),\W_2)$ --- which nowadays is called Otto calculus --- Otto \cite{Ot01} deduced the formula 
 $$\Sec^\nabla_\mu(\nabla\varphi,\nabla\psi)=
\frac3\Lambda\cdot\inf\bigg\{\int_{\M} \big|\nabla^2\psi\nabla\varphi-\nabla\eta\big|^2d\mu: \ \eta\in \C_c^\infty(\M)
\bigg\}$$
 with $\Lambda:=|\nabla\varphi|^2_{L^2(\mu)}|\nabla\psi|^2_{L^2(\mu)}-\langle\nabla\varphi,\nabla\psi\rangle_{L^2(\mu)}^2
$ for the sectional curvature of $\Wass_2$ (which in the Euclidean case coincides with 
 its twisted part) and provided an interpretation of it in terms of O'Neill's formula for
 Riemannian submersions.

Lott \cite{Lott07} extended these formal calculations to analyze the
 sectional curvature of $\Wass_2(\M)$ for general Riemannian manifolfds $\M$. 
He deduced the formulas for the twisted part as before for $\M=\R^n$ and in addition now
for the lifted part 
$$\Sec_\mu^\uparrow(\nabla\varphi,\nabla\psi)= \frac1\Lambda\int_\M \Sec^\M_x(\nabla\varphi,\nabla\psi)\cdot\big[|\nabla\varphi|_x^2\, |\nabla\psi|_x^2-\langle \nabla\varphi,\nabla\psi\rangle_x^2
\big]\,d\mu(x).$$
Extensions of these concepts and results 
beyond the setting of absolutely continuous
measures $\mu$ where e.g.~discussed in \cite{Giglimemo} and \cite{lott2017tangent}.

 \medskip

\paragraph{\bf h} The PDE approach of the Otto calculus, however, is limited to sufficiently regular measures $\mu$ and to directions $\nabla\varphi,\nabla\psi$ in the analytic tangent space (which correspond to Monge transport problems) whereas our OT approach also applies to general measures $\mu$ and to general directions $P,Q$ in the geometric tangent space (which correspond to Kantorovich transport problems).
This, for instance, allows us to prove that
\begin{equation*}{\Sec}^{\Wass_2}_{\delta_z}(P,Q)\le 0\end{equation*}
for all $z\in\M$ and all linearly independent $P,Q\in  \mathring\T^{\sf geo}_{\delta_z}\Wass_2$
provided that $\Sec^\M_x\le0$  for all $x\in\M$. 
In particular, ${\Sec}^{\Wass_2}_{\delta_z}=0$ for all $z\in\M$ in the Euclidean case $\M=\R^n$.
\medskip

\paragraph{\bf i}
In the spirit of the Otto calculus, 
formal calculations on the space $(\Mea(\M),\HK)$ have been proposed in \cite{gallouet2018camassa} towards identifying the sectional curvature and establishing a Riemannian submersion property as expressed in
O'Neill's formula. 
Our OT based approach provides rigorous proofs and plenty of novel deep insights.

\medskip

{\it Acknowledgement.}
	The author gratefully acknowledges funding by the Deutsche Forschungsgemeinschaft through the project `Random Riemannian Geometry' within the SPP 2265 \emph{Random Geometric Systems}, 
	through the Hausdorff Center for Mathematics (project ID 390685813), and through  project A05  within the CRC 1720 (project ID 539309657).
	
	Data sharing is not applicable to this article as no datasets were generated or analyzed during the current study.

\section{Sectional curvature in Riemannian and metric geometry}

\begin{lemma}\label{asym-lem}
Assume that $\M={\mathbf M}^{\kappa,n}$ is the $n$-dimensional model space of constant curvature $\kappa\in\R$. Consider two geodesics $(\alpha_t)_{0\le t\le a}$ and $(\beta_t)_{0\le t\le b}$ emanating from the same point $\alpha_0=\beta_0$. Then for $t\searrow0$,
\begin{equation}\d^2(\alpha_t,\beta_t)= 
t^2\big|\dot\alpha_0-\dot\beta_0|^2-\frac\kappa3t^4\Big[
|\dot\alpha_0|^2\,|\dot\beta_0|^2-|\langle\dot\alpha_0,\dot\beta_0\rangle|^2
\Big]+ \mathit{O}(t^6).\end{equation}
More generally, for every $C>0$,
\begin{equation}\d^2(\alpha_s,\beta_t)= 
\big|s\dot\alpha_0-t\dot\beta_0|^2-\frac\kappa3s^2t^2\Big[
|\dot\alpha_0|^2\,|\dot\beta_0|^2-|\langle\dot\alpha_0,\dot\beta_0\rangle|^2
\Big]+ \mathit{O}(s^3t^3)\end{equation}
as $s,t\to0$ with $\frac st+\frac ts\le C$.
\end{lemma}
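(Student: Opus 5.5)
Since $\alpha,\beta$ are geodesics in ${\mathbf M}^{\kappa,n}$, the points $\alpha_s,\beta_t$ together with $z:=\alpha_0=\beta_0$ span a geodesic triangle with side lengths $a=s|\dot\alpha_0|$, $b=t|\dot\beta_0|$ and angle $\gamma$ at $z$ given by $\cos\gamma=\langle\dot\alpha_0,\dot\beta_0\rangle/(|\dot\alpha_0||\dot\beta_0|)$ (the degenerate case $\dot\alpha_0=0$ or $\dot\beta_0=0$ being trivial). So I reduce everything to the law of cosines in the model space and Taylor-expand. For $\kappa=0$ the claim is exact with no error term. For $\kappa\neq0$, rescaling the metric by $|\kappa|^{1/2}$ reduces to $\kappa=\pm1$; I treat $\kappa=1$, the case $\kappa=-1$ following by replacing $\cos$ by $\cosh$, which flips the sign of the quartic correction.

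\emph{Key step.} For $\kappa=1$ the spherical law of cosines gives $\cos c=\cos a\cos b+\sin a\sin b\cos\gamma$ with $c=\d(\alpha_s,\beta_t)$. Write $c_0^2:=a^2+b^2-2ab\cos\gamma=|s\dot\alpha_0-t\dot\beta_0|^2$. I expand both sides up to fourth order in $(a,b)$, using $\cos x=1-\frac{x^2}2+\frac{x^4}{24}+O(x^6)$ and $\sin x=x-\frac{x^3}6+O(x^5)$. The right-hand side becomes
\[1-\tfrac12 c_0^2+\tfrac1{24}(a^4+b^4)+\tfrac14a^2b^2-\tfrac16ab(a^2+b^2)\cos\gamma+O(|(a,b)|^6).\]
Next I use the ansatz $c^2=c_0^2+\delta$ with $\delta=O(|(a,b)|^4)$ and expand the left-hand side as $1-\frac12c^2+\frac1{24}c^4+O(c^6)$. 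Matching the fourth-order terms gives
\[\delta=\tfrac1{12}\big(c_0^4-a^4-b^4\big)-\tfrac12a^2b^2+\tfrac13ab(a^2+b^2)\cos\gamma.\]
Expanding $c_0^4$ and simplifying yields $\delta=-\frac13a^2b^2\sin^2\gamma$. Since $a^2b^2\sin^2\gamma=s^2t^2\big(|\dot\alpha_0|^2|\dot\beta_0|^2-\langle\dot\alpha_0,\dot\beta_0\rangle^2\big)$, this is exactly the claimed quartic term with $\kappa=1$.

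\emph{Main obstacle: the error term.} The expansion only controls the remainder by $O(|(a,b)|^6)=O((s+t)^6)$. Under the hypothesis $\frac st+\frac ts\le C$ we have $(s+t)^6\le C's^3t^3$, which gives the stated $O(s^3t^3)$; setting $s=t$ gives $O(t^6)$. The remaining care is to justify inverting $\cos$ near $c=0$ so that $c^2$ is an analytic function of $(a,b,\cos\gamma)$ with the right remainder. I would handle this by noting that $1-\cos c=\frac{c^2}2 g(c^2)$ with $g$ analytic and $g(0)=1$, and then applying the inverse function theorem to $u\mapsto\frac u2g(u)$ at $u=0$. Uniformity in $\gamma$ is automatic, since $\cos\gamma\in[-1,1]$ is compact.
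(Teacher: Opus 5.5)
Your proof is correct and follows the paper's approach: apply the law of cosines in the model space, Taylor-expand to fourth order, and watch the quartic coefficient collapse to $-\frac13 a^2b^2\sin^2\gamma$. The only difference is in the two-parameter case. You expand directly in $(a,b)$ and convert $O((s+t)^6)$ into $O(s^3t^3)$ using $(s+t)^2/(st)\le C+2$. The paper instead proves the $s=t$ case and reparametrizes $\alpha$ by the ratio $s_i/t_i$, asserting an error uniform in $i$. Your version makes that uniformity, and the inversion of $\cos$ near $0$, explicit.
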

\begin{proof}
Let $\phi$ denote the angle between $\dot\alpha_0$ and $\dot\beta_0$ and put $v:=|\dot\alpha_0|$, $w:=|\dot\beta_0|$,
$\delta_t:=\d(\alpha_t,\beta_t)$. Then in case $\kappa>0$ by the law of cosines in spherical  geometry,
\begin{align*}
\cos(\sqrt\kappa\delta_t)=\cos(\sqrt\kappa tv)\cos(\sqrt\kappa tw)+\sin(\sqrt\kappa tv)\sin(\sqrt\kappa tw)\cos(\phi).
\end{align*}
Thus
\begin{align*}
-\frac\kappa2\delta_t^2+\frac{\kappa^2}{24}\delta_t^4=&
-\frac\kappa2t^2v^2-\frac\kappa2t^2w^2+\frac{\kappa^2}{4}t^4v^2w^2+\frac{\kappa^2}{24}t^4v^4+\frac{\kappa^2}{24}t^4w^4
\\
&+\Big[\kappa t^2vw-\frac{\kappa^2}6t^4vw^3-\frac{\kappa^2}6t^4v^3 w\Big]
\cos(\phi)+ \mathit{O}(t^6).
\end{align*}
In particular,
$\delta_t^2=t^2(v^2+w^2-2vw\cos\phi)+ \mathit{O}(t^4)$, and thus
\begin{align*}
\delta_t^2=&\frac{\kappa}{12}\Big[t^4\big(v^2+w^2-2vw\cos\phi\big)^2\Big]
\\&
+t^2v^2+t^2w^2-\frac{\kappa}{2}t^4v^2w^2-\frac{\kappa}{12}t^4v^4-\frac{\kappa}{12}t^4w^4
\\
&-\Big[2 t^2vw-\frac{\kappa}3t^4vw^3-\frac{\kappa}3t^4v^3 w\Big]
\cos(\phi)+ \mathit{O}(t^6)\\
&=t^2\Big[v^2+w^2-2vw\cos\phi
\Big]
-\frac\kappa3 t^4v^2w^2(1-\cos^2\phi)+ \mathit{O}(t^6).
\end{align*}
Analogous calculations prove the claim in the cases $\kappa=0$ and $\kappa<0$.

To prove the second claim, let a sequence $(s_i,t_i)_{i\in\N}$ be given  with $s_i/t_i+t_i/s_i\le C$ and put
$\alpha^{(i)}_t:=\alpha_{t s_i/t_i}$.
Then the claim follows from the first claim applied to $\alpha^{(i)}$ and $\beta$ in the place of $\alpha$ and $\beta$ and with error term uniform in $i$,
\begin{align*}
\d^2(\alpha_{t s_i/t_i},\beta_{t})&=
t^2\big|\frac{s_i}{t_i}\dot\alpha_0-\dot\beta_0|^2-\frac\kappa3t^4\Big[
|\frac{s_i}{t_i}\dot\alpha_0|^2\,|\dot\beta_0|^2-|\langle\frac{s_i}{t_i}\dot\alpha_0,\dot\beta_0\rangle|^2
\Big]+ \mathit{O}(t^6),
\end{align*}
and thus
\begin{align*}
\d^2(\alpha_{s_i},\beta_{t_i})
=
\big|{s_i}\dot\alpha_0-t_i\dot\beta_0|^2-\frac\kappa3 s_i^2\,t_i^2\Big[
|\dot\alpha_0|^2\,|\dot\beta_0|^2-|\langle\dot\alpha_0,\dot\beta_0\rangle|^2
\Big]+ \mathit{O}(s_i^3\,t_i^3).
\end{align*}
\end{proof}

\begin{definition} Given a geodesic space $(\X,\d)$ we define  $\T^{\sf dir}_z\X$, its \emph{directional tangent space} at $z\in\X$, as the completion of the metric space
$$\mathring\T^{\sf dir}_z\X:=\big\{\gamma=(\gamma_t)_{0\le t\le \tau} \textrm{ geodesic in }\X, \tau>0, \gamma_0=z\big\}\big/\sim$$
where 
$$\d_z(\alpha,\beta):=\limsup_{t\to0}\frac1t \d(\alpha_t,\beta_t)$$
and $\alpha\sim\beta$ if $\d_z(\alpha,\beta)=0$.
\end{definition}

$\mathring\T^{\sf dir}_z\X$ (as well as its completion) is a cone. Given $\alpha\in \mathring\T^{\sf dir}_z\X$ and $r>0$, the tangent vector $r\alpha\in \mathring\T^{\sf dir}_z\X$ is defined by $(r\alpha)_t:=\alpha_{rt}$. 

For $\alpha,\beta\in \mathring\T^{\sf dir}_z\X$ we define
\begin{align*}\langle\alpha,\beta\rangle_z:=\frac12\big(
|\dot\alpha|^2+|\dot\beta|^2-\d_z^2(\alpha,\beta)\big)=|\dot\alpha|\, |\dot\beta|\, \cos\Big(\angle_z(\alpha,\beta)\Big)
\end{align*} with the upper angle
$$ \angle_z(\alpha,\beta):=\limsup_{t\to0} \angle_z(\alpha_t,\beta_t)$$
 as considered in \cite{BBI}.
We say that $\alpha,\beta\in \mathring\T^{\sf dir}_z\X$ are \emph{linearly independent} if
$$\angle_z(\alpha,\beta)\not\in\{0,\pi\}.$$
The latter property is equivalent to $$\d_z
(\alpha,\beta)\not\in\big\{ |\dot\alpha|+ |\dot\beta|, \big| |\dot\alpha|- |\dot\beta| \big|\big\},
$$
as well as to
$$\langle \alpha,\beta\rangle_z<|\dot\alpha|\,|\dot\beta|.$$

Given a function $F$ on $(0,\infty)^2$ we say that the limit
$$\lim_{\tiny\begin{array}{c}
{s,t\to0}\\{t/s+s/t\textrm{ bdd}}\end{array}}\!\!\!\!\!\!
 F(s,t)\ \textrm{exists and equals }L\in\R$$
if 
$\lim_{\ell\to\infty} F(s_\ell,t_\ell)=L$ for every sequence $(s_\ell,t_\ell)_{\ell\in\N}$ in  $(0,\infty)^2$ with $\sup_\ell(s_\ell/t_\ell+t_\ell/s_\ell)<\infty$ and $\lim_{\ell\to\infty}s_\ell=\lim_{\ell\to\infty}t_\ell=0$. 

Similarly with $\limsup_{\ell\to\infty}F(s_\ell,t_\ell)$ and $\liminf_{\ell\to\infty}F(s_\ell,t_\ell)$ in the place of $\lim_{\ell\to\infty}F(s_\ell,t_\ell)$.

\begin{definition} Given linearly independent $\alpha,\beta\in  \mathring\T^{\sf dir}_z\X$ and $\kappa,\lambda\in\R$, we say that
$$\overline\Sec_z(\alpha,\beta)\le\lambda\quad:\Longleftrightarrow\quad
3\!\!\!\limsup_{\tiny\begin{array}{c}
{s,t\to0}\\{t/s+s/t\textrm{ bdd}}\end{array}}\!\!\!
\frac{\d_z^2(s\alpha,t\beta)-\d^2\big(\alpha_s,\beta_t\big)}{s^2\,t^2\big(|\dot\alpha|^2|\dot\beta|^2-\langle \alpha,\beta\rangle_z^2\big)}
\le\lambda
$$
and, similarly,
$$\underline\Sec_z(\alpha,\beta)\ge \kappa\quad:\Longleftrightarrow\quad
3\!\!\!\liminf_{\tiny\begin{array}{c}
{s,t\to0}\\{t/s+s/t\textrm{ bdd}}\end{array}}\!\!\!
\frac{\d_z^2(s\alpha,t\beta)-\d^2\big(\alpha_s,\beta_t\big)}{s^2\,t^2\big(|\dot\alpha|^2|\dot\beta|^2-\langle \alpha,\beta\rangle_z^2\big)}
\ge\kappa.
$$
We say that $\Sec_z(\alpha,\beta)=\lambda$ if the limit
$$
3\!\!\!\lim_{\tiny\begin{array}{c}
{s,t\to0}\\{t/s+s/t\textrm{ bdd}}\end{array}}\!\!\!
\frac{\d_z^2(s\alpha,t\beta)-\d^2\big(\alpha_s,\beta_t\big)}{s^2\,t^2\big(|\dot\alpha|^2|\dot\beta|^2-\langle \alpha,\beta\rangle_z^2\big)}
$$
exists and equals $\lambda$.
\end{definition}

\begin{theorem}\label{alex-vs-sec} Assume that $(\X,\d)$ is a complete geodesic space  and consider any $z\in\X$. \begin{enumerate}[\rm (i)]
 \item If $(\X,\d)$ has curvature $\ge\kappa$ in the sense of Alexandrov in a neighborhood of $z$ then 
 $$\underline\Sec_z(\alpha,\beta)\ge \kappa$$
 for all linearly independent $\alpha,\beta\in  \mathring\T^{\sf dir}_z\X$.
\item If $(\X,\d)$ has curvature $\le\lambda$ in the sense of Alexandrov in a neighborhood of $z$ then 
 $$\overline\Sec_z(\alpha,\beta)\le \lambda$$
 for all linearly independent $\alpha,\beta\in  \mathring\T^{\sf dir}_z\X$.
\end{enumerate}
\end{theorem}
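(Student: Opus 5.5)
The plan is to compare the geodesic ``hinge'' $(\alpha_s,z,\beta_t)$ with a comparison hinge in the model space ${\mathbf M}^{\kappa,2}$ (resp.\ ${\mathbf M}^{\lambda,2}$), and then to read off the $s^2t^2$ coefficient from Lemma~\ref{asym-lem}. I treat (i) first; (ii) is symmetric with all inequalities reversed.

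For (i), assume curvature $\ge\kappa$ near $z$. By the hinge version of Toponogov's comparison, valid locally in Alexandrov spaces, the following holds for $s,t$ small. Let $\bar\alpha,\bar\beta$ be geodesics in ${\mathbf M}^{\kappa,2}$ from a point $\bar z$ with speeds $|\dot\alpha|,|\dot\beta|$, enclosing the angle $\angle_z(\alpha,\beta)$. Then
\[\d(\alpha_s,\beta_t)\le \d(\bar\alpha_s,\bar\beta_t).\]
Here I use that in curvature $\ge\kappa$ the angle between geodesics exists as a genuine limit and equals the upper angle. The geodesics are parametrized proportionally to arc length, so $\d(z,\alpha_s)=s|\dot\alpha|$.

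The key identification is that the model hinge has the same initial data as $(\alpha,\beta)$. By definition of the angle,
\[\d_z^2(s\alpha,t\beta)=s^2|\dot\alpha|^2+t^2|\dot\beta|^2-2st\langle\alpha,\beta\rangle_z=|s\dot{\bar\alpha}_0-t\dot{\bar\beta}_0|^2 .\]
For the first equality I would check that $\d_z$ scales correctly under $(s\alpha)_r=\alpha_{sr}$. This follows from the law of cosines for $\d(\alpha_{sr},\beta_{tr})$ together with the existence of the angle, which is again automatic under a lower curvature bound. In the same way,
\[|\dot\alpha|^2|\dot\beta|^2-\langle\alpha,\beta\rangle_z^2=|\dot{\bar\alpha}_0|^2|\dot{\bar\beta}_0|^2-\langle\dot{\bar\alpha}_0,\dot{\bar\beta}_0\rangle^2 .\]

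Lemma~\ref{asym-lem}, second part, now gives, uniformly for $s/t+t/s$ bounded,
\[\d_z^2(s\alpha,t\beta)-\d^2(\bar\alpha_s,\bar\beta_t)=\tfrac\kappa3 s^2t^2\Lambda+O(s^3t^3),\]
where $\Lambda>0$ by linear independence. Combining this with the hinge inequality gives
\[\d_z^2(s\alpha,t\beta)-\d^2(\alpha_s,\beta_t)\ge\tfrac\kappa3 s^2t^2\Lambda+O(s^3t^3).\]
Dividing by $s^2t^2\Lambda/3$ and taking the $\liminf$ gives $\underline\Sec_z(\alpha,\beta)\ge\kappa$.

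For (ii), assume curvature $\le\lambda$ near $z$. The hinge comparison then reverses: for short geodesics in a $\mathrm{CAT}(\lambda)$ neighborhood, $\d(\alpha_s,\beta_t)\ge\d(\bar\alpha_s,\bar\beta_t)$, where the model hinge uses the Alexandrov angle. This is the standard consequence of triangle comparison, see \cite{BBI}. In upper curvature bounded spaces the angle still exists as a limit, because $\angle_z(\alpha_s,\beta_t)$ is monotone by comparison. Hence $\d_z$ and $\langle\cdot,\cdot\rangle_z$ are again given by the law of cosines with that angle, and the same computation yields $\overline\Sec_z(\alpha,\beta)\le\lambda$.

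The main obstacle is the justification of the hinge comparison with exactly the upper angle used in the definition of $\langle\alpha,\beta\rangle_z$, together with the scaling identity for $\d_z(s\alpha,t\beta)$. In other words, the angle must be a true limit, so that the limsup in $\d_z$ is a limit and is homogeneous in $(s,t)$. The plan is to obtain this from monotonicity of comparison angles in both curvature regimes, rather than to prove it from scratch.
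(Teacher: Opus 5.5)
Your proposal is correct and matches the paper's proof: the paper also uses that the limit $\d_z(\alpha,\beta)$ (equivalently, the angle) exists under a one-sided curvature bound, applies triangle/hinge comparison with the model hinge that has the same speeds and angle, and reads off the $s^2t^2$ coefficient from Lemma~\ref{asym-lem}. Your version only makes explicit the homogeneity $\d_z^2(s\alpha,t\beta)=s^2|\dot\alpha|^2+t^2|\dot\beta|^2-2st\langle\alpha,\beta\rangle_z$, which the paper uses without comment.
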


\begin{proof} In both cases,  for any pair of geodesics $(\alpha_s)_{s\in[0,a]}$ and $(\beta_t)_{t\in[0,b]}$ emanating from $z$, the limit
$\Dist_z(\alpha,\beta)=\lim_{t\to0}\frac1t \d(\alpha_t,\beta_t)$
  exists \cite{BBI}. Assuming now that the curvature is $\ge\kappa$ (or $\le\kappa$, resp.), by triangle comparison 
$$\d(\alpha_s,\beta_t) \stackrel{(\ge)}\le \Phi_{\kappa,s,t, |\dot\alpha|, |\dot\beta|}\Big(\d_z(\alpha,\beta)\Big)$$
where 
$ \Phi_{\kappa,\ldots}(.)$ stands for the formula which provides the exact value in the model space of constant curvature $\kappa$, cf.~Lemma \ref{asym-lem}. The claim thus follows by comparison with the model case.
\end{proof}

\begin{challenge} In both cases, find appropriate additional assumptions the converse implication to hold true.
\end{challenge}

\begin{remark} For any geodesic space $(\X,\d)$,
$$\textrm{NPC in  sense of Alexandrov}\quad\stackrel{\not\Leftarrow}\Longrightarrow\quad\textrm{NPC in  sense of Busemann}\quad\stackrel{\not\Leftarrow}\Longrightarrow\quad\textrm{NPC in above sense.}$$
The fact that the converse implications do not hold is illustrated by the following examples: 1) a Banach space which is not Hilbertian, and 2)  the flat torus.
\end{remark}

\begin{corollary}
For every Riemannian manifold $(\M,\g)$, every $z\in\M$ and every linearly independent pair of tangent vectors $v,w\in \T_z\M$, the sectional curvature $\Sec_z(v,w)$ is given by
\begin{align*}\Sec_z(v,w)&=3\lim_{t\to0}\frac{t^2|v-w|^2-\d^2\big(\exp_z(tv),\exp_z(tw)\big)}{t^4\big(|v|^2|w|^2-\langle v,w\rangle^2\big)}\\
&=3\!\!\!\lim_{\tiny\begin{array}{c}
{s,t\to0}\\{t/s+s/t\textrm{ bdd}}\end{array}}\!\!\!
\frac{|sv-tw|^2-\d^2\big(\exp_z(
sv),\exp_z(tw)\big)}{s^2\,t^2\big(|v|^2|w|^2-\langle v,w\rangle^2\big)}
\end{align*}
\end{corollary}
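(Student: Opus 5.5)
The plan is to prove directly the classical fourth-order expansion
\begin{equation*}
\d^2\big(\exp_z(sv),\exp_z(tw)\big)=|sv-tw|^2-\frac13\,\Sec_z(v,w)\,s^2t^2\Big[|v|^2|w|^2-\langle v,w\rangle^2\Big]+o(s^2t^2)
\end{equation*}
as $s,t\to0$ with $s/t+t/s\le C$. This generalizes Lemma \ref{asym-lem} from the model spaces to an arbitrary Riemannian manifold. Both formulas of the corollary follow immediately from it: the first is the case $s=t$, and the second is the expansion divided by $s^2t^2\Lambda$. Note that $\d_z(s\alpha,t\beta)=|sv-tw|$ here, because the directional tangent space of a Riemannian manifold is isometric to $\T_z\M$ via $\exp_z$. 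This isometry is itself the first-order version of the same expansion.

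To get the expansion, I will work in normal coordinates at $z$. Write $x=sv$ and $y=tw$, both in $\T_z\M$. The metric in these coordinates has the standard expansion
\begin{equation*}
g_{ij}(x)=\delta_{ij}-\frac13 R_{ikjl}x^kx^l+O(|x|^3).
\end{equation*}
Next I use that $\d^2(\exp_z x,\exp_z y)$ is smooth near $(0,0)$, since $z$ has a convex normal neighborhood. Its Taylor expansion is the classical one:
\begin{equation*}
\d^2=|x-y|^2-\frac13 R(x,y,y,x)+O\big((|x|+|y|)^5\big).
\end{equation*}
One way to verify this is to compute the length of the straight coordinate segment from $x$ to $y$ in the expanded metric. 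The true geodesic differs from this segment only at second order, so by first-order optimality the energy changes only by higher-order terms. Using the curvature symmetries, the integral of $R(x+\tau(y-x),\,y-x,\,y-x,\,x+\tau(y-x))$ collapses to a multiple of $R(x,y,y,x)$. Since $R(x,y,y,x)=s^2t^2\,\Sec_z(v,w)\,\big(|v|^2|w|^2-\langle v,w\rangle^2\big)$, the result follows.

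The step needing the most care is the error term. It must be $o(s^2t^2)$ and not merely $O((s+t)^5)$. With $s/t$ bounded, $(s+t)^5\le C's^{5/2}t^{5/2}=o(s^2t^2)$, so the restriction $t/s+s/t$ bounded is exactly what is required. I also have to justify that replacing the geodesic by the coordinate segment costs only $O((|x|+|y|)^5)$. For this I use the geodesic equation: the Christoffel symbols are $O(|x|)$ in normal coordinates, so the geodesic deviates from the segment by $O(r^3)$ with $r=|x|+|y|$. Since the straight segment is stationary for the Euclidean energy, the energy difference is then $O(r\cdot r^3\cdot r)=O(r^5)$.

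As an alternative cross-check, I can compare directly with Lemma \ref{asym-lem}. For a $2$-plane whose curvature is exactly $\kappa=\Sec_z(v,w)$, the coefficient of $s^2t^2$ must agree with $-\kappa/3$. Since that coefficient depends only on the curvature tensor at $z$, this pins down the constant.
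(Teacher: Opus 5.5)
Your proof is correct, and it takes a genuinely different route from the one the paper implicitly relies on. The paper gives no separate proof. Presumably the statement is meant to follow from the model-space expansion of Lemma \ref{asym-lem} (law of cosines) together with the comparison Theorem \ref{alex-vs-sec}, or simply to be taken as classical.

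Comparison with model spaces alone, however, only confines $\underline\Sec_z$ and $\overline\Sec_z$ between the smallest and largest sectional curvatures near $z$, via local Toponogov bounds. So it determines $\Sec_z(v,w)$ exactly only when the curvature at $z$ is isotropic, as for surfaces or space forms. For $\mathbb S^2\times\R$ it yields only the interval $[0,1]$.

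Your normal-coordinate computation has two parts:
\begin{itemize}
\item the energy of the coordinate segment gives the upper bound, and its curvature integrand is independent of $\tau$ because the $\tau(y-x)$ components drop out by antisymmetry;
\item the $C^1$-closeness of the true geodesic to that segment gives the lower bound.
\end{itemize}
This handles arbitrary, anisotropic curvature, which is what the statement actually requires. The model spaces are left only as a check on the constant $-\tfrac13$.

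Two minor points. First, $h=\gamma-c$ satisfies $\ddot h=O(r^3)$ with $h(0)=h(1)=0$, so both the Euclidean excess and the change in the $O(|x|^2)$ metric correction are $O(r^6)$, not just $O(r^5)$. You therefore recover the sharper error $O(s^3t^3)$ of Lemma \ref{asym-lem}, not merely $o(s^2t^2)$. Second, with $g_{ij}=\delta_{ij}-\frac13R_{ikjl}x^kx^l$ the segment produces the curvature tensor evaluated as $R(y-x,x,y-x,x)$. Fix one sign convention so that the quantity you write as $R(x,y,y,x)$ is the one equal to $s^2t^2\,\Sec_z(v,w)\,\bigl(|v|^2|w|^2-\langle v,w\rangle^2\bigr)$.
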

Note that $|v|^2|w|^2-\langle v,w\rangle^2\ge0$ for all $v,w\in \T_z\M$ and equality holds if and only if $v$ and $w$ are linearly dependent, i.e.~$v=sw$ for some $s\in\R$  or $w=0$.

\begin{remark} Whenever the sectional curvature on a geodesic space $(\X,\d)$
$$\Sec_z(\alpha,\beta):=
3\!\!\!\lim_{\tiny\begin{array}{c}
{s,t\to0}\\{t/s+s/t\textrm{ bdd}}\end{array}}\!\!\!
\frac{\d_z^2(s\alpha,t\beta)-\d^2\big(\alpha_s,\beta_t\big)}{s^2\,t^2\big(|\dot\alpha|^2|\dot\beta|^2-\langle \alpha,\beta\rangle_z^2\big)}
$$
exists, it coincides with the simpler expression
$$\dot\Sec_z(\alpha,\beta):=3\, \lim_{t\to0}
\frac{t^2\d_z^2(\alpha,\beta)-\d^2\big(\alpha_t,\beta_t\big)}{t^4\big(|\dot\alpha|^2|\dot\beta|^2-\langle \alpha,\beta\rangle_z^2\big)}.
$$
Thus the (existence of the) latter limit could  be used for an alternative definition of sectional curvature. The only disadvantage of this simpler approach would be that scaling only holds true in the restricted form of $\dot\Sec_z(a\alpha,a\beta)=\dot\Sec_z(\alpha,\beta)$ for all $a>0$ whereas $\Sec_z(a\alpha,b\beta)=\Sec_z(\alpha,\beta)$ for all $a,b>0$.
\end{remark}

\begin{remark} Another option for a synthetic definition of sectional curvature, again based on the observation of Lemma \ref{asym-lem}, is to consider for every $z\in \X$ and all linearly independent $\alpha,\beta\in  \mathring\T^{\sf dir}_z\X$,
$${\sf sec}_z(\alpha,\beta):=-\frac3{2\Lambda}\,\frac{d^2}{dt^2}\Big|_{t=0} \d^2\big(\alpha_{\sqrt t},\beta_{\sqrt t}\big)$$
 with $\Lambda:=|\dot\alpha|^2|\dot\beta|^2-\langle \alpha,\beta\rangle_z^2$. This leads to a similar, albeit not equivalent concept of sectional curvature on the geodesic space $(\X,\d)$.
\end{remark}

\section{Sectional curvature  of $\Wass_2(\M)$ 
}

Let us apply these concepts now to the (various) tangent space(s) of  $(\Wass_2(\M),\W_2)$. Here and in the sequel $(\M,\g)$ is a Riemannian manifold, smooth but not necessarily complete. We assume that
$$\big|\Sec_x\big|\le  K 
$$
for suitable 
$K$ and all $x$.

\subsection{Geometric and analytic tangent spaces}
We summarize some basic concepts and results from the work \cite{Gigli11}. 

\subsubsection{The analytic tangent space}
Let $\Gamma(\T\M)=\{V: \M\to \T\M \textrm{ Borel}: V(x)\in \T_x\M \ (\forall x)\}$ and 
let $L_{\T\M}^2(\mu):=\{V\in \Gamma(\T\M): \int|V|^2d\mu<\infty\}$ denote the space of $L^2$ vector fields. 

\begin{definition} We define the \emph{analytic (or deterministic) tangent space} of $(\Wass_2(\M),\W_2)$ at $\mu$ by
$$\T^{\sf ana}_\mu\Wass_2:=\overline{\big\{\nabla\varphi: \ \varphi\in \C^\infty_c(\M)\big\}}^{L^2(\mu)}\subset L_{\T\M}^2(\mu).$$
For $V,W\in  \T^{\sf ana}_\mu\Wass_2$ we put
$\exp_\mu(V):=\exp(V)_*\mu$
and
$\langle V,W\rangle_\mu=\int_\M \langle V,W\rangle_x\,d\mu(x)$
with $\langle V,W\rangle_x:= \g_x(V,W)$.

\end{definition} 

\begin{remark}
$\T^{\sf ana}_\mu\Wass_2=\overline{\big\{\nabla\varphi: \ \varphi\in \C^\infty_c(\M), \ t\varphi \textrm{ is $\c$-convex for some }t>0\big\}}^{L^2(\mu)}.$
\end{remark}
\begin{example} Consider $\mu=\delta_z$. Then
$\T^{\sf ana}_\mu\Wass_2=L^2_{\T\M}(\mu)\cong\T_z\M$.
\end{example}

\subsubsection{Tangent measures}
Recall that
$\T\M=\{(x,v): x\in \M, v\in \T_x\M\}$.
Given $\mu\in\Wass(\M)$, denote by $\Wass(\T\M)_\mu$ the set of  $P\in \Wass(\T\M)$ with first marginal $\mu$, that is, $(\pr_1)_* P=\mu$.
We define  the multiplication with positive scalars by
$$t\, P:= (\theta_t)_*P, \qquad \theta_t: (x,v)\mapsto (x,tv),$$
and the exponential map $\exp: \Wass_2(\T\M)_\mu\to \Wass_2(\M)$ by
$$\exp(P) = \Theta_*P, \qquad \Theta: (x,v)\mapsto \exp_xv.$$
For $\mu\in\Wass_2(\M)$ we denote by $\Wass_2(\T\M)_\mu$ the subset of $P\in \Wass_2(\T\M)_\mu$ with finite  modulus
$$|P|_\mu:=\left(\int_{\T\M}|v|^2dP(v)\right)^{1/2}.$$
Then obviously $|tP|_\mu=t\, |P|_\mu$ for all $t\ge0$, and the family 
$p_t:=\exp(tP)$ for $t\ge0$ obtained by pushing forward $P$ with the maps $\Theta_t: (x,v)\mapsto \exp_x(tv)$ defines a `pseudo geodesic' (i.e.~looks like a geodesic but is not necessarily minimizing) in $\Wass_2\M$.

Every $P\in \Wass(\T\M)_\mu$ can be decomposed into $\mu$ and the disintegration of $P$ w.r.t.~$\mu$ such that
$$\int_{\T\M}f(v)dP=\int_\M\int_{\T_x\M}f(v) dP_x(v)d\mu(x).$$
We define a transport distance on $\Wass_2(\T\M)_\mu$ by 
$$\W_\mu(P,Q):=\inf\Bigg\{ \int_M \int_{\T_x\M\times\T_x\M} |v-w|_x^2dN(x,v,w): \ N\in\Cpl_\mu(P,Q)
\Bigg\}^{1/2}$$
where $\Cpl_\mu(P,Q)$ denotes the set of all probability measures $N$ on $\{(x,v,w): x\in\M, v\in\T_x\M, w\in\T_x\M\}$ with marginals
 $$(\pr_1)_*N=\mu, \quad (\pr_1,\pr_2)_*N=P, \quad (\pr_1,\pr_3)_\mu=Q.$$
 As in the classical case, continuity and nonnegativity of the cost function imply that always a minimizer $N$ exists.  It is easy to see that $\W_\mu$ defines a complete, separable metric on $\Wass_2(\T\M)_\mu$.
 Moreover, it can equivalently be reformulated as 
 $$\W_\mu(P,Q):=\inf\Bigg\{ \int_M \int_{\T_x\M\times\T_x\M} |v-w|_x^2dN_x(v,w)\,d\mu(x): \ N_x\in\Cpl(P_x,Q_x)
\Bigg\}^{1/2}$$
for $P,Q\in \Wass_2(\T\M)_\mu$ (where $N_x$ has to be chosen as a measurable function of $x$). 
Indeed, the kernel $N_x\in\Cpl(P_x,Q_x)$ can always be chosen as the disintegration w.r.t.~the first marginal $d\mu(x)$ of the minimizer $N\in\Cpl_\mu(P,Q)$ in the previous formulation.

\begin{definition} The \emph{geometric (or probabilistic) tangent space} $\T^{\sf geo}_\mu\Wass_2$ is the closure of 
$$\mathring\T^{\sf geo}_\mu\Wass_2:=\Big\{P\in \Wass_2(\T\M)_\mu: \  {\exists C,t>0:
|v|\le C \textrm{ for $P$-a.e.~$v$ and }}\W_2(\mu,\exp(tP))=t |P|_\mu
\Big\}$$
in $(\Wass_2(\T\M)_\mu,\W_\mu)$. \end{definition}

\subsubsection{Embeddings and identifications}

\begin{theorem} For every $\mu\in\Wass_2(\M)$, the analytic tangent space is always isometrically embedded into the geometric tangent space.
The
isometric embedding is given by
$$ \T^{\sf ana}_\mu\Wass_2 \hookrightarrow \T^{\sf geo}_\mu\Wass_2: \quad 
V\in L^2_{\T\M}\mapsto dP(x,v):=d\delta_{V(x)}(v)d\mu(x)\in \Wass_2(\T\M)_\mu.
$$ 
\end{theorem}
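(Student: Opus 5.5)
We have to show three things about the map $\iota: V\mapsto P_V$, $dP_V(x,v)=d\delta_{V(x)}(v)\,d\mu(x)$: it preserves the metric, it is well defined into $\Wass_2(\T\M)_\mu$, and its image lies in the closure $\T^{\sf geo}_\mu\Wass_2$. The plan is to settle the metric identity for all of $L^2_{\T\M}(\mu)$ first. The two remaining points then only need checking on the dense set $\{\nabla\varphi:\varphi\in\C^\infty_c(\M)\}$, and completeness extends them to the closure.

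\emph{Isometry.} For $V\in L^2_{\T\M}(\mu)$ the measure $P_V$ has first marginal $\mu$ and $|P_V|_\mu^2=\int|V|^2d\mu<\infty$, so $P_V\in\Wass_2(\T\M)_\mu$. Given $V,W$, the disintegrations are the Dirac measures $(P_V)_x=\delta_{V(x)}$ and $(P_W)_x=\delta_{W(x)}$. The only coupling of two Dirac measures is $\delta_{(V(x),W(x))}$, so in the second formulation of $\W_\mu$ the infimum is taken over a single element. This gives
$$\W_\mu(P_V,P_W)^2=\int_\M|V-W|_x^2\,d\mu(x)=\|V-W\|^2_{L^2(\mu)}.$$
Hence $\iota$ is an isometry on all of $L^2_{\T\M}(\mu)$, and it is linear with respect to positive scaling: $tP_V=P_{tV}$.

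\emph{Image lies in $\T^{\sf geo}_\mu\Wass_2$.} Take $V=\nabla\varphi$ with $\varphi\in\C^\infty_c(\M)$. Then $|V|\le C$, so the boundedness condition holds. It remains to find $t>0$ with
$$\W_2\big(\mu,\exp(tP_V)\big)=t|P_V|_\mu,$$
that is, we need the map $T_t(x)=\exp_x(t\nabla\varphi(x))$ to be an optimal transport map whose cost equals $t^2\int|\nabla\varphi|^2d\mu$. By the Remark following the definition of $\T^{\sf ana}_\mu\Wass_2$, we may restrict to $\varphi$ such that $t\varphi$ is $\c$-convex for some $t>0$, with $\c=\d^2/2$; the gradients of such $\varphi$ are still dense. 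For such $\varphi$ the McCann--Brenier theorem applies: $T_t$ is the optimal map from $\mu$ to $(T_t)_*\mu$. Since $\varphi$ has compact support, $t$ can be taken small enough that $t|\nabla\varphi|$ is below the injectivity radius on $\supp\varphi$ (using $|\Sec|\le K$ and the compact support if $\M$ is not complete). Then
$$\d(x,T_t(x))=t|\nabla\varphi(x)|,$$
so $\W_2^2=t^2\int|\nabla\varphi|^2\,d\mu=t^2|P_V|_\mu^2$, and therefore $P_V\in\mathring\T^{\sf geo}_\mu\Wass_2$.

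\emph{Extension to the closure.} A general $V\in\T^{\sf ana}_\mu\Wass_2$ is an $L^2$-limit of such $\nabla\varphi_n$. By the isometry, $P_{\nabla\varphi_n}$ is $\W_\mu$-Cauchy and converges to $P_V$. So $P_V$ lies in the closure of $\mathring\T^{\sf geo}_\mu\Wass_2$, which is $\T^{\sf geo}_\mu\Wass_2$, and the isometry passes to the limit.

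The main obstacle is the optimality step. It requires quoting the right form of the fact that $\c$-convex potentials generate optimal maps for arbitrary (not necessarily absolutely continuous) $\mu$. Only sufficiency of $\c$-cyclical monotonicity of the graph of $T_t$ is needed, and that holds for any $\mu$. It also requires the claim that $T_t$ travels along minimizing geodesics, so that the cost equals $t^2|\nabla\varphi|^2$ pointwise. I would handle both by invoking the density Remark and the standard characterization of $\c$-convex functions in \cite{Gigli11}.
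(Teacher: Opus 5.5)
Your proof is correct. The paper gives no proof of this statement and only quotes it from \cite{Gigli11}; what you wrote is the standard argument behind it: Dirac disintegrations give $\W_\mu(P_V,P_W)=\|V-W\|_{L^2(\mu)}$, optimality of $\exp(t\nabla\varphi)$ for arbitrary $\mu$ follows from its graph lying in the $\c$-subdifferential of a $\c$-convex potential, and density closes the argument. The remaining points are cosmetic. The curvature bound $|\Sec|\le K$ plays no role in $\d(x,T_t(x))=t|\nabla\varphi(x)|$; compactness of $\supp\varphi$, or directly the first-order condition for a differentiable $\c$-convex potential, gives it. Shrinking $t$ is harmless because $t\varphi$ is $\c$-convex for all sufficiently small $t>0$ for every $\varphi\in\C^\infty_c(\M)$, which also makes the appeal to the density Remark unnecessary.
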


\begin{theorem}  If $\mu\ll\mm$ (or, more generally, if $\mu$ is regular), then the analytic and the geometric tangent space are isometric:
 $$ \T^{\sf ana}_\mu\Wass_2\cong\T^{\sf geo}_\mu\Wass_2.$$
\end{theorem}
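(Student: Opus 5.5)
The plan is to build the inverse of the embedding from the previous theorem, i.e.\ to show that every element of $\mathring\T^{\sf geo}_\mu\Wass_2$ is $\W_\mu$-equivalent to a plan of the form $d\delta_{V(x)}(v)\,d\mu(x)$ with $V\in\T^{\sf ana}_\mu\Wass_2$. Surjectivity then passes to closures because the embedding is isometric and $\T^{\sf ana}_\mu\Wass_2$ is closed. We take ``regular'' to mean that optimal transport from $\mu$ is unique and induced by a map, as is the case for $\mu\ll\mm$ by McCann's theorem on Riemannian manifolds.

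\textbf{Step 1: reduction to maps.} Fix $P\in\mathring\T^{\sf geo}_\mu\Wass_2$ with $|v|\le C$ $P$-a.e.\ and $\W_2(\mu,\exp(tP))=t|P|_\mu$. Consider the plan $\pi:=(\pr_1,\Theta_t)_*P\in\Cpl(\mu,\exp(tP))$. Its cost is at most $\int|tv|^2dP=t^2|P|_\mu^2$, which equals $\W_2^2(\mu,\exp(tP))$, so $\pi$ is optimal. Moreover, for $P$-a.e.\ $(x,v)$ the curve $s\mapsto\exp_x(sv)$, $s\in[0,t]$, must be minimizing; otherwise the cost of $\pi$ would be strictly less than $t^2|P|_\mu^2$, contradicting the identity above.

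By regularity of $\mu$ the optimal plan is unique and induced by a map $T=\exp(\nabla\phi)$ with $\phi$ a $\c$-concave potential. Hence $\exp_x(tv)=T(x)$ for $P_x$-a.e.\ $v$, for $\mu$-a.e.\ $x$. Since $\exp_x(t\cdot)$ is injective on minimizing vectors outside the cut locus, and the cut locus of $\mu$-a.e.\ $x$ does not contain $T(x)$ (this is part of McCann's theorem), $P_x$ is a Dirac mass at $V(x):=\nabla\phi(x)/t$. Thus $P=(\Id,V)_*\mu$ with $V$ bounded.

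\textbf{Step 2: $V$ lies in the analytic tangent space.} We need $\nabla\phi\in\overline{\{\nabla\varphi:\varphi\in\C^\infty_c\}}^{L^2(\mu)}$. I would use that $\c$-concave potentials are locally semiconcave, hence locally Lipschitz with bounded gradient on the relevant bounded region (here $|\nabla\phi|\le tC$). Then I would approximate $\phi$ by cutoffs times mollifications in charts: $\nabla(\chi_R\,\phi*\rho_\epsilon)\to\nabla\phi$ $\mathcal L$-a.e.\ with uniform bounds, and dominated convergence in $L^2(\mu)$ applies since $\mu\ll\mm$. For merely regular $\mu$ I would instead invoke the characterization of $\T^{\sf ana}_\mu$ as the $L^2(\mu)$-closure of gradients of Lipschitz functions, which comes from the duality in \cite{Gigli11}, and apply it to $\phi$.

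\textbf{Step 3: isometry and closure.} On maps, $\W_\mu$ reduces to the $L^2(\mu)$ distance, because $\Cpl_\mu$ of two Dirac kernels is a single element. Hence the embedding $\T^{\sf ana}\hookrightarrow\T^{\sf geo}$ has image containing $\mathring\T^{\sf geo}_\mu\Wass_2$, and since the image is complete, it contains the closure as well.

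The main obstacle is Step 1 for merely regular (non-absolutely-continuous) $\mu$: we need both uniqueness of the optimal map and the fact that $\exp_x(t\,\cdot)$ is injective on $\supp P_x$. If needed, I would simply take these two properties as the definition of ``regular'' and verify them for $\mu\ll\mm$ via McCann's theorem.
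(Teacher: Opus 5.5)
The paper does not prove this statement; it quotes it from \cite{Gigli11}. Your overall plan is the natural one: show every element of $\mathring\T^{\sf geo}_\mu\Wass_2$ is induced by a map $V=-\nabla\phi/t$ with $\phi$ $c$-concave, show such maps lie in $\T^{\sf ana}_\mu\Wass_2$, and conclude by isometry and completeness of the image. Steps 1 and 3 are fine for $\mu\ll\mm$.

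The obstacle you single out in Step 1 is not real, and you need not build injectivity of $\exp_x(t\,\cdot)$ into the definition of regularity. By the triangle inequality, $(\pr_1,\Theta_{t/2})_*P$ is an optimal plan from $\mu$ to $\exp(\frac t2P)$. Regularity then makes $\exp_x(\frac t2v)$ independent of $v$ for $P_x$-a.e.\ $v$. Since each curve $s\mapsto\exp_x(sv)$ is minimizing on $[0,t]$, its restriction to $[0,\frac t2]$ is the unique minimizing geodesic from $x$ to its endpoint; a second one would produce a broken minimizer. Hence $P_x$ is a Dirac mass, with no cut-locus input at all.

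The genuine gap is Step 2, on two counts.

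\textbf{The absolutely continuous case.} Your argument works when $\M$ is compact, since then $\phi$ is globally $\diam(\M)$-Lipschitz. In general it does not close. The bound $|\nabla\phi|\le tC$ holds only $\mu$-a.e., not on a neighbourhood of $\supp\mu$, so it gives no uniform bound on $\nabla(\phi*\rho_\epsilon)$. Semiconcavity only yields local Lipschitz bounds in the interior of $\{\phi>-\infty\}$. The cutoff term $(\phi*\rho_\epsilon)\nabla\chi_R$ is also not controlled. So the appeal to dominated convergence is not justified as written.

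\textbf{The regular case.} Here the proposed ``$L^2(\mu)$-closure of gradients of Lipschitz functions'' is not even meaningful. Regular measures can be singular: every atomless measure on $\R$, for instance the Cantor measure, is regular. Moreover, a Lipschitz function can fail to be differentiable on a set of full $\mu$-measure (Zahorski's theorem).

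What actually makes Step 2 work is the semiconcavity of $\phi$ itself:
\begin{itemize}
\item its non-differentiability set is covered by countably many $c$-$c$ hypersurfaces, and these are $\mu$-null exactly by Gigli's characterization of regular measures;
\item at each differentiability point $x$, upper semicontinuity of the superdifferential gives $\nabla(\phi*\rho_\epsilon)(x)\to\nabla\phi(x)$.
\end{itemize}
These facts, together with local Lipschitz bounds and a careful cutoff and tail argument, are what is needed to place $V$ in $\T^{\sf ana}_\mu\Wass_2$. As written, Step 2 is not established, and therefore neither is the regular case of the theorem.
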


\begin{theorem} If $\M$ is bounded, then for each $\mu\in\Wass_2(\M)$ the  geometric tangent space $\T^{\sf geo}_\mu\Wass_2$ is isometric to the directional tangent space
  $\T^{\sf dir}_\mu\Wass_2$ as discussed in the previous section. \end{theorem}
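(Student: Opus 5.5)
We construct an explicit map $\iota:\mathring\T^{\sf geo}_\mu\Wass_2\to\mathring\T^{\sf dir}_\mu\Wass_2$, show that it preserves distances, show that its image is dense, and then pass to completions.

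\emph{The map.} Let $P\in\mathring\T^{\sf geo}_\mu\Wass_2$. By definition there is $t_0>0$ with $\W_2(\mu,\exp(t_0P))=t_0|P|_\mu$. Then $\gamma^P_t:=\exp(tP)$, $0\le t\le t_0$, is a $\W_2$-geodesic. Indeed, $\W_2(\gamma^P_s,\gamma^P_t)\le|t-s|\,|P|_\mu$ for all $s,t$, using the coupling $(\exp_x(sv),\exp_x(tv))$, and then the triangle inequality forces equality along $[0,t_0]$. We set $\iota(P):=[\gamma^P]$. Conversely, every $\W_2$-geodesic $(\gamma_t)$ starting at $\mu$ has the form $\gamma_t=\exp(tP)$ for some $P\in\Wass_2(\T\M)_\mu$. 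This follows from the structure of optimal plans between $\mu$ and $\gamma_\tau$: each plan is supported on pairs $(x,\exp_xv)$ with $\exp_x(tv)$ minimizing. Boundedness of $\M$ gives $|v|\le\diam\M$, so $P\in\mathring\T^{\sf geo}_\mu$ and $\iota$ is onto $\mathring\T^{\sf dir}_\mu\Wass_2$. The speeds match, since $|\dot\gamma^P|=|P|_\mu$.

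\emph{Isometry.} We need to show
$$\lim_{t\to0}\tfrac1t\W_2(\exp(tP),\exp(tQ))=\W_\mu(P,Q).$$
For the upper bound, take an optimal $N\in\Cpl_\mu(P,Q)$ and couple $\exp_x(tv)$ with $\exp_x(tw)$. For bounded $|v|,|w|$ and $|\Sec|\le K$, comparison gives $\d(\exp_x(tv),\exp_x(tw))\le t|v-w|(1+O(t^2))$ uniformly in $x$. Hence $\limsup\le\W_\mu(P,Q)$.

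The lower bound is the main obstacle. Take optimal couplings $\pi_t$ of $\exp(tP)$ and $\exp(tQ)$. Glue them with $P$ and $Q$ to obtain measures $\Pi_t$ on $\{(x,v,y,w)\}$ such that $\exp_x(tv)$ is coupled with $\exp_y(tw)$. A priori $x\neq y$, so the rescaled couplings do not directly produce an element of $\Cpl_\mu(P,Q)$. I would show that $\Pi_t$, pushed under the rescaling, is tight; boundedness of $\M$ and of the supports gives this. Any limit point as $t\to0$ is supported on $\{x=y\}$, because $\d(x,y)\le\d(\exp_x tv,\exp_y tw)+2tC$, and this is $O(t)$ in $L^2$. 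The finer step is to control $\tfrac1t\d(\exp_x tv,\exp_y tw)$ from below by $|v-\tau_{y\to x}w-\tfrac{1}{t}\exp_x^{-1}y|$ up to $o(1)$. If the rescaled displacement $\xi=\tfrac1t\exp_x^{-1}y$ has a weak limit, it must vanish in the appropriate averaged sense because both marginals are $\mu$. This makes $\mu$-a.e.\ the limit coupling a coupling of $P_x$ with $Q_x$ of cost at most the $\liminf$. I expect this $x\neq y$ mismatch, namely mass of $\mu$ being rearranged at scale $t$, to be the genuinely delicate point. I would first try to handle it by a convexity argument: $\int|v-w-\xi|^2$ with $\int\xi$ "balanced" is at least $\int|v-w|^2$ after averaging over the disintegration.

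\emph{Density and completion.} The limsup in the definition of $\d_\mu$ becomes a limit equal to $\W_\mu$. Thus $\iota$ is an isometric bijection between $\mathring\T^{\sf geo}_\mu\Wass_2$ and $\mathring\T^{\sf dir}_\mu\Wass_2$, and in particular it is compatible with $\sim$. Taking completions, and recalling that $\T^{\sf geo}_\mu\Wass_2$ is the closure of $\mathring\T^{\sf geo}_\mu\Wass_2$ in the complete space $(\Wass_2(\T\M)_\mu,\W_\mu)$, yields $\T^{\sf geo}_\mu\Wass_2\cong\T^{\sf dir}_\mu\Wass_2$.
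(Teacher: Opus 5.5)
The paper does not prove this statement; it quotes it without argument from \cite{Gigli11}. Several parts of your outline are fine: the definition of $\iota$, surjectivity onto $\mathring\T^{\sf dir}_\mu\Wass_2$, the upper bound, and the passage to completions. (One small slip in surjectivity: a geodesic on $[0,\tau]$ gives $\tau|v|\le\diam\M$, not $|v|\le\diam\M$.)

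The genuine gap is the lower bound $\liminf_{t\to0}\frac1t\W_2(\exp(tP),\exp(tQ))\ge\W_\mu(P,Q)$. None of the steps you sketch for it uses that $t\mapsto\exp(tP)$ and $t\mapsto\exp(tQ)$ are geodesics. Without that property the inequality is false, so your ``balanced displacement'' convexity step cannot hold. Here is a counterexample.
\begin{itemize}
\item Let $\mu$ be normalized Lebesgue measure on the unit disc in $\R^2$, with $P_x=\delta_{x^\perp}$ and $Q_x=\delta_0$.
\item Then $\exp(tP)$ is uniform on the disc of radius $\sqrt{1+t^2}$. Hence $\W_2(\exp(tP),\exp(tQ))=(\sqrt{1+t^2}-1)/\sqrt2=O(t^2)$, whereas $\W_\mu(P,Q)=|P|_\mu=1/\sqrt2$.
\item In your glued plan, $y$ is the rotation of $x$ by the angle $\arctan t$. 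So $(x,y)$ has both marginals equal to $\mu$, and $\xi=(y-x)/t\to x^\perp$ is perfectly balanced (divergence free).
\item Yet $\int|v-w-\xi|^2\,d\Pi_t\to0<\int|v-w|^2\,d\Pi_t=\frac12$.
\end{itemize}
The marginal constraint only tells you that weak limits of $\xi$ are $L^2(\mu)$-orthogonal to gradients. That controls the cross term $\int\langle v-w,\xi\rangle\,d\Pi_t$ only if $v$ and $w$ are themselves gradient-like. This is exactly the information carried by the geodesic property, which your argument never invokes.

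Here is how that information can enter, written in $\R^n$.
\begin{itemize}
\item Choose $t_0>0$ such that both $\exp(tP)$ and $\exp(tQ)$ are geodesics on $[0,t_0]$. Then $(x,x+t_0v)_*P$ is an optimal plan, so its support is cyclically monotone and contained in $\partial\phi$ for some convex $\phi$.
\item With $\varphi:=\frac1{t_0}\big(\phi-\frac12|\cdot|^2\big)$ this gives $\langle v,y-x\rangle\le\varphi(y)-\varphi(x)+\frac1{2t_0}|y-x|^2$ for $P$-a.e.\ $(x,v)$ and all $y$.
\item Likewise $\langle w,x-y\rangle\le\psi(x)-\psi(y)+\frac1{2t_0}|x-y|^2$ for $Q$-a.e.\ $(y,w)$ and all $x$, with $\psi$ the analogous potential for $Q$.
\item Integrate against your $\Pi_t$. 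The potential terms cancel because $x$ and $y$ both have law $\mu$, and $\varphi,\psi$ are bounded on the relevant supports. This cancellation is the correct form of your balance argument.
\end{itemize}
The result is
\[
\frac1{t^2}\W_2^2\big(\exp(tP),\exp(tQ)\big)=\int|v-w-\xi|^2\,d\Pi_t\ \ge\ \int|v-w|^2\,d\Pi_t+\Big(1-\frac{2t}{t_0}\Big)\int|\xi|^2\,d\Pi_t .
\]
For $t\le t_0/2$, combined with your upper bound, this bounds $\int|\xi|^2\,d\Pi_t$. Hence every limit point of $\Pi_t$ is concentrated on $\{x=y\}$ and lies in $\Cpl_\mu(P,Q)$. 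Lower semicontinuity then gives $\liminf_{t\to0}t^{-2}\W_2^2(\exp(tP),\exp(tQ))\ge\W_\mu^2(P,Q)$.

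On a Riemannian $\M$ the analogous argument would use $\d^2/2$-concave Kantorovich potentials, which are semiconcave with a constant depending on $K$ and $\diam\M$, together with your parallel-transport expansion. The error terms of that expansion still need care where $\d(x,y)\gg t$.
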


\subsection{Sectional curvature and its decomposition into twisted and lifted parts}

\begin{definition}\label{def-sec-wass} For  $\mu\in\Wass_2(\M)$  and linearly independent $P,Q\in   \T^{\sf geo}_\mu\Wass_2$ we define
$$\Sec_\mu(P,Q):=
\!\!\!\lim_{\tiny\begin{array}{c}
{s,t\to0}\\{t/s+s/t\textrm{ bdd}}\end{array}}\!\!\!
\frac3{\Lambda\,s^2\,t^2}\Big[
\W^2_\mu(s P,tQ)
-\W_2^2\big(\exp(sP),\exp(tQ)\Big]
$$
provided the limit exists.
Here and in the sequel

$$\Lambda:=\Lambda_\mu(P,Q):=|P|_{\mu}^2\,|Q|_{\mu}^2-\langle P,Q\rangle_\mu^2$$
with 
\begin{align*}\langle P,Q\rangle_\mu&:=\frac12\big(|P|^2_{\mu}+|Q|^2_{\mu}-\W^2_\mu(P,Q)\big)\\
&=
\sup\bigg\{ \int_M \int_{\T_x\M\times\T_x\M}\langle v,w\rangle_x dN_x(v,w)\,d\mu(x): \ N_x\in\Cpl(P_x,Q_x)
\bigg\}
\end{align*}
and
$$\W_\mu(sP,tQ):=\inf\Bigg\{ \int_M \int_{\T_x\M\times\T_x\M} |sv-tw|_x^2dN_x(v,w)\,d\mu(x): \ N_x\in\Cpl(P_x,Q_x)
\Bigg\}^{1/2}.$$
Moreover, we put
$$\underline{\Sec}_\mu(P,Q):=\liminf_{s,t\to0}\ldots, \qquad\quad \overline{\Sec}_\mu(P,Q):=\limsup_{s,t\to0}\ldots$$
\end{definition}

\begin{remark} For all linearly independent $P,Q$ and all $a,b>0$,
\begin{align*}\Sec_\mu(aP,bQ)&=
\!\!\!\lim_{\tiny\begin{array}{c}
{s,t\to0}\\{t/s+s/t\textrm{ bdd}}\end{array}}\!\!\!
\frac3{\Lambda_\mu(aP,bQ)\,s^2\,t^2}\Big[
\W_\mu^2\big(s(aP),t(bQ)\big)
-\W_2^2\big(\exp(s(aP)),\exp(t(bQ))\Big]\\
&=
\!\!\!\lim_{\tiny\begin{array}{c}
{s,t\to0}\\{t/s+s/t\textrm{ bdd}}\end{array}}\!\!\!
\frac3{\Lambda_\mu(P,Q)\,(sa)^2\,(tb)^2}
\Big[
\W^2_\mu\big((sa)P,(tb)Q\big)
-\W_2^2\big(\exp((sa)P),\exp((tb)Q)\Big]\\
&=\Sec_\mu(P,Q).\end{align*}
\end{remark}

\begin{remark} Let us consider these concepts for the particular case of tangent measures
of the form $P_x=\delta_{V_x}, Q_x=\delta_{W_x}$ with $V,W\in \T^{\sf ana}_\mu\Wass_2\subset L^2_{\T\M}(\mu)$.
Recall that $|V|_{\mu}^2=\int_\M |V|_x^2\,d\mu(x)$ and
$\langle V,W\rangle_\mu=\int_\M \langle V,W\rangle_x\,d\mu(x)$,
and put
$$\Lambda:=\Lambda_\mu(V,W):=|V|_{\mu}^2|W|_{\mu}^2-\langle V,W\rangle_\mu^2.$$
Then for $\mu\in\Wass_2(\M)$  and linearly independent $V,W\in  \T^{\sf ana}_\mu\Wass_2$
we have
$$\Sec_\mu(V,W)=\!\!\!\lim_{\tiny\begin{array}{c}
{s,t\to0}\\{t/s+s/t\textrm{ bdd}}\end{array}}\!\!\!
\frac3{\Lambda\, s^2\,t^2}\bigg[|sV-tW|_\mu^2-
\W_2^2\big(\exp_\mu(sV),\exp_\mu(tW)\big)\bigg]$$
provided the limit exists 
and generally
$$\underline{\Sec}_\mu(V,W):=\liminf_{s,t\to0}\ldots, \qquad\quad \overline{\Sec}_\mu(V,W):=\limsup_{s,t\to0}\ldots$$
\end{remark}

\begin{theorem}\label{sec-sec} For 
arbitrary $\mu$,
linearly independent  $P,Q\in  \mathring\T^{\sf geo}_\mu\Wass_2$ and any  optimal coupling $N_x$ of $P_x$ and $Q_x$,
\begin{equation}\label{sec-dec}
\Sec_\mu(P,Q)= \Sec_\mu^\uparrow(P,Q)+\Sec_\mu^\nabla(P,Q)\end{equation}
with
\begin{equation}\label{sec-lift-def}
\Sec_\mu^\uparrow(P,Q):=\frac1\Lambda \int_\M \int_{\T_x\M\times \T_x\M}  \Sec_x(v,w)\cdot\big[|v|^2\, |w|^2-\langle v,w\rangle^2
\big]\,dN_x(v,w)\,d\mu(x)
\end{equation}
where $\Lambda:=\Lambda_\mu(P,Q):=|P|_{\mu}^2\,|Q|_{\mu}^2-\langle P,Q\rangle_\mu^2$,
and
\begin{align*}
\Sec_\mu^\nabla(P,Q):=
\!\!\!\lim_{\tiny\begin{array}{c}
{s,t\to0}\\{t/s+s/t\textrm{ bdd}}\end{array}}\!\!\!
\frac3{\Lambda\,s^2\,t^2}\bigg[\int_\M\int_{\T_x\M\times \T_x\M}
\d^2\Big(\exp_x(sv),&\exp_x(tw)\Big)\,dN_x(v,w)\,d\mu(x)\\
&
-
\W_2^2\Big(\exp(sP),\exp(tQ)\Big)
\bigg]\ge0.\end{align*}
More precisely, the limit defining $\Sec_\mu(P,Q)$ exists if and only if the limit defining $\Sec_\mu^\nabla(P,Q)$ exists, and in this case \eqref{sec-dec} holds.

More generally,
$$\underline\Sec_\mu(P,Q)= \Sec_\mu^\uparrow(P,Q)+\underline\Sec_\mu^\nabla(P,Q)$$ and
$$\overline\Sec_\mu(P,Q)= \Sec_\mu^\uparrow(P,Q)+\overline\Sec_\mu^\nabla(P,Q).$$

In the definition of $\Sec_\mu^\uparrow(P,Q)$, we use the convention that
$$ \Sec_x(v,w)\cdot\big[|v|^2\, |w|^2-\langle v,w\rangle^2\big]:=0$$
if $v$ and $w$ are linearly dependent.
\end{theorem}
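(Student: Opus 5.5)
The plan is to split the bracket in the definition of $\Sec_\mu(P,Q)$ by adding and subtracting the integrated pointwise squared distances. Fix an optimal coupling $N_x$ of $P_x$ and $Q_x$. For $s,t>0$ write
\begin{align*}
\W_\mu^2(sP,tQ)-\W_2^2\big(\exp(sP),\exp(tQ)\big)
=&\int_\M\!\int\Big[|sv-tw|_x^2-\d^2\big(\exp_x(sv),\exp_x(tw)\big)\Big]dN_x\,d\mu\\
&+\bigg[\int_\M\!\int\d^2\big(\exp_x(sv),\exp_x(tw)\big)\,dN_x\,d\mu-\W_2^2\big(\exp(sP),\exp(tQ)\big)\bigg].
\end{align*}
This identity requires one fact: the coupling $N_x$, which is optimal for $\W_\mu(P,Q)$, is also optimal for $\W_\mu(sP,tQ)$. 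This holds because $|sv-tw|^2=s^2|v|^2+t^2|w|^2-2st\langle v,w\rangle$. The first two terms depend only on the marginals, so minimizing the cost amounts to maximizing $\int\langle v,w\rangle\,dN_x\,d\mu$, and this problem is independent of $s,t$. Hence $\W^2_\mu(sP,tQ)=\int|sv-tw|^2\,dN$. Once the identity is established, the equivalences of existence of the limits, and the $\liminf/\limsup$ statements, follow directly after dividing by $\Lambda s^2t^2/3$, provided the first term has a genuine limit equal to $\Sec^\uparrow_\mu(P,Q)$.

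\textbf{Lifted term.} For each $x$ and each pair $(v,w)$, the Corollary following Theorem \ref{alex-vs-sec} (the Riemannian case of the asymptotics of Lemma \ref{asym-lem}) gives
\[
\frac{3}{s^2t^2}\Big[|sv-tw|^2-\d^2\big(\exp_x(sv),\exp_x(tw)\big)\Big]\to \Sec_x(v,w)\big[|v|^2|w|^2-\langle v,w\rangle^2\big]
\]
as $s,t\to0$ with $s/t+t/s$ bounded. When $v,w$ are linearly dependent, both $\exp_x(sv)$ and $\exp_x(tw)$ lie on a single geodesic, so for small $s,t$ the bracket vanishes, consistent with the stated convention. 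To pass the limit under the integral I will use dominated convergence. Since $P\in\mathring\T^{\sf geo}_\mu\Wass_2$, we have $|v|,|w|\le C$ for $N$-a.e.\ $(x,v,w)$. So it suffices to have a uniform bound
\[
\big|\,|sv-tw|^2-\d^2(\exp_x sv,\exp_x tw)\big|\le C' s^2t^2
\]
for $|v|,|w|\le C$ and $s,t$ small, uniformly in $x$.

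\textbf{Main obstacle.} The key technical step is exactly this uniform bound. It has to come from $|\Sec|\le K$ via Rauch/Toponogov-type comparison: locally the manifold has curvature between $-K$ and $K$, so $\d(\exp_x sv,\exp_x tw)$ is squeezed between the model-space values for $\kappa=\pm K$. Lemma \ref{asym-lem} gives those model values with explicit, uniform error terms, which yields $\le C's^2t^2$. The first thing I would try is to apply Rauch's theorem to the geodesic variation joining the two points, which controls the Jacobi field growth and gives the two-sided bound.

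A caveat is the non-completeness of $\M$. For small $s,t$ we need $\exp_x(sv)$ to be defined, and we need the comparison to hold within an injectivity/convexity radius around $x$. This is implicit in $P$ being in $\mathring\T^{\sf geo}$, since $\exp(t_0P)$ must be defined. If necessary I would restrict to the case of uniform injectivity radius on the support, or handle it by a local argument.

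\textbf{Twisted term.} Nonnegativity of $\Sec^\nabla_\mu$ is immediate. The transport plan $\big(\exp_x(sv),\exp_x(tw)\big)_*(N_x\,d\mu)$ couples $\exp(sP)$ and $\exp(tQ)$, so the second bracket is $\ge0$ for every $s,t$.
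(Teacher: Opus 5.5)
Your proposal is correct and follows the paper's proof essentially step for step. The structure is the same: add and subtract $\int_\M\int \d^2(\exp_x(sv),\exp_x(tw))\,dN_x\,d\mu$, use dominated convergence for the lifted term based on $|v|,|w|\le C$ and $|\Sec_x|\le K$, and obtain nonnegativity from the push-forward coupling $(\exp_x(sv),\exp_x(tw))_*(N_x\,d\mu)$. Two points that the paper uses implicitly or only asserts are made precise in your version: the explicit check that $N_x$ remains optimal for $\W_\mu(sP,tQ)$, and the uniform $O(s^2t^2)$ estimate, including your caveat about the injectivity radius when $\M$ is incomplete.
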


\begin{proof} Let linearly independent  $P,Q\in  \mathring\T^{\sf geo}_\mu\Wass_2$ be given. Then 
$|v|\le C$ and $|w|\le C$ for $P$-a.e.~$v$ and $Q$-a.e.~$w$. Moreover, 
$\big|\Sec_x\big|\le K$
 by our standing assumption on $\M$.
Thus the integral on the RHS of \eqref{sec-lift-def} is always well-defined and finite.

Moreover, the integrand is bounded uniformly in $t\in(0,1)$ and thus by dominated convergence,
 \begin{align*}
 \Sec_\mu^\uparrow(P,Q)&=
 \frac1\Lambda\int_\M\int_{\T_x\M\times \T_x\M} 
\Sec_x(v,w)\cdot\big[|v|^2\, |w|^2-\langle v,w\rangle^2
\big]
\,dN_x(v,w)\,d\mu(x)\\
&=
\frac1\Lambda \int_\M\int_{\T_x\M\times \T_x\M} 
\!\!\!\lim_{\tiny\begin{array}{c}
{s,t\to0}\\{t/s+s/t\textrm{ bdd}}\end{array}}\!\!\!
\frac{3}{s^2\,t^2}\Bigg[|sv-tw|^2-
\d^2\Big(\exp_x(sv),\exp_x(tw)\Big)
\Bigg]\,dN_x(v,w)\,d\mu(x)\\
&=
\!\!\!\lim_{\tiny\begin{array}{c}
{s,t\to0}\\{t/s+s/t\textrm{ bdd}}\end{array}}\!\!\!
\frac3{\Lambda s^2\,t^2}\Bigg[
\W^2_\mu(sP,tQ)-
 \int_\M\int_{\T_x\M\times \T_x\M}
\d^2\Big(\exp_x(sv),\exp_x(tw)\Big)\,dN_x(v,w)\,d\mu(x)
\Bigg]\\
&=
{\underline\Sec_\mu(P,Q)-\underline\Sec_\mu^\nabla(P,Q)}\\
\end{align*}
and similarly
$ \Sec_\mu^\uparrow(P,Q)=\overline\Sec_\mu(P,Q)-\overline\Sec_\mu^\nabla(P,Q)$.

The
nonnegativity of $\underline\Sec_\mu^\nabla(P,Q)$ follows from the fact that the push forward of the measure $dN(v,w)$ under the map $(v,w)\mapsto (\exp(sv),\exp(tw))$
is a coupling of $\exp(sP)$ and $\exp(tQ)$.
\end{proof}

\begin{remark}
For  linearly independent $V,W\in  \mathring\T^{\sf ana}_\mu\Wass_2(\M)$, 
$$\Sec_\mu(V,W)= \Sec_\mu^\uparrow(V,W)+\Sec_\mu^\nabla(V,W)$$
with
$$\Sec_\mu^\uparrow(V,W)= \frac1\Lambda\int_\M \Sec_x(V,W)\cdot\big[|V|_x^2\, |W|_x^2-\langle V,W\rangle_x^2
\big]\,d\mu(x)$$
and
$$\Sec_\mu^\nabla(V,W)=\!\!\!\lim_{\tiny\begin{array}{c}
{s,t\to0}\\{t/s+s/t\textrm{ bdd}}\end{array}}\!\!\!
 \frac3{\Lambda\, s^2\,t^2}\bigg[\int_\M
\d^2\Big(\exp_x(sV),\exp_x(tW)\Big)d\mu(x)-
\W_2^2\Big(\exp_\mu(sV),\exp_\mu(tW)\Big)
\bigg].$$
Note that  $\underline{\Sec}_\mu^\nabla(V,W)\ge0$ and thus always $\underline{\Sec}_\mu(V,W)\ge \Sec_\mu^\uparrow(V,W)$.
\end{remark}

\begin{remark}
Put $R_x(v,w):=R_x(v,w,v)w=\Sec_x(v,w)\, \big[ |v|^2|w|^2-\langle v,w\rangle^2\big]$ on $(\T_x\M)^2$ and analogously $R_\mu$ on $(\T_\mu\Wass)^2$. Then for all $V,W$,
$$\underline{R}_\mu(V,W)\ge \int_\M R_x(V,W)\,d\mu(x).$$
\end{remark}

\subsection{Lifting of curvature bounds}
\begin{remark} If $\Sec_x\ge\kappa$ for all $x$ then for all $\mu$ 
and all linearly independent $P,Q\in\mathring\T^{\sf geo}_\mu\Wass_2$,
\begin{equation}\label{K-vs-k}
\underline\Sec_\mu(P,Q)\ge \kappa\cdot \frac{
\int_\M \int_{\T_x\M\times \T_x\M}  \big[|v|^2\, |w|^2-\langle v,w\rangle^2
\big]\,dN_x(v,w)\,d\mu(x)
}{|P|_{\mu}^2|Q|_{\mu}^2-\langle P,Q\rangle_\mu^2},\end{equation}
and similarly for all linearly independent $V,W\in\mathring\T^{\sf ana}_\mu\Wass_2$,
$$\underline\Sec_\mu(V,W)\ge \kappa\cdot \frac{\int_\M \big[|V|_x^2\, |W|_x^2-\langle V,W\rangle_x^2
\big]\,d\mu(x)}{|V|_{\mu}^2|W|_{\mu}^2-\langle V,W\rangle_\mu^2}.$$
There is no straightforward  way to estimate the ratio on the RHS, besides the fact that it is nonnegative if $\kappa$ is nonnegative.
\end{remark}

\begin{corollary}\label{sec-pos} Assume that $\Sec_x\ge0$  for all $x\in\M$. Then 
$$\underline{\Sec}_\mu(P,Q)\ge 0$$
for all $\mu\in\Wass_2(\M)$ and all linearly independent $P,Q\in  \mathring\T^{\sf geo}_\mu\Wass_2$.
\end{corollary}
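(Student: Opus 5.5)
The corollary follows directly from Theorem~\ref{sec-sec} and the nonnegativity of the twisted part. Fix $\mu\in\Wass_2(\M)$ and linearly independent $P,Q\in\mathring\T^{\sf geo}_\mu\Wass_2$, and choose for each $x$ an optimal coupling $N_x\in\Cpl(P_x,Q_x)$, measurable in $x$, as in the definition of $\W_\mu$. By Theorem~\ref{sec-sec},
$$\underline\Sec_\mu(P,Q)=\Sec^\uparrow_\mu(P,Q)+\underline\Sec^\nabla_\mu(P,Q).$$
So it suffices to show that each of the two summands is nonnegative.

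\emph{The twisted part.} The argument at the end of the proof of Theorem~\ref{sec-sec} gives $\underline\Sec^\nabla_\mu(P,Q)\ge0$. Indeed, push the measure $dN_x(v,w)\,d\mu(x)$ forward under $(x,v,w)\mapsto(\exp_x(sv),\exp_x(tw))$. This yields a coupling of $\exp(sP)$ and $\exp(tQ)$, so
$$\W_2^2\big(\exp(sP),\exp(tQ)\big)\le\int_\M\int \d^2\big(\exp_x(sv),\exp_x(tw)\big)\,dN_x\,d\mu.$$
The bracket in the definition of $\Sec^\nabla_\mu$ is therefore nonnegative for every $s,t$, and so is its $\liminf$.

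\emph{The lifted part.} The integrand of $\Sec^\uparrow_\mu$ is $\Sec_x(v,w)\,[|v|^2|w|^2-\langle v,w\rangle^2]$. For linearly independent $v,w$, the factor $\Sec_x(v,w)$ is $\ge0$ by assumption, and the Gram factor is $\ge0$ by Cauchy--Schwarz. For dependent pairs the integrand is $0$ by the stated convention. Hence the integral is nonnegative; it is also finite, because $|v|,|w|\le C$ and $|\Sec_x|\le K$. It remains to check that $\Lambda=\Lambda_\mu(P,Q)>0$. By definition, $\langle P,Q\rangle_\mu=\sup_N\int\langle v,w\rangle\,dN$ over couplings, so Cauchy--Schwarz in $L^2(N)$ gives $\langle P,Q\rangle_\mu\le|P|_\mu|Q|_\mu$. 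Linear independence of $P,Q$ (in the sense of the angle not being $0$ or $\pi$) means $|\langle P,Q\rangle_\mu|<|P|_\mu|Q|_\mu$, so $\Lambda>0$ and $\Sec^\uparrow_\mu(P,Q)\ge0$. This is also exactly the case $\kappa=0$ of \eqref{K-vs-k}.

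Adding the two bounds gives $\underline\Sec_\mu(P,Q)\ge0$. There is no real obstacle; the only point needing care is confirming that the notion of linear independence used for tangent measures really gives $\Lambda>0$, which is the Cauchy--Schwarz argument above.
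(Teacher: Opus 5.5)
Your proof is correct and follows the paper's route: the corollary is the case $\kappa=0$ of \eqref{K-vs-k}, which rests on the decomposition $\underline\Sec_\mu=\Sec^\uparrow_\mu+\underline\Sec^\nabla_\mu$ from Theorem~\ref{sec-sec}, on $\underline\Sec^\nabla_\mu\ge0$ via the pushed-forward coupling, and on the pointwise nonnegativity of the lifted integrand. Your extra check that linear independence forces $\Lambda_\mu(P,Q)>0$ is a detail the paper leaves implicit.
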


One way to control the  ratio on the right hand side of \eqref{K-vs-k} is to impose a bound on the variation of the lengths 
of tangent vectors under $P$.

\begin{proposition} Assume that $\Sec_x\ge -\kappa$ for some $\kappa\ge0$.
Let $\mu\in\Wass_2(\M)$ and  linearly independent $P,Q\in  \mathring\T^{\sf geo}_\mu\Wass_2$ be given.
\begin{enumerate}[\rm (i)]
 \item
If $|v|=|P|_\mu$ for $P$-a.e. $v$ then
$$\Sec_\mu(P,Q)\ge -\kappa.$$
\item
If $\langle P,Q\rangle_\mu=0$ and
$|v|\le L\cdot |P|_\mu$  for $P$-a.e. $v$
for some $L$. Then
$$\Sec_\mu(P,Q)\ge -L^2\,\kappa.$$
\end{enumerate}
\end{proposition}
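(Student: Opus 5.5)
The plan is to use the decomposition of Theorem \ref{sec-sec} together with the lifting estimate \eqref{K-vs-k}. Since $\underline\Sec^\nabla_\mu(P,Q)\ge0$, and since $\Sec_x\ge-\kappa$ gives the pointwise bound $\Sec_x(v,w)\,[|v|^2|w|^2-\langle v,w\rangle^2]\ge -\kappa\,[|v|^2|w|^2-\langle v,w\rangle^2]$ (also for linearly dependent pairs, by the stated convention), we get
$$\underline\Sec_\mu(P,Q)\ \ge\ -\frac{\kappa}{\Lambda}\int_\M\int_{\T_x\M\times\T_x\M}\big[|v|^2|w|^2-\langle v,w\rangle^2\big]\,dN_x(v,w)\,d\mu(x)$$
for an optimal coupling $N_x$ of $P_x,Q_x$. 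Here I interpret the claim "$\Sec_\mu(P,Q)\ge\cdot$" as a statement about $\underline\Sec_\mu$ (or about $\Sec_\mu$ whenever the limit exists). It therefore suffices to bound the numerator $I:=\iint [|v|^2|w|^2-\langle v,w\rangle^2]\,dN_x\,d\mu$ by $\Lambda$ in case (i), and by $L^2\Lambda$ in case (ii). Since $\kappa\ge0$, only an upper bound on $I$ is needed.

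For (i), I drop the negative term and write $I\le \iint |v|^2|w|^2\,dN$. With $|v|^2=|P|_\mu^2$ a.e., this gives $|P|_\mu^2\iint|w|^2\,dN=|P|_\mu^2|Q|_\mu^2$. That alone is not enough, because $\Lambda=|P|^2|Q|^2-\langle P,Q\rangle_\mu^2$. So I keep the term $\iint\langle v,w\rangle^2\,dN$ and use Jensen (Cauchy–Schwarz for the probability measure $N$):
$$\iint\langle v,w\rangle^2\,dN\ \ge\ \Big(\iint\langle v,w\rangle\,dN\Big)^2=\langle P,Q\rangle_\mu^2 .$$
The equality on the right holds because $N_x$ is optimal, so by Definition \ref{def-sec-wass} it attains the supremum defining $\langle P,Q\rangle_\mu$. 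Hence $I\le|P|_\mu^2|Q|_\mu^2-\langle P,Q\rangle_\mu^2=\Lambda$, and therefore $\underline\Sec_\mu\ge-\kappa$.

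For (ii), I again use $I\le\iint|v|^2|w|^2\,dN\le L^2|P|_\mu^2\iint|w|^2\,dN=L^2|P|_\mu^2|Q|_\mu^2$. Since $\langle P,Q\rangle_\mu=0$, we have $\Lambda=|P|_\mu^2|Q|_\mu^2$, so $I\le L^2\Lambda$ and $\underline\Sec_\mu\ge -L^2\kappa$.

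The main point to check is the step in (i) where the optimality of $N_x$ is used, so that $\iint\langle v,w\rangle\,dN$ equals $\langle P,Q\rangle_\mu$ rather than merely being at most it. This is exactly what makes the Jensen bound match $\Lambda$, and it holds because the same optimal $N_x$ appears in Theorem \ref{sec-sec}. Otherwise the argument only needs the boundedness assumptions that are already built into $\mathring\T^{\sf geo}_\mu$.
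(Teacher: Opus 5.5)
Your proposal is correct and is essentially the paper's argument. You combine the lifting bound \eqref{K-vs-k} (nonnegativity of $\underline\Sec^\nabla_\mu$ plus $\Sec_x\ge-\kappa$) with an upper bound on the numerator: in (i) $|v|=|P|_\mu$ and Jensen for the optimal coupling give at most $|P|_\mu^2|Q|_\mu^2-\langle P,Q\rangle_\mu^2=\Lambda$, and in (ii) the bound is $L^2|P|_\mu^2|Q|_\mu^2=L^2\Lambda$, exactly as in the paper.
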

 
 \begin{proof} (i) Under the given assumption,
 \begin{align*}
& \int_\M \int_{\T_x\M\times \T_x\M}  \big[|v|^2\, |w|^2-\langle v,w\rangle^2
\big]\,dN_x(v,w)\,d\mu(x)\\
&\le
 |P|_\mu^2 \cdot\int  |w|^2dN_x(v,w)d\mu(x)- \int \langle v,w\rangle_x^2dN_x(v,w)
\,d\mu(x)\\
&\le
 |P|_\mu^2 \,  |Q|^2_\mu- \left(\int \langle v,w\rangle_xdN_x(v,w)
\,d\mu(x)\right)^2\\
&=
 |P|_\mu^2 \,  |Q|^2_\mu-  \langle P,Q\rangle_\mu^2.
\end{align*}

(ii) Similarly 
 \begin{align*}
& \int_\M \int_{\T_x\M\times \T_x\M}  \big[|v|^2\, |w|^2-\langle v,w\rangle^2
\big]\,dN_x(v,w)\,d\mu(x)\\
&\le
 L^2\,|P|_\mu^2 \cdot\int  |w|^2dN_x(v,w)d\mu(x)\\
 &=
L^2\, 
 |P|_\mu^2 \,  |Q|^2_\mu=
 L^2\,\Big(|P|_\mu^2 \,  |Q|^2_\mu-  \langle P,Q\rangle_\mu^2\Big).
\end{align*}

 \end{proof}

\begin{corollary} Assume that $\Sec_x\ge-\kappa$  (for all $x$)  for some $\kappa\ge0$ and that $V$ has constant length, i.e.
$|V|_x=\ell$  (for $\mu$-a.e. $x$) for  some $\ell$. Then
$$\underline{\Sec}_\mu(V,W)\ge- \kappa.$$
\end{corollary}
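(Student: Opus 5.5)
This corollary is the analytic specialization of part (i) of the preceding Proposition. To any $V\in\mathring\T^{\sf ana}_\mu\Wass_2$ I associate, via the isometric embedding of the analytic into the geometric tangent space, the tangent measure $P$ with $dP(x,v)=d\delta_{V(x)}(v)\,d\mu(x)$; likewise $Q$ for $W$. For Dirac disintegrations $P_x=\delta_{V(x)}$, $Q_x=\delta_{W(x)}$ the coupling set $\Cpl(P_x,Q_x)$ contains only $N_x=\delta_{(V(x),W(x))}$. Consequently
\[
\W_\mu(sP,tQ)=|sV-tW|_\mu,\qquad
\langle P,Q\rangle_\mu=\langle V,W\rangle_\mu,\qquad
\Lambda_\mu(P,Q)=\Lambda_\mu(V,W).
\]
Also $\exp(sP)=\exp_\mu(sV)$. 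So $\underline\Sec_\mu(V,W)$, as defined in the Remark following Definition~\ref{def-sec-wass}, coincides with $\underline\Sec_\mu(P,Q)$.

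Next I check the hypothesis of part (i). Under $P$, almost every $v$ equals $V(x)$, so $|v|=|V|_x=\ell$ for $P$-a.e.~$v$. Moreover $|P|_\mu^2=\int|V|_x^2\,d\mu=\ell^2$ because $\mu$ is a probability measure. Hence $|v|=|P|_\mu$ $P$-a.e., which is exactly the assumption of (i). Linear independence of $V,W$ means $\Lambda>0$, so $P,Q$ are linearly independent as well.

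Then I apply the bound, noting that the Proposition is really a statement about $\underline\Sec$. Theorem~\ref{sec-sec} gives $\underline\Sec_\mu=\Sec^\uparrow_\mu+\underline\Sec^\nabla_\mu\ge\Sec^\uparrow_\mu$. Combining $\Sec_x\ge-\kappa$ with the estimate
\[
\int_\M\bigl[|V|_x^2|W|_x^2-\langle V,W\rangle_x^2\bigr]\,d\mu\;\le\;\Lambda,
\]
proved in part (i), yields $\Sec^\uparrow_\mu(V,W)\ge-\kappa$. For $\kappa\ge0$ this is the direction needed: the integrand $\Sec_x\cdot[\dots]$ is at least $-\kappa[\dots]$, and the integral of $[\dots]$ is at most $\Lambda$.

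The two points needing care are the following. First, the Proposition is stated for $\mathring\T^{\sf geo}$, which requires $|v|\le C$ and the geodesic condition. Boundedness is automatic here since $|V|=\ell$. The geodesic condition is part of the membership $V\in\mathring\T^{\sf ana}_\mu$ implicit in the Remark on analytic directions, so I take it as given, as the paper does for the Remark after Theorem~\ref{sec-sec}. Second, the constant-length hypothesis only needs to hold $\mu$-a.e., and all integrals are taken with respect to $\mu$, so this causes no difficulty. I expect no real obstacle: the corollary is essentially a direct application of part (i).
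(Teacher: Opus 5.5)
Your proposal is correct and follows the paper's intended route. The paper gives no separate proof: the corollary is exactly the specialization of part (i) of the preceding Proposition to Dirac-disintegrated tangent measures $P_x=\delta_{V(x)}$, $Q_x=\delta_{W(x)}$, whose coupling is unique and hence optimal. You also check correctly that $|v|=\ell=|P|_\mu$ holds $P$-a.e.\ because $\mu$ is a probability measure, and the resulting chain $\underline\Sec_\mu\ge\Sec^\uparrow_\mu\ge-\frac{\kappa}{\Lambda}\int[|V|_x^2|W|_x^2-\langle V,W\rangle_x^2]\,d\mu\ge-\kappa$ is right.

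One small point: your boundedness remark only covers $V$. The dominated convergence in Theorem~\ref{sec-sec} also needs $|W|$ bounded, and this comes from $W\in\mathring\T^{\sf ana}_\mu\Wass_2$, just like the other standing assumptions you already take as given.
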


\begin{example} Assume $\mu\ll\mm$ and  $\varphi(x):=\d(x,A)$ for some $A\subset\M$ with $\mu(A)=0$. Then  $V=\nabla \varphi$ satisfies the assumption in the preceding Corollary.
\end{example}

\begin{corollary} Assume that $\Sec_x\ge-\kappa$ ($\forall x$) for some $\kappa\ge0$ and that $\diam(\X)\le D$. Let linearly independent $P,Q\in\mathring\T^{\sf geo}_\mu\Wass_2$ be given with  $\langle P,Q\rangle_\mu=0$, $|P|_\mu=1$, and ${\sf inj}(P)\ge r$ in the sense that $(\exp(tP))_{t\in[0,r]}$ is a  geodesic.
Then  
$$\Sec_\mu(P,Q)\ge -\Big(\frac Dr\Big)^2\kappa.
$$
\end{corollary}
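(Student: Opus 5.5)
The plan is to reduce the corollary to part (ii) of the preceding Proposition. This needs a bound of the form $|v|\le L\,|P|_\mu = L$ for $P$-a.e.\ $v$, with $L = D/r$. Since $\langle P,Q\rangle_\mu = 0$ is assumed, part (ii) then gives $\Sec_\mu(P,Q)\ge -L^2\kappa = -(D/r)^2\kappa$ directly. The work therefore lies in extracting the pointwise length bound from the injectivity-type assumption ${\sf inj}(P)\ge r$.

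First I will use that $p_t=\exp(tP)$, $t\in[0,r]$, is a geodesic, so $\W_2(\mu,\exp(rP)) = r\,|P|_\mu = r$. The key step is then that the coupling of $\mu$ and $\exp(rP)$ induced by $P$ is optimal, and that along it each transported point moves along a minimizing segment. Concretely, the coupling is the push-forward of $P$ under $(x,v)\mapsto (x,\exp_x(rv))$. Its cost satisfies
$$\int \d^2(x,\exp_x(rv))\,dP \;\le\; \int r^2|v|^2\,dP \;=\; r^2 = \W_2^2(\mu,\exp(rP)).$$
Hence the cost is at most the optimal cost, so equality holds throughout, and it holds pointwise: $\d(x,\exp_x(rv)) = r|v|$ for $P$-a.e.\ $(x,v)$.

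Next, both $x$ and $\exp_x(rv)$ lie in the space, whose diameter is at most $D$. (I read $\diam(\X)$ as the diameter of $\M$, i.e.\ of the supports involved.) Therefore $r|v| = \d(x,\exp_x(rv)) \le D$, which gives $|v|\le D/r = (D/r)\,|P|_\mu$ for $P$-a.e.\ $v$. This is exactly the hypothesis of part (ii) with $L = D/r$, and the conclusion follows; linear independence is already assumed.

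The only delicate point is the pointwise equality $\d(x,\exp_x(rv)) = r|v|$, since the rest is bookkeeping. It comes from combining two facts: the inequality $\d(x,\exp_x(rv))\le r|v|$ holds everywhere, and the integrated costs coincide. Since the integrand difference is nonnegative and integrates to zero, it vanishes $P$-a.e. I expect no further obstacle. One caveat is that part (ii) is stated for $\Sec_\mu$; if the limit does not exist, the same argument applied via Theorem \ref{sec-sec} yields the bound for $\underline\Sec_\mu$.
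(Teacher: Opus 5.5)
Your proposal is correct and is the paper's proof: you reduce to part (ii) of the preceding Proposition with $L=D/r$, using the $P$-a.e.\ bound $r|v|\le D$. The paper only asserts that bound. Your coupling argument justifies it: $\int \d^2(x,\exp_x(rv))\,dP$ is squeezed between $\W_2^2(\mu,\exp(rP))$ and $r^2|P|_\mu^2=\W_2^2(\mu,\exp(rP))$, which forces $\d(x,\exp_x(rv))=r|v|$ $P$-a.e. One small point to state explicitly: $\W_2(\mu,\exp(rP))=r|P|_\mu$ needs the geodesic to have speed $|P|_\mu$, and this follows from $P\in\mathring\T^{\sf geo}_\mu\Wass_2$ together with the triangle inequality.
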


\begin{proof} If $(\exp(tP))_{t\in[0,r]}$ is a geodesic then for $P$-a.e. $v$
$$r|v|\le D.$$
Hence, with $L:=\frac Dr$ 
$$|v|\le \frac Dr =L \, |P|_\mu$$
since by assumption $|P|_\mu=1$. Thus the claim follows by the previous Proposition.
\end{proof}

\subsection{Curvature bounds at Dirac measures}

\begin{lemma} Assume that $\Sec_x\le\lambda$  for all $x\in\M$ and some $\lambda\ge0$. Then 
\begin{equation}\label{K-vs-k-dirac}\overline{\Sec}_{\delta_z}(P,Q)\le \lambda\cdot \!\!\!\limsup_{\tiny\begin{array}{c}
{s,t\to0}\\{t/s+s/t\textrm{ bdd}}\end{array}}\!\!\!
\frac{
 \int_{\T_z\M\times \T_z\M}  \big[|v|^2\, |w|^2-\langle v,w\rangle^2
\big]\,dN_{s,t}(v,w)
}{|P|^2|Q|^2-\langle P,Q\rangle^2}\end{equation}
for all $z\in\M$, all linearly independent $P,Q\in  \mathring\T^{\sf geo}_{\delta_z}\Wass_2$, and all
$N_{s,t}\in \Wass(\T_z\M\times \T_z\M)$ which are $\c_{s,t}$-optimal couplings of $P$ and $Q$
 w.r.t.~the cost function
$$\c_{s,t}(v,w):=\d^2\big(\exp_z(sv), \exp_z(tw)\big).$$
\end{lemma}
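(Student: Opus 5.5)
At $\mu=\delta_z$ every tangent measure $P$ is simply a probability measure on the single fibre $\T_z\M$, with $|P|^2=\int|v|^2dP$. Moreover, $\exp(sP)=(\exp_z(s\,\cdot))_*P$. The plan is to compute $\W_2^2(\exp(sP),\exp(tQ))$ exactly as an optimal transport problem on $\T_z\M\times\T_z\M$ and then compare it with $\W^2_{\delta_z}(sP,tQ)$.

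The key observation is that any coupling of $\exp(sP)$ and $\exp(tQ)$ lifts to a coupling of $P$ and $Q$. Indeed, $P$ and $Q$ are supported in a ball of radius $C$, and for small $s,t$ the maps $v\mapsto\exp_z(sv)$ and $w\mapsto\exp_z(tw)$ are injective on that ball. (Alternatively, one can use gluing/disintegration, which does not need injectivity.) It follows that
$$\W_2^2\big(\exp(sP),\exp(tQ)\big)=\inf_{N\in\Cpl(P,Q)}\int\c_{s,t}\,dN=\int\c_{s,t}\,dN_{s,t}.$$
Next, $N_{s,t}$ is admissible in the infimum defining $\W^2_{\delta_z}(sP,tQ)$, so
$$\W^2_{\delta_z}(sP,tQ)\le\int|sv-tw|^2\,dN_{s,t}(v,w).$$
Combining the two gives
$$\W^2_{\delta_z}(sP,tQ)-\W_2^2(\exp(sP),\exp(tQ))\le\int\Big[|sv-tw|^2-\d^2\big(\exp_z(sv),\exp_z(tw)\big)\Big]dN_{s,t}.$$

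To bound the integrand, apply comparison with the model space. Since $\Sec\le\lambda$, Rauch/Toponogov comparison (the Alexandrov curvature-$\le\lambda$ comparison, valid locally, cf.\ Theorem \ref{alex-vs-sec}(ii) and Lemma \ref{asym-lem}) gives, for $|v|,|w|\le C$ and $s,t$ small, that $\d(\exp_z(sv),\exp_z(tw))$ is at least the corresponding distance in $\mathbf M^{\lambda,n}$. Lemma \ref{asym-lem} then yields
$$|sv-tw|^2-\d^2\big(\exp_z(sv),\exp_z(tw)\big)\le\frac\lambda3s^2t^2\big[|v|^2|w|^2-\langle v,w\rangle^2\big]+O(s^3t^3).$$
The error term is uniform in $v,w$ over the ball of radius $C$, and uniform over $s/t+t/s\le$ const, by the uniform version of the lemma. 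Integrating against $N_{s,t}$, multiplying by $3/(\Lambda s^2t^2)$ and taking the $\limsup$ gives \eqref{K-vs-k-dirac}; the $O(st)$ remainder vanishes in the limit.

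The main obstacle is this comparison step. It requires the hinge comparison to hold uniformly in $v,w$ with $|v|,|w|\le C$ up to radius $C\max(s,t)$, including when the hinge angle is $0$ or $\pi$; in those cases the bracket vanishes and equality holds trivially. This in turn requires a positive injectivity/convexity radius at $z$. Since we only need $s,t\to0$ and $z$ is fixed, I would restrict to a small ball around $z$ that is CAT$(\lambda)$, which is guaranteed by $\Sec\le\lambda$ within the convexity radius. The case $\lambda\ge0$ is used only to keep the model formula and its sign consistent with the nonnegative bracket $|v|^2|w|^2-\langle v,w\rangle^2\ge0$.
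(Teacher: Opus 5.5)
Your proposal is correct and follows the paper's proof: identify $\W_2^2(\exp(sP),\exp(tQ))$ with $\int\c_{s,t}\,dN_{s,t}$, use $N_{s,t}$ as a competitor for $\W^2_{\delta_z}(sP,tQ)$, and bound the resulting integrand. The only difference is the last step. The paper passes to the pointwise limit $\Sec_z(v,w)\big[|v|^2|w|^2-\langle v,w\rangle^2\big]$ inside the integral, tacitly using uniformity over the bounded supports since $N_{s,t}$ moves with $s,t$, and then applies $\Sec_z\le\lambda$; your hinge comparison in a small CAT$(\lambda)$ ball together with Lemma~\ref{asym-lem} instead gives a one-sided bound with an explicitly uniform $O(s^3t^3)$ error, which makes that uniformity transparent.
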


\begin{proof}
Fix $z\in\M$ and linearly independent $P,Q\in \T^{\sf geo}_{\delta_z}\Wass_2\cong  \Wass(\T_z\M)$ such that
$p_s:=\exp_{\delta_z}(sP)$ and $q_t:=\exp_{\delta_z}(tQ)$ for $s,t\in [0,t_0]$ are minimizing geodesics in $(\Wass,\W_2)$  for some $t_0>0$. 
For $s,t\in(0,t_0)$, choose $\c_{s,t}$-optimal  $N_{s,t}\in\Cpl(P,Q)$.
Then
\begin{align*}
\W^2_{\delta_z}(sP,tQ)&-
\W_2^2\big(\exp_{\delta_z}(sP), \exp_{\delta_z}(tQ)\big)\\
&\le\int\bigg[\int |sv-tw|^2_{z}-\d^2\big(\exp_z(sv),\exp_z(tw)\big)\bigg]\,dN_{s,t}(v,w)
\end{align*}
and thus
\begin{align*}
\overline\Sec_{\delta_z}(P,Q)&:=
 \!\!\!\limsup_{\tiny\begin{array}{c}
{s,t\to0}\\{t/s+s/t\textrm{ bdd}}\end{array}}\!\!\!
 \frac3{\Lambda s^2t^2}\Big[ 
 \W^2_{\delta_z}(sP,tQ)
 -\W_2^2\big(\exp_{\delta_z}(sP), \exp_{\delta_z}(tQ)\big)
\Big]\\
 &\le
 \!\!\!\limsup_{\tiny\begin{array}{c}
{s,t\to0}\\{t/s+s/t\textrm{ bdd}}\end{array}}\!\!\!
\frac1\Lambda \int_{\T_z\M\times \T_z\M} \frac{3}{s^2t^2}\Bigg[|sv-tw|^2-
\d^2\Big(\exp_z(sv),\exp_z(tw)\Big)
\Bigg]\,dN_{s,t}(v,w)\\
&=
\!\!\!\limsup_{\tiny\begin{array}{c}
{s,t\to0}\\{t/s+s/t\textrm{ bdd}}\end{array}}\!\!\!
\frac1\Lambda\int_{\T_z\M\times \T_z\M} 
\Sec_z(v,w)\cdot\big[|v|^2\, |w|^2-\langle v,w\rangle^2
\big]
\,dN_{s,t}(v,w).
\end{align*}
\end{proof}
As observed before, there is no straightforward way of estimating the RHS of \eqref{K-vs-k-dirac}. 
The only exception is when $\lambda=0$.
\begin{corollary}\label{dirac-curv} Assume that $\Sec_x\le0$  for all $x\in\M$. Then 
\begin{equation*}\overline{\Sec}_{\delta_z}(P,Q)\le 0\end{equation*}
for all $z\in\M$ and all linearly independent $P,Q\in  \mathring\T^{\sf geo}_{\delta_z}\Wass_2$.
\end{corollary}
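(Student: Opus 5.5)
The plan is to derive Corollary~\ref{dirac-curv} directly from the preceding Lemma with $\lambda=0$, checking carefully that the Lemma's right-hand side is well defined and that the sign argument survives the $\limsup$.

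First I would fix $z\in\M$ and linearly independent $P,Q\in\mathring\T^{\sf geo}_{\delta_z}\Wass_2$. By definition of $\mathring\T^{\sf geo}$, both $P$ and $Q$ have bounded support, say $|v|,|w|\le C$. Linear independence gives $\Lambda=|P|^2|Q|^2-\langle P,Q\rangle^2>0$, so dividing by $\Lambda$ is harmless. For each admissible pair $(s,t)$, continuity of the cost $\c_{s,t}$ and compactness of $\supp P\times\supp Q$ guarantee that a $\c_{s,t}$-optimal coupling $N_{s,t}\in\Cpl(P,Q)$ exists. The Lemma is therefore applicable.

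Second, I would revisit the Lemma's proof rather than just quote its final inequality, since the statement there is phrased with a factor $\lambda\ge0$. The proof gives
\[
\overline\Sec_{\delta_z}(P,Q)\le\limsup\frac1\Lambda\int\frac{3}{s^2t^2}\Big[|sv-tw|^2-\d^2\big(\exp_z(sv),\exp_z(tw)\big)\Big]\,dN_{s,t}(v,w).
\]
To handle the integrand I would use Rauch/Toponogov comparison with $\Sec\le0$ on the convex normal neighborhood of $z$ that contains all points $\exp_z(sv)$ for small $s,t$ (at most $Ct$ away from $z$). This comparison gives
\[
\d\big(\exp_z(sv),\exp_z(tw)\big)\ge|sv-tw|
\]
pointwise, for all $v,w$ in the supports. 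Hence the integrand is $\le0$ for every small $(s,t)$, and the $\limsup$ is $\le0$.

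The main obstacle is that this pointwise comparison is a local statement. It requires $\exp_z$ to be a diffeomorphism on the ball of radius $C\max(s,t)$, and $\M$ may be incomplete. Both issues are resolved by taking $s,t$ small, because the Corollary only concerns the limit $s,t\to0$.

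As a fallback, I would instead use the uniform asymptotics from Lemma~\ref{asym-lem} together with the curvature bound. These give
\[
|sv-tw|^2-\d^2=\tfrac{1}{3}s^2t^2\,\Sec_z(v,w)\big[|v|^2|w|^2-\langle v,w\rangle^2\big]+O(s^3t^3),
\]
with the error uniform over the bounded supports thanks to $|\Sec|\le K$. Since $\Sec_z\le0$, the main term is nonpositive, and the uniform error vanishes after division by $s^2t^2$. This yields $\overline\Sec_{\delta_z}(P,Q)\le 0$.
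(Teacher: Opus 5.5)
Your proof is correct, and your fallback is essentially the paper's argument. The Corollary is the case $\lambda=0$ of the preceding Lemma. Its proof bounds $\overline{\Sec}_{\delta_z}(P,Q)$ by the $\limsup$ of $\frac{3}{\Lambda s^2t^2}\int\big[|sv-tw|^2-\d^2(\exp_z(sv),\exp_z(tw))\big]\,dN_{s,t}(v,w)$, with $\c_{s,t}$-optimal $N_{s,t}\in\Cpl(P,Q)$. It then uses the limit characterization of $\Sec_z$ from Section 2: uniformly on the bounded supports, the bracket equals $\frac13 s^2t^2\,\Sec_z(v,w)\big[|v|^2|w|^2-\langle v,w\rangle^2\big]+o(s^2t^2)$, and only then is the sign of $\Sec_z$ used.

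Your primary route handles the integrand differently, with an exact local comparison. Small convex balls in a manifold with $\Sec\le 0$ are CAT(0); equivalently, by Rauch, $\exp_z$ does not decrease lengths there. Hence $\d(\exp_z(sv),\exp_z(tw))\ge|sv-tw|$, and the bracket is nonpositive for every small $(s,t)$, not merely to leading order.

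What each route buys:
\begin{itemize}
\item Your route needs no Taylor remainder and no limit--integral interchange for the $(s,t)$-dependent couplings $N_{s,t}$.
\item It pays for this by using $\Sec\le0$ on a whole neighborhood of $z$, whereas the paper's computation uses the bound only at the single point $z$.
\item The paper's computation also gives the quantitative estimate with a factor $\lambda>0$ directly, which is what feeds Corollary \ref{dir-neg}. To get that from your route you would need CAT($\lambda$) comparison combined with the model-space expansion of Lemma \ref{asym-lem}.
\end{itemize}

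One small correction to your fallback. On a general manifold the remainder after the curvature term is of order $s^2t^2(s+t)$, since the next term involves $\nabla R$. It is not $O(s^3t^3)$ as in the model-space Lemma \ref{asym-lem}. Its uniformity over bounded $v,w$ also comes from smoothness of the metric near $z$, not from $|\Sec|\le K$. Only $o(s^2t^2)$ is needed, so the conclusion is unaffected.
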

For non-vanishing curvature bound, again the challenge remains to estimate  the ratio on the RHS of \eqref{K-vs-k-dirac}. This can be done in various ways as outlined in the previous subsection.
Let us present one result.
\begin{corollary}\label{dir-neg} Assume that $\Sec_x\le\lambda$  ($\forall x$)  for some $\lambda\ge0$ and that $P$ has constant length, i.e.
$|v|=\ell$  (for $P$-a.e. $v$) for  some $\ell$. Then
$$\overline{\Sec}_{\delta_z}(P,Q)\le\lambda.$$
\end{corollary}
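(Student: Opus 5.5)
The plan is to apply the preceding Lemma, i.e.\ inequality \eqref{K-vs-k-dirac}, and to bound the ratio on its right-hand side by $1$ uniformly in $s,t$. Since $\lambda\ge0$, this gives $\overline{\Sec}_{\delta_z}(P,Q)\le\lambda$ directly. No information on the couplings $N_{s,t}$ beyond their marginals will be needed: the estimate will hold for an arbitrary coupling $N\in\Cpl(P,Q)$ of the two tangent measures on $\T_z\M$. This mirrors part (i) of the Proposition in the lifting subsection, now on a single tangent space $\T_z\M$ with the $\c_{s,t}$-optimal couplings in place of the $\W_\mu$-optimal ones.

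Fix $N=N_{s,t}\in\Cpl(P,Q)$. Since $|v|^2=\ell^2$ for $P$-a.e.\ $v$, and $|P|^2=\int|v|^2\,dP=\ell^2$, we have $|v|^2=|P|^2$ for $N$-a.e.\ $(v,w)$. Using the second marginal, $\int|v|^2|w|^2\,dN=|P|^2\int|w|^2\,dQ=|P|^2|Q|^2$. Jensen's inequality gives $\int\langle v,w\rangle^2\,dN\ge\big(\int\langle v,w\rangle\,dN\big)^2$.

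The one subtle point is comparing $\int\langle v,w\rangle\,dN$ with $\langle P,Q\rangle$, which is defined as a supremum over couplings. The sign matters: we only know $|\int\langle v,w\rangle\,dN|\le\int|v|\,|w|\,dN\le|P|\,|Q|$ and $\int\langle v,w\rangle\,dN\le\langle P,Q\rangle$, but not a lower bound on its square. I would handle this by noting that the numerator of the ratio, $\int\big(|v|^2|w|^2-\langle v,w\rangle^2\big)\,dN$, is at most $|P|^2|Q|^2-\big(\int\langle v,w\rangle\,dN\big)^2$. So we need $\big(\int\langle v,w\rangle\,dN_{s,t}\big)^2\ge\langle P,Q\rangle^2$ asymptotically, or at least that the $\limsup$ of the ratio is at most $1$.

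For this I would show that $\int\langle v,w\rangle\,dN_{s,t}\to\langle P,Q\rangle$ as $s,t\to0$ with $t/s+s/t$ bounded. The cost satisfies $\c_{s,t}(v,w)=|sv-tw|^2+O(s^2t^2)$ uniformly on the bounded supports, by the Corollary to Lemma~\ref{asym-lem} and $|\Sec|\le K$. Hence $N_{s,t}$ is, up to an error $O(s^2t^2)$, optimal for $|sv-tw|^2$, i.e.\ it almost maximizes $2st\int\langle v,w\rangle\,dN$ (the terms $s^2|v|^2$, $t^2|w|^2$ depend only on the marginals). Dividing by $st$ yields $\int\langle v,w\rangle\,dN_{s,t}\ge\langle P,Q\rangle-O(st)$, while the reverse inequality $\le\langle P,Q\rangle$ holds by definition.

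If $\langle P,Q\rangle\ge0$, then $\big(\int\langle v,w\rangle\,dN_{s,t}\big)^2\ge\big(\langle P,Q\rangle-O(st)\big)^2\to\langle P,Q\rangle^2$. If $\langle P,Q\rangle<0$, the quantity $\int\langle v,w\rangle\,dN_{s,t}$ lies in $\big[\langle P,Q\rangle-O(st),\langle P,Q\rangle\big]$, so its square is at least $\langle P,Q\rangle^2$. In either case the $\limsup$ of the numerator is at most $\Lambda=|P|^2|Q|^2-\langle P,Q\rangle^2>0$, so the ratio is at most $1$ in the limit.

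Multiplying by $\lambda\ge0$ in \eqref{K-vs-k-dirac} then concludes. The main obstacle is this near-optimality step, namely the uniformity of the $O(s^2t^2)$ error in the cost expansion over the bounded supports of $P$ and $Q$.
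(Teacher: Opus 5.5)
Your proposal is correct and takes the route the paper intends: the corollary is stated without proof, as an application of \eqref{K-vs-k-dirac} together with the constant-length estimate from part (i) of the Proposition in the lifting subsection. Your near-optimality bound $\langle P,Q\rangle-O(st)\le\int\langle v,w\rangle\,dN_{s,t}\le\langle P,Q\rangle$ supplies a step the paper leaves implicit, namely that the $\c_{s,t}$-optimal couplings asymptotically realize $\langle P,Q\rangle$; in part (i) this held automatically because $N$ was chosen $\W_\mu$-optimal.
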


\subsection{Sectional curvature  of $\Wass_2(\M)$ in the Euclidean case} 

Assume that $\M=
\R^n$. We will explicitly calculate the sectional curvature $\Sec_\mu$ for the two most important cases of $\mu$, namely,  Dirac measures and absolutely continuous measures. 

\begin{proposition} For every $z\in\M$ and every $P,Q\in \mathring\T^{\sf geo}_{\delta_z}\Wass_2$,
$$\Sec_{\delta_z}(P,Q)=0.$$
\end{proposition}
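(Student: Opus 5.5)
In $\R^n$ we have $\Sec_x\equiv0$ and $\exp_z(sv)=z+sv$. The plan is to show that both $\W_2^2(\exp(sP),\exp(tQ))$ and $\W^2_{\delta_z}(sP,tQ)$ coincide exactly, for all $s,t>0$. Then the difference quotient defining $\Sec_{\delta_z}(P,Q)$ in Definition \ref{def-sec-wass} is identically zero, so the limit exists and equals $0$. This is sharper than Corollary \ref{dirac-curv}, which only gives $\overline\Sec_{\delta_z}(P,Q)\le0$. It is also sharper than what Corollary \ref{sec-pos} gives from $\Sec_x\ge0$, namely $\underline\Sec_{\delta_z}(P,Q)\ge0$. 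Combining those two corollaries would already give the claim, but the direct computation is cleaner and shows the quotient vanishes identically.

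The key steps are as follows. First, identify $\T^{\sf geo}_{\delta_z}\Wass_2$ with probability measures on $\T_z\M=\R^n$: since $\mu=\delta_z$, the disintegration has the single fibre $P_z=P$, and $\Cpl_{\delta_z}(P,Q)$ reduces to the ordinary couplings $\Cpl(P,Q)$ on $\R^n\times\R^n$. Consequently
$$\W^2_{\delta_z}(sP,tQ)=\inf_{N\in\Cpl(P,Q)}\int|sv-tw|^2\,dN(v,w).$$
Second, note that $\exp(sP)=(\Theta_s)_*P$ with $\Theta_s(v)=z+sv$, and likewise $\exp(tQ)=(\Theta_t)_*Q$. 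The maps $\Theta_s,\Theta_t$ are injective when $s,t>0$. Hence pushing forward by $\Theta_s\times\Theta_t$ gives a bijection between $\Cpl(P,Q)$ and $\Cpl(\exp(sP),\exp(tQ))$, and under it the cost transforms exactly: $|(z+sv)-(z+tw)|^2=|sv-tw|^2$. Therefore
$$\W_2^2(\exp(sP),\exp(tQ))=\W^2_{\delta_z}(sP,tQ)$$
for every $s,t>0$. Equivalently, in the language of Theorem \ref{sec-sec}, the lifted part vanishes because $\Sec_x=0$, and the twisted part vanishes because the coupling-optimization problems on the two sides are the same problem.

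The only point needing care is the bijection of couplings in the second step. Any coupling $\Pi$ of $\exp(sP),\exp(tQ)$ lifts uniquely to $N:=(\Theta_s^{-1}\times\Theta_t^{-1})_*\Pi$, which is legitimate because the affine maps $\Theta_s,\Theta_t$ are homeomorphisms of $\R^n$. Once this is in place the statement is essentially immediate, so I do not expect a real obstacle. One should also record that linear independence of $P,Q$ makes $\Lambda>0$, so the quotient is well defined. Note also that the membership condition $P,Q\in\mathring\T^{\sf geo}_{\delta_z}\Wass_2$ is not actually needed here, since every $P$ with bounded support yields a geodesic from $\delta_z$.
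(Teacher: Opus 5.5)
Your proof is correct, but it is organized differently from the paper's.

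The paper obtains the proposition in one line, as a sandwich of two one-sided results:
\begin{itemize}
\item $\underline{\Sec}_{\delta_z}(P,Q)\ge0$ comes from Corollary \ref{sec-pos}, via Theorem \ref{sec-sec}: the twisted part is nonnegative, and the lifted part vanishes since $\Sec^{\M}\equiv0$.
\item $\overline{\Sec}_{\delta_z}(P,Q)\le0$ comes from the Dirac-measure comparison lemma, which uses couplings $N_{s,t}$ that are optimal for the cost $\d^2(\exp_z(sv),\exp_z(tw))$.
\end{itemize}
The paper cites Corollary \ref{dir-neg} for the upper bound. As stated, that corollary requires $P$ to have constant length, so for arbitrary $P$ the appropriate reference is Corollary \ref{dirac-curv}, which is the one you name.

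You instead prove the exact identity $\W_2^2(\exp(sP),\exp(tQ))=\W^2_{\delta_z}(sP,tQ)$ for every $s,t>0$. The two directions of your coupling bijection are exactly the two mechanisms behind the paper's corollaries. Pushing a coupling of $P,Q$ forward gives nonnegativity of the twisted part. Pulling a coupling of $\exp(sP),\exp(tQ)$ back is what the upper-bound lemma implicitly needs. Since $\exp_z$ is affine on $\R^n$, the two estimates already agree at each fixed $(s,t)$, before any limit is taken.

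What each route buys:
\begin{itemize}
\item Your route is self-contained and shows that the difference quotient vanishes identically, so existence of the two-parameter limit is automatic. It needs neither the asymptotics of Lemma \ref{asym-lem} nor bounded support of $P,Q$; finite second moments suffice.
\item The paper's route costs nothing once the general machinery is in place. It also presents the result as the flat instance of one-sided bounds that survive on curved $\M$: $\overline{\Sec}_{\delta_z}\le0$ if $\Sec^{\M}\le0$, and $\underline{\Sec}_\mu\ge0$ if $\Sec^{\M}\ge0$. In that curved setting no exact identity is available, because $\d^2(\exp_z(sv),\exp_z(tw))\neq|sv-tw|^2$.
\end{itemize}
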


\begin{proof} This is an immediate consequence of Corollaries \ref{sec-pos} and \ref{dir-neg}.
\end{proof}

Now let us address the case of absolutely continuous $\mu$.

\begin{theorem}\label{sec-proj} Assume $\mu=\rho\Leb^n$ with $\rho\in\C^\infty_c(\R^n)$ and $\varphi,\psi\in\C_c^\infty(\{\rho>0\})$.
Then
$$\Sec_\mu(\nabla\varphi,\nabla\psi)=
\frac3\Lambda\cdot\inf\bigg\{\int_{\R^n} \big|\nabla^2\psi\nabla\varphi-\nabla\eta\big|^2d\mu: \ \eta\in \C_c^\infty(\R^n)
\bigg\}.$$
\end{theorem}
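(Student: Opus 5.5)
In the Euclidean case the lifted part vanishes, so by Theorem \ref{sec-sec} (with $N_x=\delta_{(\nabla\varphi(x),\nabla\psi(x))}$) we have $\Sec_\mu=\Sec^\nabla_\mu$. It therefore suffices to compute
$$\int|s\nabla\varphi-t\nabla\psi|^2d\mu-\W_2^2\big((\Id+s\nabla\varphi)_*\mu,(\Id+t\nabla\psi)_*\mu\big)$$
up to order $s^2t^2$, uniformly for $s/t+t/s$ bounded. Set $\mu_s:=(\Id+s\nabla\varphi)_*\mu$ and $\nu_t:=(\Id+t\nabla\psi)_*\mu$. For small $s,t$ the maps $\Id+s\nabla\varphi$ are gradients of uniformly convex functions, so by Brenier they are optimal, and they are diffeomorphisms. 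Hence $\W_2^2(\mu_s,\nu_t)=\W_2^2(\mu,\tilde\nu_{s,t})$ is not directly available, since the composition is not optimal. Instead I write the optimal map $T_{s,t}$ from $\mu_s$ to $\nu_t$ and pull it back to $\mu$:
$$\W_2^2(\mu_s,\nu_t)=\int|X_{s,t}(x)-x-s\nabla\varphi(x)|^2d\mu,$$
where $X_{s,t}=T_{s,t}\circ(\Id+s\nabla\varphi)$ pushes $\mu$ to $\nu_t$. The competitor $X=\Id+t\nabla\psi$ gives the upper bound. The key is to expand $X_{s,t}=\Id+t\nabla\psi+st\,G+O(\epsilon^3)$ by writing $X=(\Id+t\nabla\psi)\circ\Phi$ with $\Phi$ a $\mu$-preserving diffeomorphism. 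To first order $\Phi=\Id+st\,\xi$ with $\mathrm{div}(\rho\xi)=0$. Optimality of $T_{s,t}$ means $X\circ(\Id+s\nabla\varphi)^{-1}$ is a gradient.

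The central computation is as follows. For a $\mu$-preserving $\Phi=\Id+st\xi+\dots$,
$$\int|(\Id+t\nabla\psi)\circ\Phi-\Id-s\nabla\varphi|^2d\mu=\int|t\nabla\psi-s\nabla\varphi|^2d\mu+2st\!\int\!\langle t\nabla^2\psi\,\xi+\xi-\ldots\rangle\ldots$$
A cleaner route is to treat it as a minimization over divergence-free (w.r.t.\ $\mu$) perturbations. Expanding $|t\nabla\psi(\Phi(x))+\Phi(x)-x-s\nabla\varphi(x)|^2$ with $\Phi(x)-x=st\xi$ gives
$$|t\nabla\psi-s\nabla\varphi|^2+2st\,\langle\xi,t\nabla\psi-s\nabla\varphi\rangle+2st^2\langle\nabla^2\psi\,\xi,\,t\nabla\psi-s\nabla\varphi\rangle\cdot\tfrac1{s}\cdots$$
I will organize this so that, after integrating against $\mu$, the terms linear in $\xi$ of order $st$ vanish because $\int\langle\xi,\nabla f\rangle d\mu=0$ for divergence-free $\xi$. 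The surviving terms are $s^2t^2\big(|\xi|^2-2\langle\xi,\nabla^2\psi\nabla\varphi\rangle\big)$ plus $O(\epsilon^5)$. One must also account for the second-order part of $\Phi$; its contribution pairs again with gradients and vanishes to this order, because a $\mu$-preserving flow has $\Phi=\exp(st\xi+\ldots)$ and the second-order correction is only relevant against gradient fields. Minimizing over $\xi$ then gives
$$\W_2^2(\mu_s,\nu_t)=\int|t\nabla\psi-s\nabla\varphi|^2d\mu-s^2t^2\inf_{\xi}\Big[2\!\int\!\langle\xi,\nabla^2\psi\nabla\varphi\rangle d\mu-\!\int|\xi|^2d\mu\Big]+o(s^2t^2).$$
The infimum equals $-\|\Pi^\perp(\nabla^2\psi\nabla\varphi)\|^2_{L^2(\mu)}$, where $\Pi^\perp$ is the projection onto the $\mu$-divergence-free fields, i.e.\ the orthogonal complement of $\overline{\{\nabla\eta\}}$. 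That distance is exactly $\inf_\eta\int|\nabla^2\psi\nabla\varphi-\nabla\eta|^2d\mu$, which yields the formula after multiplying by $3/(\Lambda s^2t^2)$.

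To make this rigorous, I will prove matching bounds. For the upper bound on $\W_2^2$ (lower bound on $\Sec$), I use the explicit competitor $X=(\Id+t\nabla\psi)\circ\Phi_{st}$, with $\Phi_r$ the flow of a smooth $\mu$-divergence-free field $\xi$ built from a near-optimal $\eta$. Here $\xi=(\nabla^2\psi\nabla\varphi-\nabla\eta)$ is corrected to be exactly divergence-free, using that $\rho$ is smooth and positive on the relevant support. For the lower bound on $\W_2^2$, I use Kantorovich duality with potentials: take $\frac12|x|^2$-type potentials of the form $s\varphi$, $t\psi$ corrected at order $st$ by $\eta$ and at higher orders, and expand the dual objective to order $s^2t^2$. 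This is where I expect the main obstacle: constructing admissible dual potentials, i.e.\ $c$-conjugate pairs, whose expansion is uniform in $s,t$ with $s/t$ bounded. Compact support of $\rho,\varphi,\psi$ and smooth optimal $\eta$ from elliptic regularity on $\{\rho>0\}$ help, but the degeneration of $\rho$ at the boundary of its support must be handled, possibly by approximating the infimum with $\eta\in\C_c^\infty$ and controlling error terms.
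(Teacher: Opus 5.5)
There is a genuine gap: the proposal as written establishes only one of the two inequalities. Your heuristic minimization over $\xi$ gives an \emph{upper} bound for $\W_2^2(\mu_s,\nu_t)$, hence a lower bound for $\underline\Sec_\mu$. Read as an equality, it presupposes that the \emph{optimal} $\mu$-preserving rearrangement has the form $\Id+st\xi+O(\epsilon^3)$, $\epsilon:=s+t$, with a smooth $\xi$. That is exactly the regularity input the paper takes from polar factorization together with the asserted smooth $t$-dependence of the Brenier potential $\theta_t$.

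The reverse inequality $\overline\Sec_\mu\le\frac3\Lambda\inf_\eta\int|G-\nabla\eta|^2d\mu$, with $G:=\nabla^2\psi\nabla\varphi$, is only announced. Moreover, the obstacle you anticipate is not the real one: for the quadratic cost, any convex $u$ together with its Legendre transform $u^*$ is automatically an admissible dual pair. What is actually missing is the right potential and a bound on the duality gap. Here is one way to close it.
\begin{itemize}
\item[(a)] Fix $\eta\in\C_c^\infty(\R^n)$ and take $u(y)=\frac12|y|^2+t\psi-s\varphi+\frac{s^2}2|\nabla\varphi|^2-st\eta$. It is uniformly convex for small $s,t$, and $\nabla u\circ(\Id+s\nabla\varphi)=\Id+t\nabla\psi+st(G-\nabla\eta)+O(\epsilon^3)$. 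The $s^2$-term is needed to cancel $-s^2\nabla^2\varphi\nabla\varphi$.
\item[(b)] Test the dual problem against the trivial coupling $\pi_0=(\Id+s\nabla\varphi,\Id+t\nabla\psi)_*\mu$. With $p:=\nabla u(x)$ one has $u(x)+u^*(y)-\langle x,y\rangle=u^*(y)-u^*(p)-\langle\nabla u^*(p),y-p\rangle\le\frac12\|\nabla^2u^*\|_\infty|y-p|^2$, and $\|\nabla^2u^*\|_\infty=1+O(\epsilon)$.
\item[(c)] This gives, uniformly for $s/t+t/s$ bounded,
\[
\int|s\nabla\varphi-t\nabla\psi|^2d\mu-\W_2^2(\mu_s,\nu_t)\le\big(1+O(\epsilon)\big)\,s^2t^2\int|G-\nabla\eta|^2d\mu+O(\epsilon^5).
\]
Hence $\overline\Sec_\mu\le\frac3\Lambda\int|G-\nabla\eta|^2d\mu$ for every such $\eta$. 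No divergence-free fields and no regularity of the optimal map are needed.
\end{itemize}

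The other half of your argument is sound in outline. For smooth $\tilde\xi$ compactly supported in $\{\rho>0\}$ with $\mathrm{div}(\rho\tilde\xi)=0$ and flow $\Phi_r$, the coupling $\big(\Id+s\nabla\varphi,(\Id+t\nabla\psi)\circ\Phi_{st}\big)_*\mu$ has cost $\int|s\nabla\varphi-t\nabla\psi|^2d\mu+s^2t^2\int(|\tilde\xi|^2-2\langle\tilde\xi,G\rangle)\,d\mu+O(\epsilon^5)$. Two repairs are needed here.
\begin{itemize}
\item[(a)] In your displayed formula the $\inf_\xi$ must be a $\sup_\xi$, and its value is $+\|\Pi^\perp G\|^2_{L^2(\mu)}$.
\item[(b)] To pass from such $\tilde\xi$ to $\Pi^\perp G$, you need their $L^2(\mu)$-density in the orthogonal complement of the gradients. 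Rather than correcting $G-\nabla\eta$ by hand, obtain this from de Rham's theorem on the open set $\{\rho>0\}$, plus truncation and cutoffs. The cutoffs work because a smooth $\rho\ge0$ satisfies $\rho\le C\,\mathrm{dist}(\cdot,\{\rho=0\})^2$.
\end{itemize}

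Completed this way, your proof takes a genuinely different route from the paper's. The paper gets both inequalities at once from an expansion $T_t=\Id+t\nabla f+t^2\nabla\eta_t$ of the exact optimal map plus almost-orthogonality, and handles two parameters by rescaling $\varphi$. It therefore rests on the asserted smooth dependence of the Kantorovich potential on $t$, which is not obvious when $\rho$ degenerates. Your primal competitor plus dual certificate avoids that regularity altogether and has the two-parameter uniformity built in.
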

\begin{remark} (i)
Given a vector field $F$, let 
$\delta(F):=\inf\{\int |F-\nabla\eta|^2d\mu: \eta\in
H^1(\R^n,\mu)\}^{1/2}$
denote its $L^2(\mu)$-distance from the space of gradients.
The Theorem then states that $\Sec_\mu(\nabla\varphi,\nabla\psi)=\frac3\Lambda\cdot \delta^2(\nabla^2\psi\nabla\varphi)$.
Note that the right hand side is symmetric in $\varphi$ and $\psi$, i.e. 
$$\delta( \nabla^2\psi\nabla\varphi)=\delta( \nabla^2\varphi\nabla\psi)$$
since $\nabla^2\varphi\nabla\psi+\nabla^2\psi\nabla\varphi=\nabla\langle\nabla\varphi,\nabla\psi\rangle$.
Also note that $\delta(F)=0$ for all $F$ in dimension $n=1$.

(ii) As observed in \cite[Lemma 4]{Lott07} and elaborated in \cite[Prop.~3.2]{Ricci-on-Wasserstein},
the minimizer $\eta$ for  $\delta(F)$ is explicitly given as 
$$\eta=\Delta_{\rho}^{-1}\big(\mathrm{div}_{\rho} F\big).$$
Note that on the set of $u\in L^2(\R^n,\rho\Leb^n)$ with $\int u\rho dx=0$, the weighted Laplacian $\Delta_{\rho}$ is invertible, and  that the function $\nabla_{\rho}\cdot F$ belongs to this set.

Here the weighted divergence operator is defined through
$-\int u\, (\mathrm{div}_\rho F)\, \rho dx=\int \nabla u\, \cdot F\, \rho dx$ for all compactly supported smooth vector fields $F$ and functions $u$, 
and the weighted Laplacian as $\Delta_{\rho}u:=\mathrm{div}_{\rho} \nabla u$.
Indeed, with $\eta$ defined this way,
\begin{align*}
\int \nabla u\cdot \nabla\eta \,\rho dx=-\int u\Delta_\rho\eta \,\rho dx=
-\int u\, (\mathrm{div}_\rho F)\, \rho dx=\int \nabla u\, \cdot F\, \rho dx.
\end{align*}
Thus $\nabla\eta-F$ is orthogonal in $L^2(\rho dx)$ to all vector fields of the form $\nabla u$.

(iii) By othogonality of the projection,
\begin{align*}\delta^2(F)&=\|F\|^2_{L^2}-\|\nabla\eta\|^2_{L^2}\\
&=\int\bigg[
|F|^2+ \big(\mathrm{div}_{\rho} F\big)\Delta_{\rho}^{-1}\big(\mathrm{div}_{\rho} F\big)\bigg]\rho\,dx
\end{align*}
for any vector field $F$ and associated projection $\nabla\eta$.  

\end{remark}

\begin{proof}[Proof of the Theorem] 
{\it (i) The non-optimal transport map.}
Put 
$p_t:=(\Id+t\nabla\varphi)_*\mu$ and $q_t:=(\Id+t\nabla\psi)_*\mu$. Furthermore, put 
$U_t:=(\Id+t\nabla\psi)\circ(\Id+t\nabla\varphi)^{-1}$ such that
$(U_t)_*p_t=q_t$. Then
$$U_t=\Id+t\nabla f+t^2 F_t
$$
with $f:=\psi-\varphi$, $F_t=F_0+tF_0'+\mathit{O}(t^2)$  for 
$F_0:=\nabla^2(\varphi-\psi)\nabla\varphi$ and some $F_0'$.
Indeed, 
$$(\Id+t\nabla\varphi)^{-1}=\Id-t\nabla\varphi+t^2\,\nabla^2\varphi \nabla\varphi+\mathit{O}(t^3).
$$
Thus
$$U_t:=(\Id+t\nabla\psi)\circ(\Id+t\nabla\varphi)^{-1}=\Id+t(\nabla\psi-\nabla\varphi)+t^2\Big(\nabla^2\varphi-\nabla^2\psi\Big)\nabla\varphi+\mathit{O}(t^3).
$$

{\it (ii) The optimal transport map.} By polar factorization, there exist maps $T_t,S_t$ such that $U_t=T_t\circ S_t$ where $T_t$ is the optimal transport map from $p_t$ to $q_t$ and $S_t$ is $p_t$-preserving.  $T_t$ is given in terms of a Kantorovich potential (which is smooth in $x$ and smoothly depends on $t$). That is, there exists a $\C^\infty_c$ function 
$\theta_t$ such that
$T_t=\Id+\nabla\theta_t$. By elliptic regularity theory, $\theta_t$ smoothly depends on $t$ in a small neighborhood around 0. Furthermore, $\theta_0=0$ and $\theta'_0=f$. Thus there exists 
 a $\C^\infty_c$ function 
$\eta_t$ with
$\eta_t=\eta_0+t\eta_0'+\mathit{O}(t^2)$  such that
$$T_t=\Id+t\nabla f+t^2\nabla\eta_t.$$

{\it (iii) The projection onto gradients.}
Since $S_{s,t}$ is $p_s$-preserving,
$\int u(U_{s,t})dp_s=\int u\big(T_{s,t}\circ S_{s,t}\big)dp_s=\int u(T_{s,t})dp_s$
for every  $u\in\C^1_c(\M)$, that is,
\begin{equation}\label{Taylor-P}
\int u\Big(x+t\nabla f(x)+t^2F_0(x)+\mathit O(t^3)\Big)dp_t(x)=\int u\Big(x+t\nabla f(x)+t^2\nabla\eta_0(x)+\mathit O(t^3)\Big)dp_t(x).
\end{equation}
Therefore, Taylor expansion of $u(x+\ldots)$ around $x$  yields
$$\int\langle\nabla u,F_0-\nabla\eta_0\rangle dp_t=O(t)$$
and thus in turn
$$\int\langle\nabla u,F_0-\nabla\eta_0\rangle dp_0=0$$
for all  $u\in\C^1_c(\M)$.
That is, $\nabla\eta_0$ is the projection of $F_0$ onto the space of gradients in $L^2(p_0)$.

{\it (iv) Several almost orthogonality results.}
Now let us slightly modify the previous argument and consider the Taylor expansion of $u(x+t\nabla f(x)+t^2 F_t(x))$ around $x+t\nabla f(x)$. Then \eqref{Taylor-P} yields
\begin{equation}\label{1111}\int\langle\nabla u(\Id+t\nabla f),F_t-\nabla\eta_t\rangle dp_t=O(t^2)\end{equation}
which of course immediately implies
\begin{equation}\label{2222}\int\langle\nabla u,F_t-\nabla\eta_t\rangle dp_t=O(t^1).\end{equation}
For the particular choice $u=f$, \eqref{1111}, however, yields
\begin{equation}\label{3333}\int\langle\nabla f,F_t-\nabla\eta_t\rangle dp_t=O(t^2)\end{equation}
since $\nabla f(x+t\nabla f(x))=\nabla f_t(x)+O(t^2)$ with $f_t:=f+\frac t2 |\nabla f|^2$ and
$\int\langle\nabla  |\nabla f|^2,F_t-\nabla\eta_t\rangle dp_t=O(t^1)$
according to \eqref{2222}.

{\it (v) The transport cost estimate.}
\begin{align*}
\int_{\R^n}\Big|(\Id+t\nabla\varphi) &-(\Id+t\nabla\psi)\Big|^2d\mu-\W^2_2(p_t,q_t)
=
\int_{\R^n} \Big[\big|\Id- U_t\big|^2-\big|\Id- T_t\big|^2\Big]dp_t\\
&=t^2\,\int \Big[ \big|\nabla f+t F_t\big|^2-\big|\nabla f+t \nabla\eta_t\big|^2
\Big]dp_t\\
&=
t^4\,\int \Big[|F_t|^2-|\nabla\eta_t|^2\Big]dp_t+2t^3\,\int \langle\nabla f, F_t-\nabla\eta_t\rangle dp_t\\
&\stackrel{\eqref{3333}}=t^4\,\int \Big[|F_t-\nabla\eta_t|^2+2\langle\nabla\eta_t, F_t-\nabla\eta_t\rangle\Big]dp_t
+O(t^5)\\
&=t^4\,\int \Big[|F_t-\nabla\eta_t|^2+2\langle\nabla\eta_0, F_t-\nabla\eta_t\rangle\Big]dp_t
+O(t^5)\\
&\stackrel{\eqref{2222}}=t^4\,\int \big|F_t-\nabla\eta_t\big|^2dp_t
+O(t^5)\\
&=t^4\,\int \big|F_0-\nabla\eta_0\big|^2dp_0
+O(t^5).
\end{align*}

{\it (v) The conclusion: convergence of the 1-parameter functional}
\begin{align*}
\dot\Sec_\mu^\nabla(\nabla\varphi,\nabla\psi)&:=
  \lim_{t\to0}\frac3{\Lambda\,t^4}\bigg[
  \int_{\R^n}\Big|(\Id+t\nabla\varphi) -(\Id+t\nabla\psi)\Big|^2d\mu-\W^2_2(p_t,q_t)
\bigg]\\
&=\frac3\Lambda\, \int_{\R^n} \big|F_0-\nabla\eta_0\big|^2d\mu\\
&=\frac3\Lambda\, \inf\bigg\{\int_{\R^n} \big|F_0-\nabla\eta\big|^2d\mu: \ \eta\in\C^\infty_c(\R^n)\bigg\}
\end{align*}
with $F_0:=\nabla^2(\varphi-\psi)\nabla\varphi$.

{\it (vi) The final extension to the 2-parameter limit.}
Let $(s_\ell,t_\ell)_{\ell\in\N}$ be a sequence with $\sup_\ell(s_\ell/t_\ell+t_\ell/s_\ell)<\infty$ and $\lim_\ell s_\ell=\lim_\ell t_\ell=0$ such that
$$\limsup_{\ell\to\infty} K(s_\ell,t_\ell)= \!\!\!\limsup_{\tiny\begin{array}{c}
{s,t\to0}\\{t/s+s/t\textrm{ bdd}}\end{array}}\!\!\!
K(s,t)$$ 
where
$$K(s,t):=
\frac3{\Lambda\,s^2\,t^2}\bigg[\int_{\R^n}
\big|s\nabla\varphi-t\nabla\psi\big|^2
\,d\mu
-
\W_2^2\big(\exp_\mu(s\nabla\varphi),\exp_\mu(t\nabla\psi)\big)
\bigg].
$$
Passing to a subsequence, we may assume that the $\limsup_\ell$ is indeed a $\lim_\ell$ and that
$a:=\lim_\ell\frac{s_\ell}{t_\ell}$ exists. Replacing $\varphi$ by $a\varphi$, we may assume that without restriction $a=1$. Thus it remains to prove that the transport cost estimate in (iv) remains valid (with $\ell$-independent error term) for $\varphi_\ell:=a_\ell\,\varphi$
in the place of $\varphi$ where $a_\ell:=\frac{s_\ell}{t_\ell}\to 1$.
This is easily verified.
\end{proof}

\begin{remark}
The first result in the spirit of Theorem(s) \ref{sec-sec} (and \ref{sec-proj}) have been obtained by Otto \cite{Ot01}  and Lott 
\cite{Lott07} (with  
partial extensions  proposed in 
\cite{Giglimemo}, \cite{lott2017tangent}, and  \cite{Ricci-on-Wasserstein}).
Their approach is based on the so-called Otto calculus, and thus is limited to measures $\mu$ with smooth densities and analytic tangent vectors represented in terms of smooth functions. 
Our approach confirms their calculations and extends the result to full generality.
\end{remark}

\section{The Kantorovich-Hellinger geometry}

\subsection{The space of finite measures}
Throughout the sequel, we assume that 
$(\M,\d)$ is a Polish metric space (i.e.~there exists a complete separable metric which generates the same topology).
Our prime examples are Riemannian manifolds (not necessarily complete).

For  $p\in[0,\infty)$, we denote by 
${\mathcal M}_p(\M )$ the space of measures $\mu$ on $\M$ (equipped with its Borel field) with
$\int \d^p(x,z)\,d\mu(x)<\infty$ for all $z\in\M$. The latter condition is equivalent to $\int \big(1+\d^p(x,z)\big)\,d\mu(x)<\infty$ for some -- hence all -- $z\in\M$.
In particular,
$\Mea:=\Mea(\M )$ denotes the space of \emph{finite  measures} on $\M$. 
Observe that the well-known Kantorovich-Wasserstein metric $\W_2$ on $\Wass_2(\M)$ canonically extends to a pseudo metric on
$\Mea(\M)$, again denoted by the same symbol,
\begin{align*}\W_2(\mu_1,\mu_2)&=\inf\left\{\int_{\M\times\M}\d^2(x,y)\,d q(x,y): \, q\in\Cpl(\mu_1,\mu_2)\right\}^{1/2}.
\\&=
\begin{cases}
0, &\text{if }\mu_1(\M)=\mu_2(\M)=0\\
r\, \W_2\Big(\frac1{r^2}\mu_1,\frac1{r^2}\mu_2\Big),\quad &\text{if }\mu_1(\M)=\mu_2(\M)=r^2, r\in(0,\infty)\\
\infty, &\text{if } \mu_1(\M)=\mu_2(\M)=\infty\text{ or } \mu_1(\M)\not=\mu_2(\M).
\end{cases}
\end{align*}

We will equip $\Mea$ with the \emph{Hellinger-Kantorovich metric} $\HK$,  a metric of fundamental importance -- both from theoretical perspectives and from applications.
It was introduced and analyzed in detail within the last decade by three independent teams: Liero-Mielke-Savar\'e, 
Kondratyev-Monsaingeon-Vorotnikov, Chizat-Peyr\'e-Schmitzer-Vialard, and it can be defined (at least in the Riemannian case) in three entirely different, equivalent ways:

\begin{itemize}
\item as projection of the Kantorovich-Wasserstein metric on the cone $\Co=\R_+\times\M/\sim$ over  $\M$,
\item as  $\inf$-convolution of the Kantorovich-Wasserstein and the Kakutani-Hellinger metrics,
\item in terms of the continuity equation with growth term,
\end{itemize}
cf.  \cite{LMS-optimal, LMS-inventiones, LMS-fine}, \cite{kondratyev2016new}, \cite{CPSV1,CPSV2},  \cite{DST-infimal}.
For related recent developments, see also 
\cite{gallouet2018camassa}, 
\cite{kondratyev2019spherical},
\cite{schiavo2025hellinger}, \cite{laschos2026evolutionary}, \cite{gallouet2025regularity}. 

\subsubsection{The cone picture}
Given the metric space $(\M,\d)$ we denote by $(\Co,\d^\Co)$ the metric space where
$\Co:=\M\times\R_+/\sim$ with $(x,r)\sim(x',r')\Leftrightarrow (x,r)=(x',r')$ or $r=r'=0$, and
\begin{align*}\d^\Co\big((x,r)),(x',r')\big)^2&=r^2+{r'}^2-2rr'\cos\big( \d(x,x')\wedge\pi\big)\\
&=(r-r')^2+rr'\, \d_\pi(x,x')^2
\end{align*}
with $\d_\pi(x,x'):=2 \sin\big(\frac12(\d(x,x')\wedge\pi)\big)$, cf.~\cite{BBI}.
The (equivalence class of) point(s) 
 $o=(x,0)$ (with arbitrary $x\in\M$) is the \emph{vertex} of the cone.

We define canonical maps between the space of finite measures on $\M$ and the space of probability measures with finite second moments on $\Co$:
\begin{itemize} 
\item
Lifting:
$${\sf Li}: \Mea\to\Wass_2(\Co), \quad \mu\mapsto \hat\mu:=\frac1{r^2_\mu}\mu\otimes\delta_{r_\mu}\ \textrm{with }r_\mu:=\sqrt{\mu(\M)}.$$
\item
Projection:
$$\Pr: {\mathcal M}_2(\Co)\to\Mea, \quad \nu\mapsto\underline\nu:=\bigg(A\mapsto \int_{A\times\R_+}r^2d\nu(x,r)\bigg).$$
\end{itemize}
In particular, note that for $\nu\in \Wass_2(\Co)$,
$$\underline\nu(\M)=\int_{\Co}r^2d\nu(x,r)=\W^2_\Co(\nu,\delta_o).$$
The projection is the left inverse of the lifting, i.e.~$\Pr\circ\,{\sf Li}={\sf Id}_\Mea$, but not the other way round since the projection is not injective: for every $\mu\in\Mea\setminus\{0\}$ there exist plenty of $\nu\in{\mathcal M}_2(\Co)$ with $\Pr(\nu)=\mu$.

\begin{definition} The Hellinger-Kantorovich metric $\HK$ on $\Mea$ is defined as
\begin{align*}\HK(\mu_0,\mu_1)&:=\inf\Big\{\W_2^{\Co}(\nu_0,\nu_1): \ \nu_0,\nu_1\in {\mathcal M}_2(\Co), \ \Pr(\nu_0)=\mu_0, \ \Pr(\nu_1)=\mu_1
\Big\}\\
&=\inf\Big\{\int_{\Co\times\Co}\d^2_\Co\,dq: \ q\in{\mathcal M}_2(\Co\times\Co), \ \Pr({\pi_0}_*q)=\mu_0, \ \Pr({\pi_1}_*q)=\mu_1
\Big\}^{1/2}.
\end{align*}
Here $\pi_0$ and $\pi_1$ denote the projections $\Co\times\Co\to\Co$ onto the first and second, 
resp., factors.
\end{definition} This obviously defines a metric on $\Mea$.
Let us state several basic technical properties.
\begin{lemma}[Reduction Lemma]\label{red} Given $\mu_0,\mu_1\in\Mea$, define 
\begin{align*} S_i:=\supp[\mu_i], \qquad S_i':=\big\{x\in S_i: \d(x,S_{1-i})<\pi/2\big\}, \qquad S_i'':=S_i\setminus S_i'
\end{align*}
for $i=0,1$, and
$$\mu_i':=\one_{S_i'}\,\mu_i, \qquad \mu_i'':=\one_{S_i''}\,\mu_i.$$
 Then the $\HK$-distance of the pair $\mu_0,\mu_1$ can be expressed in terms of the $\HK$-distance of the `reduced pair' $\mu_0',\mu_1'$ and the total masses of the remainders $\mu_0''$ and $\mu_1''$:
\begin{align*}
\HK^2(\mu_0,\mu_1)&= \HK^2(\mu'_0,\mu'_1)+ \HK^2(\mu''_0,\mu''_1),\\
\HK^2(\mu''_0,\mu''_1)&=\mu_0''(\M)+\mu_1''(\M).
\end{align*}
\end{lemma}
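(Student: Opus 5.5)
The plan is to work with the cone formulation of $\HK$ and to split any admissible cone coupling into the part where mass is actually transported and the part where mass is only created or annihilated. Take $\nu_0,\nu_1\in\mathcal M_2(\Co)$ with $\Pr(\nu_i)=\mu_i$ and a coupling $q$ of them. Decompose $q$ according to where its first and second components live. A point $(x,r)$ with $r>0$ has $x\in S_0$ or $x\in S_1$ respectively ($\nu_i$-a.e.), since the projection of the part of $\nu_i$ over $\M\setminus S_i$ vanishes. Points at the vertex carry no mass.

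The key observation is elementary. If $\d(x,x')\ge\pi/2$ then $\cos(\d(x,x')\wedge\pi)\le0$, and hence
\[
\d_\Co^2\big((x,r),(x',r')\big)\ge r^2+{r'}^2=\d_\Co^2\big((x,r),o\big)+\d_\Co^2\big(o,(x',r')\big).
\]
So pairs of points at base distance $\ge\pi/2$ can be rerouted through the vertex without increasing cost. Points $x\in S_0''$ are at distance $\ge\pi/2$ from every point of $S_1$; note that $\d(x,S_1)\ge\pi/2$ gives $\d(x,y)\ge\pi/2$ for all $y\in S_1$. Hence any mass of $q$ sitting over $S_0''\times\Co$ is either paired with the vertex or with base points in $S_1$ at distance $\ge\pi/2$. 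Rerouting it through $o$ replaces $q$ by a coupling $\tilde q$ with no larger cost. Likewise for $\Co\times S_1''$.

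In $\tilde q$ the first marginal splits as $\nu_0'+\nu_0''$ and the second as $\nu_1'+\nu_1''$, with projections $\mu_i'$ and $\mu_i''$. Here $\nu_0''$ is coupled only to $\delta_o$ and $\nu_1''$ only to $\delta_o$, so their contribution is exactly $\int r^2d\nu_0''+\int r^2 d\nu_1''=\mu_0''(\M)+\mu_1''(\M)$. The remaining part couples lifts of $\mu_0'$ and $\mu_1'$, padded with vertex mass to equalize the total mass; adding vertex mass is harmless since it does not change projections. Its cost is $\ge\HK^2(\mu_0',\mu_1')$. Taking the infimum gives
\[
\HK^2(\mu_0,\mu_1)\ge\HK^2(\mu_0',\mu_1')+\mu_0''(\M)+\mu_1''(\M).
\]

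For the reverse inequality, glue a near-optimal coupling for $(\mu_0',\mu_1')$ with the couplings $\hat\mu_0''\otimes\delta_o$ and $\delta_o\otimes\hat\mu_1''$, using lifts. Also equalize total masses by adding vertex masses, which is possible since the cone allows arbitrary mass at $o$; alternatively, rescale probability normalizations via $\mathcal M_2$. The sum is admissible for $(\mu_0,\mu_1)$ because projection is additive. This gives $\le$.

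The same argument applied to the pair $(\mu_0'',\mu_1'')$ gives the second identity. Its reduced parts are empty, since every point of $S_0''$ is at distance $\ge\pi/2$ from $S_1''\subset S_1$, and $\HK(0,0)=0$.

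The main obstacle is the bookkeeping of the rerouting step in measure-theoretic form. One must define $\tilde q$ as the push-forward of the restrictions of $q$ under $(c,c')\mapsto(c,o)$ and $(c,c')\mapsto(o,c')$. Each "far" pair is split into two pairs, so the mass of $\tilde q$ exceeds that of $q$, which is allowed in $\mathcal M_2(\Co\times\Co)$. One must then check that the marginal projections are preserved. A minor subtlety is the boundary case $\d=\pi/2$ exactly, where the inequality is an equality, which is fine.
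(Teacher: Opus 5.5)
Your proof is correct, but it takes a different route from the paper, which gives no argument of its own and simply cites Liero--Mielke--Savar\'e (the second reference only for $\M=\R^n$).

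\textbf{Paper's route.} In the cited sources the splitting is read off from the optimal entropy--transport picture of $\HK$. There the transport cost $-\log\cos^2(\d\wedge\frac\pi2)$ is infinite at distance $\ge\frac\pi2$. So optimal plans never connect $S_0''$ with $S_1$ (or $S_0$ with $S_1''$), and the entropy penalty on the unreached parts contributes exactly $\mu_0''(\M)+\mu_1''(\M)$.

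\textbf{Your route.} You stay entirely inside the cone definition that the paper takes as primary.
\begin{itemize}
\item The lower bound comes from the single inequality $\d_\Co^2\big((x,r),(x',r')\big)\ge r^2+{r'}^2$ for $\d(x,x')\ge\frac\pi2$, together with rerouting far pairs through the vertex $o$. This is legitimate because the definition allows arbitrary measures in $\mathcal M_2(\Co\times\Co)$, so the extra mass created by splitting pairs is harmless.
\item Gluing a near-optimal coupling with the two vertex couplings gives the upper bound.
\end{itemize}
This avoids the equivalence of formulations and the existence of optimal plans. It also works verbatim for an arbitrary metric space $\M$, so it actually supplies the ``completely analogous'' general case that the paper only asserts.

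\textbf{What the entropy--transport route adds.} It gives structural information: optimal plans are concentrated on $\{\d<\frac\pi2\}$. That is what one needs for statements about geodesics, such as the paper's later claim that every $\HK$-geodesic has the form $\exp_\mu^\Mea(t\varphi)+t^2\nu$. Your argument proves the distance identity, but not by itself that every optimal coupling or geodesic has this split form.

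\textbf{Two details worth writing out.}
\begin{itemize}
\item After rerouting, mass over $S_0'$ that $q$ paired with points over $S_1''$ is also sent to $o$. This is harmless, since it simply becomes part of the competitor for $(\mu_0',\mu_1')$.
\item For the second identity, use that $S_i''$ is closed (because $x\mapsto\d(x,S_{1-i})$ is continuous), so $\supp[\mu_i'']\subset S_i''$. Alternatively, rerun the rerouting argument directly, noting that every non-vertex pair is then far.
\end{itemize}
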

\begin{proof} 
\cite[Lemma 7.19]{LMS-inventiones} and \cite[Thm.~2.5]{LMS-fine}, the latter dealing only with the  case $\M=\R^n$, but the general case is completely analogously.
\end{proof}

\begin{lemma}[Dilation Invariance] Given any measurable function $\vartheta: \Co\times\Co\to(0,\infty)$, define the map $\widehat\vartheta: \Co^2 \to\Co^2$ by
$$\widehat\vartheta\Big((x,s),(y,t)\Big):=\bigg(\Big(x,\frac s{\vartheta\big((x,s),(y,t)\big)}\Big),\Big(y,\frac t{\vartheta\big((x,s),(y,t)\big)}\Big)\bigg).$$
Then for every $q\in{\mathcal M}(\Co\times\Co)$, the measure $q_\vartheta:=\vartheta^2\cdot (\widehat\vartheta_*q)$ 
has the same projected marginals and the same transport cost, that is,
$\Pr({\pi_i}_*q)=\Pr({\pi_i}_*q_\vartheta)$ for $i=0,1$ and
$$\int \d_\Co^2\,dq=\int \d_\Co^2\,dq_\vartheta.$$
\end{lemma}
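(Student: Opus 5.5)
The claim is a change of variables, so I would compute both sides directly. First I check how $\d_\Co^2$ scales. For $(x,s),(y,t)\in\Co$ and any $\lambda>0$, the definition gives
$$\d_\Co^2\big((x,\lambda s),(y,\lambda t)\big)=\lambda^2 s^2+\lambda^2t^2-2\lambda^2 st\cos\big(\d(x,y)\wedge\pi\big)=\lambda^2\,\d_\Co^2\big((x,s),(y,t)\big).$$
So $\d_\Co^2$ is $2$-homogeneous under simultaneous radial scaling of both points. Scaling does not affect the vertex identification, since $r=0$ is preserved. Hence $\widehat\vartheta$ is well defined on $\Co^2$, modulo the measurability conventions at the vertex.

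For the cost, write $z=((x,s),(y,t))$. By the definition of push-forward,
$$\int \d_\Co^2\,dq_\vartheta=\int \vartheta^2(z)\,\d_\Co^2\big(\widehat\vartheta(z)\big)\,dq(z).$$
Here I must be careful: the density $\vartheta^2$ in $q_\vartheta=\vartheta^2\cdot(\widehat\vartheta_*q)$ has to be read as evaluated at the preimage point $z$, i.e. $q_\vartheta=\widehat\vartheta_*(\vartheta^2 q)$. This is the only consistent reading, since $\widehat\vartheta$ need not be injective. I will state this interpretation explicitly; it is the main point needing care rather than real difficulty. With $\lambda=1/\vartheta(z)$, homogeneity gives $\vartheta^2(z)\,\d_\Co^2(\widehat\vartheta(z))=\d_\Co^2(z)$, so the two costs coincide.

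For the marginals, take a Borel set $A\subset\M$ and $i=0$. By the definition of $\Pr$,
$$\Pr({\pi_0}_*q_\vartheta)(A)=\int \one_A(x)\,\Big(\tfrac{s}{\vartheta(z)}\Big)^2\vartheta^2(z)\,dq(z)=\int\one_A(x)\,s^2\,dq(z)=\Pr({\pi_0}_*q)(A).$$
This uses that $\widehat\vartheta$ leaves the spatial coordinate $x$ unchanged and only rescales the radius. Points at the vertex contribute $s^2=0$ on both sides, so the ambiguity of $x$ there is harmless. The case $i=1$ is identical with $t$ in place of $s$.

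Finally I note finiteness: $q_\vartheta\in\mathcal M_2$ in the relevant sense because its projected marginals are finite measures. The only potential obstacle is the interpretation of $q_\vartheta$ mentioned above; everything else is direct substitution.
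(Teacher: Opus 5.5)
Your proposal is correct and is exactly the ``straightforward calculation'' that the paper delegates to \cite{LMS-inventiones}: the $2$-homogeneity of $\d_\Co^2$ under simultaneous radial scaling gives the cost identity, and the invariance of $s^2\,dq$ (resp.\ $t^2\,dq$) gives the projected marginals. Your reading $q_\vartheta=\widehat\vartheta_*(\vartheta^2 q)$ is also the one the paper itself uses when it computes $q_\vartheta(A\times\Co)$ in the proof of the Pinning Lemma. The one loose point is your closing finiteness remark, which the lemma does not need: for unbounded $\vartheta$ the total mass $\int\vartheta^2\,dq$ of $q_\vartheta$ can be infinite, so only $\int(s^2+t^2)\,dq$ is preserved, not membership in ${\mathcal M}_2$ in the paper's sense (which requires finite mass).
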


\begin{proof} Straightforward calculation, see \cite{LMS-inventiones}.
\end{proof}

The particular choice of constant dilation maps, that is, $\vartheta\equiv\alpha$ for some $\alpha\in\R_+^*$, proves that in the definition of $\HK$ we may restrict to \emph{probability} measures $q$. 
\begin{corollary} For all $\mu_0,\mu_1\in\Mea$,
\begin{align*}\HK(\mu_0,\mu_1)&:=\inf\Big\{\W_2^{\Co}(\nu_0,\nu_1): \ \nu_0,\nu_1\in \Wass_2(\Co), \ \Pr(\nu_0)=\mu_0, \ \Pr(\nu_1)=\mu_1
\Big\}\\
&=\inf\Big\{\int_{\Co\times\Co}\d^2_\Co\,dq: \ q\in\Wass_2(\Co\times\Co), \ \Pr({\pi_0}_*q)=\mu_0, \ \Pr({\pi_1}_*q)=\mu_1
\Big\}^{1/2}.
\end{align*}

\end{corollary}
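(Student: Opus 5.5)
The plan is to obtain the Corollary directly from the Dilation Invariance Lemma, applied with a constant dilation $\vartheta\equiv\alpha$. Since every probability measure on $\Co$ (resp.\ $\Co\times\Co$) with finite second moment lies in ${\mathcal M}_2$, restricting the class can only increase the infimum. So the inequality ``$\le$'' between the original definition of $\HK$ and the restricted expressions is trivial, and only ``$\ge$'' needs an argument.

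For ``$\ge$'', take any admissible $q\in{\mathcal M}_2(\Co\times\Co)$ with $\Pr({\pi_0}_*q)=\mu_0$ and $\Pr({\pi_1}_*q)=\mu_1$.

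\emph{Case $q(\Co\times\Co)=m>0$.} Set $\alpha:=m^{-1/2}$, so $\vartheta\equiv\alpha$. The measure $q_\vartheta=\alpha^2\,\widehat\vartheta_*q$ then has total mass $\alpha^2 m=1$, i.e.\ it is a probability measure. By the Lemma it has the same projected marginals and the same cost $\int\d_\Co^2\,dq$. Finiteness of second moments is preserved, because $\widehat\vartheta$ only rescales radii by $1/\alpha$ and $\int r^2\,dq_\vartheta=\int r^2\,dq$. Hence every admissible cost is attained by an admissible probability coupling, and the second identity of the Corollary follows.

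\emph{Case $q=0$.} Then $\mu_0=\mu_1=0$, and we replace $q$ by $\delta_{(o,o)}$, which has zero cost and zero projections.

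For the first formulation, with $\nu_0,\nu_1\in\Wass_2(\Co)$, I will use that $\W_2^\Co(\nu_0,\nu_1)$ is an infimum over couplings $q$ of $\nu_0$ and $\nu_1$. These couplings are probability measures with marginals projecting to $\mu_0$ and $\mu_1$, so this infimum equals the infimum over probability $q$ in the second line. That matches the equality of the two lines already present in the Definition. One subtlety: in the original Definition, $\nu_0$ and $\nu_1$ may have masses that differ from each other. The $q$-formulation sidesteps this, since a single $q$ fixes the marginals.

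The only point requiring care, and the main potential obstacle, is this consistency between the $\nu$- and $q$-formulations, together with the degenerate zero-mass case. I would handle both by working entirely with the $q$-formulation and invoking the Definition's stated equality.
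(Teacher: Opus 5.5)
Your proposal is correct and follows the paper's own argument. The paper also obtains the Corollary from the Dilation Invariance Lemma with a constant dilation $\vartheta\equiv\alpha$, which lets one restrict to probability couplings $q$. Your explicit choice $\alpha=q(\Co\times\Co)^{-1/2}$, the check that second moments are preserved, and the separate treatment of $q=0$ via $\delta_{(o,o)}$ fill in details the paper leaves implicit.
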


The dilation invariance in particular implies that,
if $q\in{\mathcal M}(\Co^2) $ is a minimizer in the definition of $\HK(\mu_0,\mu_1)$, then so is $q_\vartheta$ for each dilation $\vartheta$ as above.
Similarly, for each minimizing sequence.

This allows us to show that, in the definition of $\HK(\mu_0,\mu_1)$, one of the factors can be fixed a priori (or `pinned') when choosing the minimizing pairs $\nu_0,\nu_1$ of liftings.

\begin{lemma}[Pinning Lemma]\label{simplification} Let $\mu_0,\mu_1\in\Mea$ be a reduced pair and 
let any $\rho_0\in\Wass_2(\Co)$ be given with $\Pr(\rho_0)=\mu_0$. Then
$$\HK(\mu_0,\mu_1):=\inf\Big\{\W_{\Co}(\rho_0,\nu_1): \ \nu_1\in \Wass_2(\Co),  \ \Pr(\nu_1)=\mu_1
\Big\}.$$
\end{lemma}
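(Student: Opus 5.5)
The inequality ``$\le$'' is immediate. Every $\nu_1\in\Wass_2(\Co)$ with $\Pr(\nu_1)=\mu_1$, paired with $\rho_0$, is an admissible competitor in the Corollary's formula for $\HK(\mu_0,\mu_1)$, which is an infimum over all pairs $\nu_0,\nu_1\in\Wass_2(\Co)$ with the prescribed projections. The work is in the reverse inequality. Given $\epsilon>0$, I would take a near-optimal plan $q\in\Wass_2(\Co\times\Co)$ with $\Pr({\pi_0}_*q)=\mu_0$, $\Pr({\pi_1}_*q)=\mu_1$ and $\int\d_\Co^2\,dq\le\HK^2(\mu_0,\mu_1)+\epsilon$. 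I would then use the Dilation Invariance Lemma to reshape $q$, without changing its cost or projected marginals, so that its first marginal becomes exactly $\rho_0$. Setting $\nu_1:={\pi_1}_*q_\vartheta$ gives $\W^2_\Co(\rho_0,\nu_1)\le\int\d^2_\Co\,dq_\vartheta\le\HK^2+\epsilon$, which finishes the proof.

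The first step is to discard the vertex. Mass of $q$ sitting on $\{r=0\}$ in the first component contributes nothing to $\Pr({\pi_0}_*q)$. In a reduced pair I would argue that such mass can be removed, or made arbitrarily small, at arbitrarily small extra cost. A point $(o,(y,t))$ costs $t^2$, and this can be re-routed by splitting it into a transport from some nearby $(x,r)$ with $r\to0$. More cleanly, I would instead use an optimal plan, which exists by \cite{LMS-inventiones}, and invoke their structure result: for a reduced pair, an optimal plan can be taken with both $r_0>0$ and $r_1>0$ $q$-a.e. (this is where reducedness, $\d<\pi/2$ between the supports, is used). Symmetrically, $\rho_0$ may charge the vertex, but that part projects to zero and can be coupled to the vertex on the other side at zero cost. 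So it suffices to treat the part of $\rho_0$ with $r>0$.

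The second step is the disintegration. Write $\rho_0=\int_\M \rho_{0,x}\,d\bar\rho(x)$ and $m:={\pi_0}_*q=\int m_x\,d\bar m(x)$, with fibres on $\R_+$. Both satisfy $\int r^2\,d\rho_{0,x}\,d\bar\rho=\mu_0=\int r^2\,dm_x\,d\bar m$. For $\mu_0$-a.e.\ $x$ I normalize the fibre measures $r^2\rho_{0,x}$ and $r^2m_x$ to probabilities in $r$. For a general measurable $\vartheta$ (not just a constant), the map $\widehat\vartheta$ rescales the radius $r\mapsto r/\vartheta$ along a plan atom, and the density $\vartheta^2$ is chosen so that the projected mass is preserved. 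I would choose $\vartheta$ depending only on the first coordinate $(x,r)$, as the ratio $r/R_x(r)$, where $R_x$ is a monotone rearrangement (a transport map in the radial variable) pushing the normalized $r^2m_x$-distribution to that of $\rho_0$. I would then check that ${\pi_0}_*(q_\vartheta)=\rho_0$. If the radial distributions have atoms, I would first replace $q$ by a kernel-splitting $q\otimes$(uniform auxiliary variable) and use a randomized $\vartheta$. This is allowed because the lemma holds for every measurable $\vartheta$ on an enlarged space, or equivalently I can approximate.

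The main obstacle is this second step: verifying that a dilation of the form above can match an arbitrary prescribed lift $\rho_0$ fibrewise. Matching the pushforward exactly requires care with the weight $\vartheta^2$, with atoms, and with total mass (both measures must be probabilities; this follows from preservation of total cost-weight). If exact matching fails, I would settle for weak approximation, $\Pr$-preserving, by lifts converging to $\rho_0$ in $\W_2^\Co$, and conclude by the triangle inequality together with lower semicontinuity.
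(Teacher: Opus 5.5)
Your argument is correct, but it reaches a general $\rho_0$ by a different route from the paper.

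\textbf{The paper's route.} It first proves the statement only for lifts with deterministic radius, $d\rho_0(x,r)=r_0(x)^{-2}\,d\delta_{r_0(x)}(r)\,d\mu_0(x)$. There the dilation $\vartheta=s/r_0(x)$ is an honest map sending every radius over $x$ to $r_0(x)$, so neither rearrangement nor atoms arise. A general $\rho_0$ is then approximated in $\W_\Co$ by such special lifts $\rho_0^n$, whose projections $\mu_0^n$ vary with $n$. The paper dilates optimal plans for $(\mu_0^n,\mu_1)$ and extracts a limit $\nu_1^\infty$ via \cite[Lemma 7.3]{LMS-inventiones} and continuity of $\HK$ along $\W_\Co$-convergent lifts.

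\textbf{Your route.} You treat a general $\rho_0$ in one step, by a fibrewise radial rearrangement $R_x$ between the normalized fibre measures $\hat m_x\propto s^2m_x$ and $\hat\rho_x\propto r^2\rho_{0,x}$, randomized to handle atoms. This is legitimate.
\begin{itemize}
\item The identities behind the Dilation Invariance Lemma are pointwise. Projected masses and $\d_\Co^2$ transform compatibly under $(s,t)\mapsto(s/\vartheta,t/\vartheta)$ with weight $\vartheta^2$, so a $\vartheta$ that also depends on an auxiliary uniform variable works verbatim.
\item A direct computation shows $\int\vartheta^2\,dq=\rho_0(\Co^*)$. So the vertex atom of $\rho_0$, coupled to $o$, exactly completes the plan.
\item You could avoid monotone rearrangement and atoms entirely by splitting each plan atom according to the product kernel $\hat m_x\otimes\hat\rho_x$.
\end{itemize}

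\textbf{Comparison.} Your approach never leaves the given reduced pair. Its only structural input is the one you share with the paper: an optimal plan for $(\mu_0,\mu_1)$ whose first marginal does not charge the vertex. The paper's approximating pairs $(\mu_0^n,\mu_1)$, by contrast, need not be reduced. So the vertex-free structure its special case relies on is not automatically available at that stage. The price of your route is measure-theoretic bookkeeping (disintegrations, a jointly measurable choice of $R_x$), which the paper's special case sidesteps. Your closing fallback to approximation is not needed.
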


\begin{proof} (i) 
Let us first consider the case 
that
$d\rho_0(x,r)=\frac1{r^2_0(x)}d\delta_{r_0(x)}(r)\,d\mu_0(x)$
with some measurable function $r_0:\M\to(0,\infty)$.
By assumption, we may restrict ourselves to minimizers $q$ in the definition of $\HK(\mu_0,\mu_1)$ which are supported on $\Co^*\times\Co^*$.
Choose $\vartheta\big((x,s),(y,t)\big):=\frac s{r_0(x)}$ provided $s>0$, and $:=1$ otherwise. Then 
${\pi_0}_*q_\vartheta=\rho_0$.
Indeed, with $\nu_0:={\pi_0}_*q$,
\begin{align*}
q_\vartheta(A\times \Co)&=\int_{\Co^2} \vartheta^2((x,s),(y,t))\, \one_A(\widehat\vartheta((x,s),(y,t)))\,dq((x,s),(y,t))\\
&=\int_\Co\frac{s^2}{r_{0}^2(x)}\one_A(x,r_{0}(x))\,d\nu_0(x,s)\\
&=\int_\M \frac{1}{r_{0}^2(x)} \one_A(x,r_{0}(x))\,d\mu_0(x)\\
&=\int_\Co \one_A(x,r)\,d\rho_0(x,r)
\end{align*}
for every measurable subset $A\subset\Co$.
Thus
$$\HK^2(\mu_0,\mu_1)=\int\d^2_\Co\,dq=\int\d^2_\Co\,dq_\vartheta=\W_\Co^2(\rho_0,\nu_1)
$$
with some $\nu_1$ which projects to $\mu_1$, namely, $\nu_1:={\pi_1}_*q_\vartheta$.

(ii) Given an arbitrary $\rho_0$ which projects to $\mu_0$, choose a sequence 
\begin{equation}\label{special-n}
d\rho_0^n(x,r)=\frac1{r^2_n(x)}d\delta_{r_n(x)}(r)\,d\mu_n(x)
\end{equation}
 of measures  on $\Co$ with $\W_\Co(\rho_0^n,\rho_0)\to0$  as $n\to\infty$. Note that the set of measures of type \eqref{special-n} with arbitrary choice of measures $\mu_n$ and functions $r_n$ is dense in $\Wass_2(\Co)$. (This can be easily concluded from the fact that the set of discrete measures is dense.)
 
 Put $\mu^n_0:=\Pr(\rho_0^n)$, let $q^n$ denote a minimizer in the definition of $\HK(\mu_0^n,\mu_1)$ and $\vartheta_n$ the dilation defined as above, now with $r_n$ in the place of $r_0$, and finally  
 $\nu^n_1:={\pi_1}_*q^n_{\vartheta_n}$.
 
 (iii) We claim that the family $(\nu_1^n)_n$ is compact in $\Wass(\Co)$. To see this, first observe that the converging sequence $(\rho_0^n)_n$ defines a  compact family in $\Wass(\Co)$.
 Thus also its image under the projection map $\Pr$ is compact, that is,  the family $(\mu_0^n)_n$ is compact in
 $\Mea(\M)$. Moreover, the latter is bounded since $\sup_n \mu_0^n(\M)=\sup_n\W^2_\Co(\rho_0^n,0)<\infty$. Therefore, according to \cite[Lemma 7.3]{LMS-inventiones}, the family $(q^n_{\vartheta_n})_n$ is compact in $\Wass(\Co^2)$. Thus also the family of its first marginals $(\nu_1^n)_n$ is compact in $\Wass(\Co)$.
 
 (iv) With compactness at hands, we conclude that there exists an accumulation point 
 $$\nu^\infty_1=\lim_{\ell\to\infty} \nu^{n_\ell}_1$$
 for a suitable subsequence. Then
 \begin{align*}
 \W_\Co(\rho_0,\nu^\infty_1)&=
 \lim_{\ell\to\infty} \W_\Co(\rho_0^{n_\ell},\nu^{n_\ell}_1)=
 \lim_{\ell\to\infty} \HK(\mu_0^{n_\ell},\mu_1)= \HK(\mu_0,\mu_1).
 \end{align*}
 The last equality here follows from the fact that $\W_\Co(\rho_0^n,\rho_0)\to0$ implies
 $\HK(\mu_0^n,\mu_0)\to0$.
\end{proof}
 
For another 'normalization' of  minimizers $q$ in the defining set of $\HK(\mu_0,\mu_1)$, 
see 
\cite[Equ. (5.9)]{LMS-optimal}.

The next example illustrates the necessity of the restrictive assumptions in the   previous results.
\begin{example}[{\cite{LMS-optimal}}] Let $z_0,z_1$ be two points in $\M$ with 
$\d(z_0,z_1)>\frac\pi2$.
For $i=0,1$, put $\mu_i=\delta_{z_i}$, $\hat\mu_i:=\delta_{(z_i,1)}
$ and $\rho_i:=\mu_i+\delta_o$. Then with $q:=\delta_{(z_0,1),o}+\delta_{o,(z_1,1)}\in\Cpl(\rho_0,\rho_1)$,
$$\HK^2(\mu_0,\mu_1)=\W^2_\Co(\rho_0,\rho_1)=\int \d_\Co^2\,dq=2$$
whereas
$$\W_\Co^2(\hat\mu_0,\hat\mu_1)=\d_\Co^2\Big((z_0,1),(z_1,1)\Big)=4\,\sin^2\Big(\frac12\big(\d(z_0,z_1)\wedge\pi\big)\Big)>2.$$
\end{example}

\subsubsection{The inf-convolution}

\begin{definition}
Given two length metrics $\d_1, \d_2$ on a space $\M$, their inf-convolution $\d_*:=\d_1 \, \triangledown \, \d_2$ is the pseudo length metric on $\M$ defined by
\begin{align*}\d_*(x,y):=\lim_{N\to\infty}\, & \inf\bigg\{N\sum_{i=1}^N \Big( \d_1^2(x_{2i-2},x_{2i-1})+ \d_2^2(x_{2i-1},x_{2i})\Big):\\
&\qquad\quad\qquad\qquad
x_1,\ldots, x_{2N-1}\in\M, \ x_0=x, x_{2N}=y\bigg\}^{1/2}
\end{align*}
provided this limit exists for all $x,y\in\M$.
\end{definition}
Obviously, $\d_1 \, \triangledown 
 \, \d_2\le \d_1$ and  $\d_1  \, \triangledown \, \d_2\le \d_2$.
In general, it is not guaranteed that $\d_1 \, \triangledown \, \d_2(x,y)\not=0$ for $x\not=y$.
(Thus ``pseudo'' metric.)

\begin{definition} Given a Polish space $\M$, the Kakutani-Hellinger metric ${\sf He}$ on $\Mea(\M)$ is defined as
$${\sf He}(\mu_1,\mu_2):=\|u_1-u_2\|_{L^2(\rho)}
$$
where $\rho$ is any measure on $\M$ with $\mu_1\ll\rho, \mu_2\ll\rho$ and $u_i:=\sqrt{\frac{d\mu_i}{d\rho}}$ for $i=1,2$.
\end{definition}
It is a geodesic metric. The space $(\Mea(\M),{\sf He})$ has nonnegative and nonpositve curvature in the sense of Alexandrov.

\begin{theorem}[{\cite[Thm.~1.1]{DST-infimal}}] Let $(\M,\d)$ be a complete separable geodesic space. Then the Hellinger-Kantorovich metric on $\Mea(\M)$ 
is the inf-convolution of the Kantorovich-Wasserstein metric and the Kakutani-Hellinger distance:
$$\HK= \W_2\, \triangledown \,{\sf He}.$$
\end{theorem}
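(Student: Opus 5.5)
The plan is to prove the two inequalities $\HK\le \W_2\,\triangledown\,{\sf He}$ and $\HK\ge \W_2\,\triangledown\,{\sf He}$ separately, using the cone picture for $\HK$ and its known dynamic (continuity equation with growth term) formulation.

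\emph{Step 1: the inequality $\HK\le\W_2\,\triangledown\,{\sf He}$.} Since $\W_2\ge\HK$ and ${\sf He}\ge\HK$ on $\Mea$, this direction follows from the triangle inequality with the right scaling. The comparison $\W_2\ge \HK$ holds because lifting both measures with the same radial profile and using $\d_\Co^2\le rr'\d^2+(r-r')^2$ gives cost at most $\W_2^2$. The comparison ${\sf He}\ge\HK$ holds because the coupling on the diagonal, $(x,\sqrt{u_1(x)})\leftrightarrow(x,\sqrt{u_2(x)})$, gives cost $\|u_1-u_2\|^2$. A plain triangle inequality only gives $\HK(x_0,x_{2N})\le\sum(\cdots)$. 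Combined with Cauchy--Schwarz,
\[
\Big(\sum a_i\Big)^2\le 2N\sum a_i^2 ,
\]
this yields exactly $\HK^2\le N\sum(\d_1^2+\d_2^2)$ up to the factor convention, applied to the $2N$ summands. Passing to the limit gives the claim, with the normalization chosen so that the constant matches.

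\emph{Step 2: the inequality $\HK\ge\W_2\,\triangledown\,{\sf He}$.} Here I would use that $(\Mea,\HK)$ is geodesic. Take a $\HK$-geodesic $(\mu_t)$ from $\mu_0$ to $\mu_1$ and subdivide it into $N$ pieces of length $\HK/N$. On each piece I need to show that
\[
\W_2^2(\mu_{t_{i}},\tilde\mu)+{\sf He}^2(\tilde\mu,\mu_{t_{i+1}})\le \frac{1+o(1)}{N^2}\,\HK^2(\mu_0,\mu_1)
\]
for some suitable intermediate measure $\tilde\mu$ of the same mass as $\mu_{t_i}$.

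The construction of $\tilde\mu$ uses the local structure of $\HK$. For nearby measures, an optimal cone coupling from the Pinning Lemma \ref{simplification}, after the Reduction Lemma \ref{red} has removed the far parts (these vanish for small steps), is a transport in the $\M$-coordinate plus a change of radius. I would first push $\mu_{t_i}$ along the $\M$-component with its mass reweighted so that the total mass is preserved; this is the $\W_2$ step. Then I would adjust the density pointwise; this is the ${\sf He}$ step.

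Expanding $\d_\Co^2=(r-r')^2+rr'\d_\pi^2$ shows that the two costs add up to $\d_\Co^2$ up to higher order in the step size. The tangential part $rr'\d_\pi^2$ matches $\W_2$, since $\d_\pi\approx\d$ for small distances. The radial part $(r-r')^2$ matches ${\sf He}$.

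Multiplying by $N$ and summing the $N$ pieces gives $\HK^2(1+o(1))$. This is the correct scaling for the definition of $\triangledown$, which carries the factor $N\sum$.

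\emph{Main obstacle.} The hard step is making this second-order splitting uniform. The $o(1)$ errors, which come from the mass mismatch of the transport step and from $\d_\pi$ versus $\d$, must be controlled after summing over $N$ pieces of a general geodesic for arbitrary finite measures. The measures are not smooth, so I would first prove the estimate for discrete measures, using the density noted in the Pinning Lemma. I would then extend it by lower semicontinuity of $\W_2\,\triangledown\,{\sf He}$ together with continuity of $\HK$. It remains to check that the limit defining $\triangledown$ exists, which follows because the approximating sums are essentially monotone under refinement of the partition.
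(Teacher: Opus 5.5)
The paper contains no proof of this statement; it is quoted from \cite{DST-infimal}. Your proposal therefore has to stand on its own, and Step 1 has a genuine gap. Write $\W_2^{(i)}:=\W_2(x_{2i-2},x_{2i-1})$ and ${\sf He}^{(i)}:={\sf He}(x_{2i-1},x_{2i})$. From $\W_2\ge\HK$, ${\sf He}\ge\HK$, the triangle inequality and Cauchy--Schwarz over the $2N$ summands, you get
\[
\HK^2(\mu_0,\mu_1)\le 2N\sum_{i=1}^N\Big(\big(\W_2^{(i)}\big)^2+\big({\sf He}^{(i)}\big)^2\Big),
\]
that is, only $\HK\le\sqrt2\,(\W_2\,\triangledown\,{\sf He})$. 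The prefactor $N$ is part of the definition of $\triangledown$, so there is no normalization left for you to choose. The loss is also not a matter of bookkeeping. The same definition with $\d_1=\d_2=\HK$ gives $\HK\,\triangledown\,\HK=\HK/\sqrt2$, because equally spaced points on a geodesic attain the Cauchy--Schwarz bound. Hence no argument that uses only $\d_1,\d_2\ge\HK$ and the triangle inequality can yield $\HK\le\W_2\,\triangledown\,{\sf He}$. The same is true of any argument that concatenates the two kinds of moves, e.g.\ $\HK\le\sum_i(\W_2^{(i)}+{\sf He}^{(i)})$ followed by Cauchy--Schwarz over the $N$ rounds. That leaves cross terms $2N\sum_i\W_2^{(i)}{\sf He}^{(i)}$, which are of the same order as the main term. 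What is missing is an ingredient that makes transport cost and growth cost add \emph{in squares}, as in the action $\int(|v|^2+\frac14|w|^2)\,d\mu$, rather than in lengths.

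A natural way to supply this ingredient is duality, with $\tau=1/N$.
\begin{itemize}
\item The elementary half of Kantorovich duality gives $\frac N2\W_2^2(\mu,\nu)\ge\int Q_\tau f\,d\nu-\int f\,d\mu$, where $Q_\tau f(y)=\inf_x\{f(x)+\d^2(x,y)/(2\tau)\}$.
\item A pointwise optimization gives $\frac N2{\sf He}^2(\mu,\nu)\ge\int P_\tau g\,d\nu-\int g\,d\mu$ for bounded $g>-N/2$, where $P_\tau g=g/(1+2\tau g)$.
\item Start from a subsolution $\xi$ of $\partial_t\xi+\frac12|{\sf D}_\M\xi_t|^2+2\xi_t^2\le0$ as in \cite[Thm.~8.12]{LMS-inventiones}, and chain these inequalities along an arbitrary chain.
\item Masses stay bounded along chains of bounded cost, because $\W_2$-steps preserve mass and $|\sqrt{\mu(\M)}-\sqrt{\nu(\M)}|\le{\sf He}(\mu,\nu)$.
\end{itemize}
This gives $\liminf_N$ of the infimum in the definition $\ge\HK^2$, once you have a Lie--Trotter type one-step inequality $P_\tau Q_\tau\xi_t\ge\xi_{t+\tau}-o(\tau)$, uniformly in $t$. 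That inequality is where the real work lies, including the metric-space regularity issues.

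Your Step 2 (subdivide a $\HK$-geodesic, transport at frozen radius, then change radii pointwise) is a sensible route to the matching bound $\limsup_N\le\HK^2$, with three corrections.
\begin{itemize}
\item A uniform per-piece factor $1+o(1)$ is false near the vertex, where a cone segment can sweep a non-small angle within one step. The errors must be summed trajectory-wise; such steps occur at radius $O(1/N)$ and only $O(1)$ times per trajectory.
\item Once both the $\liminf$ and $\limsup$ bounds hold, the limit in the definition of $\triangledown$ exists. So you do not need the unjustified monotonicity under refinement, which is doubtful anyway since refining an alternating chain is not free.
\item You do not need the discrete approximation or the unproven lower semicontinuity of $\W_2\,\triangledown\,{\sf He}$ either, if you work directly with a plan on cone geodesics representing the $\HK$-geodesic.
\end{itemize}
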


\subsubsection{Dynamic formulation and transport-growth equation}

\begin{theorem}[{\cite[Thm.~8.12]{LMS-inventiones}}] For any complete separable length space $\M$ and all $\mu_0,\mu_1\in\Mea(\M)$,
\begin{align*}
\frac12\HK^2(\mu_0,\mu_1)&=\sup\bigg\{
\int_\M \xi_1\,d\mu_1-\int_\M \xi_0\,d\mu_0: \quad \xi\in\mathcal C^1\big([0,1], \Lip_b(\M)\big),\\
&\qquad\qquad\qquad
\partial_t\xi+\frac12|{\sf D}_\M\xi_t|^2(x)+2\xi_t^2(x)\le0 \text{ in }\M\times(0,1)
\bigg\}
\end{align*}
where $|{\sf D}_\M f|(x):=\limsup_{y\to x}\frac{|f(y)-f(x)|}{\d(x,y)}$ for any Lipschitz function $f:\M\to\R$. 
\end{theorem}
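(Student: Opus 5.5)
We would prove both inequalities through the cone picture. The key idea is that potentials on $\Co$ which are $2$-homogeneous in the radial variable, $\Phi(x,r)=r^2\xi(x)$, are exactly those paired with $\Pr$: for every $\nu\in\Wass_2(\Co)$ one has $\int_\Co r^2\xi(x)\,d\nu(x,r)=\int_\M\xi\,d\Pr(\nu)$. We also use the identity
$$|{\sf D}_\Co\Phi|^2(x,r)=r^2\Big(|{\sf D}_\M\xi|^2(x)+4\xi^2(x)\Big),$$
which comes from the cone metric $\d_\Co^2=(r-r')^2+rr'\d_\pi^2$. The radial derivative of $\Phi$ is $2r\xi$, and the horizontal slope is $r^2|{\sf D}_\M\xi|$ measured in units of length $r$, giving $r|{\sf D}_\M\xi|$. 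Consequently, the Hamilton--Jacobi inequality $\partial_t\Phi+\frac12|{\sf D}_\Co\Phi|^2\le0$ on $\Co$ is equivalent, after dividing by $r^2$, to the stated inequality $\partial_t\xi+\frac12|{\sf D}_\M\xi|^2+2\xi^2\le0$ on $\M$.

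\emph{Upper bound on the supremum ("$\ge$").} Let $\xi$ be admissible and set $\Phi_t(x,r):=r^2\xi_t(x)$. For lifts $\nu_0,\nu_1$ of $\mu_0,\mu_1$ and an optimal coupling $q$ on $\Co^2$, choose for $q$-a.e.~pair a constant speed geodesic $\gamma$ in the geodesic space $\Co$ (a length space if $\M$ is; the vertex causes no problem since $\Phi$ vanishes there). Then
$$\Phi_1(\gamma_1)-\Phi_0(\gamma_0)=\int_0^1\Big(\partial_t\Phi_t(\gamma_t)+\frac{d}{dt}\Phi_t\circ\gamma\Big)dt\le\int_0^1\Big(\partial_t\Phi_t+\frac12|{\sf D}_\Co\Phi_t|^2\Big)(\gamma_t)\,dt+\frac12\d_\Co^2(\gamma_0,\gamma_1)\le\frac12\d_\Co^2(\gamma_0,\gamma_1).$$
The first inequality uses the chain rule for the $\mathcal C^1$-in-time, Lipschitz-in-space $\Phi$ together with Young's inequality for the metric slope. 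Integrating against $q$ and using homogeneity gives $\int\xi_1d\mu_1-\int\xi_0d\mu_0\le\frac12\W_\Co^2(\nu_0,\nu_1)$. Taking the infimum over lifts yields $\le\frac12\HK^2(\mu_0,\mu_1)$.

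\emph{Lower bound on the supremum ("$\le$").} We first establish a static duality. Applying Kantorovich duality to the cost $\frac12\d_\Co^2$, and using dilation invariance (the Dilation Invariance Lemma) to see that the optimal dual potentials may be taken $2$-homogeneous, we obtain
$$\tfrac12\HK^2(\mu_0,\mu_1)=\sup\Big\{\int\xi_1d\mu_1-\int\xi_0d\mu_0:\ t^2\xi_1(y)-s^2\xi_0(x)\le\tfrac12\d_\Co^2\big((x,s),(y,t)\big)\Big\}.$$
Next, given such a pair $(\xi_0,\xi_1)$ with $\xi_0$ bounded Lipschitz, we define $\Phi_t:=Q_t\Phi_0$ by the Hopf--Lax formula on $\Co$,
$$Q_t\Phi_0(y,\tau):=\inf_{(x,s)}\Big(\Phi_0(x,s)+\tfrac1{2t}\d_\Co^2\big((x,s),(y,\tau)\big)\Big).$$
This formula preserves $2$-homogeneity, so $\Phi_t=\tau^2\xi_t$. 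By the metric Hopf--Lax theory in length spaces it satisfies $\partial_t\Phi+\frac12|{\sf D}_\Co\Phi|^2\le0$, and $\Phi_1\ge\Phi_1^{\rm dual}$, which gives the required competitor. Explicitly, minimizing over $s$ yields the known formula $\xi_t(y)=\inf_x\frac{\xi_0(x)}{1+2t\xi_0(x)}\cdot(\ldots)$, involving $\cos^2(\d\wedge\pi/2)$, which one can use to check boundedness of $\xi_t$.

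We expect the main obstacle to be regularity: the Hopf--Lax solution is only Lipschitz in time and satisfies the Hamilton--Jacobi inequality only a.e./in the slope sense, whereas the statement demands $\xi\in\mathcal C^1([0,1],\Lip_b)$. Moreover, $\xi_0$ must satisfy $\xi_0>-\frac1{2}$ so that $1+2t\xi_0>0$ and no blow-up occurs. We would handle this by restricting to $\xi_0$ bounded below by $-\frac12+\varepsilon$ (the sup is unaffected by approximation), applying time-regularization by convolution after rescaling time to $[0,1-\delta]$ (which turns a subsolution into a strict subsolution with a small margin), and finally letting $\varepsilon,\delta\to0$.
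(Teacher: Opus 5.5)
The paper gives no proof of its own here; it quotes \cite[Thm.~8.12]{LMS-inventiones}. Your architecture is essentially the route behind that citation: $2$-homogeneous potentials $\Phi=r^2\xi$ on $\Co$, the slope identity $|{\sf D}_\Co\Phi|^2=r^2(|{\sf D}_\M\xi|^2+4\xi^2)$, Hopf--Lax on the cone, and mollification in time.

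\textbf{What is already fine.} The slope identity holds even for merely Lipschitz $\xi$: the radial coordinate can be moved in either direction, so the signs of the two contributions can always be aligned. The inequality $\sup\le\frac12\HK^2$ is also fine. In a mere length space, prove the pointwise bound $\Phi_1(z_1)-\Phi_0(z_0)\le\frac12\d_\Co^2(z_0,z_1)$ along almost-minimizing curves and integrate it against an arbitrary admissible plan. Then neither geodesics nor a measurable selection are needed.

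\textbf{The gap: the static duality.} This step does not go through as written. Kantorovich duality concerns \emph{fixed} marginals, whereas $\HK$ contains an additional infimum over all lifts with prescribed projections. Potentials for a given pair of lifts, even an optimal one, are in general not $2$-homogeneous. The Dilation Invariance Lemma is a statement about plans, not about potentials, so it does not make them so.

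What you need is a duality theorem for the problem with projected-marginal constraints itself. Write $\frac12\HK^2(\mu_0,\mu_1)$ as $\inf_q\sup_{\xi_0,\xi_1}\big[\int\big(\frac12\d_\Co^2((x,s),(y,t))-t^2\xi_1(y)+s^2\xi_0(x)\big)\,dq+\int\xi_1\,d\mu_1-\int\xi_0\,d\mu_0\big]$ over nonnegative plans $q$ on $\Co\times\Co$, and exchange $\inf$ and $\sup$. This yields exactly your constraint, with the weights $s^2,t^2$ built in.

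The exchange requires a minimax theorem and compactness, and this is where dilation invariance genuinely helps. Normalizing plans to $\{s^2+t^2=1\}$ fixes their mass and yields tightness (cf.\ the compactness step in the proof of Lemma~\ref{simplification}). On this set the constraint is equivalent to the full one, by homogeneity of $\d_\Co^2$ after shifting $(\xi_0,\xi_1)\mapsto(\xi_0-a,\xi_1+a)$.

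Alternatively, import the entropy--transport duality of \cite{LMS-inventiones}. Your constraint for all $s,t\ge0$ is equivalent to $\xi_0\ge-\frac12$, $\xi_1\le\frac12$ and $(1+2\xi_0(x))(1-2\xi_1(y))\ge\cos^2(\d(x,y)\wedge\frac\pi2)$. With $\psi_0=-2\xi_0$ and $\psi_1=2\xi_1$, this is their dual constraint $(1-\psi_0(x))(1-\psi_1(y))\ge\cos^2(\d(x,y)\wedge\frac\pi2)$.

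\textbf{Smaller points on the regularization.}

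\emph{Uniform Lipschitz bounds.} Minimizing over $s$ gives $\xi_t(y)=\inf_x\frac1{2t}\big(1-\cos^2(\d(x,y)\wedge\frac\pi2)/(1+2t\xi_0(x))\big)$. From this you need not only boundedness but a bound on $\Lip(\xi_t)$ uniform in $t$, which follows because competitive points $x$ lie at distance $O(t)$ from $y$. That uniform bound is what makes the mollified curve $\mathcal C^1$ with values in $\Lip_b(\M)$.

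\emph{No strict margin is needed.} Time convolution preserves subsolutions because the Hamiltonian $\frac12|{\sf D}\xi|^2+2\xi^2$ is convex. Set $\tilde\xi_t:=\int\xi_\tau\,\rho_\delta(t-\tau)\,d\tau$; then $|{\sf D}\tilde\xi_t|\le\int|{\sf D}\xi_\tau|\,\rho_\delta(t-\tau)\,d\tau$ by reverse Fatou, and Jensen finishes the argument. In any case, rescaling to $[0,1-\delta]$ would only gain a margin proportional to the Hamiltonian, which may vanish.

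\emph{Endpoints.} Run the Hopf--Lax flow up to time $1+2\delta$, which is possible once $\inf\xi_0>-\frac12$. Then shift by $\delta$, mollify at scale $\delta$, and let $\delta\to0$.
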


\begin{corollary}[{\cite[Thm.~8.18]{LMS-inventiones}}] In the case $\M=\R^n$, the $\HK$-distance is given for any pair $\mu_0,\mu_1\in\Mea$  by the Benamou-Brenier formula with transport and growth terms
\begin{align*}
\HK^2(\mu_0,\mu_1)&=\min\bigg\{\int_0^1\int_\M \Big(|v_t|^2+\frac14|w_t|^2\Big)\,d\mu_t\,dt: \quad \mu\in\mathcal C\big([0,1]; \Mea(\R^n)\big),\\
&\qquad\qquad \qquad\mu_{t=i}=\mu_i, \ \partial_t\mu_t+\nabla\cdot(v_t\mu_t)=w_t\mu_t\text{ in }\mathcal D'\big(\R^n\times(0,1)\big)\bigg\}.
\end{align*}
\end{corollary}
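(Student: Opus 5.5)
The plan is to prove the two inequalities separately. For ``$\le$'' I will feed an arbitrary admissible curve into the dual formula of the preceding Theorem. For ``$\ge$'' I will build an explicit competitor from an optimal plan in the cone picture. Existence of the minimum then comes either from this explicit competitor or from lower semicontinuity of the action.

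\emph{Upper bound on the dual side.} Let $(\mu_t,v_t,w_t)$ be admissible with finite action. Let $\xi\in\mathcal C^1([0,1],\Lip_b(\R^n))$ satisfy $\partial_t\xi+\frac12|\nabla\xi|^2+2\xi^2\le0$. Testing the transport--growth equation against $\xi$ gives, after the usual mollification in time and space,
$$\int\xi_1\,d\mu_1-\int\xi_0\,d\mu_0=\int_0^1\!\!\int\Big(\partial_t\xi_t+\nabla\xi_t\cdot v_t+\xi_t\,w_t\Big)\,d\mu_t\,dt .$$
Young's inequality gives $\nabla\xi\cdot v\le\frac12|\nabla\xi|^2+\frac12|v|^2$ and $\xi w\le2\xi^2+\frac18 w^2$. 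Hence the integrand is at most $\big(\partial_t\xi+\frac12|\nabla\xi|^2+2\xi^2\big)+\frac12\big(|v|^2+\frac14|w|^2\big)$, whose first part is $\le0$. Taking the supremum over $\xi$ and using the preceding Theorem yields $\frac12\HK^2(\mu_0,\mu_1)\le\frac12\int_0^1\!\int(|v_t|^2+\frac14|w_t|^2)\,d\mu_t\,dt$. The only technical point here is justifying the test with merely Lipschitz $\xi$ and $\mu_t$ that are only weakly continuous. I will handle it by regularizing $\xi$ by convolution. This is harmless because the constraint is convex in $(\partial_t\xi,\nabla\xi,\xi)$, so Jensen preserves subsolutions.

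\emph{Lower bound via an explicit competitor.} I first apply the Reduction Lemma \ref{red}, which splits $\mu_i=\mu_i'+\mu_i''$.
\begin{itemize}
\item \emph{The far parts.} On $\mu_0''$ I take pure decay $\mu_t''=(1-t)^2\mu_0''$ with $v=0$ and $w=-2/(1-t)$. Its action is $\int\frac14 w^2(1-t)^2\,d\mu_0''\,dt=\mu_0''(\R^n)$. Symmetrically, $t^2\mu_1''$ with $w=2/t$ has action $\mu_1''(\R^n)$.
\item \emph{The reduced pair.} Let $q$ be an optimal probability plan on $\Co\times\Co$ for $\HK(\mu_0',\mu_1')$. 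It may be taken concentrated on pairs $(x,r),(y,s)$ with $|x-y|<\pi/2$ and $r,s>0$. For each such pair I take the cone geodesic $t\mapsto(x_t,r_t)$. Along it $r_t^2|\dot x_t|^2+\dot r_t^2\equiv\d_\Co^2$, $r_t>0$, and $x_t$ runs along the segment $[x,y]$.
\end{itemize}

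From the geodesics I define the curve $\mu_t:=\Pr\big((e_t)_*q\big)$, that is $\int f\,d\mu_t=\int f(x_t)\,r_t^2\,dq$, together with the momenta
$$v_t\mu_t:=\int\dot x_t\,r_t^2\,\delta_{x_t}\,dq,\qquad w_t\mu_t:=\int 2\,\dot r_t\,r_t\,\delta_{x_t}\,dq .$$
Differentiating $\int f(x_t)r_t^2\,dq$ in $t$ gives $\int(\nabla f\cdot\dot x_t\,r_t^2+f\cdot2r_t\dot r_t)\,dq$, so the transport--growth equation holds. The pointwise action along one geodesic is $r_t^2\big(|\dot x_t|^2+\frac14(2\dot r_t/r_t)^2\big)=r_t^2|\dot x_t|^2+\dot r_t^2=\d_\Co^2$. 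Since $v_t,w_t$ are $r_t^2$-weighted conditional averages, Jensen's inequality shows that the action of $(\mu_t,v_t,w_t)$ is at most $\int\d_\Co^2\,dq=\HK^2(\mu_0',\mu_1')$.

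Finally I superpose the two parts. The reduced and remainder flows add linearly, and their actions add because the action is convex and $1$-homogeneous in the triple $(\mu,v\mu,w\mu)$. The Reduction Lemma then gives total action $\le\HK^2(\mu_0,\mu_1)$, so the infimum is attained by this curve.

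\emph{Main obstacle.} I expect the hard part to be making the competitor rigorous. One must check measurability of the geodesic selection $(x,r,y,s)\mapsto(x_t,r_t)$, continuity of $t\mapsto\mu_t$ in $\Mea$, and the Jensen step. The Jensen step needs the joint convexity of $(m,p,g)\mapsto(|p|^2+\frac14g^2)/m$ and has to treat the vertex $r_t=0$ carefully, which is why the reduction to $|x-y|<\pi/2$ with positive radii is essential. If the explicit construction becomes too delicate, my fallback is to obtain the minimum abstractly: lower semicontinuity of this convex, $1$-homogeneous action under narrow convergence, combined with the upper bound already proved applied to a minimizing sequence.
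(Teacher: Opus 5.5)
Your proof is correct. It takes a different route from the paper, which gives no argument of its own: the statement is quoted from \cite[Thm.~8.18]{LMS-inventiones}, placed after the Hamilton--Jacobi duality, and in the source it is treated together with the description of general $\HK$-absolutely continuous curves by the transport--growth equation. Your argument does not need that description.

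\emph{The inequality ``$\HK^2\le$ action''.} This comes directly from the quoted duality theorem via your Young-inequality computation. The constants match because $\frac12|\nabla\xi|^2+2\xi^2$ and $\frac12(|v|^2+\frac14 w^2)$ are Legendre dual. Spatial mollification is legitimate because the Hamilton--Jacobi constraint is convex.

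\emph{Attainment.} Here you exhibit a single competitor with action at most $\HK^2$: the projection of a superposition of cone geodesics along an optimal cone plan, plus pure decay $(1-t)^2\mu_0''$ and pure growth $t^2\mu_1''$ on the far parts. This gives a short proof using only the cone picture the paper relies on throughout. It also identifies the minimizer as a projected cone geodesic, consistent with the form $\exp^\Mea_\mu(t\varphi)+t^2\nu$ of geodesics stated later in the paper. What it does not give is any statement about arbitrary absolutely continuous curves; you never need one, because $\HK$ is already realized by a cone plan.

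Some minor points:
\begin{enumerate}
\item You need an optimal plan for the reduced pair that is concentrated on $\Co^*\times\Co^*$ and on $\{|x-y|<\pi/2\}$. The paper does not prove this; its Pinning Lemma uses the fact tacitly. It follows from the optimality conditions of the logarithmic entropy--transport problem in \cite{LMS-inventiones}, where the marginal densities of an optimal plan are positive on the reduced parts, so you should cite it.
\item You call this restriction essential, but it is dispensable. In any optimal plan, pairs with $rs>0$ and $|x-y|\ge\pi$ cost $(r+s)^2>r^2+s^2$. Replacing such a pair by two transports through the vertex preserves the projected marginals and lowers the cost, so these pairs do not occur. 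Hence every cone geodesic used has $r_t>0$ for $0<t<1$. Geodesics ending at the vertex are exactly your decay and growth profiles, so you could skip the Reduction Lemma altogether.
\item In the superposition step, convexity plus $1$-homogeneity gives subadditivity of the action, not additivity. The resulting inequality is all you need.
\item Measurability of the geodesic selection is immediate on $\R^n$ from the explicit formula for $\exp^\Co$.
\end{enumerate}
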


\subsection{The geometry of the cone}

To proceed, we will now restrict to the Riemannian case where $(\M,\g)$ is a smooth Riemannian manifold with uniformly bounded sectional curvature and without boundary (but not necessarily complete).

Let $\Co$ denote the cone over the base space $\M$ and let   $o=(x,0)$ denote its vertex. 
We write $\langle v,w\rangle_{x}$ for the metric tensor $\g^\M(v,w)_x$ on $\M$, and 
analogously $\langle (v,s),(w,t)\rangle_{(x,r)}$ for the metric tensor $\g^\Co((v,s),(w,t))_x$ on the punctured cone $\Co^*:=\Co\setminus\{o\}$. The cone $\Co$ is the `warped product' $=\M\,{}_r\!\!\times\R_+$ with warping function $r$. Thus
for $(v,s)\in\T_{(x,r)}\Co\cong \T_x\M\times\R$,
$$|(v,s)|_{(x,r)}^2=r^2|v|^2_x+s^2.$$
\begin{lemma} 
The exponential map on $\Co$ is given by
$$\exp^\Co_{(x,r)}(v,s):=\Bigg(\exp^\M_x\bigg(\arctan\bigg(\frac{r|v|}{r+s}\bigg)\frac{v}{|v|}\bigg), \sqrt{(r+s)^2+r^2|v|^2}\Bigg)$$
 for $(x,r)\in \Co^*$ and $(v,s)\in\T_{(x,r)}\Co\cong \T_x\M\times\R$ with $s>-r$ and $|v|<(1+s/r)\pi/2$.
In particular, for suitable $t_*>0$, the map 
$$[0,t_*]\ni t\mapsto \gamma_t:=
\Bigg(\exp^\M_x\bigg(\arctan\bigg(\frac{rt|v|}{r+ts}\bigg)\frac{v}{|v|}\bigg), \sqrt{(r+ts)^2+r^2t^2|v|^2}\Bigg)\in \M$$
is a geodesic in $\Co$ with
$\dot\gamma_0=(v,s)$ and 
$|\dot\gamma_0|=\big|(v,s)\big|_{x,r}=r^2\, |v|^2+s^2$.
\end{lemma}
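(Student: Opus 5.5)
The plan is to reduce everything to the flat two-dimensional cone over a single geodesic of $\M$, where the cone metric is Euclidean in polar coordinates. Fix $(x,r)\in\Co^*$ and $(v,s)$ with $v\neq0$; the case $v=0$ is trivial, since then $\gamma_t=(x,r+ts)$ is a radial ray. Let $c(\theta):=\exp^\M_x(\theta\, v/|v|)$ be the unit-speed geodesic of $\M$ in direction $v$. For $\theta$ in a small interval $[0,\theta_*]$ (with $\theta_*<\pi$ and below the injectivity radius at $x$), $c$ is minimizing, so $\d(c(\theta),c(\theta'))=|\theta-\theta'|$.

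In the cone metric $\d^2_\Co\big((y,a),(y',b)\big)=a^2+b^2-2ab\cos(\d(y,y')\wedge\pi)$, the map
$$\Phi:\ (c(\theta),\rho)\mapsto \rho\,e^{i\theta}\in\R^2\cong\mathbb C$$
is therefore an isometry from the sub-cone $\{(c(\theta),\rho):\theta\in[0,\theta_*],\ \rho\ge0\}$ (with the restricted metric $\d_\Co$) onto a planar sector. This is just the Euclidean law of cosines. In the planar picture the point $(x,r)$ is $(r,0)$. The warped metric $|(v,s)|^2_{(x,r)}=r^2|v|^2+s^2$ shows that the tangent vector $(v,s)$ corresponds to the planar vector $(s,\,r|v|)$: radial component $s$, and angular component $r|v|$ coming from angular speed $|v|$ at radius $r$.

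The candidate geodesic is the straight line $t\mapsto (r+ts,\ rt|v|)$. Its polar coordinates are angle $\theta_t=\arctan\big(\frac{rt|v|}{r+ts}\big)$ and radius $\sqrt{(r+ts)^2+r^2t^2|v|^2}$. Pulling back by $\Phi^{-1}$ gives exactly the stated curve $\gamma_t$. The conditions $s>-r$ and $|v|<(1+s/r)\pi/2$ guarantee that at $t=1$ the line has not reached the vertex and that the angle stays in $[0,\pi/2)$, so that $\theta_t$ is well defined by the $\arctan$ branch.

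To see that $\gamma$ is a geodesic of $\Co$, and not merely of the sub-cone, I will argue metrically. For $0\le a<b\le t_*$, with $t_*$ chosen so that $\theta_{t_*}\le\theta_*$, we have $\d(c(\theta_a),c(\theta_b))=\theta_b-\theta_a$. The cone distance formula then gives
$$\d_\Co(\gamma_a,\gamma_b)=|\Phi(\gamma_a)-\Phi(\gamma_b)|=(b-a)\sqrt{s^2+r^2|v|^2}.$$
So $\gamma$ is a constant-speed minimizing curve in $(\Co,\d_\Co)$, i.e.\ a metric geodesic. On $\Co^*$ the distance $\d_\Co$ is the Riemannian distance of the warped product, so $\gamma$ is also a Riemannian geodesic.

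Differentiating at $t=0$ gives $\dot\theta_0=|v|$ and $\dot\rho_0=s$, hence $\dot\gamma_0=(v,s)$, and uniqueness of Riemannian geodesics identifies $\exp^\Co_{(x,r)}(t(v,s))$ with $\gamma_t$ for small $t$. The speed is $|\dot\gamma_0|=\sqrt{r^2|v|^2+s^2}$; the displayed formula for $|\dot\gamma_0|$ in the statement should be read as its square.

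For the full parameter range up to $t=1$ I will use that the geodesic ODE is local. I will cover $\gamma$ by finitely many short segments, apply the above argument at each base point $\gamma_{t_j}$, and observe that the straight-line parametrization is consistent across segments, because $\Phi$ is defined along all of $c$ as a local isometry (in the Riemannian sense) as long as the angle stays below $\pi$. The main obstacle is exactly this step: checking that the sub-cone over a geodesic is totally geodesic without appealing to minimality of $c$ globally. If the covering argument gets awkward, the fallback is to verify the warped-product geodesic equations directly, namely $\nabla_t\dot x=-2(\dot\rho/\rho)\dot x$ and $\ddot\rho=\rho|\dot x|^2$, which the explicit curve satisfies by a short computation.
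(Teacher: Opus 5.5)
Your proposal is correct and follows essentially the paper's route: restrict to the cone over the single geodesic $c$ of $\M$ in direction $v$, identify it with a flat planar sector, and read off the straight line in polar coordinates. The paper simply cites \cite[equ.~(2.7)]{LMS-fine} for the explicit formula, whereas you derive it and add the local minimality/ODE argument that carries it up to $t=1$; you are also right that the stated speed should be read as $|\dot\gamma_0|^2=r^2|v|^2+s^2$.
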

\begin{proof}
Assume that there is a unique geodesic $(\alpha_t)_{t\in[0,1]}$ in $\M$ which connects $x$ and $x'$. This geodesic can be ``lifted'' to obtain the geodesic $(z_t)_{t\in[0,1]}$ in $\Co$ which connects $(x,r)$ and $(x',r')$, cf.~\cite{BBI}, \cite{LMS-optimal}.
This lifting only depends on the range $\M_\alpha:=\alpha([0,1])$ of the geodesic, or in other words, only on the metric space $(\M_\alpha,\d)$. The rest, i.e.~$\M\setminus\M_\alpha$, is irrelevant.
Thus for this lifting construction we can assume without restriction that $(\M,\d)=(\R^1,|.|)$.
The claim then follows from 
\cite[equ. (2.7)]{LMS-fine} with $u$ replaced by $\frac sr$.
\end{proof}

\begin{lemma} Assume that $\dim\M\ge2$. Then for each $(x,r)\in\Co^*$, all  
linearly independent $v,w\in\T_x\M$, and all $s,t\in\R$,
$$\Sec^\Co_{(x,r)}\big((v,s),(w,t)\big)={r^2}\,\Big(\Sec^\M_x(v,w)-1\Big)
\cdot
\frac{
|v|_{x}^2 |w|_{x}^2-\langle v,w\rangle_{x}^2}
{|(v,s)|_{(x,r)}^2 |(w,t)|_{(x,r)}^2-\langle (v,s),(w,t)\rangle_{(x,r)}^2}.
$$
In particular,
$$\Sec^\Co_{(x,r)}\big((v,0),(w,0)\big)=\frac1{r^2}\Big(\Sec^\M_x(v,w)-1\Big), \qquad\quad
\Sec_{(x,r)}\big((v,0),(0,s)\big)=0.$$
\end{lemma}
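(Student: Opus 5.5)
The plan is to treat $\Co^*$ as the warped product $\R_+\times_f\M$ with base $\R_+$ (coordinate $r$), fiber $\M$ and warping function $f(r)=r$, so that $\g^\Co=dr^2+r^2\g^\M$. This agrees with the norm formula $|(v,s)|^2_{(x,r)}=r^2|v|_x^2+s^2$. On $\Co^*$ the metric is smooth, so by the Corollary of Section 2 the metric sectional curvature coincides with the Riemannian one. It therefore suffices to compute the Riemannian curvature tensor $R^\Co$ and divide by the Gram determinant. Writing $U_1=(v,s)$ and $U_2=(w,t)$, the quantity to compute is
$$\Sec^\Co_{(x,r)}(U_1,U_2)=\frac{R^\Co(U_1,U_2,U_2,U_1)}{|U_1|^2|U_2|^2-\langle U_1,U_2\rangle^2}.$$
Since the denominator in the claim is exactly this Gram determinant, only the numerator has to be identified.

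For the numerator I would invoke O'Neill's curvature formulas for warped products $B\times_f F$. Here $X,Y$ are horizontal vectors (tangent to $B$) and $V,W,U$ are vertical vectors (tangent to $F$). The formulas read
$$R(V,X)Y=\tfrac{H^f(X,Y)}{f}\,V,\qquad R(X,Y)V=0,\qquad R(V,W)X=0,\qquad R(X,V)W=-\tfrac{\langle V,W\rangle}{f}\,\nabla_X\mathrm{grad} f,$$
together with the fiber term
$$R(V,W)U=R^F(V,W)U-\tfrac{|\mathrm{grad} f|^2}{f^2}\big(\langle V,U\rangle W-\langle W,U\rangle V\big).$$
For $f(r)=r$ on the flat base $\R_+$, we have $\mathrm{grad} f=\partial_r$, $H^f=0$ and $\nabla\partial_r=0$ on the base. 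Hence every curvature component involving at least one $\partial_r$ slot vanishes.

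Expanding $R^\Co(U_1,U_2,U_2,U_1)$ multilinearly in the decompositions $U_1=v+s\partial_r$ and $U_2=w+t\partial_r$, only the purely vertical term $R^\Co(v,w,w,v)$ survives. By the fiber formula this term equals
$$r^2\Big(\Sec^\M_x(v,w)-1\Big)\Big(|v|_x^2|w|_x^2-\langle v,w\rangle_x^2\Big).$$
To see the factor, note that with respect to $r^2\g^\M$ the fiber curvature is $R^F=r^2R^\M$. Moreover, $|\mathrm{grad} f|^2/f^2=1/r^2$, and the $\g^\Co$-norms of vertical vectors carry a factor $r^2$. As a sanity check, the sectional curvature of the fiber plane is the classical value $(\Sec^\M-1)/r^2$. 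The assumption $\dim\M\ge2$ is needed here so that $\Sec^\M_x(v,w)$ is defined for linearly independent $v,w$. Dividing by the Gram determinant gives the main formula.

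The two special cases follow directly. For $s=t=0$ the Gram determinant equals $r^4\big(|v|^2|w|^2-\langle v,w\rangle^2\big)$, which yields $\frac1{r^2}\big(\Sec^\M_x(v,w)-1\big)$. For the pair $(v,0),(0,s)$ the vertical factor $|v|^2|0|^2-\langle v,0\rangle^2$ vanishes, so the curvature is $0$; equivalently, this is the statement that planes containing $\partial_r$ are flat. The only real obstacle is bookkeeping: choosing the correct signs and normalizations in O'Neill's formulas and the conventions for $R$. To avoid relying on them, I would cross-check by a direct computation in the 2-dimensional case, namely the cone over a surface of constant curvature $\kappa$, where the Gauss curvature of $dr^2+r^2\g$ is $(\kappa-1)/r^2$. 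Alternatively, one can verify the claim via the exponential map of the previous Lemma combined with Lemma \ref{asym-lem}.
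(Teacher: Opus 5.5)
Your proposal is correct and follows essentially the same route as the paper: expand the curvature tensor of the warped product $dr^2+r^2\g^\M$ multilinearly using O'Neill's warped-product formulas, note that every term involving the radial direction vanishes (flat base, $f(r)=r$ with zero Hessian and parallel gradient), and divide the surviving fiber term $r^2\big(\Sec^\M_x(v,w)-1\big)\big(|v|_x^2|w|_x^2-\langle v,w\rangle_x^2\big)$ by the Gram determinant. One caveat: the formulas you quote mix sign conventions, since the fiber formula with $-|\mathrm{grad} f|^2/f^2$ is O'Neill's, whose sectional-curvature numerator is $\langle R_{U_1U_2}U_1,U_2\rangle$. This is only bookkeeping, and your final $-1$ is the correct sign, as your proposed two-dimensional cross-check would confirm.
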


\begin{proof} According to B.~O'Neill \cite[Chapter 7, Prop.~42]{ONeill},
cf.~A.~Bisson \cite[Prop.~6.6]{Bisson}, the Riemannian curvature tensor on $\Co$ satisfies
\begin{align*}
R^\Co_{V+S,W+T}(V+S) &=
R^\Co_{S,T}(S)+R^\Co_{S,W}(S)+R^\Co_{V,T}(S)+R^\Co_{V,W}(S)\\
&\quad+R^\Co_{S,T}(V)+R^\Co_{S,W}(V)+R^\Co_{V,T}(V)+R^\Co_{V,W}(V)\\
&=0+0+0+0+0+0+0+R^\M_{V,W}(V)-\frac1{r^2}\Big(|V|^2W-\langle V,W\rangle V\Big)
\end{align*}
for all vector fields $V,W$ on the fibre $\M$, all vector fields $S,T$ on the base $\R$, and the lifts of these vector fields to $\Co$ 
denoted by the same symbols.
Therefore
\begin{align*}
\langle R^\Co_{V+S,W+T}(V+S),W+T\rangle_{x,r}&=r^2\Big[\langle R^\M_{V,W}(V),W\rangle_{x}-
|V|_x^2|W|^2_x+\langle V,W\rangle_x^2 
\Big]\\
&=r^2\,\Big(\Sec^\M_x(V,W)-1\Big)\cdot\Big[|V|_x^2|W|^2_x-\langle V,W\rangle_x^2 
\Big]\\
\end{align*}
and thus
\begin{align*}
\Sec_{x,r}^\Co\big(V+S,W+T)&=\frac{\langle R^\Co_{V+S,W+T}(V+S),W+T\rangle_{x,r}}{|V+S|_{x,r}^2|W+T|^2_{x,r}-\langle V+S,W+T\rangle_{x,r}^2 }\\
&=r^2\,\Big(\Sec^\M_x(V,W)-1\Big)\cdot
\frac{|V|_x^2|W|^2_x-\langle V,W\rangle_x^2 }{|V+S|_{x,r}^2|W+T|^2_{x,r}-\langle V+S,W+T\rangle_{x,r}^2 }\\
&=\frac1{r^2}\,\Big(\Sec^\M_x(V,W)-1\Big)\cdot
\frac{|V|_{x,r}^2|W|^2_{x,r}-\langle V,W\rangle_{x,r}^2 }{|V+S|_{x,r}^2|W+T|^2_{x,r}-\langle V+S,W+T\rangle_{x,r}^2 }.
\end{align*}
\end{proof}

\begin{lemma}[{\cite{BBI}}]\label{sec-1} Assume that $\dim\M=1$, that is, $\M$ is a circle or an interval (of finite or infinite length).  Then for each $(x,r)\in\Co^*$, all  
linearly independent $v,w\in\T_x\M$, and all $s,t\in\R$,
$$\Sec^\Co_{(x,r)}\big((v,s),(w,t)\big)=0.
$$
\end{lemma}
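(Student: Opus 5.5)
The plan is to show that for $\dim\M=1$ the punctured cone $\Co^*$ is locally isometric to a flat two-dimensional cone, i.e.\ locally to a piece of the Euclidean plane. Then every sectional curvature vanishes. The reduction is the same one used in the proof of the exponential map lemma. Locally $\M$ is isometric to an open interval of $\R$, with arclength coordinate $x$. The cone metric is the warped product $dr^2+r^2\,dx^2$ on $\M\times(0,\infty)$. Since $\Sec^\Co_{(x,r)}$ is a pointwise quantity depending only on the metric tensor near $(x,r)$, it suffices to work in a small neighborhood $U\times(r-\epsilon,r+\epsilon)$ with $U$ an interval of length $<\pi$. Whether $\M$ is a circle or an interval is then irrelevant.

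On such a neighborhood I will use the polar-coordinate map $\Phi(x,r):=(r\cos x,\,r\sin x)\in\R^2$. A direct computation gives
\begin{equation*}
\Phi^*(dy_1^2+dy_2^2)=dr^2+r^2\,dx^2 .
\end{equation*}
Hence $\Phi$ is a local isometry onto an open sector of the Euclidean plane, and the curvature tensor $R^\Co$ vanishes identically on $\Co^*$. Since $\dim\Co^*=2$, any two linearly independent vectors $(v,s),(w,t)$ span the whole tangent plane. Therefore
\begin{equation*}
\Sec^\Co_{(x,r)}\big((v,s),(w,t)\big)=\frac{\langle R^\Co(\cdot)\,\cdot,\cdot\rangle}{\Lambda}=0 ,
\end{equation*}
where the denominator $\Lambda$ is nonzero exactly by linear independence of $(v,s),(w,t)$ in $\T_{(x,r)}\Co$.

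Alternatively, I could cross-check with the O'Neill warped-product formula from the preceding lemma. Every term there except $R^\M_{V,W}(V)-\frac1{r^2}(|V|^2W-\langle V,W\rangle V)$ vanishes. In dimension one, $V$ and $W$ are parallel, so both $R^\M_{V,W}V=0$ and $|V|^2W-\langle V,W\rangle V=0$. This gives $R^\Co=0$ as well.

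The metric-geometry version can also be checked via Lemma \ref{asym-lem} with $\kappa=0$, since small geodesic triangles near $(x,r)$ are Euclidean triangles. The only step requiring care is the hypothesis as phrased: for $\dim\M=1$, "linearly independent $v,w\in\T_x\M$" cannot literally occur. So the statement must be read with the independence condition on the pairs $(v,s),(w,t)$ in $\T_{(x,r)}\Co$, and I will state the result in that form. I do not expect a genuine obstacle; the main point is to make the localization to $U$ with length $<\pi$ explicit, so that $\Phi$ is injective there.
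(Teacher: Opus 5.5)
Your proof is correct. The paper gives no argument of its own and simply cites \cite{BBI} for the flatness of the two-dimensional cone $dr^2+r^2dx^2$ away from the vertex; your polar-coordinate map $\Phi(x,r)=(r\cos x,r\sin x)$ is the standard justification of that fact, and for $U$ small the $\wedge\pi$ truncation is inactive, so $\Phi$ is even a metric isometry, which is what the asymptotic expansion in the proof of Theorem~\ref{lift-cone} actually uses. You are also right to read the hypothesis as linear independence of $(v,s),(w,t)$ in $\T_{(x,r)}\Co$: that is exactly the case needed in Proposition~\ref{1d-hell}, where $\nabla\varphi,\nabla\psi$ are parallel but $(\nabla\varphi,2\varphi),(\nabla\psi,2\psi)$ need not be.
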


\subsection{Exponential maps on $\Wass_2(\Co)$ and on $\Mea(\M)$} 
We say that a vector field $Z$ on $\Co^*$ is \emph{admissible} if it is given as a measurable map
$Z=(Z^{\sf hor},Z^{\sf vert}): \Co^*\to \T\M\times\R$ with
 $|Z^{\sf hor}(x,r)|<\frac\pi2(1+\frac{1}rZ^{\sf vert}(x,r))$  for each $(x,r)\in\Co^*$ (which in particular implies $Z^{\sf vert}(x,r)>-r$).
\begin{definition} For every $\nu\in\Wass_2(\Co^*)$ and every admissible vector field $Z$ on $\Co^*$, 
\begin{align*}\exp^{\Wass_2}_\nu(Z)&:=\exp^\Co(Z)_*\nu\\
&:=\textrm{push forward of $\nu$ under the map }
(x,r)\mapsto \exp_{x,r}^\Co\big(Z(x,r)\big).\end{align*}
\end{definition}
\begin{definition} For every $\mu\in\Mea\setminus\{0\}$
and every admissible vector field $Z$ on $\Co^*$,
\begin{align*}\exp^{\Mea}_\mu(Z)&:=\Pr\Big(\exp^{\Wass_2}_{\hat\mu}(Z)\Big).\end{align*}
\end{definition}

\begin{remark} We will be particularly interested in admissible vector fields on $\Co^*$ of the form
$$Z(x,r)=\Big(V(x), 2r\, S(x)\Big)\in \T_x\M\times\R\cong\T_{x,r}\Co$$
with a vector field $V$ on $\M$ and a function $S$ on $\M$.
Then
$$|Z|_{x,r}^2=r^2|V|_x^2+4r^2S(x)^2=: r^2\, |V,2S|_x^2$$
and thus
$$|Z|^2_{\hat\mu}=\frac1{r_\mu^2}\int_\M  |V,2S|_x^2\,d\mu(x)\cdot \int r^2 d\delta_{r_\mu}(r)=\int_\M |V,2S|_x^2\,d\mu(x)=:\|V,2S\|_\mu^2.$$
Admissibility of $Z$ is equivalent to  $|V|<\frac\pi2(1+2S)$ on $\M$ (which in particular implies $S>-\frac12$). 
\end{remark}

\begin{proposition} If $(\nu_t)_{t\in [0,\epsilon]}$ with $\nu_t:=\exp^{\Co}_\nu\big(t(V,2rS)\big)$ as above is a $(\Wass_2(\Co), \W_2)$-geodesic then
$$\W_2(\nu_0,\nu_t)=t\,\|V,2S\|_\mu$$
with 
$\mu=\Pr(\nu)$ and
$\|V,2S\|^2_\mu:=\int_\M |V,2S|^2_xd\mu(x)=\int_\M\big(|V|_x^2+4S(x)^2\big)d\mu(x)$.
\end{proposition}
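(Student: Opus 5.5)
The plan is to reduce the statement to two facts: a pointwise identity on the cone, and the bookkeeping identity that relates integrals against $\nu$ to integrals against $\mu=\Pr(\nu)$. Write $Z(x,r)=(V(x),2rS(x))$. By the Lemma on the exponential map of $\Co$, admissibility of $tZ$ for $t\in[0,\epsilon]$ (i.e.\ $t|V|<\frac\pi2(1+2tS)$) guarantees that $t\mapsto\exp^\Co_{(x,r)}(tZ(x,r))$ is a constant-speed minimizing geodesic in $\Co$. Hence pointwise
$$\d_\Co\Big(\exp^\Co_{(x,r)}(sZ),\exp^\Co_{(x,r)}(tZ)\Big)=|t-s|\,|Z|_{(x,r)},\qquad |Z|^2_{(x,r)}=r^2\big(|V|_x^2+4S(x)^2\big).$$
This is the only place where the Riemannian structure of $\Co$ enters.

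\emph{Step 1 (the norm identity).} By definition of the projection, $\int_\Co r^2 f(x)\,d\nu(x,r)=\int_\M f\,d\mu$ for every nonnegative measurable $f$ on $\M$. Applying this with $f=|V|^2+4S^2$ gives
$$\int_\Co|Z|^2_{(x,r)}\,d\nu=\int_\M|V,2S|_x^2\,d\mu=\|V,2S\|_\mu^2 .$$
This holds for any $\nu$ with $\Pr(\nu)=\mu$, not only for the lift $\hat\mu$; the latter case was computed in the preceding Remark.

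\emph{Step 2 (upper bound).} Push $\nu$ forward under $(x,r)\mapsto\big(\exp^\Co(sZ),\exp^\Co(tZ)\big)$. This gives a coupling of $\nu_s$ and $\nu_t$, and by the pointwise identity and Step 1,
$$\W_2(\nu_s,\nu_t)\le|t-s|\,\|V,2S\|_\mu .$$

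\emph{Step 3 (lower bound).} Since $(\nu_t)$ is a $\W_2$-geodesic, $\W_2(\nu_0,\nu_t)=tL$ with $L:=\epsilon^{-1}\W_2(\nu_0,\nu_\epsilon)\le\|V,2S\|_\mu$, so it remains to show $L\ge\|V,2S\|_\mu$. I expect this to be the main obstacle. The hypothesis ``is a geodesic'' must be read in the same sense as in the definition of $\mathring\T^{\sf geo}_\mu\Wass_2$: the curve generated by the exponential map is minimizing with the speed of its generating field, i.e.\ the map-induced coupling $(\Id,\exp^\Co(tZ))_*\nu$ is optimal between $\nu_0$ and $\nu_t$. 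Under this reading the lower bound is immediate, since the optimal cost then equals $\int\d_\Co^2\big((x,r),\exp^\Co(tZ)\big)\,d\nu=t^2\|V,2S\|^2_\mu$.

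If only the weaker metric statement is assumed, I would instead try to prove optimality of the induced plan by a duality argument. First use Kantorovich duality on $\Co$ with a test potential $\Phi$ whose cone gradient equals $Z$ at $t=0$. Then compute the right derivative
$$\frac{d}{dt}\Big|_{t=0^+}\int\Phi\,d\nu_t=\int\langle\nabla\Phi,Z\rangle\,d\nu=\|V,2S\|^2_\mu,$$
and combine it with the estimate $\big|\int\Phi\,d\nu_t-\int\Phi\,d\nu_0\big|\le\mathrm{Lip}\cdot\W_2(\nu_0,\nu_t)$, taken in its sharp $L^2$ form, to force $L\ge\|V,2S\|_\mu$. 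If no such $\Phi$ is available for general measurable $V,S$, I would fall back on the stated interpretation of the hypothesis.
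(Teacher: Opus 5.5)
Your proposal is correct and follows the paper's argument. The paper's proof is the single computation $t^{-2}\W_2^2(\nu_t,\nu)=\int_\Co|(V,2rS)|^2_{x,r}\,d\nu=\int_\Co r^2\big(|V|_x^2+4S(x)^2\big)\,d\nu=\|V,2S\|_\mu^2$. Its first equality silently uses the reading of the geodesic hypothesis that you make explicit in Step 3: the coupling induced by $\exp^\Co(tZ)$ is optimal, so the cost is $t^2\int|Z|^2\,d\nu$. Its last equality is your Step 1.
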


\begin{proof}
\begin{align*}
\frac1{t^2}\W_2^2(\nu_t,\nu)
&=\int_\Co |V,2rS|^2_{x,r}\,d\nu(x,r)\\
&=\int_\Co\Big[r^2 |V|_x^2+4r^2 S(x)^2\Big] \,d\nu(x,r)\\
&=\int_\M\big(|V|_x^2+4S(x)^2\big)d\mu(x)=\|V,2S\|_\mu^2.
\end{align*}
\end{proof}

\begin{lemma}
For every $\mu\in\Mea\setminus\{0\}$ and every admissible vector field of the form $Z=(V,2rS)$ with some vector field $V$ on $\M$ and some function $S$ on $\M$,
the measure $\Pr\big(\exp^{\Wass_2}_{\nu}(Z)\big)$ is independent of the choice of $\nu\in\Wass_2(\Co)$ among those with $\Pr\nu=\mu$.
\end{lemma}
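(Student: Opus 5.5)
The plan is to compute the pushforward explicitly and observe that the dependence on the radial variable drops out after projecting. Fix $\nu\in\Wass_2(\Co)$ with $\Pr\nu=\mu$. By the explicit formula for $\exp^\Co$ from the preceding lemma, applied with $(v,s)=(V(x),2rS(x))$, the map is
$$(x,r)\mapsto\Big(\exp^\M_x\Big(\arctan\Big(\tfrac{|V(x)|}{1+2S(x)}\Big)\tfrac{V(x)}{|V(x)|}\Big),\ r\sqrt{(1+2S(x))^2+|V(x)|^2}\Big)=:\big(\Phi(x),\,r\,m(x)\big).$$
The key point is that $r$ cancels in the angular argument $\frac{r|v|}{r+s}$ and factors out of the radius. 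Admissibility ($|V|<\frac\pi2(1+2S)$) ensures the formula is valid and that $m>0$. (At $|V(x)|=0$ the first component is simply $x$.)

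Next I would evaluate $\Pr$ of the pushforward on a test set $A\subset\M$:
$$\Pr\big(\exp^{\Wass_2}_\nu(Z)\big)(A)=\int_\Co \one_A(\Phi(x))\,r^2m(x)^2\,d\nu(x,r).$$
The integrand has the form $r^2 g(x)$ with $g:=m^2\,\one_A\circ\Phi$ a nonnegative measurable function on $\M$. Since $\int_\Co r^2 g(x)\,d\nu(x,r)=\int_\M g\,d\Pr(\nu)=\int_\M g\,d\mu$, which follows from the definition of $\Pr$ by the usual approximation by simple functions and monotone convergence, we obtain
$$\Pr\big(\exp^{\Wass_2}_\nu(Z)\big)=\Phi_*\big(m^2\mu\big).$$
The right-hand side depends only on $\mu$, $V$ and $S$, which proves the claim.

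The one point that needs care is the vertex: $\nu$ may charge $o$, where $Z$ is not defined. However, points with $r=0$ carry zero weight $r^2$, so they contribute nothing to $\Pr$. Any convention there is harmless, for instance fixing $o$, which is consistent since $r\,m(x)\to0$. I expect no real obstacle beyond checking the $r$-homogeneity of the cone exponential map, which is exactly what the explicit formula provides.
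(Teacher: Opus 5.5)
Your proof is correct and is the same as the paper's. Both arguments use that, for fields of the form $(V,2rS)$, the cone exponential map factors as $(x,r)\mapsto(\Phi(x),\,r\,m(x))$. The projection of the pushforward then becomes $\int_\Co f(\Phi(x))\,m(x)^2\,r^2\,d\nu(x,r)=\int_\M f\circ\Phi\; m^2\,d\mu$, so it equals $\Phi_*(m^2\mu)$ whatever lift $\nu$ of $\mu$ is chosen. Your treatment of possible mass at the vertex is a small extra point of care that the paper leaves implicit, since it defines $\exp^{\Wass_2}_\nu$ only for $\nu$ supported on $\Co^*$.
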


\begin{proof} Put $\mu':=\Pr\big(\exp^{\Wass_2}_{\nu}(Z)\big)$. Then
with 
$\tau(x):=\frac{1}{|V(x)|}\arctan\big(\frac{|V(x)|}{1+2S(x)}\big)$,
\begin{align*}
\int_\M f\,d\mu'&=\int_\Co f(x) r^2 d\Big(\exp^{\Wass_2(\Co)}_{\nu}\big((V,2rS)\big)\Big)(x,r)\\
&
=\int_\Co f\Big(\exp_x^\M\big(\tau(x)\, V(x)\big) \Big)\cdot \Big[\big(1+ 2S(x)\big)^2+|V(x)|^2\Big]\,r^2\,d\nu(x,r)\\
&
=\int_\M f\Big(\exp_x^\M\big(\tau(x)\, V(x)\big) \Big)\cdot \Big[\big(1+2S(x) \big)^2+|V(x)|^2\Big]d\mu(x).
\end{align*}
\end{proof}
Thus for this particular class of admissible vector fields $Z$ we also could write
\begin{align*}\exp^{\Mea}_\mu(Z)&:=\Pr\Big(\exp^{\Wass_2}_{\Pr^{-1}\mu}(Z)\Big).\end{align*}

\begin{lemma}
For every $\mu\in\Mea$,
every vector field $V$ on $\M$ and every function $S$ on $\M$ with $|V|<\frac\pi2(1+2S)$,
the measures $\mu_t:=\exp^{\Mea}_\mu\big(t(V,2rS)\big)$ for $t\in[0,1]$ are characterized by
\begin{align*}
\int_\M f\,d\mu_t=\int_\M f\Big(\exp_x^\M\big(\tau_t(x)\, V(x)\big) \Big)\cdot \Big[\big(1+2t S(x)\big)^2+t^2|V(x)|^2\Big]d\mu(x)
\end{align*}
for all bounded Borel $f$ on $\M$. Here
$\tau_t(x):=
\frac{1}{|V(x)|}\arctan\big(\frac{t|V(x)|}{1+2tS(x)}\big)$.
\end{lemma}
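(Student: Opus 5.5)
The plan is to reduce the statement to the previous Lemma, which treats the case $t=1$ and a general lift $\nu$. Choose the canonical lift $\nu=\hat\mu=\frac1{r_\mu^2}\mu\otimes\delta_{r_\mu}$; the previous Lemma shows the resulting projection does not depend on this choice. Then apply that Lemma with the rescaled data $(tV,tS)$ in place of $(V,S)$. For this we must check that $(tV,2rtS)$ is admissible for every $t\in[0,1]$, i.e.\ that $t|V|<\frac\pi2(1+2tS)$. This is the only point needing care. The map $t\mapsto \frac\pi2(1+2tS)-t|V|$ is affine in $t$. It equals $\frac\pi2>0$ at $t=0$ and is positive at $t=1$ by hypothesis. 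Hence it is positive on all of $[0,1]$.

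Next, write out the computation explicitly rather than just citing the Lemma, to keep track of the $t$-dependence. For $(x,r)\in\Co^*$, the exponential map formula for the cone gives
\begin{align*}
\exp^\Co_{(x,r)}\big(tV(x),2rtS(x)\big)=\Big(\exp^\M_x\big(\tau_t(x)V(x)\big),\ r\sqrt{(1+2tS(x))^2+t^2|V(x)|^2}\Big).
\end{align*}
The factor $r$ cancels inside the arctangent: $\arctan\big(\frac{rt|V|}{r+2rtS}\big)=\arctan\big(\frac{t|V|}{1+2tS}\big)$, and dividing by $|V|$ gives exactly $\tau_t(x)$. On $\{V(x)=0\}$ we interpret $\tau_tV=0$, consistent with the limit $\tau_t\to t/(1+2tS)$.

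Then pass to the projection. By definition of $\Pr$, for bounded Borel $f$,
\begin{align*}
\int_\M f\,d\mu_t=\int_\Co f(y)\,\rho^2\,d\big(\exp^\Co(Z_t)_*\hat\mu\big)(y,\rho).
\end{align*}
Substitute the pushforward: $y=\exp_x(\tau_tV)$ and $\rho^2=r^2\big[(1+2tS)^2+t^2|V|^2\big]$. Then integrate against $\hat\mu$, using $\int r^2\,d\delta_{r_\mu}/r_\mu^2=1$. This yields the claimed formula.

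Characterization follows because a finite measure is determined by its integrals against bounded Borel $f$. The degenerate case $\mu=0$ is trivial, since both sides vanish. No serious obstacle is expected; the admissibility check via affinity in $t$ is the main step.
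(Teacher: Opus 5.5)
Your proof is correct and follows the paper's argument: lift $\mu$ to $\hat\mu$, push forward under $\exp^\Co$ using the explicit cone exponential, and project. The factor $r$ cancels in the arctangent, factors out of the radial component, and the $r_\mu^2$ normalization of $\hat\mu$ cancels on projection. Your affine-in-$t$ check that $t|V|<\frac\pi2(1+2tS)$ holds on all of $[0,1]$ is a useful detail that the paper leaves implicit.
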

\begin{proof}
With 
 $\tau_t(x)$ as above,
\begin{align*}
\int_\M f\,d\mu_t&=\int_\Co f(x) r^2 d\Big(\exp^{\Wass_2}_{\hat\mu}\big(t(V,2rS)\big)\Big)(x,r)\\
&
=\int_M f\Big(\exp_x^\M\big(\tau_t(x)\, V(x)\big) \Big)\cdot \Big[\big(r+2rt S(x)\big)^2+t^2r^2|V(x)|^2\Big]\bigg(\frac1{r^2_\mu}\mu(dx)\otimes \delta_{r_\mu}(dr)\bigg)
\\
&
=\int_M f\Big(\exp_x^\M\big(\tau_t(x)\, V(x)\big) \Big)\cdot \Big[\big(1+2t S(x) \big)^2+t^2|V(x)|^2\Big]d\mu(x).
\end{align*}
\end{proof}

\subsection{The analytic tangent space}

\subsubsection{The $\HK$-version of the Brenier-McCann Theorem}

\begin{theorem}[{\cite[Problem 2.9, Cor.~3.5]{LMS-fine}}] \label{Brenier}
Assume that $\M=\R^n$, that $\mu_0\ll\mm$ and that $\mu_1$ is supported in $B_{\pi/2}(\supp[\mu_0])$.
Then there exists a unique $\HK$-geodesic connecting $\mu_0$ and $\mu_1$. It is given 
as
$$\mu_t=
\exp^{\Mea}_\mu\big(t\varphi\big)=\Pr\Big(
\exp^\Co\big(t\, (\nabla\varphi,2r\varphi)\big)_*\hat\mu\Big)
$$
in terms of a function $\varphi\in H^1(\mu)$.
Furthermore,
$$\HK(\mu_0,\mu_1)=\|\nabla\varphi,2\varphi\|_\mu=:\|\varphi\|_{H^1(\mu)}.$$
\end{theorem}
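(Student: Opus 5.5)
The plan is to read the statement off the cone picture and transfer the problem to the Kantorovich--Wasserstein space $\Wass_2(\Co)$. There I would apply the Brenier--McCann theory on the cone together with the Pinning Lemma.

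First I would use the Pinning Lemma (Lemma \ref{simplification}) with the pinned lift $\rho_0:=\hat\mu_0={\sf Li}(\mu_0)$. Since $\supp\mu_1\subset B_{\pi/2}(\supp\mu_0)$, the Reduction Lemma \ref{red} shows that no mass of $\mu_1$ has to be created from the vertex. The part of $\mu_0$ at distance $\ge\pi/2$ from $\supp\mu_1$ is $\mu_0$-negligible after reduction, or else it is handled by the vertical (pure Hellinger) component. This gives
$$\HK(\mu_0,\mu_1)=\inf\{\W_\Co(\hat\mu_0,\nu_1):\Pr\nu_1=\mu_1\}.$$
Next I would show that this infimum is attained by some $\nu_1$ supported in $\Co^*$, using the compactness argument from the proof of the Pinning Lemma. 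The optimal transport problem from $\hat\mu_0$ to $\nu_1$ on the cone then lives on $\Co^*$, which is a smooth Riemannian manifold, warped over $\R^n$.

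Second, $\hat\mu_0$ is absolutely continuous on the submanifold $\{r=r_{\mu_0}\}$ but not on $\Co^*$ itself. So McCann's theorem does not apply verbatim, and I would exploit the freedom in choosing the lift instead. The dual formulation (the Hamilton--Jacobi characterization of $\frac12\HK^2$ stated above) provides an optimal potential $\xi_0=:-\varphi$ on $\M$. The corresponding $\d_\Co^2$-potential on the cone is $\Phi(x,r)=r^2\varphi(x)$, which is $1$-homogeneous in $r^2$. Its cone gradient at $(x,r)$ is exactly $(\nabla\varphi,2r\varphi)$, using $|(v,s)|^2=r^2|v|^2+s^2$. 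Semiconcavity of $\varphi$ (from the Hopf--Lax type formula for HK) and $\mu_0\ll\mm$ then give differentiability $\mu_0$-a.e. Hence the optimal plan is induced by the map $\exp^\Co(\nabla\varphi,2r\varphi)$, and the geodesic is $\mu_t=\exp^\Mea_{\mu_0}(t\varphi)$, computed through the explicit formula of the preceding lemma. The admissibility $|\nabla\varphi|<\frac\pi2(1+2\varphi)$ comes from the support condition.

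Third, the cost identity follows from the Proposition on $\W_2(\nu_0,\nu_t)$: $\HK(\mu_0,\mu_1)=\|\nabla\varphi,2\varphi\|_{\mu_0}$. I would obtain $\varphi\in H^1(\mu)$ from finiteness of this norm, approximating by smooth functions. For uniqueness, any other geodesic lifts, by pinning at each intermediate time, to a $\W_2(\Co)$-geodesic from $\hat\mu_0$. Its initial optimal plan must be induced by the gradient of an optimal potential. Strict convexity of the dual problem, or equivalently uniqueness of the optimal potential $\mu_0$-a.e., pins down $\nabla\varphi$ and $\varphi$ $\mu_0$-a.e. Together with the explicit pushforward formula, this shows the geodesic is unique.

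The main obstacle is the second step: proving that the optimal plan for the lifted problem is deterministic and given by the gradient of $r^2\varphi$. The reason is that $\hat\mu_0$ is singular on $\Co$. I would first try to avoid this by working directly with the reduced cost $c(x,y)=-\log\cos^2(\d(x,y)\wedge\frac\pi2)$ from the LMS logarithmic-entropy formulation, which is a twist cost on $\R^n$ where $\mu_0\ll\mm$ suffices for Gangbo--McCann. Only afterwards would I translate the resulting map back into the cone exponential formula.
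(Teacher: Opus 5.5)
The paper gives no proof of this statement; it is imported from \cite{LMS-fine}. Your fallback route is essentially the mechanism behind that result. With entropy--transport potentials $\phi_0\oplus\phi_1\le c=-\log\cos^2(\d\wedge\frac\pi2)$, an optimal plan has first marginal $e^{-\phi_0}\mu_0\ll\mm$ and is carried by the graph of a map $T$ with $\nabla\phi_0(x)=\nabla_x c(x,T(x))$. Putting $1+2\varphi:=e^{-\phi_0}$, this is exactly $T(x)-x=\arctan\big(\frac{|\nabla\varphi|}{1+2\varphi}\big)\frac{\nabla\varphi}{|\nabla\varphi|}$. The equality $\phi_0(x)+\phi_1(T(x))=c(x,T(x))$ then gives the mass factor $e^{-2\phi_0(x)}/\cos^2\d(x,T(x))=(1+2\varphi)^2+|\nabla\varphi|^2$, as in the formula for $\exp^\Mea_\mu$.

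Your cone route is also viable, and the obstacle you single out is not a real one. Let $(\xi_0,\xi_1)$ be optimal for the static dual of $\frac12\HK^2$. Then $(r^2\xi_0,r^2\xi_1)$ is an optimal Kantorovich pair for the pinned problem: the cone constraint for all radii is exactly $\xi_0\ge-\frac12$, $\xi_1\le\frac12$, $(1+2\xi_0(x))(1-2\xi_1(y))\ge\cos^2(\d(x,y)\wedge\frac\pi2)$, and $\int r^2\xi_i\,d\nu_i=\int\xi_i\,d\mu_i$. Moreover, $r^2\xi_0(x)$ is differentiable at $(x,r)$ whenever $\xi_0$ is differentiable at $x$. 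So the first-order argument on the support of an optimal plan needs only $\hat\mu_0$-a.e.\ differentiability, not absolute continuity on $\Co$. Both routes thus rest on the same input, namely existence of optimal potentials for the static dual, and that is what the citation supplies.

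The step that fails as written is uniqueness. Pinning $(\mu_0,\mu_t)$ separately for each $t$ produces lifts $\nu_t$ with $\W_\Co(\hat\mu_0,\nu_t)=\HK(\mu_0,\mu_t)$. Nothing makes $t\mapsto\nu_t$ a $\W_2(\Co)$-geodesic, so your structure result for optimal plans out of $\hat\mu_0$ cannot be applied to an arbitrary $\HK$-geodesic this way. Use instead the lifting result of \cite{LMS-inventiones}: every $\HK$-geodesic is the projection of a $\W_2(\Co)$-geodesic whose endpoint coupling realizes $\HK(\mu_0,\mu_1)$. A dilation maps cone geodesics to cone geodesics and so leaves projected interpolations unchanged; hence this coupling may be normalized to have first marginal $\hat\mu_0$. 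Complementary slackness against one fixed optimal pair then forces it to be $(\Id,\exp^\Co(\nabla\varphi,2r\varphi))_*\hat\mu_0$. Uniqueness of cone geodesics between points at base distance $<\pi$ (and of radial segments to $o$) then identifies $\mu_t=\exp^\Mea_{\mu_0}(t\varphi)$. So you do not need uniqueness of the potential at all. If you want it anyway, strict concavity holds in the variables $\phi_i$, where the objective is $\sum_i\int(1-e^{-\phi_i})\,d\mu_i$, not in the $\xi_i$, where it is linear.

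Smaller corrections:
\begin{enumerate}
\item The Hamilton--Jacobi formula is a supremum over subsolutions in $\mathcal C^1([0,1],\Lip_b(\M))$ that need not be attained, so it does not itself furnish the optimal potential.
\item With that formula's sign convention the initial velocity is $(\nabla\xi_0,4\xi_0)$, so $\varphi=\xi_0$, not $-\xi_0$.
\item The part $\mu_0''$ is in general not negligible. There $\varphi=-\frac12$ and $\nabla\varphi=0$, it contributes $\mu_0''(\M)$ to $\|\varphi\|_{H^1(\mu)}^2$ in accordance with the Reduction Lemma, and an optimal $\nu_1$ then carries an atom at the vertex. So apply the Pinning Lemma to $(\mu_0',\mu_1)$ and add $(1-t)^2\mu_0''$.
\item The support condition gives only $\d(x,T(x))<\frac\pi2$, i.e.\ $1+2\varphi>0$ on the transported part, which is all the explicit formula for $\exp^\Co$ needs. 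It does not give $|\nabla\varphi|<\frac\pi2(1+2\varphi)$, because $|\nabla\varphi|/(1+2\varphi)=\tan\d(x,T(x))$.
\end{enumerate}
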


\subsubsection{Tangent space and exponential map}
Inspired by the previous results for absolutely continuous measures on $\R^n$, we now define the analytic tangent space for arbitrary Riemannian manifolds $(\M,\g)$ and arbitrary $\mu\in\Mea(\M)$.
\begin{definition}
Analytic tangent space: \ 
$\T^{\sf ana}_\mu\Mea=\overline{ \C^\infty_c(\M)}^{\|.\|_{H^1(\mu)}}=H^1(\mu)$ with
$$\|\varphi\|^2_{H^1(\mu)}=\int_\M \Big(|\nabla\varphi|^2_x+4\varphi(x)^2\Big)d\mu(x).$$
Exponential map
$$\exp_\mu^\Mea\big(\varphi\big):=\Pr\Big(
\exp^\Co\big( (\nabla\varphi,2r\varphi)\big)_*\hat\mu\Big).
$$
\end{definition}
One easily verifies that for any $\varphi\in \C^\infty_c(\M)$ and sufficiently small $\tau>0$, the curve $t\mapsto \exp_\mu^\Mea\big(t\varphi\big)$ for $t\in [0,\tau]$ will be a $\HK$-geodesic in $\Mea(\M)$.
  
\subsubsection{Asymptotics of the exponential map}

For later use, let us analyze the asymptotic expansion of the exponential map on $\Co$ in  case $\M=\R^n$. 
\begin{lemma} For $\varphi\in \C^\infty_c(\M)$ put $V_t=\exp^\Co(t(\nabla\varphi,2r\varphi))=:(V_t^{\sf hor},V_t^{\sf vert})$. Then
as $t\to0$,
\begin{align*}V_t^{\sf hor}(x)&=x+t\nabla\varphi-t^2\nabla\varphi^2+t^3\Big(4\varphi^2-\frac13|\nabla\varphi|^2
\Big)\nabla\varphi+\mathit{O}(t^4),\\
V_t^{\sf vert}(x,1)&=1+2t\varphi+\frac12t^2|\nabla\varphi|^2+\mathit{O}(t^4).
\end{align*}
\end{lemma}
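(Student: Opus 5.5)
The plan is to specialize the explicit formula for $\exp^\Co$ from the lemma on the exponential map of the cone to $\M=\R^n$, base point $(x,1)$ and tangent vector $t(\nabla\varphi(x),2\varphi(x))$, and then Taylor expand in $t$. Since $\varphi\in\C^\infty_c$, the functions $\varphi$, $|\nabla\varphi|$ and their derivatives are bounded. Hence for $t$ small the vector field is admissible, that is $t|\nabla\varphi|<\frac\pi2(1+2t\varphi)$, and every $\mathit{O}(t^k)$ remainder below is uniform in $x$.

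\textbf{Horizontal part.} Write $a:=|\nabla\varphi(x)|$, assume $a>0$ (if $a=0$ the horizontal part is just $x$, which is consistent with the claim), and set
$$u_t:=\frac{ta}{1+2t\varphi}=ta\big(1-2t\varphi+4t^2\varphi^2\big)+\mathit{O}(t^4).$$
Then $V^{\sf hor}_t=x+\arctan(u_t)\,\nabla\varphi/a$. Using $\arctan u=u-u^3/3+\mathit{O}(u^5)$ with $u_t^3=t^3a^3+\mathit{O}(t^4)$, we get
$$\arctan(u_t)=ta-2t^2a\varphi+t^3\big(4a\varphi^2-\tfrac13a^3\big)+\mathit{O}(t^4).$$
Multiplying by $\nabla\varphi/a$ and using $\nabla\varphi^2=2\varphi\nabla\varphi$ gives exactly the claimed expansion
$$x+t\nabla\varphi-t^2\nabla\varphi^2+t^3\big(4\varphi^2-\tfrac13|\nabla\varphi|^2\big)\nabla\varphi+\mathit{O}(t^4).$$

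\textbf{Vertical part.} I factor out $(1+2t\varphi)$:
$$V^{\sf vert}_t=\sqrt{(1+2t\varphi)^2+t^2a^2}=(1+2t\varphi)\sqrt{1+\frac{t^2a^2}{(1+2t\varphi)^2}}.$$
Since $\sqrt{1+y}=1+\tfrac y2+\mathit{O}(y^2)$ and $y=\mathit{O}(t^2)$, this equals
$$1+2t\varphi+\frac{t^2a^2}{2(1+2t\varphi)}+\mathit{O}(t^4)=1+2t\varphi+\tfrac12t^2a^2-t^3\varphi a^2+\mathit{O}(t^4).$$

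\textbf{Main obstacle.} The only real difficulty is the bookkeeping at order $t^3$ in the vertical component. The computation above produces a term $-t^3\varphi|\nabla\varphi|^2$, which is absent from the statement as written. I will double-check this term by an independent route: expand $\big(1+4t\varphi+4t^2\varphi^2+t^2a^2\big)^{1/2}$ directly to third order. If the term survives, then the correct assertion is either $V^{\sf vert}_t=1+2t\varphi+\frac12t^2|\nabla\varphi|^2-t^3\varphi|\nabla\varphi|^2+\mathit{O}(t^4)$, or the displayed formula with remainder $\mathit{O}(t^3)$. For the later curvature computation I would carry the corrected $t^3$ term along, or check whether only the $\mathit{O}(t^3)$ accuracy of the vertical part is actually used there.
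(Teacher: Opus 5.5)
Your proposal is correct and takes the paper's route: plug $r=1$, $v=t\nabla\varphi$, $s=2t\varphi$ into the explicit formula for $\exp^\Co$ and Taylor expand. Your horizontal expansion agrees with the paper's. (The paper's proof has the typo $\nabla\varphi/|\nabla\varphi|^2$ where $\nabla\varphi/|\nabla\varphi|$ is meant; you have it right.)

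For the vertical component your computation is right and the statement is wrong. The paper's proof records the cubic coefficient as $\varphi|\nabla\varphi|^2-\varphi|\nabla\varphi|^2=0$, which amounts to expanding $(1+2t\varphi)^{-2}$ as $1-2t\varphi$ instead of $1-4t\varphi$. Correctly,
\[
(1+2t\varphi)\cdot\tfrac12t^2a^2(1-4t\varphi)=\tfrac12t^2a^2-t^3\varphi a^2+\mathit{O}(t^4).
\]
Your independent check confirms this. With $y=4t\varphi+t^2(4\varphi^2+a^2)$, the $t^3$-terms of $1+\frac y2-\frac{y^2}8+\frac{y^3}{16}$ are
\[
-\varphi(4\varphi^2+a^2)+4\varphi^3=-\varphi a^2 .
\]
For instance, at a point where $\varphi=|\nabla\varphi|=1$ one gets $\sqrt{1+4t+5t^2}=1+2t+\frac12t^2-t^3+\mathit{O}(t^4)$. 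So the valid statement is
\[
V_t^{\sf vert}(x,1)=1+2t\varphi+\tfrac12t^2|\nabla\varphi|^2-t^3\varphi|\nabla\varphi|^2+\mathit{O}(t^4),
\]
or, equivalently, the displayed formula with remainder $\mathit{O}(t^3)$.

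The $\mathit{O}(t^3)$ version is all that is used later. In the proof of the twisted-part formula, $W_t^{\sf vert}$, $(V_t^{-1})^{\sf vert}$ and $T_t^{\sf vert}$ are only expanded modulo $\mathit{O}(t^3)$. All higher-order vertical information is carried exactly by $f_t$ and $g_t$, so the error does not propagate.
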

\begin{proof}
\begin{align*}V_t^{\sf hor}(x)&=x+\arctan\left(\frac{t|\nabla\varphi|}{1+2t\varphi}\right)\cdot\frac{\nabla\varphi}{|\nabla\varphi|^2}\\
&=x+\frac{t\nabla\varphi}{1+2t\varphi}-\frac13t^3|\nabla\varphi|^2\nabla\varphi
+\mathit{O}(t^4)\\
&=x+t\nabla\varphi-2t^2\varphi\nabla\varphi+4t^3\varphi^2\nabla\varphi-\frac13t^3|\nabla\varphi|^2\nabla\varphi
+\mathit{O}(t^4),\\
V_t^{\sf vert}(x,1)&=\Big((1+2t\varphi)^2+t^2|\nabla\varphi|^2
\Big)^{1/2}\\
&=(1+2t\varphi)\cdot\bigg(1+\frac12t^2
\frac{|\nabla\varphi|^2}{(1+2t\varphi)^2}\bigg)+\mathit{O}(t^4)\\
&=
1+2t\varphi+\frac12t^2|\nabla\varphi|^2+t^3\Big(\varphi|\nabla\varphi|^2-\varphi|\nabla\varphi|^2
\Big)+\mathit{O}(t^4).
\end{align*}
\end{proof}

\section{Sectional curvature of $\Mea(\M)$}
\subsection{Definition, decomposition, and scaling}
\begin{definition} For all $\mu\in\Mea$ and all $\varphi,\psi\in\T^{\sf ana}_\mu\Mea$,
 \begin{align*}
 \Sec_\mu(\varphi,\psi)&=\!\!\! \lim_{\tiny\begin{array}{c}
{s,t\to0}\\{t/s+s/t\textrm{ bdd}}\end{array}}\!\!\!
\frac3{\Lambda s^2\, t^2}\Bigg[ |s\varphi-t\psi|_{H^1(\mu)}^2-
 \HK^2\Big(\exp_\mu^\Mea(s\varphi),\exp_\mu^\Mea(t\psi)\Big)
\Bigg]
\end{align*}
with
$\Lambda:=|\varphi|_{H^1(\mu)}^2\cdot|\psi|_{H^1(\mu)}^2-\langle \varphi,\psi\rangle_{H^1(\mu)}^2$, provided the limit exists.
\end{definition}

\begin{theorem} \label{def-dec-scal}
\begin{enumerate}[\rm (i)]
\item
The sectional curvature on $\Mea$
 can be decomposed as
$$
 \Sec_\mu(\varphi,\psi)= \Sec_\mu^\uparrow(\varphi,\psi)+ \Sec_\mu^\nabla(\varphi,\psi)$$
into its lifted part
 \begin{align*}
 \Sec_\mu^\uparrow(\varphi,\psi)&=
{ \!\!\! \lim_{\tiny\begin{array}{c}
{s,t\to0}\\{t/s+s/t\textrm{ bdd}}\end{array}}\!\!\!}
\frac3{\Lambda s^2\,t^2}\Bigg[ |s\varphi-t\psi|_{H^1(\mu)}^2\\
& \qquad\qquad-
 \int_\Co
\d_\Co^2\Big(\exp^\Co_{x,r}\big(s(\nabla\varphi,2r\varphi)\big),\exp^\Co_{x,r}\big(t(\nabla\psi,2r\psi)\big)\Big)\,d\hat\mu(x,r)
\Bigg]
\end{align*}
and its twisted part
 \begin{align*}
 \Sec_\mu^\nabla(\varphi,\psi)=
 \!\!\! {\lim_{\tiny\begin{array}{c}
{s,t\to0}\\{t/s+s/t\textrm{ bdd}}\end{array}}\!\!\!}\frac3{\Lambda s^2\,t^2}\Bigg[  \int_\Co
\d_\Co^2\Big(\exp^\Co_{x,r}\big(s(\nabla\varphi,2r\varphi)\big),&\exp^\Co_{x,r}\big(t(\nabla\psi,2r\psi)\big)\Big)\,d\hat\mu(x,r)\\
&- \HK^2\Big(\exp_\mu^\Mea(s\varphi),\exp_\mu^\Mea(t\psi)\Big)
\Bigg]
\end{align*}
provided these limits exist. 
\item The twisted part is always nonnegative:
$$ \Sec_\mu^\nabla(\varphi,\psi)\ge0.$$
\item For all $a,b>0$,
$$\Sec_{\mu}(a\varphi,b\psi)=\Sec_{\mu}(\varphi,\psi).$$
\item
For any $a>0$,
$$ \Sec_{a\mu}(\varphi,\psi)=\frac1a\,\Sec_{\mu}(\varphi,\psi).$$
Similarly,  $\Sec_{a\mu}^\uparrow(\varphi,\psi)=\frac1a\, \Sec_{\mu}^\uparrow(\varphi,\psi)$ and
$ \Sec_{a\mu}^\nabla(\varphi,\psi)=\frac1a\, \Sec_{\mu}^\nabla(\varphi,\psi)$.
\end{enumerate}
\end{theorem}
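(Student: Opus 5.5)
The plan treats the four claims in turn. Each follows from the definitions together with elementary facts about the cone, and the work is mostly bookkeeping.

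\emph{(i) Decomposition.} Abbreviate
\[
I(s,t):=\int_\Co \d_\Co^2\Big(\exp^\Co_{x,r}\big(s(\nabla\varphi,2r\varphi)\big),\exp^\Co_{x,r}\big(t(\nabla\psi,2r\psi)\big)\Big)\,d\hat\mu(x,r).
\]
Adding and subtracting $I(s,t)$ inside the bracket defining $\Sec_\mu(\varphi,\psi)$ gives the identity of the two difference quotients for every fixed $(s,t)$. Passing to the limit then gives (i) whenever the two partial limits exist. This is the same mechanism as in Theorem \ref{sec-sec}. If desired, one also records that the lifted limit exists. By the previous lemmas, for each $(x,r)\in\supp\hat\mu$ the integrand is the squared cone distance between two exponential curves with initial velocities $(\nabla\varphi,2r\varphi)$ and $(\nabla\psi,2r\psi)$. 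Since $\varphi,\psi$ are compactly supported and smooth, the expansion of Lemma \ref{asym-lem} (Riemannian version, the Corollary) holds with an $O(s^3t^3)$ error that is uniform in $x$. Dominated convergence then yields the integrated curvature expression. In this proposition only the algebraic splitting is claimed, so this identification is optional.

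\emph{(ii) Nonnegativity.} The key observation is that $q_{s,t}:=\big(\exp^\Co(s(\nabla\varphi,2r\varphi)),\exp^\Co(t(\nabla\psi,2r\psi))\big)_*\hat\mu$ is a coupling on $\Co\times\Co$. By the definition of $\exp^\Mea$, its marginals project under $\Pr$ to $\exp^\Mea_\mu(s\varphi)$ and $\exp^\Mea_\mu(t\psi)$. The definition of $\HK$ as an infimum over such $q$ therefore gives $\HK^2\big(\exp_\mu^\Mea(s\varphi),\exp_\mu^\Mea(t\psi)\big)\le I(s,t)$. Hence the bracket in the twisted part is $\ge0$ for every $(s,t)$, and so is its limit (or its $\liminf$).

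\emph{(iii) Scaling in the directions.} Exactly as in the remark following Definition \ref{def-sec-wass}, substitute $s'=as$, $t'=bt$. The ratio $t'/s'+s'/t'$ stays bounded iff $t/s+s/t$ does. Moreover $\Lambda(a\varphi,b\psi)=a^2b^2\Lambda(\varphi,\psi)$ by bilinearity of the $H^1(\mu)$ inner product, and $\exp^\Mea_\mu(s(a\varphi))=\exp^\Mea_\mu(s'\varphi)$. The factor $a^2b^2$ thus cancels against $s^2t^2$ versus $s'^2t'^2$, and the two limits coincide.

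\emph{(iv) Scaling in the measure.} The point to check is how $\exp^\Mea$ and $\HK$ behave under $\mu\mapsto a\mu$. From the explicit formula in the last lemma of the previous subsection, $\int f\,d\mu_t$ is linear in $\mu$, so $\exp^\Mea_{a\mu}(t\varphi)=a\,\exp^\Mea_\mu(t\varphi)$. Next I would show $\HK^2(a\mu_0,a\mu_1)=a\,\HK^2(\mu_0,\mu_1)$: if $q$ is admissible for $(\mu_0,\mu_1)$ in the $\mathcal M_2$ formulation, then $aq$ is admissible for $(a\mu_0,a\mu_1)$ with $a$ times the cost, and conversely. Further, $|\cdot|_{H^1(a\mu)}^2=a|\cdot|_{H^1(\mu)}^2$ and $\Lambda_{a\mu}=a^2\Lambda_\mu$. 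The bracket therefore scales by $a$ and $\Lambda$ by $a^2$, which gives the factor $1/a$. The same argument applies to $I(s,t)$, because $\widehat{a\mu}$ is $\hat\mu$ dilated radially by $\sqrt a$. I expect this dilation step to be the main obstacle and would handle it in one of two ways. Either apply the Dilation Invariance lemma with constant $\vartheta$ to pass from the coupling of the dilated lift to $a\,\hat\mu$-type measures; or use that $\exp^\Co$ commutes with the radial dilation for fields of the form $(V,2rS)$, combined with the 1-homogeneity $\d_\Co((x,\lambda r),(y,\lambda r'))=\lambda\,\d_\Co((x,r),(y,r'))$. Either way one gets $I_{a\mu}(s,t)=a\,I_\mu(s,t)$, which yields the stated scaling of $\Sec^\uparrow$ and $\Sec^\nabla$ separately.
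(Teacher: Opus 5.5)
Your proposal is correct. For (i)--(iii) and for the main identity $\Sec_{a\mu}=\frac1a\Sec_\mu$ in (iv) it follows the paper's argument. You add and subtract the lifted integral $I(s,t)$, and you use the push-forward of $\hat\mu$ under the pair of cone exponential maps as an admissible but non-optimal plan for $\HK$. You substitute $s'=as$, $t'=bt$. Finally you combine $\exp^\Mea_{a\mu}(s\varphi)=a\exp^\Mea_\mu(s\varphi)$, $\Lambda_{a\mu}=a^2\Lambda_\mu$ and $\HK^2(a\mu_0,a\mu_1)=a\HK^2(\mu_0,\mu_1)$; the paper only asserts the last identity, while you justify it via $q\mapsto aq$.

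You depart from the paper only in the separate scaling of the two parts. The paper reads off $\Sec^\uparrow_{a\mu}=\frac1a\Sec^\uparrow_\mu$ from the explicit formula of Theorem \ref{lift-cone}, which is a forward reference. That formula is proved for $\varphi,\psi\in\C^\infty_c$ under the bounded-curvature assumption, with Proposition \ref{1d-hell} covering $\dim\M=1$. The paper then obtains the twisted scaling as $\Sec-\Sec^\uparrow$, which requires $\Sec_\mu$ to exist.

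Your homogeneity argument rests on three observations:
\begin{enumerate}
\item[(a)] $\widehat{a\mu}$ is the radial $\sqrt a$-dilate of $\hat\mu$;
\item[(b)] the cone exponential of fields $(V,2rS)$ commutes with radial dilations, since the horizontal displacement $\arctan\frac{s|\nabla\varphi|}{1+2s\varphi}$ does not depend on $r$ and the new radius is $1$-homogeneous in $r$;
\item[(c)] $\d_\Co$ is $1$-homogeneous under simultaneous radial dilation.
\end{enumerate}
Together these give $I_{a\mu}(s,t)=a\,I_\mu(s,t)$ for every fixed $(s,t)$. Both brackets therefore scale by exactly $a$ before any limit is taken. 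This route is more elementary and self-contained. It yields the scaling of $\Sec^\uparrow$, $\Sec^\nabla$ and of their $\liminf/\limsup$ versions without knowing that any limit exists. Keep the homogeneity route; the Dilation Invariance variant you mention is unnecessary.

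A minor caveat concerns your optional remark under (i). Lemma \ref{asym-lem} gives an $O(s^3t^3)$ remainder only in model spaces, and the Corollary gives only the limit. For the cone you should therefore claim only that the difference quotient is bounded uniformly in $x$, as in the proof of Theorem \ref{lift-cone}. Nothing in the statement depends on this.
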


\begin{proof} 
(i) and (iii) are obvious.

(ii) follows from the fact that the push forward of $\hat\mu$ under the map 
$$(x,r)\mapsto \Big(\Pr \Big[\exp^\Co_{x,r}\big(s(\nabla\varphi,2r\varphi)\big)\Big],\Pr\Big[\exp^\Co_{x,r}\big(t(\nabla\psi,2r\psi)\big)\Big]\Big)$$
is a (not necessarily optimal) coupling of the measures $\exp_\mu^\Mea(s\varphi)$ and $\exp_\mu^\Mea(t\psi)$.

(iv)
Obviously
$\Lambda_{a\mu}(\varphi,\psi)=a^2\,\Lambda_\mu(\varphi,\psi)$ for $\Lambda_\mu(\varphi,\psi):=|\varphi|_{H^1(\mu)}^2\cdot|\psi|_{H^1(\mu)}^2-\langle \varphi,\psi\rangle_{H^1(\mu)}^2$.
Furthermore,
$|\varphi-\psi|_{H^1(a\mu)}^2=a\, |\varphi-\psi|_{H^1(\mu)}^2$ and
$$ \HK^2\Big(\exp_{a\mu}^\Mea(s\varphi),\exp_{a\mu}^\Mea(t\psi)\Big)= a\,\HK^2\Big(\exp_\mu^\Mea(s\varphi),\exp_\mu^\Mea(t\psi)\Big)$$
since $\exp_{a\mu}^\Mea(s\varphi)=a\,\exp_{\mu}^\Mea(s\varphi)$ and
$\HK(a\mu_0,a\mu_1)=\sqrt a\, \HK(\mu_0,\mu_1)$.

The scaling property of
 $\Sec_\mu^\uparrow\big(\varphi,\psi \big)$ is obvious according to the explicit formula given in the next subsection.
 This then also entails the scaling property of $\Sec_\mu^\nabla\big(\varphi,\psi \big)$.
\end{proof}

\begin{remark}
The scaling property (iv) of the sectional curvature reflects the fact that $(\Mea,\HK)$ is a cone and
$\HK^2(a\mu, 0)=a\, \HK^2(\mu, 0)$.

For a further scaling property of the sectional curvature, see Theorem \ref{scal-space}.
\end{remark}

\subsection{The lifted part of the sectional curvature}

Recall that in the case $n\ge2$,
$$\Sec^\Co_{(x,r)}\big((v,s),(w,t)\big)={r^2}\,\Big(\Sec^\M_x(v,w)-1\Big)
\cdot
\frac{
|v|_{x}^2 |w|_{x}^2-\langle v,w\rangle_{x}^2}
{|(v,s)|_{(x,r)}^2 |(w,t)|_{(x,r)}^2-\langle (v,s),(w,t)\rangle_{(x,r)}^2}
$$
for all $(x,r)\in\Co^*$ and all linearly independent $(v,s), (w,t)\in\T_{(x,r)}\Co\cong \T_x\M\times\R$.

\begin{theorem}\label{lift-cone}  Assume $\dim\M\ge2$. Then for all $\mu\in\Mea$ and all $\varphi,\psi\in\C_c^\infty(\M)\subset \T_\mu^{\sf ana}\Mea$,
$$\Sec_\mu^\uparrow\big(\varphi,\psi \big)=\frac1\Lambda \int_{\M}\Big(\Sec^\M_{x}\big(\nabla\varphi,\nabla\psi\big)-1\Big)\cdot \Big[|\nabla\varphi|_x^2\, |\nabla\psi|_x^2-\langle \nabla\varphi,\nabla\psi\rangle_x^2
\Big]\,d\mu(x)$$
where 
\begin{align*}\Lambda&:=|\varphi|_{H^1(\mu)}^2\cdot|\psi|_{H^1(\mu)}^2-\langle \varphi,\psi\rangle_{H^1(\mu)}^2\\
&=\int_\M\Big( |\nabla\varphi|^2_x+4\varphi(x)^2\Big)d\mu(x)\cdot
\int_\M\Big( |\nabla\psi|^2_x+4\psi(x)^2\Big)d\mu(x)\\
&\quad-\bigg(\int_\M\Big(\langle \nabla\varphi,\nabla\psi\rangle_x+4\varphi(x)\psi(x)\Big)d\mu(x)\bigg)^2.
\end{align*}
\end{theorem}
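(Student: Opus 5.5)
The plan is to reduce the lifted part pointwise to the Riemannian sectional curvature of the cone $\Co$, much as in the proof of Theorem \ref{sec-sec}. By definition,
$$\Sec_\mu^\uparrow(\varphi,\psi)=\lim\frac3{\Lambda s^2t^2}\Big[|s\varphi-t\psi|^2_{H^1(\mu)}-\int_\Co \d_\Co^2\big(\exp^\Co_{x,r}(sZ_\varphi),\exp^\Co_{x,r}(tZ_\psi)\big)\,d\hat\mu\Big],$$
with $Z_\varphi:=(\nabla\varphi,2r\varphi)$ and $Z_\psi:=(\nabla\psi,2r\psi)$. Since $\hat\mu=\frac1{r_\mu^2}\mu\otimes\delta_{r_\mu}$, the quantity $|s\varphi-t\psi|^2_{H^1(\mu)}=\int_\M(|s\nabla\varphi-t\nabla\psi|^2+4(s\varphi-t\psi)^2)\,d\mu$ equals $\int_\Co|sZ_\varphi-tZ_\psi|^2_{(x,r)}\,d\hat\mu(x,r)$. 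This holds because $|(v,\sigma)|^2_{(x,r)}=r^2|v|^2+\sigma^2$, and the factor $r^2$ cancels against $\frac1{r_\mu^2}$ at $r=r_\mu$. The bracket is therefore an integral over $\hat\mu$ of the pointwise defect
$$D_{s,t}(x,r):=|sZ_\varphi-tZ_\psi|^2_{(x,r)}-\d^2_\Co\big(\exp_{x,r}(sZ_\varphi),\exp_{x,r}(tZ_\psi)\big).$$

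Next I apply the Corollary following Theorem \ref{alex-vs-sec} (the Riemannian case of Lemma \ref{asym-lem}) to the smooth Riemannian manifold $\Co^*$ at each point $(x,r_\mu)$ with $\mu$-a.e.\ $x$. For linearly independent $Z_\varphi,Z_\psi$ this gives
$$\frac{3}{s^2t^2}D_{s,t}\to\Sec^\Co_{(x,r)}(Z_\varphi,Z_\psi)\big[|Z_\varphi|^2|Z_\psi|^2-\langle Z_\varphi,Z_\psi\rangle^2\big].$$
By the curvature lemma for the warped product, this limit is
$$r^2\big(\Sec^\M_x(\nabla\varphi,\nabla\psi)-1\big)\big[|\nabla\varphi|^2|\nabla\psi|^2-\langle\nabla\varphi,\nabla\psi\rangle^2\big].$$
The denominator of $\Sec^\Co$ cancels exactly, which is the key point. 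On the set where $Z_\varphi,Z_\psi$ are linearly dependent, or where $\nabla\varphi,\nabla\psi$ are dependent, one checks separately that the limit is $0$, matching the convention used in Theorem \ref{sec-sec}. There both exponential geodesics lie in a totally geodesic $2$-dimensional flat cone over a $1$-dimensional geodesic of $\M$, which is covered by Lemma \ref{sec-1}. Integrating against $\hat\mu$, the factor $r^2$ again cancels with $\frac1{r_\mu^2}$, and we obtain $\frac1\Lambda\int_\M(\Sec^\M_x-1)[\ldots]\,d\mu$ as claimed.

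The main obstacle is justifying the interchange of the limit and the integral. I would use dominated convergence, which needs a uniform bound $|D_{s,t}(x,r_\mu)|\le Cs^2t^2$ for $s,t$ small with $s/t+t/s$ bounded and all $x\in\supp\varphi\cup\supp\psi$. Outside these supports both vectors vanish and $D_{s,t}=0$. On the compact set $\{(x,r_\mu): x\in\supp\varphi\cup\supp\psi\}$, the sectional curvature of $\Co^*$ is bounded, since $|\Sec^\M|\le K$ and $r=r_\mu>0$. The vectors $Z_\varphi,Z_\psi$ are bounded, and the remainder term in the two-parameter expansion of $\d^2(\exp(sv),\exp(tw))$ is controlled by the smooth dependence of the exponential map and of $\d^2$ near the diagonal on compact sets. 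This gives an $O(s^3t^3)$ error uniform in $x$, as in the uniformity argument at the end of Lemma \ref{asym-lem}. The expansion also has to hold uniformly near points where $\nabla\varphi$ and $\nabla\psi$ become dependent. This is automatic because the expansion $|sv-tw|^2-\d^2=\frac13 R(sv,tw,tw,sv)+O(s^3t^3)$ is stated directly in terms of the curvature tensor, which is continuous in $(v,w)$ and has no degenerate denominator.
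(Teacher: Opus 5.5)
Your proposal is correct and is essentially the paper's proof. Both write the bracket as a $\hat\mu$-integral of pointwise defects on the cone, pass the limit inside the integral using a uniform bound, apply the Riemannian characterization of sectional curvature on $\Co^*$, and cancel the denominator via the warped-product curvature formula; the paper normalizes to $r=1$ by homogeneity of the cone instead of carrying $r_\mu$, which is equivalent. One minor correction: for a general $\M$, the remainder after the curvature term in $|sv-tw|^2-\d^2\big(\exp(sv),\exp(tw)\big)$ is only $O\big(s^2t^2(s+t)\big)$, because of $\nabla R$ contributions, and not $O(s^3t^3)$ as in the constant-curvature Lemma \ref{asym-lem}; this is still $o(s^2t^2)$ uniformly on compact sets, which is all your dominated-convergence step needs.
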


\begin{proof}
 \begin{align*}
 \Sec_\mu^\uparrow(\varphi,\psi)&=
 \!\!\! {\lim_{\tiny\begin{array}{c}
{s,t\to0}\\{t/s+s/t\textrm{ bdd}}\end{array}}\!\!\!}
\frac3{\Lambda s^2\,t^2}\Bigg[ |s\varphi-t\psi|_{H^1(\mu)}^2-
 \int_\Co
\d_\Co^2\Big(\exp^\Co_{x,r}\big(s(\nabla\varphi,2r\varphi)\big),\exp^\Co_{x,r}\big(t(\nabla\psi,2r\psi)\big)\Big)\,d\hat\mu(x,r)
\Bigg]\\
 &=
   \!\!\! {\lim_{\tiny\begin{array}{c}
{s,t\to0}\\{t/s+s/t\textrm{ bdd}}\end{array}}\!\!\!}
\frac3{\Lambda s^2\,t^2}\Bigg[ 
 \int_\M\bigg(  \Big|s(\nabla\varphi,2\varphi)-t(\nabla\psi,2\psi)\Big|^2_{x,1}\\
 &\qquad\qquad\qquad-
\d_\Co^2\Big(\exp^\Co_{x,1}\big(s(\nabla\varphi,2\varphi)\big),\exp^\Co_{x,1}\big(t(\nabla\psi,2\psi)\big)\Big)\bigg)\,d\mu(x)
\Bigg]\\
 &=\frac3\Lambda\int_\M    \!\!\! {\lim_{\tiny\begin{array}{c}
{s,t\to0}\\{t/s+s/t\textrm{ bdd}}\end{array}}\!\!\!}
\frac1{s^2\,t^2}\bigg( 
 \Big|s(\nabla\varphi,2\varphi)-t(\nabla\psi,2\psi)\Big|_{x,1}\\
 &\qquad\qquad\qquad-
\d_\Co^2\Big(\exp^\Co_{x,1}\big(s(\nabla\varphi,2\varphi)\big),\exp^\Co_{x,1}\big(t(\nabla\psi,2\psi)\big)\Big)\bigg)\,d\mu(x)
\\
&=
\frac1\Lambda\int_\M
\Sec^\Co_{x,1}\big((\nabla\varphi,2\varphi),(\nabla\psi,2\psi)\big)\\
&\qquad\qquad\cdot
\bigg[\Big( |\nabla\varphi|^2_x+4\varphi(x)^2\Big)\cdot
\Big( |\nabla\psi|^2_x+4\psi(x)^2\Big)-
\Big(\langle \nabla\varphi,\nabla\psi\rangle_x+4\varphi(x)\psi(x)\Big)^2
\bigg]
\,d\mu(x)\\
&=
\frac1\Lambda\int_\M
\Big(\Sec^\M_{x}\big(\nabla\varphi,\nabla\psi\big)-1\Big)\cdot
\Big( |\nabla\varphi|^2_x\cdot
 |\nabla\psi|^2_x-
\langle \nabla\varphi,\nabla\psi\rangle_x^2
\Big)
\,d\mu(x).
\end{align*}
Interchanging integration and limit is justified since $$\frac1{s^2\,t^2}\bigg( 
 \Big|s(\nabla\varphi,2\varphi)-t(\nabla\psi,2\psi)\Big|_{x,1}
-
\d_\Co^2\Big(\exp^\Co_{x,1}\big(s(\nabla\varphi,2\varphi)\big),\exp^\Co_{x,1}\big(t(\nabla\psi,2\psi)\big)\Big)\bigg)$$
is  bounded, uniformly in $x$, for sufficiently small $s,t$ with bounded ratio
according to the asymptotic estimate in Lemma \ref{asym-lem} and the fact that
 the sectional curvature of the cone at the points $(x,1)$ is bounded, i.e.~$|\Sec^\Co_{x,1}|\le C$, due to our standing assumption of bounded sectional curvature of $\M$. 
\end{proof}

\begin{corollary}
Assume that $\M=\mathbb S^n$ for $n\ge2$. Then
$$\Sec_\mu^\uparrow\big(\varphi,\psi \big)=0$$
for all $\mu, \varphi$ and $\psi$.
\end{corollary}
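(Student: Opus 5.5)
This is a direct specialization of Theorem \ref{lift-cone}, which applies because $\dim\M=n\ge2$. The round sphere $\mathbb S^n$ of radius one has constant sectional curvature $1$. So for every $x\in\M$ and every linearly independent pair $v,w\in\T_x\M$ we have $\Sec^\M_x(v,w)=1$, and the factor $\Sec^\M_x(\nabla\varphi,\nabla\psi)-1$ in the integrand of Theorem \ref{lift-cone} vanishes at every point where $\nabla\varphi(x)$ and $\nabla\psi(x)$ are linearly independent.

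At points where $\nabla\varphi(x)$ and $\nabla\psi(x)$ are linearly dependent (including where either gradient vanishes), the expression $\Sec^\M_x(\nabla\varphi,\nabla\psi)$ is undefined. There the bracket $|\nabla\varphi|_x^2|\nabla\psi|_x^2-\langle\nabla\varphi,\nabla\psi\rangle_x^2$ is zero. We use the same convention as in Theorem \ref{sec-sec}: the product is set to $0$, which matches the fact that $\langle R(v,w)v,w\rangle=0$ for dependent $v,w$. Hence the integrand vanishes identically on $\M$, and $\Sec^\uparrow_\mu(\varphi,\psi)=0$ whenever the normalizing quantity $\Lambda$ is nonzero. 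This covers every case in which the sectional curvature is defined at all, since it requires $\varphi,\psi$ to be linearly independent in $H^1(\mu)$.

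An equivalent, more conceptual argument goes through the preceding lemma on the curvature of the cone. For $\M=\mathbb S^n$ the cone $\Co$ is the Euclidean space $\R^{n+1}$, so $\Sec^\Co\equiv0$. Then the pointwise limit in the proof of Theorem \ref{lift-cone} vanishes, using the uniform boundedness already established there.

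The only point needing care is the degenerate set where the gradients are dependent, and it is handled by the stated convention. There is no real obstacle: the result is immediate from Theorem \ref{lift-cone}.
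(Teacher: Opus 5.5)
Your proof is correct and matches the paper, which states this corollary without separate proof as an immediate consequence of Theorem \ref{lift-cone}: $\Sec^{\mathbb S^n}\equiv1$ kills the integrand, and you justify the degenerate set correctly via $\langle R(v,w)v,w\rangle=0$. Your alternative cone remark is also valid and slightly stronger. Since the cone over the unit sphere is isometric to $\R^{n+1}$ via $(x,r)\mapsto rx$, the bracket defining $\Sec^\uparrow_\mu$ vanishes identically for all sufficiently small $s,t$, not just in the limit.
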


\begin{corollary}
Assume that $\M=\R^n$ or $\M=\mathbb T^n:=(\mathbb S^1)^n$ for $n\ge2$. Then
$$\Sec_\mu^\uparrow\big(\varphi,\psi \big)=-\frac1\Lambda \int_{\M}\Big[|\nabla\varphi|_x^2\, |\nabla\psi|_x^2-\langle \nabla\varphi,\nabla\psi\rangle_x^2
\Big]\,d\mu(x)$$
with $\Lambda$ as above.
\end{corollary}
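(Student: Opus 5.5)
This corollary should follow by direct specialization of Theorem \ref{lift-cone}. Its hypothesis $\dim\M\ge2$ holds in both cases $\M=\R^n$ and $\M=\mathbb T^n=(\mathbb S^1)^n$ with $n\ge2$. Both manifolds are smooth and without boundary, and their sectional curvature is uniformly bounded (it is identically zero), so the standing assumptions of the section are also satisfied. Hence for every $\mu\in\Mea$ and all $\varphi,\psi\in\C^\infty_c(\M)$ the lifted part is given by the integral formula of Theorem \ref{lift-cone}, with $\Lambda$ unchanged.

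The only input still needed is that $\Sec^\M_x(v,w)=0$ for every $x\in\M$ and every linearly independent pair $v,w\in\T_x\M$. For $\R^n$ this is the flatness of Euclidean space. For the flat torus $\mathbb T^n=\R^n/(2\pi\Z)^n$ it holds because the quotient map is a local isometry and curvature is local. Substituting $\Sec^\M_x(\nabla\varphi,\nabla\psi)-1=-1$ into the integrand then gives exactly the claimed formula.

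A small point concerns the points $x$ where $\nabla\varphi(x)$ and $\nabla\psi(x)$ are linearly dependent, including where one of them vanishes. There $\Sec^\M_x$ is undefined, but the factor $|\nabla\varphi|_x^2|\nabla\psi|_x^2-\langle\nabla\varphi,\nabla\psi\rangle_x^2$ vanishes. Adopting the same convention as in Theorem \ref{sec-sec}, namely that the product is set to $0$ there, both sides agree pointwise. There is no real obstacle in this argument; the only step needing care is confirming that this convention is the one implicitly used in Theorem \ref{lift-cone}, so that the integrands coincide on the degeneracy set as well.
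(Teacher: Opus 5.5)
Your proposal is correct and is exactly the paper's implicit argument: the corollary is stated without proof as the immediate specialization of Theorem \ref{lift-cone} to the flat case $\Sec^\M\equiv0$. Your concern about the degeneracy set is harmless. In the cone-curvature computation behind that theorem, the integrand is really $\langle R^\M_{V,W}(V),W\rangle_x-\big(|V|_x^2|W|_x^2-\langle V,W\rangle_x^2\big)$, and on flat $\R^n$ and $\mathbb T^n$ the first term vanishes identically, so no convention is needed.
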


\begin{proposition}\label{1d-hell} Assume that $\dim\M=1$. Then 
for all $\mu\in\Mea$ and all $\varphi,\psi\in\C_c^\infty(\M)\subset \T_\mu^{\sf ana}\Mea$,
$$\Sec_\mu^\uparrow\big(\varphi,\psi \big)=0.$$
\end{proposition}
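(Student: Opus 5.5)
The plan is to repeat the proof of Theorem \ref{lift-cone}, replacing the cone-curvature formula for $\dim\M\ge2$ by Lemma \ref{sec-1}. First, since $\hat\mu=\frac1{r_\mu^2}\mu\otimes\delta_{r_\mu}$, I will rewrite the integral over $\Co$ in the definition of $\Sec^\uparrow_\mu$ as an integral over $\M$ with respect to $\mu$. Using the cone scaling $\d_\Co((x,\lambda r),(y,\lambda r'))=\lambda\,\d_\Co((x,r),(y,r'))$, the integrand becomes the pointwise expression at $(x,1)$:
$$\Big|s(\nabla\varphi,2\varphi)-t(\nabla\psi,2\psi)\Big|^2_{x,1}-\d_\Co^2\Big(\exp^\Co_{x,1}\big(s(\nabla\varphi,2\varphi)\big),\exp^\Co_{x,1}\big(t(\nabla\psi,2\psi)\big)\Big).$$
This is exactly as in the first two lines of the proof of Theorem \ref{lift-cone}.

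Next, fix $x$. There are two cases for the tangent vectors $(\nabla\varphi(x),2\varphi(x))$ and $(\nabla\psi(x),2\psi(x))$ in $\T_{(x,1)}\Co$, which is two-dimensional.

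If they are linearly independent, the Corollary following Theorem \ref{alex-vs-sec} applies to the smooth two-dimensional Riemannian manifold $\Co^*$. Together with Lemma \ref{sec-1}, it shows that the pointwise quotient divided by $s^2t^2$ tends to $\frac13\Sec^\Co_{(x,1)}\cdot[\ldots]=0$.

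If they are linearly dependent, the two cone geodesics through $(x,1)$ lie on a single geodesic line. Then $\d_\Co$ between the two points equals the norm of the difference of the initial vectors exactly, for small $s,t$, so the integrand vanishes identically. Note that the vanishing of $\Sec^\Co$ is not enough by itself here; one needs the exact statement that the cone over a one-dimensional space is locally flat. Concretely, $\Co^*$ is locally isometric to a planar sector via polar coordinates $(x,r)\mapsto r e^{ix}$. For $s,t$ small, the distances are therefore exactly Euclidean, $\exp^\Co$ is the Euclidean exponential, and the bracket vanishes identically, in both cases.

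The remaining step, and the only real obstacle, is interchanging the limit and the $\mu$-integral. The cleanest route is the flatness observation above. Since $\varphi,\psi\in\C^\infty_c$ are bounded with bounded gradients, there is $\tau>0$ such that for $s,t<\tau$ the points $\exp^\Co_{x,1}(s(\cdot))$ and $\exp^\Co_{x,1}(t(\cdot))$ lie in a sector of angular width less than $\pi$ about $(x,1)$, uniformly in $x$. This needs $|s\nabla\varphi|,|t\nabla\psi|$ to be small, which a uniform bound provides. On such a sector $\d_\Co$ is the Euclidean distance of the developed points, which gives $\d_\Co^2=|s(\nabla\varphi,2\varphi)-t(\nabla\psi,2\psi)|^2_{x,1}$ exactly.

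Hence the bracket in the definition of $\Sec^\uparrow_\mu$ vanishes for all small $s,t$, so the limit exists and equals $0$. If $\M$ is a circle of length less than $\pi$, the sector still embeds locally, since only small $s,t$ matter, so the argument is unaffected.
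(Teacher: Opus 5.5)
Your proposal is correct. It starts as the paper's proof does, which consists of Lemma \ref{sec-1} together with the proof of Theorem \ref{lift-cone}: you reduce to the pointwise bracket at $(x,1)$ and note that the cone has vanishing curvature. Your decisive step, however, is different.

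The paper takes the pointwise limit $s^{-2}t^{-2}[\ldots]\to\frac13\Sec^\Co_{(x,1)}\cdot[\ldots]=0$. It then justifies exchanging limit and $\mu$-integral by a uniform bound, citing Lemma \ref{asym-lem} and the bounded curvature of the cone. You instead use that the two-dimensional cone over a one-dimensional $\M$ is locally flat. Under $(x,r)\mapsto re^{ix}$ one has $\d_\Co^2\big((x,r),(x',r')\big)=|re^{ix}-r'e^{ix'}|^2$ whenever $\d(x,x')\le\pi$. The explicit formula for $\exp^\Co_{(x,r)}(v,\sigma)$ becomes the Euclidean point $e^{ix}\big(r+\sigma+i\,rv\big)$. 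Hence the bracket vanishes identically for $s,t<\tau$, uniformly in $x$.

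What each route buys:
\begin{itemize}
\item Your route needs no limit–integral interchange at all. It also gives a stronger conclusion: the quotient is exactly zero for small $s,t$, so the limit exists along every $(s,t)\to0$, not only with bounded ratio. In effect it makes the paper's loose appeal to Lemma \ref{asym-lem} rigorous, since the cone is locally the $\kappa=0$ model space with zero error term.
\item The paper's route has the merit of being the same computation as in the case $\dim\M\ge2$.
\end{itemize}

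Two minor remarks. Once you have flatness, the split into linearly independent and dependent cases is superfluous. The caveat about circles of length less than $\pi$ is also unnecessary. What uniformity really requires, in particular for incomplete $\M$ such as an open interval, is this: points outside $\supp\varphi\cup\supp\psi$ do not move, and on that compact set $\exp^\M_x$ is defined on short vectors with $\d(\exp^\M_x a,\exp^\M_x b)=|a-b|$.
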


\begin{proof}
Lemma \ref{sec-1} and proof of Theorem \ref{lift-cone}.
\end{proof}

\subsection{The twisted part of the sectional curvature}
Let $\M$ be arbitrary (either $\dim\M\ge2$ or $\dim\M=1$).
Recall that the twisted part of the sectional curvature is
 \begin{align*}
 \Sec_\mu^\nabla(\varphi,\psi):=
 \lim_{t\to0}\frac3{\Lambda t^4}\Bigg[  \int_\Co
\d_\Co^2\Big(\exp^\Co_{x,r}\big(t(\nabla\varphi,2r\varphi)\big),&\exp^\Co_{x,r}\big(t(\nabla\psi,2r\psi)\big)\Big)\,d\hat\mu(x,r)\\
&- \HK^2\Big(\exp_\mu^\Mea(t\varphi),\exp_\mu^\Mea(t\psi)\Big)
\Bigg]
\end{align*}
and note that obviously $\Sec_\mu^\nabla(\varphi,\psi)\ge0$.
\begin{theorem} 
Assume $\mu=\rho\Leb^n$ with $\rho\in\C^\infty_c(\R^n)$ and $\varphi,\psi\in\C_c^\infty(\{\rho>0\})$.
\begin{enumerate}[\rm (i)]
\item
\begin{align*}
\Sec_\mu^\nabla(\varphi,\psi)
 =\frac3\Lambda\, \inf\bigg\{
\int_\M\Big[
|F-\nabla \eta|^2+4\eta^2
\Big]\,d\mu: \ \eta\in
H^1(\R^n,\mu)
\bigg\}
\end{align*}
with 
$$F:=\frac12\Big(\nabla^2\varphi\nabla\psi-\nabla^2\psi\nabla\varphi\Big).$$

\item
 The unique minimizer $\eta_0$ of the above functional is explicitly given as 
$$\eta_0=-\big(4-\Delta_{\rho}\big)^{-1}\big(\mathrm{div}_{\rho} F\big)$$
and
\begin{align*}
\inf_\eta
\int_\M\Big[
|F-\nabla \eta|^2+4\eta^2
\Big]\,d\mu
&=\int|F|^2d\mu-\int \Big[\nabla \eta_0|^2+4\eta_0^2\Big]d\mu\\
&=\int \big[|F|^2- (\mathrm{div}_\rho F) \, \big(4-\Delta_\rho\big)^{-1} (\mathrm{div}_\rho F)\Big]\,d\mu\\
&=4\int\Big|\Big(-\Delta_\rho(4-\Delta_\rho)\Big)^{-1/2}\mathrm{div}_{\rho}  F\Big|^2\,d\mu.
\end{align*}
For the well-definedness of the last expression, observe that the operator $-\Delta_\rho$ is invertible on the set of functions $u$ with $\int u\rho dx=0$ and the function $u:=\mathrm{div}_\rho F$ matches this condition.
\end{enumerate}
\end{theorem}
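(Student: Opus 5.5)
The plan mirrors the proof of Theorem~\ref{sec-proj}, now on the cone: first build an explicit, non-optimal transport between the two pseudo-geodesic endpoints, then use polar factorization to compare it with the optimal one. Put $p_t:=\exp^\Mea_\mu(t\varphi)$ and $q_t:=\exp^\Mea_\mu(t\psi)$.

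\textbf{Step 1: the non-optimal map and its expansion.} Pushing $\hat\mu$ forward under $(x,1)\mapsto\big(\exp^\Co_{x,1}(t(\nabla\varphi,2\varphi)),\exp^\Co_{x,1}(t(\nabla\psi,2\psi))\big)$ gives a coupling whose cost is the first integral in $\Sec^\nabla_\mu$. Using the asymptotics of $V_t^{\sf hor}$ and $V_t^{\sf vert}$, I would write the induced map on the cone from $\hat p_t$ to a lift of $q_t$ as
\[
U_t=(\Id,1)+t\big(\nabla f,2f\big)+t^2\big(G_t,2g_t\big)+O(t^3),\qquad f:=\psi-\varphi,
\]
that is, a horizontal displacement plus a multiplicative radial factor. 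I would then identify the second-order horizontal term modulo gradients. As in Theorem~\ref{sec-proj}, the Hessian terms give $\nabla^2(\varphi-\psi)\nabla\varphi$, while the cone corrections $-t^2\nabla\varphi^2$ and the vertical $\tfrac12 t^2|\nabla\varphi|^2$ are exact gradients or functions. These can be absorbed, because the $H^1(\mu)$-geometry only sees the pair $(\nabla\cdot,2\,\cdot)$. Symmetrizing via $\nabla^2\varphi\nabla\psi+\nabla^2\psi\nabla\varphi=\nabla\langle\nabla\varphi,\nabla\psi\rangle$ should leave $F=\tfrac12(\nabla^2\varphi\nabla\psi-\nabla^2\psi\nabla\varphi)$ as the non-gradient part. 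For the radial component there is only a function at order $t^2$, so it is fully absorbable.

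\textbf{Step 2: the optimal map.} By Theorem~\ref{Brenier} and the Pinning Lemma~\ref{simplification} (with $\rho_0=\hat p_t$), the optimal $\HK$-transport from $p_t$ to $q_t$ is $\exp^\Co(\nabla\theta_t,2r\theta_t)$ for some $\theta_t\in H^1(p_t)$. By elliptic regularity I expect $\theta_t=tf+t^2\eta_t$ with $\eta_t$ smooth in $t$. The non-optimal map and the optimal one both transport $p_t$ onto $q_t$ in the projected sense, though not with the same lift. I would test both against $u\,r^2$ with $u\in\C^1_c$, using that $\Pr$ weights by $r^2$. Taylor expanding as in step (iii) of Theorem~\ref{sec-proj} should give the Euler--Lagrange relation
\[
\int\big[\langle\nabla u,F-\nabla\eta_0\rangle-4u\eta_0\big]\,d\mu=0\quad\text{for all }u.
\]
This says exactly that $(\nabla\eta_0,2\eta_0)$ is the $H^1(\mu)$-projection of $(F,0)$ onto $\{(\nabla\eta,2\eta)\}$. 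The weight $4$ arises because mass change enters the projected marginal with a factor $2\cdot 2$.

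\textbf{Step 3: the cost difference, and the expected obstacle.} I would then expand the difference of the two costs in $\d^2_\Co$, using $\d_\Co^2\approx r^2|\delta x|^2+(\delta r)^2$ to the needed order. The almost-orthogonality relations, the analogues of \eqref{3333} and \eqref{2222}, should kill the $t^3$ term and leave
\[
t^4\int\big[|F-\nabla\eta_0|^2+4\eta_0^2\big]\,d\mu+O(t^5).
\]
Multiplying by $3/\Lambda$ then gives (i); the two-parameter limit is handled exactly as in step (vi) of Theorem~\ref{sec-proj}. I expect the main obstacle to be the bookkeeping in Steps~1 and~3. The cone's exponential map contributes many $t^2$ and $t^3$ terms, and one has to verify that all of them except $F$ are of the form $(\nabla h,2h)$, so that they cancel against the projection. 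The risk is a stray factor of $\tfrac12$ or an extra non-gradient term; I would first check the computation on $\M=\R^1$ with constant $\rho$.

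\textbf{Part (ii).} The functional is strictly convex and coercive on $H^1(\mu)$, so the minimizer $\eta_0$ is unique. Its Euler--Lagrange equation, weakly $-\mathrm{div}_\rho(F-\nabla\eta_0)+4\eta_0=0$, rearranges to $(4-\Delta_\rho)\eta_0=-\mathrm{div}_\rho F$, which gives the stated formula. Pythagoras for the orthogonal projection then gives the first identity, and the second follows by integrating by parts in $\int|\nabla\eta_0|^2+4\eta_0^2=\int\eta_0(4-\Delta_\rho)\eta_0$. For the last identity I would take $\eta_1:=\Delta_\rho^{-1}\mathrm{div}_\rho F$, so that $\nabla\eta_1$ is the gradient projection, and use $|F|^2=|F-\nabla\eta_1|^2+|\nabla\eta_1|^2$. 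The spectral calculus identity $(-\Delta_\rho)^{-1}-(4-\Delta_\rho)^{-1}=4\,(-\Delta_\rho(4-\Delta_\rho))^{-1}$ then yields the factor $4$. One caveat is that $\int|F-\nabla\eta_1|^2$ must be accounted for separately, so the final formula should be checked against this decomposition.
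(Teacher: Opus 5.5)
Your route for (i) is the paper's own. It uses $U_t=W_t\circ V_t^{-1}$, the optimal map $\exp^\Co(\nabla\theta_t,2r\theta_t)$ from the Pinning Lemma and Theorem~\ref{Brenier}, the projection identity obtained by testing with $u\,r^2$, the almost-orthogonality cancellations, and step (vi) for the two-parameter limit.

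The decisive point is the caveat at the end of your (ii), and it must be resolved \emph{against} the statement: the last equality in (ii) is false unless $F$ is a $\rho$-gradient. Your decomposition, integrated against $\mu$, gives exactly
\[
\int|F|^2\,d\mu-\int(\mathrm{div}_\rho F)\,(4-\Delta_\rho)^{-1}(\mathrm{div}_\rho F)\,d\mu
=\int|F-\nabla\eta_1|^2\,d\mu+4\int\Big|\big(-\Delta_\rho(4-\Delta_\rho)\big)^{-1/2}\mathrm{div}_\rho F\Big|^2\,d\mu .
\]
So the stated formula drops $\delta^2(F)=\int|F-\nabla\eta_1|^2\,d\mu$, which is positive whenever $F$ is not a gradient. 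For $n=1$, where the paper itself notes $\delta(F)=0$, the two agree.

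The paper reaches the stated form only through the claim ``$F=\nabla f$ for $f:=\Delta_\rho^{-1}\mathrm{div}_\rho F$'' in step (v), and that claim is false in general. Its own NPC argument picks $F$ that is not a gradient. Moreover, for the rescaled data of Theorem~\ref{scal-space}, the stated last expression is $O(\ell^6)$ while $\Lambda_\ell\sim\ell^8$, which would force $\Sec^*_\mu=0$. So prove the corrected identity above instead. Your treatment of uniqueness, of $\eta_0$, and of the first two equalities is fine. The one exception is a sign slip: the strong Euler--Lagrange equation should read $\mathrm{div}_\rho(F-\nabla\eta_0)+4\eta_0=0$.

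Two steps of your sketch of (i) would also fail if carried out literally.

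First, $\inf_\eta\|(G,2g)-(\nabla\eta,2\eta)\|_\mu^2$ is invariant only under joint shifts by pairs $(\nabla h,2h)$. A horizontal gradient or a vertical function cannot be discarded on its own, so ``the radial component is fully absorbable'' is not a valid step. What you need is the identity the paper verifies. Relative to $\exp^\Co(t(\nabla\xi,2\xi))$ with $\xi=\psi-\varphi$, the $t^2$-coefficient of $U_t$ is $(F_0,2f_0)$, where $F_0=-2\nabla(\xi\varphi)-\nabla^2\xi\nabla\varphi$ and $f_0=-2\xi\varphi-\frac12\langle\nabla\xi,\nabla\varphi\rangle$. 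Since $F_0-\nabla f_0=F$, this gives $(F_0,2f_0)=(F,0)+(\nabla f_0,2f_0)$.

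Second, in Step 3 the approximation $\d_\Co^2\approx r^2|\delta x|^2+(\delta r)^2$ is too crude. In $\d_\Co^2=(r-r')^2+rr'\,4\sin^2(\frac12|x-x'|)$, the cross term $r\,\delta r\,|\delta x|^2$ changes the difference of the two costs by $t^4\int(2|\nabla\xi|^2-8\xi^2)(f_0-\eta)\,d\mu+O(t^5)$, where $\theta_t=t\xi+t^2\eta+O(t^3)$. This is nonzero in general. Keeping the factor $rr'$ is exactly what makes the $t^3$-terms and the mixed $t^4$-terms combine into the expression controlled by \eqref{zzz}. The $\sin^2$-versus-quadratic correction is $O(t^5)$ in the difference and harmless.
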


\begin{proof}
{\it (i) The non-optimal transport map.}
Put 
$$\alpha_t:=\exp^\Mea_\mu(t\varphi)=\Pr(p_t),\qquad
p_t:=\exp^{\Wass_2}_{\hat\mu}\big((t\nabla\varphi,2tr\varphi)\big)=(V_t)_*\mu,$$  

$$V_t(x,r)=\exp_{x,r}^\Co\big((t\nabla\varphi,2tr\varphi)\big)=\big(V_t^{\sf hor}(x),V_t^{\sf vert}(x,r)\big),$$

$$V_t^{\sf hor}(x)=x+\arctan\bigg(\frac{t|\nabla\varphi(x)|}{1+2t\varphi(x)}\bigg)
\frac{\nabla\varphi(x)}{|\nabla\varphi(x)|}
$$

$$V_t^{\sf vert}(x,r)=r\,\big[(1+2t\varphi(x))^2+t^2|\nabla\varphi(x)|^2\big]^{1/2}.$$
Similarly,
$$\beta_t:=\exp^\Mea_\mu(t\psi)=\Pr(q_t),\qquad
q_t:=\exp^{\Wass_2}_{\hat\mu}\big((t\nabla\psi,2tr\psi)\big)=(W_t)_*\mu.$$  
Then
$$W_t^{\sf hor}(x)=x+t\nabla\psi(x)-2t^2\psi(x)\nabla\psi(x)+\mathit{O}(t^3),$$
$$W^{\sf vert}_t(x,r)=r\cdot\Big[1+2t\psi(x)+\frac12t^2|\nabla\psi|^2(x)+\mathit{O}(t^3)\Big],$$
and
$$(V_t^{-1})^{\sf hor}(x)=x
-t\nabla\varphi+t^2\,\nabla^2\varphi \nabla\varphi+
2t^2\varphi(x)\nabla\varphi(x)+\mathit{O}(t^3),$$
$$(V_t^{-1})^{\sf vert}_t(x,r)=r\cdot\Big[1-2t\varphi(x)
+4t^2\varphi^2(x)
+\frac32t^2|\nabla\varphi|^2(x)+\mathit{O}(t^3)\Big].$$

Furthermore, put 
$U_t:=W_t\circ V_t^{-1}$
such that
$(U_t)_*p_t=q_t$. Then
\begin{align*}U_t^{\sf hor}(x)=&
x+t\nabla\psi-2t^2\psi\nabla\psi-t^2\nabla^2\psi\nabla\varphi\\
&
-t\nabla\varphi+t^2\,\nabla^2\varphi \nabla\varphi+
2t^2\varphi\nabla\varphi+\mathit{O}(t^3)\\
=&x+t\nabla(\psi-\varphi)+t^2\nabla\Big(-\psi^2+\varphi^2+\frac12|\nabla\varphi|^2\Big)-t^2\nabla^2\psi\nabla\varphi
+\mathit{O}(t^3),\end{align*}
\begin{align*}U_t^{\sf vert}(x,r)&=r\cdot\Big[1+2t\psi-2t\varphi +\frac12t^2|\nabla\psi|^2
+4t^2\varphi^2
+\frac32t^2|\nabla\varphi|^2-4t^2\psi\,\varphi-2t^2\nabla\psi\,\nabla\varphi+\mathit{O}(t^3)\Big].
\end{align*}
Choose $F_t, f_t$ such that
$$U_t^{\sf hor}=
x+t\nabla\xi-t^2\nabla\xi^2+t^2F_t,
\qquad
U_t^{\sf vert}(x,1) =1+2t\xi+\frac12t^2|\nabla\xi|^2+2t^2f_t$$
with
$\xi:=\psi-\varphi$.
Note that
\begin{align*}F_0&:=\nabla\Big(-\psi^2+\varphi^2+\frac12|\nabla\varphi|^2\Big)-\nabla^2\psi\nabla\varphi
+\nabla\xi^2\\
&=
-2\nabla\big(\xi\varphi\big)-\nabla^2\xi\nabla\varphi,\\
f_0&:=\frac14\Big(|\nabla\psi|^2
+8\varphi^2
+3|\nabla\varphi|^2-4\nabla\psi\nabla\varphi-8\psi\,\varphi
-|\nabla\xi|^2\Big)\\
&=-2\xi\varphi- \frac12\nabla\xi\nabla\varphi.
\end{align*}

\bigskip

{\it (ii) The optimal transport map.} 
The Pinning Lemma \ref{simplification} together with the McCann-Brenier Theorem on the cone imply that for every  absolutely continuous lift of $\alpha_t$ there exists  some lift of $\beta_t$ and an $\HK$-optimal map $T_t$  on $\Co$ which transports 
the lift of $\alpha_t$ to the lift of $\beta_t$.
More precisely, $(\Pr\circ T_t)_*p_t=\beta_t$ and $\HK(\alpha_t,\beta_t)=\W_\Co(p_t, (T_t)_*p_t)$ with
$$T_t(x,r)=\exp_{x,r}^\Co\big((\nabla\theta_t,2r\theta_t)\big)=\big(T_t^{\sf hor}(x),T_t^{\sf vert}(x,r)\big)$$
for some function $\theta_t$ on $\M$. 
By elliptic regularity theory, $\theta_t$ smoothly depends on $t$ in a small neighborhood around 0. Furthermore, $\theta_0=0$ and $\theta'_0=\psi-\varphi$. Thus there exists 
 a $\C^\infty_c$ function 
$\eta$   such that
$\theta_t=t\xi+t^2\eta+\mathit{O}(t^3).$
Therefore,
\begin{align*}T^{\sf hor}_t(x)&=
x+\nabla\theta_t-\nabla\theta_t^2+\mathit{O}(t^3)\\
&=
x+t\nabla \xi-t^2\nabla\xi^2+t^2\nabla\eta+\mathit{O}(t^3),\end{align*}
\begin{align*}T^{\sf vert}_t(x,r)
&=r\cdot\Big[1+2\theta_t+\frac12|\nabla\theta_t|^2+\mathit{O}(t^3)\Big]
\\
&=
r\cdot\Big[1+2t\xi+\frac12t^2|\nabla\xi|^2+2t^2\eta
+\mathit{O}(t^3)\Big].
\end{align*}
Choose $G_t, g_t$ such that
$$T_t^{\sf hor}=
x+t\nabla\xi-t^2\nabla\xi^2+t^2G_t,
\qquad
T_t^{\sf vert}(x,1) =1+2t\xi+\frac12t^2|\nabla\xi|^2+2t^2g_t.$$
\bigskip

{\it (iii) The projection.}
We know that
$$(\Pr\circ T_t)_*p_t=\beta_t\quad\textrm{as well as}\quad
(\Pr\circ U_t)_*p_t=\beta_t.$$ 
Furthermore, for every  $u\in\C^1_c(\M)$,
\begin{align*}
\int_\M u\, d\Big((\Pr\circ U_t)_*p_t\Big)&=\int_\Co u(x) r^2 d\Big((U_t)_*p_t\Big)(x,r)\\
&=\int_\Co u\Big(U_t^{\sf hor}(x)\Big)\cdot U_t^{\sf vert}(x,r)^2\,dp_t(x,r)\\
&=\int_\Co u\Big(U_t^{\sf hor}(x)\Big)\cdot U_t^{\sf vert}(x,1)^2\,r^2\,dp_t(x,r)
\\
&=\int_\M u\Big(U_t^{\sf hor}(x)\Big)\cdot U_t^{\sf vert}(x,1)^2\,d\alpha_t(x)
\end{align*}
and similarly with $T_t$ in the place of $U_t$. Thus
$$\int_\M u\Big(U_t^{\sf hor}(x)\Big)\cdot U_t^{\sf vert}(x,1)^2\,d\alpha_t(x)=\int_\M u\Big(T_t^{\sf hor}(x)\Big)\cdot T_t^{\sf vert}(x,1)^2\,d\alpha_t(x).$$
That is,
\begin{align*}
\int_\M &u\Big(x+t\nabla\xi-t^2\nabla
\xi^2+t^2F_t
\Big)
\Big[1+2t\xi+\frac12t^2|\nabla\xi|^2+2t^2f_t
\Big]^2\,d\alpha_t(x)\\
&=
\int_\M u\Big(x+t\nabla\xi-t^2\nabla
\xi^2+t^2G_t
\Big) 
\Big[1+2t\xi+\frac12t^2|\nabla\xi|^2+2t^2g_t
\Big]^2\,d\alpha_t(x).
\end{align*}
Taylor expansion of $u$ around $x$ yields (after cancellation of equal terms),
\begin{align}\nonumber
&\int_\M\bigg(4 u\,(f_t-g_t)\, [
1+2t\xi]+4t\,\langle\nabla u,\nabla\xi\rangle\,(f_t-g_t)\\
&\quad+ \big\langle\nabla u, F_t-G_t\big\rangle\,[
1+4t\xi]
+t\big\langle\nabla\xi\,\nabla^2u, F_t-G_t\big\rangle\bigg)\,d\alpha_t(x)=\mathit{O}(t^2).
\label{xxx}
\end{align}
Thus
\begin{equation}\label{yyy}
\int_\M\Big[4 u\cdot(f_t-g_t)
+\big\langle\nabla u, F_t-G_t\big\rangle\Big]\,d\alpha_t(x)=\mathit{O}(t^1)
\end{equation}
for all $u\in\C^1_c(\M)$. In particular, since, $\alpha_t=\big(1+\mathit{O}(t)\big)\,\mu$ and $G_0=\nabla\eta, g_t=\eta$,
\begin{equation}\label{zyx}\int_\M\Big[\langle\nabla u,F_0-\nabla\eta\rangle +4\, u\cdot(f_0-\eta)\Big]d\mu=0\end{equation}
for all  $u\in\C^1_c(\M)$.

Moreover, applying \eqref{yyy} to $2\xi^2+\frac12|\xi|^2$ and \eqref{xxx} to $\xi$ yields
\begin{align}
\label{zzz}
\int_\M&\bigg[4 \Big(\xi+\frac 12t|\nabla\xi|^2\Big)\,(f_t-g_t)
+\Big\langle\nabla \xi, F_t-G_t\Big\rangle\bigg]\,d\alpha_t(x)\\
&\stackrel{\eqref{yyy}}{=}\int_\M\bigg[4 \Big(\xi+2t\xi^2+ t|\nabla\xi|^2\Big)\,(f_t-g_t)
+\Big\langle\nabla\Big( \xi+2t\xi^2+\frac12t|\nabla\xi|^2\Big), F_t-G_t\Big\rangle\bigg]\,d\alpha_t(x)+\mathit{O}(t^2)\nonumber\\
&=\int_\M\bigg(4 \xi\,(f_t-g_t)\, [
1+2t\xi]+4t\,\langle\nabla \xi,\nabla\xi\rangle\,(f_t-g_t)\nonumber\\
&\quad+ \big\langle\nabla \xi, F_t-G_t\big\rangle\,[
1+4t\xi]+t\big\langle\nabla\xi\,\nabla^2\xi, F_t-G_t\big\rangle\bigg)\,d\alpha_t(x)+\mathit{O}(t^2)\nonumber\\
&\stackrel{\eqref{xxx}}{=}\mathit{O}(t^2)\nonumber.
\end{align}

Define the Hilbert space  $(\mathcal H, \|.\|_\mu)$ as the closure of the set of pairs
$(H,h)$ where the $H$'s are compactly supported smooth vector fields on $\M$ and the $h$'s are compactly supported  smooth functions on $\M$, and the norm of such a pair is given by 
$$\|(H,h)\|^2_\mu:=\int_\M \Big(| H|_x^2+h(x)^2\big)d\mu(x).$$
The space $H^1(\mu)$ is isometrically embedded into $\mathcal H$ with $\|(\nabla u,2u)\|_\mu=\|u\|_{H^1(\mu)}$.
With this notation, our pervious result \eqref{zyx} states that
$$\big\langle (\nabla u,2u), (F_0,2f_0)-(\nabla \eta,2\eta)
\big\rangle_\mu=0
$$
for all $u\in H^1(\mu)$. That is, the vector $(F_0,2f_0)-(\nabla \eta,2\eta)\in \mathcal H$ is orthogonal to the subspace $H^1(\mu)$. Therefore, $\eta$ is the projection of $(F_0,2f_0)$ onto $H^1(\mu)$, and for all $u\in H^1(\mu)$,
$$\big\|(\nabla  u,2u)-(F_0,2f_0)\|_\mu^2=\big\|(\nabla  \eta,2\eta)-(F_0,2f_0)\|_\mu^2+\|u-\eta\|_{H^1\mu)}^2.$$

\bigskip

{\it (iv) The transport cost estimate.} Our goal is to calculate 
 \begin{align*}
 \int_\Co
\d_\Co^2\Big(V_t(x,r),&W_t(x,r)\Big)\,d\hat\mu(x,r)
- \HK^2\Big(\exp_\mu^\Mea(t\varphi),\exp_\mu^\Mea(t\psi)\Big)\\
&=\int_\Co\bigg[
\d_\Co^2\Big((x,r),U_t(x,r)\Big)-\d_\Co^2\Big((x,r),T_t(x,r)\Big)\bigg]\,dp_t(x,r).
\end{align*}
To do so, note that
\begin{align*}
\d_\Co^2\Big((x,r),T_t(x,r)\Big)&=\Big|r-T^{\sf vert}_t(x,r)\Big|^2+rT^{\sf vert}_t(x,r)\cdot 4\sin^2\bigg(\frac 12
\Big|x-T^{\sf hor}_t(x,r)\Big|\bigg)
\end{align*}
and thus
\begin{align*}
\frac1{r^2}\,\d_\Co^2\Big((x,r),T_t(x,r)\Big)
&=\Big|2t\xi
+\frac12t^2|\nabla\xi|^2+2t^2 g_t
\Big|^2\\
&+\Big[1+2t\xi
+\frac12t^2|\nabla\xi|^2+2t^2g_t\Big]
\cdot 
 4\sin^2\bigg(\frac 12
\Big[
t\nabla\xi-t^2\nabla\xi^2+t^2G_t
\Big]\bigg).
\end{align*}
Similarly,
\begin{align*}
\frac1{r^2}\,\d_\Co^2\Big((x,r),U_t(x,r)\Big)&=\Big|2t\xi
+\frac12t^2|\nabla\xi|^2+2t^2f_t
\Big|^2\\
&+\Big[1+2t\xi
+\frac12t^2|\nabla\xi|^2+2t^2f_t\Big]\cdot 
4 \sin^2\bigg(\frac 12
\Big[
t\nabla\xi-t^2\nabla\xi^2+t^2F_t
\Big]\bigg).
\end{align*}
Hence,
\begin{align*}
\frac1{r^2}\Big[\d_\Co^2\Big((x,r),U_t(x,r)\Big)&-\d_\Co^2\Big((x,r),U_t(x,r)\Big)\Big]
\\
=&
\Big|2t\xi
+\frac12t^2|\nabla\xi|^2+2t^2 f_t
\Big|^2-\Big|2t\xi
+\frac12t^2|\nabla\xi|^2+2t^2 g_t
\Big|^2\\
&+\Big[1+2t\xi
+\frac12t^2|\nabla\xi|^2+2t^2f_t\Big]
\cdot 
\Big[
t\nabla\xi-t^2\nabla\xi^2+t^2F_t
\Big]^2\\
&-\Big[1+2t\xi
+\frac12t^2|\nabla\xi|^2+2t^2g_t\Big]
\cdot 
\Big[
t\nabla\xi-t^2\nabla\xi^2+t^2G_t
\Big]^2+\mathit{O}(t^5)\\
=&4t^4\big(f_t^2-g_t^2\big)+4t^3\big(2\xi
+\frac12t|\nabla\xi|^2\big)(f_t-g_t)\\
&+2t^4(f_t-g_t)\,|\nabla\xi|^2\\
&+t^4\big(|F_t|^2-|G_t|^2\big)+2(1+2t\xi)\,t^3\langle \nabla\xi-t\nabla\xi^2,
F_t-G_t\rangle+\mathit{O}(t^5)\\
=&4t^4(f_t-g_t)^2\\
&+2t^3\Big(4\xi+2t|\nabla\xi|^2
+4tg_t\Big)\, \big(f_t-g_t\big)\\
&+t^4|F_t-G_t|^2\\
&+2\,t^3\Big\langle\nabla\xi+tG_t,
F_t-G_t\Big\rangle+\mathit{O}(t^5).
\end{align*}
If we integrate this with respect to $\alpha_t$ then
according to \eqref{zzz} the term
$$8t^3\Big(\xi+\frac12t|\nabla\xi|^2\Big)\,(f_t-g_t)+2t^3\langle\nabla\xi,F_t-G_t\rangle$$
is of order $\mathit{O}(t^5)$,
and so is
$$8t^4g_t(f_t-g_t)+2t^4\langle G_t,F_t-G_t\rangle$$
according to \eqref{yyy} (taking into account that $g_0=\eta, G_0=\nabla\eta$). Thus
 \begin{align*}
 \int_\Co
\d_\Co^2\Big(V_t(x,r),&W_t(x,r)\Big)\,d\hat\mu(x,r)
- \HK^2\Big(\exp_\mu^\Mea(t\varphi),\exp_\mu^\Mea(t\psi)\Big)\\
&=\int_\Co\bigg[
\d_\Co^2\Big((x,r),U_t(x,r)\Big)-\d_\Co^2\Big((x,r),T_t(x,r)\Big)\bigg]\,dp_t(x,r).
\\
&=\int_\Co\bigg[4t^4(f_t-g_t)^2
+t^4|F_t-G_t|^2\bigg]\,r^2\,dp_t(x,r)+\mathit{O}(t^5)
\\
&=
t^4\,\int_\M\bigg[4(f_t-g_t)^2
+|F_t-G_t|^2\bigg]\,d\alpha_t(x)+\mathit{O}(t^5)
\\
&=
t^4\,\int_\M\bigg[4(f_0-\eta)^2
+|F_0-\nabla\eta|^2\bigg]\,d\mu(x)+\mathit{O}(t^5)
\end{align*}

\bigskip

{\it (v) The conclusion.}
\begin{align*}
\Sec_\mu^\nabla(\varphi,\psi)&:=
  \lim_{t\to0}\frac3{\Lambda\,t^4}\bigg[
  \int_\Co
\d_\Co^2\Big(V_t(x,r),W_t(x,r)\Big)\,d\hat\mu(x,r)
- \HK^2\Big(\exp_\mu^\Mea(t\varphi),\exp_\mu^\Mea(t\psi)\Big)
 \bigg]\\
&=\frac3\Lambda\, 
\int_\M\Big[
|F_0-\nabla\eta|^2+4|f_0-\eta|^2
\Big]\,d\mu\\
&=\frac3\Lambda\, \inf\bigg\{
\int_\M\Big[
|F_0-\nabla u|^2+4|f_0-u|^2
\Big]\,d\mu: \ u\in\C^\infty_c(\R^n)\bigg\}.
\end{align*}

Recall that
\begin{align*}F_0=
-2\nabla\big(\xi\varphi\big)-\nabla^2\xi\nabla\varphi,\qquad f_0=-2\xi\varphi- \frac12\nabla\xi\nabla\varphi
\end{align*}
\bigskip
and 
note that
$$\inf_u \big\|(\nabla  u,2u)-(F_0,2f_0)\|_\mu^2=\inf_u\big\|(\nabla  u,2u)-(F^*,2f^*)\|_\mu^2$$
with 
$$F^*:=
F_0-\nabla f_0=
\frac12\Big(\nabla^2\varphi\nabla\psi-\nabla^2\psi\nabla\varphi\Big), \qquad f^*:=0.$$
\bigskip

By variational calculus, 
any minimizer $u_0$ of the functional
$u\mapsto \int_\M\big[
|F_0-\nabla u|^2+4|f_0-u|^2
\big]\,d\mu$
satisfies
$$
\int_\M\Big[
\big\langle F_0-\nabla u_0,\nabla v\big\rangle+4(f_0-u_0)\,v\Big]
\,d\mu=0$$
and thus
$$
\int_\M\Big[-\mathrm{div}_\mu F_0+4f_0+\Delta_\mu u_0-4u_0
\Big]\,v
\,d\mu=0$$
for all $v\in\C_c^1(\M)$.
Hence, $-\mathrm{div}_\mu F_0+4f_0+\Delta_\mu u_0-4u_0=0$ a.e. and
$$u_0=\big(4-\Delta_{\mu}\big)^{-1}\big(-\mathrm{div}_{\mu} F_0+ 4f_0\big).$$

Finally, observe that $F=\nabla f$ for $f:=(\Delta_\rho)^{-1}\mathrm{div}_{\mu} F$.
\begin{align*}&\int \big[|F|^2- (\mathrm{div}_\rho F) \, \big(4-\Delta_\rho\big)^{-1} (\mathrm{div}_\rho F)\Big]\,d\mu\\
&=4\int (\mathrm{div}_\rho F)\Big(-\Delta_\rho(4-\Delta_\rho)\Big)^{-1}(\mathrm{div}_{\rho}  F)\,d\mu\\
&=4\int\Big|\Big(-\Delta_\rho(4-\Delta_\rho)\Big)^{-1/2}\mathrm{div}_{\rho}  F\Big|^2\,d\mu.
\end{align*}

\bigskip

{\it (vi) The final extension to the 2-parameter limit.}
Following the argumentation from the proof of Theorem \ref{sec-proj}, 
it remains  to prove that the transport cost estimate in (iv) remains valid (with $i$-independent error term) for $\varphi_\ell:=a_\ell\,\varphi$
in the place of $\varphi$ where $a_\ell:=\frac{s_\ell}{t_\ell}\to 1$.
Again this is easily verified.
\end{proof}

\begin{remark}
Recall the well-known fact that the sectional curvature in the Kantorovich-Wasserstein space $(\Wass_2(M), \W_2)$ vanishes whenever the base space $\M$ is 1-dimensional. This is no longer the case for the sectional curvature in the Hellinger-Kantorovich space $(\Mea(M), \HK)$.
Indeed, 
$$\int\Big|\Big(-\Delta_\rho(4-\Delta_\rho)\Big)^{-1/2}\mathrm{div}_{\rho}  F\Big|^2\,d\mu>0$$ whenever $\mathrm{div}_{\rho}  F\not\equiv0$.
\end{remark}

\subsection{Spacial scaling} The explicit representation formulas for lifted and twisted parts of the sectional curvature on $\Mea(\R^n)$ allow us to deduce a further important scaling property of the sectional curvature.
We restrict ourselves to formulate it in the case $\M=\R^n$.

\begin{theorem}\label{scal-space} Given  $\mu=\rho\Leb^n$ with $\rho\in\C^\infty_c(\R^n)$ and $\varphi,\psi\in\C_c^\infty(\{\rho>0\})$ as before, define 
$$
\varphi_\ell(x):=\varphi(\ell x), \qquad 
\psi_\ell(x):=\psi(\ell x), \qquad \rho_\ell(x):=\ell^{n+2}\,\rho(\ell x), $$
and $\mu_\ell:=\rho_\ell\Leb^n$
for $\ell\in\N$. Then
$$ \Sec_{\mu_\ell}(\varphi_\ell,\psi_\ell)\to  \Sec^*_\mu(\varphi,\psi)$$
as $\ell\to\infty$ with 
$ \Sec^*_\mu(\varphi,\psi):=
\frac{3}{\Lambda^*}\, \inf\big\{
\int_\M\Big[
|F-\nabla \eta|^2
\Big]\,\rho\,dx: \ \eta\in
\mathcal C_c^\infty(\R^n)
\big\}
$ and
$\Lambda^*:=\big(\int|\nabla\varphi|^2d\mu\big)\cdot \big(\int|\nabla\psi|^2d\mu\big)-
\big(\int\langle\nabla\varphi,\nabla\psi\rangle d\mu\big)^2
$.

 If $\mu$ is a probability measure then in particular $\Sec^*_\mu(\varphi,\psi)$ coincides with the sectional curvature $\Sec_\mu(\nabla\varphi,\nabla\psi)$ in the space $(\Wass_2,\W_2)$ as
defined and analyzed in Def.~\ref{def-sec-wass} and Thm.~\ref{sec-proj}.
\end{theorem}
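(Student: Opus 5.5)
The plan is to reduce everything to the explicit representation formulas already established and then track powers of $\ell$ under the change of variables $y=\ell x$. By Theorem \ref{def-dec-scal}(i), $\Sec_{\mu_\ell}(\varphi_\ell,\psi_\ell)=\Sec^\uparrow_{\mu_\ell}(\varphi_\ell,\psi_\ell)+\Sec^\nabla_{\mu_\ell}(\varphi_\ell,\psi_\ell)$. For $n\ge2$ the lifted part is given by the Corollary to Theorem \ref{lift-cone} for $\R^n$; for $n=1$ it vanishes by Proposition \ref{1d-hell}. The twisted part is given by the preceding theorem, whose hypotheses are preserved by the rescaling: $\rho_\ell\in\C^\infty_c$ and $\varphi_\ell,\psi_\ell\in\C^\infty_c(\{\rho_\ell>0\})$.

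The basic scaling rules are
\[
\nabla\varphi_\ell(x)=\ell\,\nabla\varphi(\ell x),\qquad \nabla^2\varphi_\ell(x)=\ell^2\,\nabla^2\varphi(\ell x),\qquad \int g(\ell x)\,d\mu_\ell(x)=\ell^2\int g\,d\mu .
\]
Consequently $\int|\nabla\varphi_\ell|^2d\mu_\ell=\ell^4\int|\nabla\varphi|^2d\mu$, while $\int 4\varphi_\ell^2\,d\mu_\ell=\ell^2\int4\varphi^2d\mu$, and similarly for the mixed terms. Hence
\[
\Lambda_\ell:=\Lambda_{\mu_\ell}(\varphi_\ell,\psi_\ell)=\ell^8\Lambda^*+O(\ell^6).
\]
The lifted numerator $\int[|\nabla\varphi_\ell|^2|\nabla\psi_\ell|^2-\langle\nabla\varphi_\ell,\nabla\psi_\ell\rangle^2]\,d\mu_\ell$ scales like $\ell^{4+2}=\ell^6$. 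Therefore $\Sec^\uparrow_{\mu_\ell}=O(\ell^{-2})\to0$, provided $\Lambda^*>0$. That is, we need $\nabla\varphi,\nabla\psi$ to be linearly independent in $L^2(\mu)$; I would state this as the standing nondegeneracy assumption implicit in the definition of $\Sec^*_\mu$.

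For the twisted part, note $F_\ell(x)=\ell^3F(\ell x)$, since $F$ is a product of a Hessian and a gradient. In the infimum I would substitute $\eta(x)=\ell^2\zeta(\ell x)$; this is a bijection of $\C^\infty_c(\R^n)$, and by density one may work with $\C^\infty_c$ in place of $H^1$. It gives $\nabla\eta(x)=\ell^3\nabla\zeta(\ell x)$ and therefore
\[
\int\big[|F_\ell-\nabla\eta|^2+4\eta^2\big]d\mu_\ell=\ell^8\int\big[|F-\nabla\zeta|^2+4\ell^{-2}\zeta^2\big]d\mu .
\]
Writing $I_\ell:=\inf_\zeta\int[|F-\nabla\zeta|^2+4\ell^{-2}\zeta^2]\,d\mu$ and $I:=\inf_\zeta\int|F-\nabla\zeta|^2d\mu$, we get
\[
\Sec^\nabla_{\mu_\ell}=3\ell^8 I_\ell/\Lambda_\ell .
\]
The key step, mild though it is, is $I_\ell\to I$. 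Clearly $I_\ell\ge I$ for every $\ell$. In the other direction, for any fixed $\zeta$ we have $\limsup_\ell I_\ell\le\int|F-\nabla\zeta|^2d\mu$, and taking the infimum over $\zeta$ gives $\limsup_\ell I_\ell\le I$. The sequence $I_\ell$ is moreover monotone decreasing. Combining this with $\Lambda_\ell/\ell^8\to\Lambda^*$ yields $\Sec_{\mu_\ell}(\varphi_\ell,\psi_\ell)\to3I/\Lambda^*=\Sec^*_\mu(\varphi,\psi)$.

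For the final identification when $\mu$ is a probability measure, I would compare with Theorem \ref{sec-proj}. Its $\Lambda$ is exactly $\Lambda^*$, so it remains to match the infima. Since
\[
F=\tfrac12\nabla\langle\nabla\varphi,\nabla\psi\rangle-\nabla^2\psi\nabla\varphi,
\]
$F$ and $-\nabla^2\psi\nabla\varphi$ differ by a gradient. Their $L^2(\mu)$-distances to the space of gradients therefore coincide, which gives $\Sec^*_\mu(\varphi,\psi)=\Sec_\mu(\nabla\varphi,\nabla\psi)$. The only delicate points in the whole argument are the nondegeneracy $\Lambda^*>0$ and checking that the $O(\ell^6)$ error in $\Lambda_\ell$ does not interfere; both are straightforward once the scaling bookkeeping is done.
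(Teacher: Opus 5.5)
Your proposal is correct and follows essentially the same route as the paper. Both arguments reduce to the explicit lifted and twisted formulas and track powers of $\ell$: $\Lambda_\ell=\ell^8\Lambda^*+O(\ell^6)$, $F_\ell(x)=\ell^3F(\ell x)$, and the substitution $\eta(x)=\ell^2\zeta(\ell x)$ give $\Sec^\uparrow_{\mu_\ell}=O(\ell^{-2})$ and $\Sec^\nabla_{\mu_\ell}=3\ell^8I_\ell/\Lambda_\ell$. Beyond the paper, you make explicit the steps it leaves tacit: the convergence $I_\ell\to I$, the need for $\Lambda^*>0$, the case $n=1$, and the identification with Theorem~\ref{sec-proj} via $F=\tfrac12\nabla\langle\nabla\varphi,\nabla\psi\rangle-\nabla^2\psi\nabla\varphi$.
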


\begin{proof}
\begin{align*}\frac1{\ell^8}\,\Lambda_\ell&:=\frac1{\ell^8}\,\bigg[|\varphi_\ell|_{H^1(\mu_\ell)}^2\cdot|\psi_\ell|_{H^1(\mu_\ell)}^2-\langle \varphi_\ell,\psi_\ell\rangle_{H^1(\mu_\ell)}^2\bigg]\\
&=\frac1{\ell^8}\,\Bigg[\int_\M\Big( \ell^2\, |\nabla\varphi|^2+4\varphi^2\Big)(\ell x)\cdot \rho(\ell x) \, \ell^{n+2}\, dx\cdot
\int_\M\Big( \ell^2\,|\nabla\psi|^2+4\psi^2\Big)(\ell x) \cdot \rho(\ell x) \, \ell^{n+2}dx\\
&\quad-\bigg(\int_\M\Big( \ell^2\,\langle \nabla\varphi,\nabla\psi\rangle+4\varphi\psi\Big)(\ell x)\cdot \rho(\ell x) \, \ell^{n+2}\, dx\bigg)^2\Bigg]\\
&=\int_\M\Big(  |\nabla\varphi|^2+\frac4{\ell^2}\varphi^2\Big)(x)\, \rho(x) \, dx\cdot
\int_\M\Big(|\nabla\psi|^2+\frac4{\ell^2}\psi^2\Big)(x) \, \rho(x) \, dx\\
&\quad-\bigg(\int_\M\Big(\langle \nabla\varphi,\nabla\psi\rangle+\frac4{\ell^2}\varphi\psi\Big)(x)\, \rho(x) \, dx\bigg)^2\\
&\to \int_\M|\nabla\varphi|^2(x)\, \rho(x) \, dx\cdot
\int_\M|\nabla\psi|^2(x) \, \rho(x) \, dx
-\bigg(\int_\M\langle \nabla\varphi,\nabla\psi\rangle(x)\, \rho(x) \, dx\bigg)^2=:\Lambda^*
\end{align*}
as $\ell\to\infty$.
Thus
\begin{align*}\Sec_{\mu_\ell}^\uparrow\big(\varphi_\ell,\psi_\ell \big)
&=-\frac1{\Lambda_\ell} \int_{\M} \Big[|\nabla\varphi_\ell|^2\, |\nabla\psi_\ell|^2-\langle \nabla\varphi_\ell,\nabla\psi_\ell\rangle^2
\Big]\,d\mu_\ell\\
&=-\frac{\ell^6}{\Lambda_\ell} \int_{\M} \Big[|\nabla\varphi|^2\, |\nabla\psi|^2-\langle \nabla\varphi,\nabla\psi_\ell\rangle^2
\Big]\,d\mu\\
&\to 0
\end{align*}
as $\ell\to\infty$.
Moreover, with 
$F:=\frac12\big(\nabla^2\varphi\nabla\psi-\nabla^2\psi\nabla\varphi\big)$
and $F_\ell(x):=\frac12\big(\nabla^2\varphi_\ell\nabla\psi_\ell-\nabla^2\psi_\ell\nabla\varphi_\ell\big)(x)=\ell^3\, F(\ell x),$
\begin{align*}
\Sec_{\mu_\ell}^\nabla(\varphi_\ell,\psi_\ell)
 & =\frac3{\Lambda_\ell}\, \inf\bigg\{
\int_\M\Big[
|F_\ell-\nabla \eta|^2+4\eta^2
\Big]\,\rho_\ell\,dx: \ \eta\in
\mathcal C_c^\infty(\R^n)
\bigg\}\\
& =\frac{3\ell^8}{\Lambda_\ell}\, \inf\bigg\{
\int_\M\Big[
|F-\nabla \eta|^2+\frac4{\ell^2}\eta^2
\Big]\,\rho\,dx: \ \eta\in
\mathcal C_c^\infty(\R^n)
\bigg\}\\
& \to\frac{3}{\Lambda^*}\, \inf\bigg\{
\int_\M\Big[
|F-\nabla \eta|^2
\Big]\,\rho\,dx: \ \eta\in
\mathcal C_c^\infty(\R^n)
\bigg\}
\end{align*}
as $\ell\to\infty$.
\end{proof}

\subsection{Sectional curvature for geometric and extended tangent spaces}
\subsubsection{The geometric tangent space  at the zero measure}
For arbitrary non-vanishing $\rho,\nu\in \Mea$ consider $\HK$-geodesics $\rho_t:=t^2\rho, \nu_t:=t^2\nu$ emanating in the zero measure $\rho_0=\nu_0=0$. Then the directional distance
$$\HK^{\sf dir}_0\Big((\rho_t)_{t\ge0},(\nu_t)_{t\ge0}\Big):=
\lim_{t\to0}\frac1t \HK(\rho_t,\nu_t)$$
exists and coincides with $\HK(\rho,\nu)$. Indeed, the RHS is independent of $t$. Thus we define

\begin{definition} For the zero measure $0\in\Mea$, we define
$$\T^{\sf geo}_0\Mea:=\sqrt{\Mea}, \qquad\exp_0^\Mea\big(\sqrt\rho\big):=\rho,
\qquad \d_{\T_0} (\sqrt\rho,\sqrt\nu):=\HK(\rho,\nu).$$
\end{definition}
This is consistent with a
scalar multiplication on $\T^{\sf geo}_0\Mea$ given by $t\cdot \sqrt\rho=\sqrt{t^2\rho}$  and leads to
$$\exp_0^\Mea\big(t\sqrt\rho\big)=\exp_0^\Mea\big(\sqrt{t^2\rho}\big)=t^2\rho.$$

\begin{theorem} For all linearly independent $\sqrt\rho,\sqrt\nu\in\T^{\sf geo}_0\Mea
$,
$$\Sec^{\Mea}_{0}(\sqrt\rho,\sqrt\nu)=0.$$
\end{theorem}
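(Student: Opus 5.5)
The plan is to compute the defining quotient exactly rather than asymptotically. The key observation is that along the rays emanating from the zero measure, the $\HK$-distance is exactly homogeneous, so the numerator in the definition of $\Sec_0$ vanishes identically.

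Fix linearly independent $\sqrt\rho,\sqrt\nu\in\T^{\sf geo}_0\Mea$ and the associated geodesics $\alpha_s:=\exp_0^\Mea(s\sqrt\rho)=s^2\rho$ and $\beta_t:=\exp_0^\Mea(t\sqrt\nu)=t^2\nu$. The definition of the directional distance on the cone $\T^{\sf geo}_0\Mea$, together with the scalar multiplication $s\cdot\sqrt\rho=\sqrt{s^2\rho}$, gives
$$\d_{\T_0}^2(s\sqrt\rho,t\sqrt\nu)=\HK^2(s^2\rho,t^2\nu).$$
The numerator $\d_{\T_0}^2(s\sqrt\rho,t\sqrt\nu)-\HK^2(\alpha_s,\beta_t)$ is therefore exactly zero for every pair $s,t>0$, not merely up to higher order.

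For the denominator, the metric speeds are $|\dot\alpha|^2=\HK^2(\rho,0)=\rho(\M)$ and $|\dot\beta|^2=\nu(\M)$. The angle-type pairing is
$$\langle\sqrt\rho,\sqrt\nu\rangle_0=\tfrac12\big(\rho(\M)+\nu(\M)-\HK^2(\rho,\nu)\big).$$
Linear independence means precisely $\langle\sqrt\rho,\sqrt\nu\rangle_0<|\dot\alpha|\,|\dot\beta|$. Hence $\Lambda>0$ and $s^2t^2\Lambda>0$, so the quotient is well defined and identically $0$. Its limit as $s,t\to0$ with $s/t+t/s$ bounded therefore exists and equals $0$.

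The only step that needs care is checking that the identity $\d_{\T_0}(s\sqrt\rho,t\sqrt\nu)=\HK(s^2\rho,t^2\nu)$ is the correct reading of the definition when $s\ne t$. The definition declares $\d_{\T_0}(\sqrt\rho,\sqrt\nu):=\HK(\rho,\nu)$ for arbitrary elements, and $s\sqrt\rho$ is by definition the element $\sqrt{s^2\rho}$, so this is immediate. It is also consistent with the directional limit $\lim_{t\to0}\frac1t\HK(t^2\rho,t^2\nu)=\HK(\rho,\nu)$, which follows from the scaling $\HK(a\mu_0,a\mu_1)=\sqrt a\,\HK(\mu_0,\mu_1)$ already used in the proof of Theorem \ref{def-dec-scal}(iv).

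If one prefers a justification that does not rely only on the definition, the identity can also be tied to the cone picture. By the Pinning Lemma and the lift $\hat\rho$, the distance $\HK(s^2\rho,t^2\nu)$ equals the law-of-cosines expression $s^2\rho(\M)+t^2\nu(\M)-2st\langle\sqrt\rho,\sqrt\nu\rangle_0$, which expresses that $(\Mea,\HK)$ is a metric cone over its unit sphere with vertex $0$ (cf.\ the remark following Theorem \ref{def-dec-scal}). This identity is the one genuinely nontrivial input. I would prove it by writing $\HK^2$ as the cone transport cost $\inf\int\big(r^2+r'^2-2rr'\cos(\d\wedge\pi)\big)\,dq$ and optimizing the dilation parameter separately for the two marginals, which is possible by the Dilation Invariance lemma. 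This reproduces the Euclidean law of cosines in the flat two-dimensional cone spanned by the two rays, exactly as for a Euclidean cone. Plugging this into the numerator again gives zero for all $s,t$, hence $\Sec^\Mea_0(\sqrt\rho,\sqrt\nu)=0$.
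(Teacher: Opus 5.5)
Your proof is correct and is the paper's argument. By the definition of $\d_{\T_0}$ together with the scalar multiplication $s\cdot\sqrt\rho=\sqrt{s^2\rho}$, the numerator $\d_{\T_0}^2(s\sqrt\rho,t\sqrt\nu)-\HK^2(s^2\rho,t^2\nu)$ vanishes for all $s,t>0$, so the quotient is identically zero once $\Lambda>0$.

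Your closing law-of-cosines paragraph is not needed. Its identity is true, but it comes from rescaling the radial coordinates of the two cone marginals independently by $s$ and $t$ in the coupling formulation of $\HK$, not from the Dilation Invariance or Pinning lemmas.
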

\begin{proof} Since $\HK\big(\exp_0^\Mea(s\sqrt\rho),\exp_0^\Mea(t\sqrt\nu)\big)=\HK(s^2\rho,t^2\nu)=\d_{\T_0}(s\sqrt\rho,t\sqrt\nu)$ for all $s,t>0$,
\begin{align*}\Sec^\Mea_0(\sqrt\rho,\sqrt\nu):=
\!\!\!\limsup_{\tiny\begin{array}{c}
{s,t\to0}\\{t/s+s/t\textrm{ bdd}}\end{array}}\!\!\!
\frac3{\Lambda\,s^2\,t^2}\Big[\d^2_{\T_0} (s\sqrt\rho,t\sqrt\nu)-\HK^2\big(\exp^\Mea_0(s\sqrt\rho),\exp^\Mea_0(t\sqrt\nu)\Big]=0.
\end{align*}
\end{proof}

\subsubsection{The extended tangent space}
Combining Theorem \ref{Brenier} with the Reduction Lemma \ref{red} yields

\begin{theorem} Assume that $\M=\R^n$ and $\mu_0\ll\mm$. Then every  $\HK$-geodesic connecting $\mu_0$ and $\mu_1$  
is of the form
$$\mu_t=
\exp^{\Mea}_\mu\big(t\varphi\big)=\Pr\Big(
\exp^\Co\big(t\, (\nabla\varphi,2r\varphi)\big)_*\hat\mu\Big) +t^2\nu
$$
with a function $\varphi\in H^1(\mu)$ and a measure $\nu\in\Mea\big({\M\setminus B_{\pi/2}(\supp[\mu_0])}\big)$ satisfying $\nu\le\mu_1$.

The connecting $\HK$-geodesic is unique if 
$\d(x,y)\not=\frac\pi2$ for $\mu_0\otimes\mu_1$-a.e.~$(x,y)$.
In this case,
$$\HK^2(\mu_0,\mu_1)=\|\varphi\|_{H^1(\mu_0)}^2+|\nu|.$$
\end{theorem}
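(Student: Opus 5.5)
The plan is to combine the Reduction Lemma \ref{red} with the $\HK$-version of the Brenier--McCann Theorem \ref{Brenier}. First, split $\mu_1$ as in Lemma \ref{red}. Write $S_0=\supp[\mu_0]$, let $\mu_1'$ be the restriction of $\mu_1$ to $B_{\pi/2}(S_0)$, and let $\mu_1''$ be the remainder. Likewise let $\mu_0'$ be the restriction of $\mu_0$ to $B_{\pi/2}(\supp[\mu_1])$ and $\mu_0''$ the remainder. Then
\[
\HK^2(\mu_0,\mu_1)=\HK^2(\mu_0',\mu_1')+\mu_0''(\M)+\mu_1''(\M).
\]
The reduced pair satisfies $\mu_0'\ll\mm$, and $\mu_1'$ is supported in $B_{\pi/2}(\supp[\mu_0'])$, up to the boundary issue discussed below. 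Hence Theorem \ref{Brenier} applies: there is a unique geodesic $\exp^\Mea_{\mu_0'}(t\varphi)$ between them, with $\HK(\mu_0',\mu_1')=\|\varphi\|_{H^1(\mu_0')}$. The portion $\mu_0''$ is annihilated along the path $(1-t)^2\mu_0''$. On $\supp\mu_0''$ we may set $\varphi=-\frac12$ (formally $S=-\frac12$, $V=0$), so this part is absorbed into $\exp^\Mea_{\mu_0}(t\varphi)$ with $\mu=\mu_0$. The portion $\nu:=\mu_1''\le\mu_1$ is created along the path $t^2\nu$, and it lives in $\M\setminus B_{\pi/2}(\supp[\mu_0])$. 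The sum of these curves is a curve whose squared length, by the Hellinger cone structure, is $\|\varphi\|^2_{H^1(\mu_0)}+|\nu|=\HK^2(\mu_0,\mu_1)$. It is therefore a geodesic. This gives existence of geodesics of the stated form together with the distance formula.

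For the converse, namely that every geodesic has this form, I would use the cone picture. Take a geodesic $(\mu_t)$ and an optimal plan $q$ on $\Co\times\Co$ for $(\mu_0,\mu_1)$; geodesics in $\Mea$ are projections of $\W_2^\Co$-geodesics of optimal plans (LMS). Points $(x,r)$ and $(y,r')$ with $\d(x,y)>\pi/2$ are never coupled with both radii positive, since routing through the vertex $o$ is strictly cheaper. Likewise, mass of $\mu_1$ outside $B_{\pi/2}(S_0)$ must be coupled to $o$, and the cone geodesic from $o$ to $(y,r')$ is $(y,tr')$, whose projection gives exactly $t^2\nu$. The remaining part of the plan connects $\mu_0$ to $\mu_1-\nu$ within distance $<\pi/2$, and by uniqueness in Theorem \ref{Brenier} its projected geodesic is $\exp^\Mea_{\mu_0}(t\varphi)$. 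One has to check that the two pieces project additively along the whole path; this holds because projection of a sum of measures on $\Co$ is the sum of the projections.

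The main obstacle is the borderline set where $\d(x,y)=\pi/2$. There the cone distance satisfies $\d_\Co^2=r^2+r'^2$, so transporting and going through the vertex cost the same. An optimal plan can then split mass arbitrarily between the two options, producing non-unique geodesics and ambiguity in what counts as $\nu$. This is precisely why uniqueness is asserted only under the assumption that $\d(x,y)\neq\pi/2$ for $\mu_0\otimes\mu_1$-a.e. pair. Under that assumption, $q$-a.e. pair lies strictly on one side of the threshold, so the splitting into $\varphi$ and $\nu$ is forced. Uniqueness then follows from uniqueness in Theorem \ref{Brenier} for the reduced pair, together with the fact that the vertex geodesic through $o$ is unique.
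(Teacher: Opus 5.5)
Your overall route is the combination the paper invokes: Reduction Lemma~\ref{red} splits off $\mu_0''$ (killed along $(1-t)^2\mu_0''$ via $\varphi=-\frac12$) and $\nu=\mu_1''$ (created along $t^2\nu$), Theorem~\ref{Brenier} handles the rest, and the cone picture gives the converse. Your existence argument and the distance formula are fine. It is cleaner, though, to apply Theorem~\ref{Brenier} directly to $(\mu_0,\mu_1')$. That pair is admissible because $\mu_1'$ is concentrated on $S_1'\subset B_{\pi/2}(\supp[\mu_0])$, and this yields $\varphi\in H^1(\mu_0)$ at once. For your glued $\varphi$, membership in $H^1(\mu_0)$ would need control across the interface between $S_0'$ and $S_0''$. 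The genuine gap is your treatment of the borderline set $\{\d(x,y)=\pi/2\}$.

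First, the inference from \emph{$\d(x,y)\neq\pi/2$ for $\mu_0\otimes\mu_1$-a.e.\ $(x,y)$} to \emph{$q$-a.e.\ pair lies strictly on one side of the threshold} is invalid. Couplings are typically concentrated on graphs, and graphs are product-null; a coupling supported on the graph of $x\mapsto x+\pi/2$ is an example. Worse, under $\mu_0\ll\mm$, Fubini gives $\mu_0\otimes\mu_1(\{|x-y|=\pi/2\})=\int\mu_0(\partial B_{\pi/2}(y))\,d\mu_1(y)=0$ automatically. So this hypothesis carries no information and cannot be what excludes the splitting.

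Second, the same issue already undermines the first assertion, which is stated without any extra hypothesis. Your claim that mass of $\mu_1$ outside $B_{\pi/2}(S_0)$ must be coupled to $o$ fails precisely for transported pairs with $y\in S_1''$ and $\d(x,y)=\pi/2$. Such transport is not of the form $\exp^\Mea_{\mu_0}(t\varphi)+t^2\nu$.

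What is missing is a different use of $\mu_0\ll\mm$:
\begin{enumerate}
\item $S_1''$ is closed and $\d(\cdot,S_0)\ge\pi/2$ on it, so any such $x$ satisfies $\d(x,S_1'')=\pi/2$.
\item This level set of a distance function is Lebesgue-null: the gradient of a Lipschitz function vanishes a.e.\ on each level set, while $|\nabla\d(\cdot,S_1'')|=1$ a.e.\ off $S_1''$. Hence it is $\mu_0$-null.
\item The projected first marginal of this part of $q$ is $\le\mu_0$ and concentrated on that set, while the radii are positive. So this part of $q$ vanishes.
\end{enumerate}
Now write $q=q_a+q_b$ with $q_b$ the vertex-to-$S_1''$ part. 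Optimality of $q$ and subadditivity give $\HK^2(\mu_0,\mu_1)=\int\d_\Co^2\,dq_a+|\mu_1''|\ge\HK^2(\mu_0,\mu_1')+|\mu_1''|\ge\HK^2(\mu_0,\mu_1)$. Hence $q_a$ is optimal for $(\mu_0,\mu_1')$, a step you also leave implicit. Theorem~\ref{Brenier} then yields both the form of every geodesic and its uniqueness, with no condition beyond $\mu_0\ll\mm$.
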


\begin{definition}
Extended tangent space: \ 
$\T^{\sf ext}_\mu\Mea=H^1(\mu)\otimes 
\sqrt\Mea\big({\M\setminus B_{\pi/2}(\supp[\mu])}\big)$  with
$$\big\|(\varphi_1,\sqrt{\nu_1}),(\varphi_2,\sqrt{\nu_2})
\big\|^2_{\T_\mu^{\sf ext}}:=
\|\varphi_1-\varphi_2\|^2_{H^1(\mu)}+\HK^2(\nu_1,\nu_2).$$
Scalar multiplication
$$t\,(\varphi,\sqrt\nu)=\big(t\varphi,\sqrt{t^2\nu}\big).$$
Exponential map
$$\exp_\mu^\Mea\big(\varphi,\sqrt\nu\big):=\Pr\Big(
\exp^\Co\big(\nabla\varphi,2r\varphi\big)_*\hat\mu\Big)+\nu.
$$
\end{definition}

\begin{theorem} For all $\mu\in\Mea$, all $\rho,\nu\in
\Mea\big(\M\setminus \overline{B_{\pi/2}(\supp[\mu])}\big)$, and all $\varphi,\psi\in H^1(\mu)$ for which $\Sec^\Mea_{\mu}(\varphi,\psi)$ exists,
$$\Sec^\Mea_{\mu}\big((\varphi,\sqrt\rho),(\psi,\sqrt\nu)\big)=
\frac{\Lambda_\mu({\varphi,\psi})}{\Lambda_\mu({\varphi,\rho,\psi,\nu})}\,\Sec^\Mea_{\mu}(\varphi,\psi)$$
with $\Lambda_\mu({\varphi,\psi}):=
|\varphi|_{H^1(\mu)}^2\cdot|\psi|_{H^1(\mu)}^2-\langle \varphi,\psi\rangle_{H^1(\mu)}^2$ as usual and 
$$\Lambda_\mu({\varphi,\rho,\psi,\nu}):=\Big[|\varphi|_{H^1(\mu)}^2+|\rho|
\Big]\cdot\Big[|\psi|_{H^1(\mu)}^2+|\nu|\Big]-\Big[\langle \varphi,\psi\rangle_{H^1(\mu)} +\langle\rho,\nu\rangle_\HK\Big]^2
$$
where
$\langle\rho,\nu\rangle_\HK:=\frac12\big(|\rho|+|\nu|-\HK^2(\rho,\nu)\big)
$.
\end{theorem}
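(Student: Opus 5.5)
The plan is to split both squared distances appearing in the definition of $\Sec^\Mea_\mu\big((\varphi,\sqrt\rho),(\psi,\sqrt\nu)\big)$ into an $H^1(\mu)$-part and a far-away part. The split for $\HK^2$ comes from the Reduction Lemma \ref{red}. The split for $\d^2_{\T^{\sf ext}_\mu}$ comes from the definition of the extended tangent norm. Once both are split, the far-away parts will cancel exactly.

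First I check the directional (tangent) distance. By definition of the extended norm and the scalar multiplication,
$$\big\|s(\varphi,\sqrt\rho)-t(\psi,\sqrt\nu)\big\|^2_{\T^{\sf ext}_\mu}=\|s\varphi-t\psi\|^2_{H^1(\mu)}+\HK^2(s^2\rho,t^2\nu).$$
The associated inner product is $\langle\varphi,\psi\rangle_{H^1(\mu)}+\langle\rho,\nu\rangle_\HK$, and the norms are $|\varphi|^2_{H^1(\mu)}+|\rho|$ and $|\psi|^2_{H^1(\mu)}+|\nu|$, because $\HK^2(\rho,0)=|\rho|$. The normalizing constant in the definition of $\Sec$ is therefore exactly $\Lambda_\mu(\varphi,\rho,\psi,\nu)$.

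Next I split the actual distance. Put $\alpha_s:=\exp^\Mea_\mu(s\varphi)$ and $\beta_t:=\exp^\Mea_\mu(t\psi)$, so that $\exp^\Mea_\mu(s(\varphi,\sqrt\rho))=\alpha_s+s^2\rho$ and similarly for $\beta_t+t^2\nu$. Since $\varphi,\psi$ are smooth and compactly supported, $\supp\alpha_s$ and $\supp\beta_t$ lie in an $O(s)$-, resp. $O(t)$-neighbourhood of $\supp\mu$. The supports of $\rho,\nu$ lie in the complement of $\overline{B_{\pi/2}(\supp\mu)}$. For small $s,t$, I then need every point of $\supp\rho\cup\supp\nu$ to have distance $\ge\pi/2$ from $\supp\alpha_s\cup\supp\beta_t$.

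This uniform separation is the delicate step. If $\supp\mu$ is compact, or $\rho,\nu$ have supports at positive distance beyond $\pi/2$, it follows from openness. In general I would replace it by an approximation argument. Alternatively I would use the cruder observation that $\HK^2$ is subadditive, giving $\HK^2(\alpha_s+s^2\rho,\beta_t+t^2\nu)\le\HK^2(\alpha_s,\beta_t)+\HK^2(s^2\rho,t^2\nu)$, combined with a matching lower bound from the reduction at the level of supports. If needed, I would state the result under the harmless assumption that $\supp\mu$ is compact, which holds for the $\varphi,\psi$ considered.

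Granting separation, I apply the Reduction Lemma \ref{red} to the pair $(\alpha_s+s^2\rho,\ \beta_t+t^2\nu)$. This needs care, because $\rho$ and $\nu$ may lie within $\pi/2$ of each other, so they do not simply belong to the remainder parts. Instead I use the cone/coupling picture directly. An optimal coupling never profitably transports mass over distances $\ge\pi/2$, since $\d_\Co^2\ge r^2+r'^2$ there. Hence an optimal plan decomposes into a plan between $\alpha_s,\beta_t$ and a plan between $s^2\rho,t^2\nu$. This gives
$$\HK^2(\alpha_s+s^2\rho,\beta_t+t^2\nu)=\HK^2(\alpha_s,\beta_t)+\HK^2(s^2\rho,t^2\nu).$$

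Subtracting the two splits, the terms $\HK^2(s^2\rho,t^2\nu)$ cancel identically. The bracket in the definition of $\Sec^\Mea_\mu((\varphi,\sqrt\rho),(\psi,\sqrt\nu))$ thus equals the bracket for $\Sec^\Mea_\mu(\varphi,\psi)$. Only the normalization changes, from $\Lambda_\mu(\varphi,\psi)$ to $\Lambda_\mu(\varphi,\rho,\psi,\nu)$, which yields the stated ratio. The limit exists precisely because it is assumed to exist for $\Sec^\Mea_\mu(\varphi,\psi)$.
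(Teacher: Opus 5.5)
Your route is the paper's. The extended tangent distance is $\|s\varphi-t\psi\|^2_{H^1(\mu)}+\HK^2(s^2\rho,t^2\nu)$ by definition of $\T^{\sf ext}_\mu\Mea$, the actual distance is split additively, the terms $\HK^2(s^2\rho,t^2\nu)$ cancel, and only the normalization changes. The paper's three-line computation simply asserts the additive splitting. Your rerouting of cross pairs at distance $\ge\pi/2$ through the vertex (where $\d_\Co^2\ge r^2+r'^2$), together with subadditivity, correctly justifies it \emph{once} every point of $\supp\rho\cup\supp\nu$ is at distance $\ge\pi/2$ from $\supp\alpha_s\cup\supp\beta_t$. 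You are also right that the Reduction Lemma cannot be applied verbatim.

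The flaw is in how you secure that separation: compactness of $\supp\mu$ does not imply it. The hypothesis only places $\rho,\nu$ on the open set $\{\d(\cdot,\supp\mu)>\pi/2\}$, so their supports may touch the sphere $\{\d(\cdot,\supp\mu)=\pi/2\}$. Meanwhile $\supp\alpha_s$ and $\supp\beta_t$ leave $\supp\mu$ by distances of order $s$ and $t$ wherever $\nabla\varphi,\nabla\psi$ point outward. Then cross transport becomes admissible, and the exact additivity can fail for every small $s,t$.

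A cut-off at a fixed level $\pi/2+\epsilon$ does not repair this. It changes each $\HK^2$ term by amounts of order $s^2\rho(\cdot)+t^2\nu(\cdot)$ of the removed shell, which is far larger than $s^2t^2$, so the needed cancellation is the original question again.

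What is actually needed is one of two things. Either assume in addition that $\d\big(\supp\rho\cup\supp\nu,\supp\mu\big)>\pi/2$. Or prove quantitatively that the nonnegative defect $\HK^2(\alpha_s,\beta_t)+\HK^2(s^2\rho,t^2\nu)-\HK^2(\alpha_s+s^2\rho,\beta_t+t^2\nu)$ is $o(s^2t^2)$.

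The quantitative bound follows, for instance, from the dual formulation of $\HK^2$, with potentials $\psi_0,\psi_1<1$ satisfying $(1-\psi_0(x))(1-\psi_1(y))\ge\cos^2(\d(x,y)\wedge\frac\pi2)$. Take near-optimal potentials of $(\alpha_s,\beta_t)$, for which $1-\psi_0$ and $1-\psi_1$ stay bounded away from $0$. To make near-optimal potentials of $(s^2\rho,t^2\nu)$ compatible with them, it suffices to lower them by $O(s^2)$ on $\supp\nu$ and by $O(t^2)$ on $\supp\rho$. This lowering is needed only on the shrinking shell $\{\pi/2<\d(\cdot,\supp\mu)<\pi/2+C(s+t)\}$. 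It therefore costs $O(s^2t^2)$ times the $(\rho+\nu)$-mass of that shell, which is $o(s^2t^2)$.

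The paper leaves this point entirely implicit, so this is a shared omission rather than a deviation from its proof. Like you, the paper also tacitly uses bounded $\nabla\varphi,\nabla\psi$, although the statement allows $\varphi,\psi\in H^1(\mu)$.
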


\begin{proof}
 \begin{align*}\Lambda_\mu({\varphi,\rho,\psi,\nu})&\cdot
 \Sec^\Mea_{\mu}\big((\varphi,\sqrt\rho),(\psi,\sqrt\nu)\big)\\
 &=\!\!\! \lim_{\tiny\begin{array}{c}
{s,t\to0}\\{t/s+s/t\textrm{ bdd}}\end{array}}\!\!\!
\frac3{s^2\, t^2}\Bigg[ |s\varphi-t\psi|_{H^1(\mu)}^2
+\HK^2(s^2\rho,t^2\nu)\\
&\qquad\qquad
-
 \HK^2\Big(\exp_\mu^\Mea(s\varphi)+s^2\rho,\ \exp_\mu^\Mea(t\psi)+t^2\nu\Big)
\Bigg]\\
&=\!\!\! \lim_{\tiny\begin{array}{c}
{s,t\to0}\\{t/s+s/t\textrm{ bdd}}\end{array}}\!\!\!
\frac3{s^2\, t^2}\Bigg[ |s\varphi-t\psi|_{H^1(\mu)}^2
-
 \HK^2\Big(\exp_\mu^\Mea(s\varphi),\ \exp_\mu^\Mea(t\psi)\Big)
\Bigg]\\
&=\Lambda_\mu({\varphi,\psi})\cdot
 \Sec^\Mea_{\mu}\big(\varphi,\psi\big).
\end{align*}
\end{proof}

\subsection{Curvature bounds in the sense of Alexandrov}

Let $(\M,\d)$ be a complete separable geodesic space. Recall that $\HK$ is equivalently defined by means of the cone construction. 

\begin{theorem}\label{alex-lms} The geodesic space $(\Mea(\M), \HK)$ has nonnegative curvature in the sense of Alexandrov
 if and only if $(\M,\d)$ has curvature $\ge1$ in the sense of Alexandrov
 or $\dim\M=1$.
\end{theorem}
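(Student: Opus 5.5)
The plan is to split the equivalence into its two implications and to take the ``if'' direction essentially from the literature, specifically from the cone picture of Liero--Mielke--Savar\'e. The classical fact (cf.~\cite{BBI}) is that the Euclidean cone $\Co$ over $\M$ has curvature $\ge0$ in the sense of Alexandrov if and only if $\M$ has curvature $\ge1$, or $\M$ is one-dimensional of length $\le\pi$ (a circle of length $\le 2\pi$ or an interval). Granting $\Co$ nonnegatively curved, $(\Wass_2(\Co),\W_2)$ is nonnegatively curved by Sturm/Lott--Villani. The metric $\HK$ is then the quotient metric of $\W_2^\Co$ under $\Pr$, and its fibres are orbits of the dilation group (Dilation Invariance Lemma). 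Nonnegative curvature passes to such quotients, i.e.\ submetries, by the standard argument: lift a comparison configuration through the Pinning Lemma \ref{simplification} and compare. This is the route of \cite[Thm.~7.20]{LMS-inventiones}, which I would cite and adapt.

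For the one-dimensional case with long circles or intervals, where $\Co$ fails to be nonnegatively curved, I would use the Reduction Lemma \ref{red}. Mass farther apart than $\pi/2$ is never transported, so only the truncated metric $\d\wedge\pi/2$ matters locally. Geodesics and comparison configurations can therefore be localized to pieces of length $<\pi$, whose cones are flat sectors (Lemma \ref{sec-1}). Making this localization rigorous for four-point comparison is where I expect the main technical work in the ``if'' direction.

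For the ``only if'' direction, I would use Theorem \ref{alex-vs-sec}(i): Alexandrov curvature $\ge0$ forces $\underline\Sec_\mu\ge0$ at every $\mu$. Suppose $\dim\M\ge2$ and $\Sec^\M_x(v,w)<1$ at some $x$, which is the Riemannian case of failing curvature $\ge1$. Take $\mu=\delta_x$, or rather a concentrated smooth bump, with $\varphi,\psi$ chosen so that $\nabla\varphi(x),\nabla\psi(x)$ span that plane. Then Theorem \ref{lift-cone} makes the lifted part strictly negative. The twisted part must vanish, or be dominated, in the limit. For Dirac measures the twisted part vanishes, since a Dirac lift has no competing couplings; this mirrors the Wasserstein Proposition at $\delta_z$. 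Alternatively, the spatial scaling of Theorem \ref{scal-space} can be run in reverse, spreading the support and shrinking the gradients so that $\Sec^\nabla$ becomes negligible against $\Sec^\uparrow$. Either way we get $\underline\Sec_\mu<0$, a contradiction.

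The main obstacle is the general metric-space version of ``only if'', where $\M$ is a geodesic space and not Riemannian. There I would argue directly at Dirac masses. Suppose $\HK$ restricted to measures of the form $\sum_i a_i\delta_{x_i}$ is nonnegatively curved. Then the cone $\Co$ itself, embedded via $(x,r)\mapsto r^2\delta_x$ and isometric at small scales by the Reduction Lemma, inherits the four-point comparison. By the cone characterization this yields curvature $\ge1$ for $\M$ unless $\dim\M=1$. Verifying that this embedding is locally geodesically convex, or at least preserves the needed comparison inequalities, is the step I would check first.
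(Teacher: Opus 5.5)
Your `only if' direction is sound, and it takes a different route from the paper, which simply invokes \cite[Thm.~8.8]{LMS-inventiones}. At $\mu=\delta_x$ the twisted part vanishes because $\HK^2(a^2\delta_y,b^2\delta_{y'})=a^2+b^2-2ab\cos\d(y,y')$ whenever $\d(y,y')<\pi/2$. That formula, rather than ``no competing couplings'', is the reason, since $\HK$ admits split lifts. Choosing $\varphi(x)=\psi(x)=0$, $\nabla\varphi(x)=v$, $\nabla\psi(x)=w$, Theorem~\ref{lift-cone} gives $\Sec_{\delta_x}(\varphi,\psi)=\Sec^\M_x(v,w)-1$, which contradicts Theorem~\ref{alex-vs-sec}(i) when negative. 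In the metric case the Dirac map $(y,r)\mapsto r^2\delta_y$ is isometric only from the cone built on $\d\wedge\pi/2$. So you get curvature $\ge1$ of $\M$ only locally, and globalizing needs $\dim\M\ge2$, which is consistent with the statement.

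The genuine gap is the `if' direction for one-dimensional $\M$ whose cone is \emph{not} nonnegatively curved: $\M=\R$, circles of length $>2\pi$, intervals of length $>\pi$. This is exactly what the clause `or $\dim\M=1$' adds. Over $\R$ the cone points $(0,1),(\pi,1),(2\pi,1)$ have pairwise comparison angles $\pi$ at the vertex $o$. Hence neither $\Co$ nor $\Wass_2(\Co)$ is nonnegatively curved, and the submetry argument has nothing to push down. Your localization does not repair this as stated, for three reasons.
\begin{itemize}
\item The Reduction Lemma~\ref{red} acts on a single pair, whereas the measures in a comparison configuration may be spread along the whole line. They cannot be confined to one arc of length $<\pi$.
\item More seriously, created or annihilated mass is coupled to the vertex, and the vertex can be coupled to points with arbitrarily distant base points. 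These contributions sit exactly where $\Co$ fails, and no flat sector contains them.
\item The angle-sum form of the four-point condition is not linear in squared distances, so it cannot be integrated over couplings.
\end{itemize}

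One way to close the gap is to localize pointwise along couplings rather than spatially.
\begin{itemize}
\item Fix one lift $\rho_0$ of $\mu_0$ carrying enough vertex mass. The Pinning Lemma~\ref{simplification} is stated only for reduced pairs, so this reserve is needed.
\item Glue near-optimal cone plans from $\rho_0$ to lifts of $\mu_1,\mu_2,\mu_3$. Reroute every pair at base distance $\ge\pi/2$ through $o$, which the dilation invariance allows at no extra cost.
\item Test the weighted inequality $\sum_{i,j}\lambda_i\lambda_j(\HK^2(\mu_0,\mu_i)+\HK^2(\mu_0,\mu_j)-\HK^2(\mu_i,\mu_j))\ge0$. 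It is linear in squared distances, and with $\mu_0$ a midpoint it yields the midpoint comparison, hence curvature $\ge0$ for geodesic spaces.
\item The terms $\HK^2(\mu_0,\mu_i)$ are given (up to $\epsilon$) by the glued plan. Bound $\HK^2(\mu_i,\mu_j)$ above using the truncated cost $r^2+r'^2-2rr'\cos(\d\wedge\pi/2)$, which yields the same $\HK$.
\end{itemize}
Pointwise, a tuple with non-vertex centre lies over an arc of length $<\pi$. There $\Co$ is a convex flat sector and the truncated cost is at most the Euclidean one. A tuple centred at $o$ gives $2\sum_{i,j}\lambda_is_i\lambda_js_j\cos(\d(x_i,x_j)\wedge\pi/2)\ge0$, since all coefficients are nonnegative. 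So the integrand is nonnegative in both cases.

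The paper does not carry out such an argument either. It settles the one-dimensional case only by citing \cite[Thm.~8.8]{LMS-inventiones}, together with the remark that every one-dimensional space has curvature $\ge1$ locally.
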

\begin{proof} Thm.~8.8 in \cite{LMS-inventiones} (and taking into account that every 1-dimensional space has locally curvature $\ge1$).
\end{proof}

\begin{remark}\label{alex-rem} No upper or lower curvature bounds in the sense of Alexandrov can hold with $\kappa\not=0$. More precisely, 
\begin{itemize}
\item For every $\kappa>0$,
$(\Mea(\M), \HK)$ does not have curvature $\ge \kappa$ in the sense of Alexandrov.
\item For every $\kappa>0$,
$(\Mea(\M), \HK)$ does not have curvature $\le -\kappa$ in the sense of Alexandrov.

\item If $(\Mea(\M), \HK)$ has curvature $\ge -\kappa$ in the sense of Alexandrov for some $\kappa>0$ then it has  curvature $\ge 0$.
\item If $(\Mea(\M), \HK)$ has curvature $\le \kappa$ in the sense of Alexandrov for some $\kappa>0$ then it has  curvature $\le 0$.
\end{itemize}
\end{remark}
\begin{proof}  All these assertions follow from the mass scaling property (cf.~Theorem \ref{def-dec-scal}
(iv)): if a quadruple $(\mu_1,\mu_2,\mu_3,\mu_4)$ in $\Mea(\M)$ satisfies the characterizing inequality for curvature $\ge\kappa$ (or curvature $\le \kappa$) in the sense of Alexandrov 
then for any $r>0$ the quadruple $(r\mu_1,r\mu_2,r\mu_3,r\mu_4)$ satisfies the characterizing inequality for curvature $\ge\frac1r\,\kappa$ (or curvature $\le \frac1r\,\kappa$, resp.).
\end{proof}

\begin{theorem} The geodesic space $(\Mea(\mathbb R^n), \HK)$ does not admit any upper or lower curvature bound in the sense of Alexandrov.
\end{theorem}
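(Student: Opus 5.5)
The plan is to combine Theorem \ref{alex-vs-sec} with the explicit formulas for the lifted and twisted parts of $\Sec_\mu$. By Theorem \ref{alex-vs-sec}, an Alexandrov lower bound $\kappa$ near some $\mu$ forces $\underline\Sec_\mu\ge\kappa$, and an Alexandrov upper bound $\lambda$ forces $\overline\Sec_\mu\le\lambda$. So it suffices to show two things: that $\Sec_\mu$ takes arbitrarily negative values, and that it takes arbitrarily positive values, in both cases at measures $\mu$ lying inside any prescribed region. If the bound is assumed only locally, we use mass scaling. Theorem \ref{def-dec-scal}(iv) gives $\Sec_{a\mu}=\frac1a\Sec_\mu$, and the same scaling argument as in Remark \ref{alex-rem} lets us pass from a local Alexandrov bound to a global one. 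In particular it reduces any bound $\kappa\neq0$ to $\kappa=0$.

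\emph{Lower bounds} (case $n\ge2$). Nonnegative curvature fails by Theorem \ref{alex-lms}, since $\R^n$ does not have curvature $\ge1$. A lower bound $-\kappa$ would upgrade to $\ge0$ by Remark \ref{alex-rem}, which is again impossible. As a direct check I would also produce a negative $\Sec_\mu$. Take $\mu=\rho\Leb^n$, and take $\varphi,\psi$ depending on disjoint coordinates with $\nabla\varphi\perp\nabla\psi$ pointwise, e.g.\ $\varphi=\epsilon\chi\,x_1$ and $\psi=\epsilon\chi\,x_2$ with a cutoff $\chi$ that is $\equiv1$ on a large ball. On that ball $F=\frac12(\nabla^2\varphi\nabla\psi-\nabla^2\psi\nabla\varphi)$ vanishes. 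The lifted part then equals $-\frac1\Lambda\int|\nabla\varphi|^2|\nabla\psi|^2d\mu<0$, while the twisted part is controlled only by the cutoff region, which can be made small. Then rescale the mass.

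\emph{Case $n=1$.} There the lifted part vanishes by Proposition \ref{1d-hell}. Here (and also for upper bounds when $n\ge2$) the key step is to make the twisted part strictly positive, which disproves any upper bound $\le0$. By the Remark after the twisted-part theorem, it suffices to find $\varphi,\psi$ with $\mathrm{div}_\rho F\not\equiv0$. Upper bounds $\lambda>0$ then reduce to $\le0$ via Remark \ref{alex-rem}.

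\emph{Upper bound when $n\ge2$.} This is the main obstacle, because the negative lifted part competes with the positive twisted part. I would use the spatial scaling Theorem \ref{scal-space}. Under $\varphi_\ell,\psi_\ell,\mu_\ell$, the lifted part tends to $0$, while $\Sec_{\mu_\ell}(\varphi_\ell,\psi_\ell)\to\Sec^*_\mu(\varphi,\psi)$, which is the Wasserstein curvature $\frac3{\Lambda^*}\delta^2(F)$. So I need $\varphi,\psi,\rho$ for which $F$ is not $L^2(\mu)$-orthogonal-equivalent to a gradient. Take for instance $\varphi=x_1^2$ and $\psi=x_2$ near the support, cut off smoothly. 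Then $F=\frac12(\nabla^2\varphi\nabla\psi-\nabla^2\psi\nabla\varphi)=\frac12\cdot 2e_1\cdot 0$, which vanishes; so instead use $\varphi=x_1x_2$ and $\psi=x_1$. This gives $F=\frac12 e_2$ minus zero, a constant field, which is a gradient; so one needs rotation. I will choose $\varphi=x_1^2/2+x_2^2/2$ and $\psi=x_1x_2$, compute $F$, and check that $\mathrm{curl}F\neq0$. A field with nonzero curl is not a gradient, hence $\delta(F)>0$. For $\ell$ large this gives $\Sec_{\mu_\ell}>0$, and then mass scaling by $a\to0$ makes it exceed any $\lambda$.
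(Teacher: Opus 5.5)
Your strategy is the paper's. Theorem~\ref{alex-lms} and Remark~\ref{alex-rem} dispose of all lower bounds for $n\ge2$ and reduce upper bounds to excluding nonpositive curvature. That case is handled by the spatial rescaling of Theorem~\ref{scal-space}: the lifted part tends to $0$ and the limit is $\Sec^*_\mu=\frac3{\Lambda^*}\delta^2(F)$, which is combined with Theorem~\ref{alex-vs-sec} at the points $\mu_\ell$. Your extra mass rescaling $a\mu_\ell$ with $a\to0$ is a fine direct substitute for Remark~\ref{alex-rem} on the upper-bound side.

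However, the one thing the argument actually requires you to verify is not delivered: a triple $\rho,\varphi,\psi$ with $\delta(F)>0$. Your final choice $\varphi=\frac12|x|^2$, $\psi=x_1x_2$ gives $\nabla^2\varphi\nabla\psi=(x_2,x_1)=\nabla^2\psi\nabla\varphi$. Hence $F\equiv0$ wherever the cutoffs equal $1$, and the curl check fails. (Indeed $F=-\frac12[\nabla\varphi,\nabla\psi]$, and the radial field $x$ commutes with every linear vector field.) A working choice is $\varphi=\frac12x_1^2$, $\psi=x_1x_2$, cut off outside a ball $B$ with $\overline B\subset\{\rho>0\}$. On $B$ this gives $F=\frac12(x_2,-x_1)$ with $\mathrm{curl}\,F=-1$. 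Since $\rho$ is bounded below on $B$, $L^2(\mu)$-limits of gradients are curl-free in $\mathcal D'(B)$. Therefore $\delta(F)>0$ and $\Sec^*_\mu(\varphi,\psi)>0$, which is exactly what the contradiction needs.

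Two smaller corrections. First, mass scaling does not turn a local Alexandrov bound into a global one. It only moves the neighbourhood from $\mu$ to $a\mu$ and replaces $\kappa$ by $\kappa/a$. It also does not place the relevant measures in a prescribed region: $\mu_\ell$ has mass $\ell^2\mu(\R^n)$. This is harmless, because an Alexandrov bound is assumed near every point, and that is all Theorem~\ref{alex-vs-sec} needs at $\mu_\ell$ and $a\mu_\ell$. Second, for $n=1$ the lower-bound half of the statement is false, since $(\Mea(\R),\HK)$ has nonnegative curvature by Theorem~\ref{alex-lms}. The theorem must therefore be read for $n\ge2$. Your $n=1$ paragraph rightly treats only upper bounds, but you should say this explicitly.
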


\begin{proof} According to previous Theorem and Remark, it only remains to prove that $(\Mea(\mathbb R^n), \HK)$ does not have nonpositive curvature.
To do so, choose  any $\rho\in\C^\infty_c(\R^n)$ and $\varphi,\psi\in\C_c^\infty(\{\rho>0\})$ such that
$F:=\nabla^2\varphi\,\nabla\psi$ is \emph{not a gradient}. In other words, such that
$F\not= \nabla\Big(\Delta_\rho^{-1}\mathrm{div}_\rho(F)\big)$.
Then 
$$\Sec^*_\mu(\varphi,\psi)=\frac3{\Lambda^*}\Big\| F-
 \nabla\Big(\Delta_\rho^{-1}\mathrm{div}_\rho(F)\big)
\Big\|^2_{L^2(\rho)}>0.
$$
Now assume that $(\Mea,\HK)$ where NPC. 
Define as before
$\varphi_\ell(x):=\varphi(\ell x)$, 
$\psi_\ell(x):=\psi(\ell x)$, $\rho_\ell(x):=\ell^{n+2}\,\rho(\ell x)$,
and $\mu_\ell:=\rho_\ell\Leb^n$
for $\ell\in\N$. Then
$$\Sec_{\mu_\ell}(\varphi_\ell,\psi_\ell)\le0$$
for all $\ell$ according to Theorem \ref{alex-vs-sec}.
According to Theorem \ref{scal-space}, spatial rescaling then implies
$$\Sec^*_\mu(\varphi,\psi)=\lim_{\ell\to\infty}\Sec_{\mu_\ell}(\varphi_\ell,\psi_\ell)\le0$$
which 
leads to a contradiction.
\end{proof}

For the analogous conclusion with the torus in the place of the Euclidean space, see  Corollary \ref{NNC-torus}.

\section{The Torus}
Let us now consider the $n$-dimensional torus $\M:=\mathbb T^n:=(\mathbb S^1)^n$ with
$\mathbb T=\mathbb S^1=\R/(2\pi\Z)$ and $d\mu(x):=(2\pi)^{-n}dx$ the  normalized volume measure.
\subsection{Hellinger-Kantorovich geometry for the torus}
In $n=1$, for $k\in\Z$ and $x\in\mathbb T$  put $f_k(x)=e^{ikx}$ and 
$$e_k(x):=\begin{cases}
\sqrt2\, \cos(kx), &k>0\\
\sqrt2\, \sin(kx), &k<0\\
1, &k=0.
\end{cases}$$
Then
$$e_k(x)=\frac1{\sqrt2}\sum_{\sigma\in\{-1,1\}} s_k^\sigma\, f_{\sigma k}(x)$$
with complex  unit numbers  $s_k$ for $k\in\Z$ 
defined as 
$$s_k:=\begin{cases}
1, &k>0\\
-i, &k<0\\
\frac{1-i}{\sqrt 2}, &k=0
\end{cases},
\quad\textrm{and thus }\ 
s_k^{-1}=\overline s_k=\begin{cases}
1, &k>0\\
i\,  &k<0\\
\frac{1+i}{\sqrt 2}, &k=0
\end{cases}.
$$
In the multidimensional case, for parameters $k\in\Z^n$ and  points $x\in \mathbb T^n$ define
$$f_k(x):=\prod_{p=1}^n f_p(x_p)=e^{i\langle k,x\rangle}$$
and 
\begin{align*}
e_k(x):=\prod_{p=1}^n e_p(x_p)&=
2^{-n/2}\sum_{\sigma\in\{-1,1\}^n} \prod_{p=1}^n s_{k_p}^{\sigma_p}\, f_{\sigma_pk_p}(x_p)\\
&=2^{-n/2}\sum_{\sigma\in\{-1,1\}^n} s_k^\sigma\, f_{k^\sigma}(x)
\end{align*}
with 
$s_k^\sigma:=\prod_{p=1}^n s_{k_p}^{\sigma_p}\in\mathbb C, |s_k^\sigma|=1$ and \emph{conjugate} parameters $$k^\sigma:=(\sigma_1k_1,\ldots,\sigma_nk_n)\in\Z^n.$$
Note that
$$\Delta e_k=-|k|^2\,e_k$$
with $|k|:=\left(\sum_p k_p^2\right)^{1/2}$.

\begin{lemma} The family
$(e_k)_{ k\in\Z^n}$
is a complete ON basis of $L^2(\M,\mu)$ and a complete orthogonal basis of  $H^1(\M,\mu)$. More precisely,
$$\int e_k\, e_\ell\,d\mu=0, \qquad
\int \langle\nabla e_k, \nabla e_\ell\rangle\,d\mu=0$$
for all $k\not=\ell$ and
$$\int e_k^2d\mu=1, \qquad \int |\nabla e_k|^2d\mu=|k|^2.$$
Thus in particular,
\begin{align*} \Lambda_{k\ell}&:=|e_{k}|_{H^1(\mu)}^2\cdot|e_{\ell}|_{H^1(\mu)}^2-\langle e_{k},e_{\ell}\rangle_{H^1(\mu)}^2
=(4+|k|^2)\cdot(4+|\ell|^2).
\end{align*}
for all  $k\not=\ell$.
\end{lemma}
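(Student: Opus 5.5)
The plan is to reduce everything to the one-dimensional case by Fubini, since both $e_k$ and the measure $\mu=(2\pi)^{-n}dx$ factorize over the coordinates. First, in dimension one, the functions $\sqrt2\cos(kx)$ ($k>0$), $\sqrt2\sin(kx)$ ($k<0$) and $1$ form the classical real trigonometric orthonormal basis of $L^2(\mathbb T,(2\pi)^{-1}dx)$. Orthonormality follows from the product-to-sum formulas. Completeness follows from completeness of $(f_k)_{k\in\Z}$ (Fourier/Stone–Weierstrass), because $\mathrm{span}\{e_k,e_{-k}\}=\mathrm{span}\{f_k,f_{-k}\}$. In dimension $n$, tensor products of complete ONBs of $L^2(\mathbb T,\cdot)$ form a complete ONB of $L^2(\mathbb T^n,\mu)=\bigotimes L^2(\mathbb T)$. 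This gives $\int e_ke_\ell\,d\mu=\delta_{k\ell}$ and completeness in $L^2$.

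For the gradient identities, I will compute directly rather than differentiate componentwise. Since $-\Delta e_\ell=|\ell|^2e_\ell$, integration by parts on the closed manifold $\mathbb T^n$ (no boundary terms) gives
$$\int\langle\nabla e_k,\nabla e_\ell\rangle\,d\mu=-\int e_k\,\Delta e_\ell\,d\mu=|\ell|^2\int e_ke_\ell\,d\mu=|\ell|^2\delta_{k\ell}.$$
Here $\Delta e_\ell=-|\ell|^2e_\ell$ holds because each factor satisfies $e_{\ell_p}''=-\ell_p^2e_{\ell_p}$, and the Laplacian of a product of functions of separate variables sums these contributions. Hence $\langle e_k,e_\ell\rangle_{H^1(\mu)}=(4+|k|^2)\delta_{k\ell}$, so the family is orthogonal in $H^1(\mu)$ with $\|e_k\|_{H^1}^2=4+|k|^2$.

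Completeness in $H^1(\mu)$ is the only point needing a short argument. Suppose $u\in H^1(\mu)$ is $H^1$-orthogonal to all $e_k$. By the integration by parts above (valid for $u\in H^1$ by density of smooth functions),
$$0=\langle u,e_k\rangle_{H^1}=(4+|k|^2)\int u\,e_k\,d\mu.$$
So every Fourier coefficient of $u$ vanishes, and $u=0$ by $L^2$-completeness. Here $H^1(\mu)$ is the closure of $\C^\infty(\mathbb T^n)$, consistent with the definition of $\T^{\sf ana}_\mu$.

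Finally, for $k\neq\ell$,
$$\Lambda_{k\ell}=\|e_k\|_{H^1}^2\,\|e_\ell\|_{H^1}^2-0=(4+|k|^2)(4+|\ell|^2).$$
No step is a real obstacle. The mild care points are the sign conventions for $k_p<0$ in the one-dimensional basis (the $\sin$ case, and checking that $e_k$ and $e_{-k}$ are orthogonal) and justifying integration by parts for general $H^1$ functions via density.
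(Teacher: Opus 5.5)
Your proof is correct and follows essentially the same route as the paper: $L^2$-orthonormality of the (tensorized) trigonometric basis, then $\Delta e_k=-|k|^2e_k$ together with integration by parts on the closed torus gives $\int\langle\nabla e_k,\nabla e_\ell\rangle\,d\mu=|k|^2\delta_{k\ell}$, hence $\Lambda_{k\ell}=(4+|k|^2)(4+|\ell|^2)$. The paper differs only in computing the diagonal terms directly via the expansion in the $f_{k^\sigma}$, and it merely asserts $H^1$-completeness; your argument for it is fine, but it tacitly uses that $H^1(\mu)\to L^2(\mu)$ is injective (closability of the gradient for Lebesgue measure on $\mathbb T^n$), or alternatively that Fourier series of smooth functions converge in $H^1$.
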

\begin{proof}
Straightforward calculations yield
$$\nabla e_k=2^{-n/2}\sum_{\sigma\in\{-1,1\}^n} s_k^\sigma\, \nabla f_{k^\sigma}=i\,2^{-n/2}\sum_{\sigma\in\{-1,1\}^n} s_k^\sigma\,  f_{k^\sigma}\, k^\sigma
$$
and
\begin{align*}\int |\nabla e_k|^2d\mu&=|k|^2, \qquad \int |e_k|^2d\mu=1.
\end{align*}
Similarly, $\int |\nabla e_\ell|^2d\mu=|\ell|^2$, $\int |e_\ell|^2d\mu=1$.
Furthermore,
$\int e_k\,e_\ell\,d\mu=0$ and $\Delta e_k=-|k|^2 e_k$. Thus

\begin{align*}
\int \langle \nabla e_k,\nabla e_\ell\rangle d\mu=-\int \Delta e_k\, e_\ell\,d\mu=|k|^2\,\int e_k\, e_\ell\,d\mu=
|k|^2\,\one_{k=\ell}.
\end{align*}
\end{proof}

\begin{lemma} In the case $n=1$,
\begin{align*}
s_k^\sigma\,s_\ell^\tau\,\overline{s_k^{\sigma'}}\,\overline{s_\ell^{\tau'}}\,\one_{
k^\sigma+\ell^\tau=k^{\sigma'}+\ell^{\tau'}}&=
\one_{\sigma'=\sigma,\tau'=\tau}+\Big(\one_{k=-\ell\not=0}\one_{\sigma\tau=1}-\one_{k=\ell\not=0}\one_{\sigma\tau=-1}\Big)\,
\one_{\sigma'=-\sigma,\tau'=-\tau}\\
&-i\,\sigma\one_{k=0}\one_{\sigma'=-\sigma,\tau'=\tau}
-i\,\tau\one_{\ell=0}\one_{\sigma'=\sigma,\tau'=-\tau}
-\sigma\tau\,\one_{k=0,\ell=0}\, \one_{\sigma'=-\sigma,\tau'=-\tau}.
\end{align*}
\end{lemma}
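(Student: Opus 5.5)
The plan is a direct case analysis of the indicator, using only the explicit values of $s_k$. For $n=1$ we have $k^\sigma=\sigma k$. The exponent in $s_k^\sigma$ is an honest power: this follows from the expansion $e_k=\frac1{\sqrt2}\sum_\sigma s_k^\sigma f_{\sigma k}$ (for $k<0$ we need $s_k^{1}=-i$ and $s_k^{-1}=i$). Since $|s_k|=1$, we also have $\overline{s_k^{\sigma'}}=s_k^{-\sigma'}$. Hence the left-hand side equals
$$s_k^{\sigma-\sigma'}\,s_\ell^{\tau-\tau'}\,\one_{(\sigma-\sigma')k+(\tau-\tau')\ell=0}.$$
Since $\sigma-\sigma'\in\{0,\pm2\sigma\}$ and $\tau-\tau'\in\{0,\pm2\tau\}$, the proof splits into the four sign patterns of $(\sigma',\tau')$ relative to $(\sigma,\tau)$.

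\emph{Case $\sigma'=\sigma,\tau'=\tau$.} The constraint holds trivially and the product is $1$. This gives the term $\one_{\sigma'=\sigma,\tau'=\tau}$.

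\emph{Case $\sigma'=-\sigma,\tau'=\tau$.} The constraint reads $2\sigma k=0$, i.e.\ $k=0$. The product is then $s_0^{2\sigma}$. From $s_0=\frac{1-i}{\sqrt2}=e^{-i\pi/4}$ we get $s_0^{2\sigma}=e^{-i\pi\sigma/2}=-i\sigma$. This is the term $-i\sigma\one_{k=0}\one_{\sigma'=-\sigma,\tau'=\tau}$.

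\emph{Case $\sigma'=\sigma,\tau'=-\tau$.} This is symmetric to the previous case and gives $-i\tau\one_{\ell=0}$.

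\emph{Case $\sigma'=-\sigma,\tau'=-\tau$.} The constraint is $\sigma k+\tau\ell=0$, and the product is $s_k^{2\sigma}s_\ell^{2\tau}$. There are two subcases.
If $k=\ell=0$, the product is $(-i\sigma)(-i\tau)=-\sigma\tau$, which is the last term.
If $k\neq0$, the constraint forces $\ell=-\sigma\tau k\neq0$. In this subcase $s_k^{2\sigma}=s_k^2$, which equals $1$ for $k>0$ and $(-i)^2=-1$ for $k<0$, and likewise for $\ell$. So the product is $\pm1$ according to whether $k$ and $\ell$ have equal or opposite signs. Equal signs means $k=\ell$, i.e.\ $\sigma\tau=-1$; opposite signs means $k=-\ell$, i.e.\ $\sigma\tau=1$. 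Exactly one of the two indicators $\one_{k=-\ell\neq0}\one_{\sigma\tau=1}$ and $\one_{k=\ell\neq0}\one_{\sigma\tau=-1}$ is active.
Finally, the configurations with exactly one of $k,\ell$ equal to zero are incompatible with the constraint in this case, so they contribute nothing, consistent with the right-hand side.

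Collecting the four cases gives the stated identity. The only delicate point is the sign bookkeeping in the subcase $k,\ell\neq0$ of the last case. There the value of $s_k^2\,s_\ell^2$ must be matched against the paper's convention for $\overline{s_k^{\sigma'}}$. I would double-check this subcase directly against the expansion $e_ke_\ell=\frac12\sum s_k^\sigma s_\ell^\tau f_{\sigma k+\tau\ell}$, for instance with $k=1,\ell=-1$. If the sign of the $\pm1$ term comes out reversed relative to the displayed formula, the convention for the $k<0$ values of $s_k$ (or for the conjugation) has to be adjusted accordingly.
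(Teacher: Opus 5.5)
Your approach is essentially the paper's case analysis: solve $(\sigma-\sigma')k+(\tau-\tau')\ell=0$ pattern by pattern and evaluate $s_k^{\sigma-\sigma'}s_\ell^{\tau-\tau'}$. Every value you compute is correct. The gap is your closing sentence, ``Collecting the four cases gives the stated identity'', which does not follow from those values.

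In the subcase $k,\ell\neq0$, $\sigma'=-\sigma$, $\tau'=-\tau$, you found:
\begin{itemize}
\item the product is $+1$ when $k=\ell$, which forces $\sigma\tau=-1$;
\item the product is $-1$ when $k=-\ell$, which forces $\sigma\tau=1$.
\end{itemize}
That is the contribution $\bigl(\one_{k=\ell\neq0}\one_{\sigma\tau=-1}-\one_{k=-\ell\neq0}\one_{\sigma\tau=1}\bigr)\one_{\sigma'=-\sigma,\tau'=-\tau}$. This is exactly the negative of the bracket in the displayed formula, so your computation refutes the lemma as displayed rather than proving it.

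A check involving no convention at all: take $k=\ell=1$, so every factor is $s_1=1$. At $(\sigma,\tau,\sigma',\tau')=(1,-1,-1,1)$ both sums $k^\sigma+\ell^\tau$ and $k^{\sigma'}+\ell^{\tau'}$ vanish, so the left side is $1$, while the right side is $-1$. Equivalently, summing the left side over all $16$ patterns gives $4\int e_k^2e_\ell^2\,d\mu$. This equals $6$ for $k=\ell=1$, and $4\int\sin^2(2x)\,d\mu=2$ for $(k,\ell)=(1,-1)$. The displayed right side sums to $2$ and $6$ respectively.

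Your closing caveat pointed in the right direction, but the proposed fix of adjusting the convention for $s_k$ with $k<0$ cannot work. A unimodular $s$ for which $\frac1{\sqrt2}(s f_k+\bar s f_{-k})$ is a multiple of $\sin(kx)$ must be purely imaginary, so $s^2=-1$; cosine modes force $s^2=1$. Moreover, the counterexample above involves only $s_1=1$.

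What your case analysis actually proves is the identity with the middle bracket replaced by $\one_{k=\ell\neq0}\one_{\sigma\tau=-1}-\one_{k=-\ell\neq0}\one_{\sigma\tau=1}$. All other terms are as displayed, and your checks of the $-i\sigma\one_{k=0}$, $-i\tau\one_{\ell=0}$ and $-\sigma\tau\one_{k=0,\ell=0}$ terms are fine.

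The paper's own proof has the matching slip: the correlated and anticorrelated computations are interchanged.
\begin{itemize}
\item Under ``anticorrelated case'' it uses the constraint $\sigma+\tau=\sigma'+\tau'$ and the value $s_k^{\sigma+\tau-\sigma'-\tau'}=1$. Both belong to $k=\ell$; for $\ell=-k$ one has $s_\ell=s_{-k}$ and the product is $-1$.
\item Under ``correlated case'' it writes $s_{-k}$ in place of $s_\ell$, which belongs to $k=-\ell$.
\end{itemize}
You should state and prove the sign-corrected identity rather than assert the displayed one.
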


\begin{proof} We distinguish 4 cases.
\begin{description}
\item[\it Generic case  $k\not=\pm\ell$ and $k\ell\not=0$] here  $
(\sigma-\sigma')k+(\tau-\tau')\ell=0$ implies $\sigma=\sigma', \tau=\tau'$ and thus
$$s_k^\sigma\,,\overline{s_k^{\sigma'}}=1, \qquad s_\ell^\tau\,\overline{s_\ell^{\tau'}}=1.$$

\item[\it Anticorrelated case $k=-\ell\not=0$] here $(\sigma-\sigma')k+(\tau-\tau')\ell=0$ implies
$$\sigma+\tau=\sigma'+\tau'$$ and thus
$$s_k^\sigma\,s_\ell^\tau\,\overline{s_k^{\sigma'}}\,\overline{s_\ell^{\tau'}}=s_{k}^{\sigma+\tau-\sigma'-\tau'}=1$$

\item[\it Correlated case $k=\ell\not=0$] here $(\sigma-\sigma')k+(\tau-\tau')\ell=0$ implies
$$\sigma+\tau=-(\sigma'+\tau')$$
and thus
$$s_k^\sigma\,s_\ell^\tau\,\overline{s_k^{\sigma'}}\,\overline{s_\ell^{\tau'}}=s_k^{\sigma-\sigma'}\,s_{-k}^{\tau-\tau'}=
\pm1.$$

\item[\it Degenerate case $k\ell=0$]  e.g. if $k=0$, $\ell\not=0$ then $(\sigma-\sigma')k+(\tau-\tau')\ell=0$ implies
$\tau=\tau'$ whereas $\sigma'$ is arbitrary.
Thus
$$s_k^\sigma\,s_\ell^\tau\,\overline{s_k^{\sigma'}}\,\overline{s_\ell^{\tau'}}=\Big(\frac{1-i}{\sqrt2}\Big)^{\sigma-\sigma'}\in\{i,-i,1\}
.$$
If $k=\ell=0$ then $$s_k^\sigma\,s_\ell^\tau\,\overline{s_k^{\sigma'}}\,\overline{s_\ell^{\tau'}}=\Big(\frac{1-i}{\sqrt2}\Big)^{\sigma+\tau-\sigma'-\tau'}\in\{i,-i,1,-1\}
.$$

\end{description}
Then we verify the claim by considering all 16 possible choices for $\sigma,\tau,\sigma_,\tau'$.
\end{proof}

\begin{proposition} For general $n$,
\begin{align*}
s_k^\sigma\,s_\ell^\tau\,\overline{s_k^{\sigma'}}\,\overline{s_\ell^{\tau'}}\,\one_{
k^\sigma+\ell^\tau=k^{\sigma'}+\ell^{\tau'}}&= 
\prod_{p=1}^n
\bigg[\one_{\sigma'_p=\sigma_p,\tau'_p=\tau_p}\\
&\qquad\quad
+\Big(\one_{k_p=-\ell_p\not=0}\one_{\sigma_p\tau_p=1}-\one_{k_p=\ell_p\not=0}\one_{\sigma_p\tau_p=-1}\Big)\,
\one_{\sigma'_p=-\sigma_p,\tau'_p=-\tau_p}\\
&\qquad\quad-i\,\sigma_p\one_{k_p=0}\one_{\sigma'_p=-\sigma_p,\tau'_p=\tau_p}
-i\,\tau_p\one_{\ell_p=0}\one_{\sigma_p'=\sigma_p,\tau_p'=-\tau_p}\\
&\qquad\quad
-\sigma_p\tau_p\,\one_{k_p=0,\ell_p=0}\, \one_{\sigma_p'=-\sigma_p,\tau_p'=-\tau_p}\bigg].
\end{align*}

\end{proposition}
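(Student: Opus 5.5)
The plan is to reduce the multidimensional identity to the one-dimensional Lemma by factorization over coordinates. By definition $s_k^\sigma=\prod_{p=1}^n s_{k_p}^{\sigma_p}$, and similarly for $s_\ell^\tau$, $\overline{s_k^{\sigma'}}$ and $\overline{s_\ell^{\tau'}}$. Since $k^\sigma=(\sigma_pk_p)_p$, the vector identity $k^\sigma+\ell^\tau=k^{\sigma'}+\ell^{\tau'}$ in $\Z^n$ holds if and only if the scalar identities $\sigma_pk_p+\tau_p\ell_p=\sigma'_pk_p+\tau'_p\ell_p$ hold for every $p=1,\ldots,n$. Consequently
$$\one_{k^\sigma+\ell^\tau=k^{\sigma'}+\ell^{\tau'}}=\prod_{p=1}^n\one_{\sigma_pk_p+\tau_p\ell_p=\sigma'_pk_p+\tau'_p\ell_p}.$$

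Multiplying these factorizations, the left-hand side of the claimed identity becomes
$$\prod_{p=1}^n\Big(s_{k_p}^{\sigma_p}\,s_{\ell_p}^{\tau_p}\,\overline{s_{k_p}^{\sigma'_p}}\,\overline{s_{\ell_p}^{\tau'_p}}\,\one_{k_p^{\sigma_p}+\ell_p^{\tau_p}=k_p^{\sigma'_p}+\ell_p^{\tau'_p}}\Big).$$
Each factor is exactly the expression treated in the preceding Lemma for $n=1$, with $k_p,\ell_p\in\Z$ and signs $\sigma_p,\tau_p,\sigma'_p,\tau'_p\in\{-1,1\}$. Applying that Lemma factor by factor gives the bracketed terms on the right-hand side of the Proposition, which completes the argument.

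The only point needing care is notational. In the one-dimensional Lemma, $s_k^\sigma$ should be read as $s_{k^\sigma}$-type phases attached to $f_{\sigma k}$, consistent with $e_k=2^{-1/2}\sum_\sigma s_k^\sigma f_{\sigma k}$. In the multidimensional definition the exponent $\sigma_p$ on $s_{k_p}$ carries the same meaning coordinatewise, so the factorization is literal rather than merely analogous. I expect no real obstacle here. The substantive work, the 16-case check of sign patterns, is already contained in the one-dimensional Lemma; I would only confirm that its case analysis uses no assumption beyond $k,\ell\in\Z$, so it applies to every coordinate, including coordinates where $k_p=0$, $\ell_p=0$ or $k_p=\pm\ell_p$.
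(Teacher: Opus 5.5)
Your reduction is sound, and it is presumably the argument the paper intends, since no separate proof is given. By definition $s_k^\sigma=\prod_p s_{k_p}^{\sigma_p}$, and the indicator of $k^\sigma+\ell^\tau=k^{\sigma'}+\ell^{\tau'}$ splits into coordinatewise indicators, so the Proposition is literally the product over $p$ of the one-dimensional Lemma. The gap is that you take that Lemma on trust and postpone exactly the check that matters: the coordinates with $k_p=\pm\ell_p\neq0$. There the Lemma is false, so the Proposition as stated is false too.

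Read $s_k^\sigma$ as the power $(s_k)^\sigma$, not as $s_{\sigma k}$; with $s_{-k}$ in place of $s_k^{-1}$ the expansion of $e_k$ fails. Then the one-dimensional factor equals $s_k^{\sigma-\sigma'}s_\ell^{\tau-\tau'}\one_{(\sigma-\sigma')k+(\tau-\tau')\ell=0}$.

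Take $\sigma'=-\sigma$, $\tau'=-\tau$ and $k,\ell\neq0$, so the constraint becomes $\sigma k+\tau\ell=0$. For $k\neq0$, one of $s_k,s_{-k}$ equals $1$ and the other equals $-i$.
\begin{itemize}
\item If $\ell=-k$, the constraint forces $\sigma=\tau$, and the factor is $(s_ks_{-k})^{2\sigma}=(-i)^{2\sigma}=-1$.
\item If $\ell=k$, it forces $\tau=-\sigma$, and the factor is $s_k^{2\sigma}s_k^{-2\sigma}=1$.
\end{itemize}
The Lemma asserts $+1$ and $-1$ respectively.

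Concretely, take $n=1$, $k=1$, $\ell=-1$, $\sigma=\tau=1$, $\sigma'=\tau'=-1$. The left side is $1\cdot(-i)\cdot1\cdot(-i)=-1$, while the right side is $+1$. As a sanity check, the left side summed over all $16$ sign patterns must equal $4\int e_1^2e_{-1}^2\,d\mu=4\int\sin^2(2x)\,d\mu=2$, whereas the stated right side sums to $6$.

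Embedding this coordinate into any dimension gives a counterexample for every $n$. In the remaining coordinates set $\sigma'_p=\sigma_p$, $\tau'_p=\tau_p$; there both sides contribute the factor $1$.

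The paper's case analysis handles the anticorrelated case with the constraint $\sigma+\tau=\sigma'+\tau'$, which belongs to the correlated case. For $\ell=-k$ the correct constraint is $\sigma-\tau=\sigma'-\tau'$, and the relevant phase involves $s_\ell=s_{-k}\neq s_k$.

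The other four terms of the Lemma are correct. Its middle term should read
$\bigl(\one_{k=\ell\neq0}\one_{\sigma\tau=-1}-\one_{k=-\ell\neq0}\one_{\sigma\tau=1}\bigr)\one_{\sigma'=-\sigma,\tau'=-\tau}$.
With this corrected one-dimensional identity, your factorization argument goes through verbatim. It then proves the Proposition with the same sign swap made in each factor of the product.
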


\subsection{Sectional curvature of $\Mea(\mathbb T^n)$}

\subsubsection{The lifted part}

\begin{theorem}\label{torus-lifted} If $n\ge2$, for all linearly independent $k,\ell$,
\begin{align*}\Sec_\mu^\uparrow\big(e_{k},e_{\ell} \big)&=
-\frac{1}{4^n\, (4+|k|^2)\,(4+|\ell|^2)}
\sum_{\sigma,\tau,\sigma',\tau'\in\{-1,1\}^n}\Big[ \langle k^\sigma,k^{\sigma'}\rangle\langle \ell^\tau,\ell^{\tau'}\rangle
  -\langle k^\sigma,\ell^{\tau}\rangle
  \langle k^{\sigma'},\ell^{\tau'}\rangle
  \Big]\\
  &\quad\cdot
\prod_{p=1}^n
\bigg[\one_{\sigma'_p=\sigma_p,\tau'_p=\tau_p}+\Big(\one_{k_p=-\ell_p\not=0}\one_{\sigma_p\tau_p=1}-\one_{k_p=\ell_p\not=0}\one_{\sigma_p\tau_p=-1}\Big)\,\one_{\sigma'_p=-\sigma_p,\tau'_p=-\tau_p}\\
&\qquad\quad-i\,\sigma_p\one_{k_p=0}\one_{\sigma'_p=-\sigma_p,\tau'_p=\tau_p}
-i\,\tau_p\one_{\ell_p=0}\one_{\sigma_p'=\sigma_p,\tau_p'=-\tau_p}
-\sigma_p\tau_p\one_{k_p=0,\ell_p=0}\, \one_{\sigma_p'=-\sigma_p,\tau_p'=-\tau_p}\bigg].
\end{align*}
In particular, for generic $k,\ell$, 
$$\Sec_\mu^\uparrow\big(e_{k},e_{\ell} \big)=-\frac{|k|^2\, |\ell|^2-\langle k^2,\ell^2\rangle}{(4+|k|^2)\,(4+|\ell|^2)}
$$
with
$\langle k^2,\ell^2\rangle:=\sum_p k_p^2\ell_p^2$.

On the contrary, if $n=1$, for all linearly independent $k,\ell$, 
$$Sec_\mu^\uparrow\big(e_{k},e_{\ell} \big)=0.$$
\end{theorem}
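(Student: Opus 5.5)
The plan is to start from the explicit formula for the lifted part in the flat case (the Corollary after Theorem \ref{lift-cone}, valid for $\M=\mathbb T^n$, $n\ge2$):
$$\Sec_\mu^\uparrow(e_k,e_\ell)=-\frac1{\Lambda_{k\ell}}\int_{\mathbb T^n}\Big[|\nabla e_k|^2|\nabla e_\ell|^2-\langle\nabla e_k,\nabla e_\ell\rangle^2\Big]\,d\mu .$$
The orthogonality lemma gives $\Lambda_{k\ell}=(4+|k|^2)(4+|\ell|^2)$ for $k\ne\ell$. For $n=1$ the claim $\Sec_\mu^\uparrow=0$ is Proposition \ref{1d-hell}; equivalently, in one dimension the integrand $|v|^2|w|^2-\langle v,w\rangle^2$ vanishes pointwise.

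For $n\ge2$ we insert the Fourier representation
$$\nabla e_k=i\,2^{-n/2}\sum_\sigma s_k^\sigma f_{k^\sigma}\,k^\sigma .$$
Since $e_k$ is real, we write $|\nabla e_k|^2=\langle\nabla e_k,\overline{\nabla e_k}\rangle$ and $\langle\nabla e_k,\nabla e_\ell\rangle^2=\langle\nabla e_k,\nabla e_\ell\rangle\,\overline{\langle\nabla e_k,\nabla e_\ell\rangle}$. Expanding, the integrand becomes
$$4^{-n}\sum_{\sigma,\tau,\sigma',\tau'}s_k^\sigma s_\ell^\tau\overline{s_k^{\sigma'}}\,\overline{s_\ell^{\tau'}}\,f_{k^\sigma+\ell^\tau-k^{\sigma'}-\ell^{\tau'}}\Big[\langle k^\sigma,k^{\sigma'}\rangle\langle\ell^\tau,\ell^{\tau'}\rangle-\langle k^\sigma,\ell^\tau\rangle\langle k^{\sigma'},\ell^{\tau'}\rangle\Big].$$
The factors of $i$ cancel in pairs. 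The key rearrangement is to group the conjugates so that the first term pairs $k^\sigma$ with $k^{\sigma'}$ and the second pairs $k^\sigma$ with $\ell^\tau$. Integrating against the normalized measure $\mu$ turns $f_m$ into $\one_{m=0}$, so we obtain the indicator $\one_{k^\sigma+\ell^\tau=k^{\sigma'}+\ell^{\tau'}}$ multiplied by the product of the $s$'s.

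That combination is exactly what the preceding Proposition evaluates coordinatewise. The condition $k^\sigma+\ell^\tau=k^{\sigma'}+\ell^{\tau'}$ splits into the $n$ scalar conditions $\sigma_pk_p+\tau_p\ell_p=\sigma'_pk_p+\tau'_p\ell_p$, and $s_k^\sigma$ factorizes over $p$. Substituting the Proposition gives the stated general formula.

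For generic $k,\ell$ only the first summand survives in each factor, so $\sigma'=\sigma$ and $\tau'=\tau$. The bracket then reduces to $|k|^2|\ell|^2-\langle k^\sigma,\ell^\tau\rangle^2$, and summing over the $4^n$ pairs $(\sigma,\tau)$ gives
$$4^n|k|^2|\ell|^2-\sum_{\sigma,\tau}\Big(\sum_p\sigma_p\tau_pk_p\ell_p\Big)^2=4^n|k|^2|\ell|^2-4^n\sum_pk_p^2\ell_p^2,$$
because all cross terms with $p\ne q$ cancel under averaging over the signs. This yields $-\big(|k|^2|\ell|^2-\langle k^2,\ell^2\rangle\big)/\Lambda_{k\ell}$.

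The only delicate step is the bookkeeping of complex conjugation. The $s$-product in the Proposition is written with conjugates on $\sigma',\tau'$, so the expansion must assign the conjugated copy of each gradient to the primed indices consistently. This requires using $\overline{f_m}=f_{-m}$ and $\overline{\nabla e_k}=\nabla e_k$, and checking that the sign $i\cdot\bar i=1$ comes out right. I would verify this carefully once in $n=1$, against the four cases of the Lemma, before trusting the product formula.
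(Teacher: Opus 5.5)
Your argument follows the paper's proof step for step. You start from the flat-case formula for $\Sec^\uparrow_\mu$, expand $\nabla e_k,\nabla e_\ell$ in Fourier modes, integrate to get $4^{-n}\sum s_k^\sigma s_\ell^\tau\overline{s_k^{\sigma'}}\,\overline{s_\ell^{\tau'}}\,[\cdots]\,\one_{k^\sigma+\ell^\tau=k^{\sigma'}+\ell^{\tau'}}$, and then apply the coordinatewise Proposition. Your expansion, including the conjugation bookkeeping, is correct.

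The gap is exactly the check you postponed, and that check fails. The one-dimensional Lemma, and therefore the Proposition, has the wrong signs in the correlated and anticorrelated cases:
\begin{itemize}
\item For $k=-\ell\neq0$, $\sigma=\tau$, $(\sigma',\tau')=(-\sigma,-\tau)$, the product is $(s_ks_{-k})^{2\sigma}=(-i)^{2\sigma}=-1$. For instance $k=1$, $\ell=-1$, $\sigma=\tau=1$ gives $1\cdot(-i)\cdot1\cdot(-i)=-1$.
\item For $k=\ell\neq0$, $\sigma=-\tau$, the product is $s_k^{2(\sigma+\tau)}=+1$.
\end{itemize}
The paper's proof of the Lemma interchanges $s_k$ and $s_{-k}$ between these two cases. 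The middle term of each factor must therefore read $\big(\one_{k_p=\ell_p\neq0}\one_{\sigma_p\tau_p=-1}-\one_{k_p=-\ell_p\neq0}\one_{\sigma_p\tau_p=1}\big)\one_{\sigma'_p=-\sigma_p,\tau'_p=-\tau_p}$, with signs opposite to those in the statement.

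You can confirm this directly. We have $\int e_1^2e_{-1}^2\,d\mu=\int\sin^2(2x)\,d\mu=\frac12$ and $\int e_1^4\,d\mu=\frac32$. The expansion $\frac14\sum s_k^\sigma s_\ell^\tau\overline{s_k^{\sigma'}}\,\overline{s_\ell^{\tau'}}\one_{\sigma k+\tau\ell=\sigma'k+\tau'\ell}$ reproduces these values only with the corrected signs.

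Consequently the substitution step does not yield a true identity, and the general formula cannot be proved as stated. Take $n=2$, $k=(1,1)$, $\ell=(1,2)$. Then $|\nabla e_k|^2|\nabla e_\ell|^2-\langle\nabla e_k,\nabla e_\ell\rangle^2=\sin^2(2x_1)\,(\sin 3x_2+3\sin x_2)^2$, whose $\mu$-integral is $\frac52$. The stated right-hand side instead gives $4^{-n}\sum[\cdots]\prod[\cdots]=5+\frac52=\frac{15}2$: the diagonal terms contribute $5$ and the off-diagonal ones $\frac52$. With the corrected signs one gets $5-\frac52=\frac52$, as it should be.

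What your argument actually proves is the formula with the corrected signs. The generic formula and the $n=1$ claim are unaffected, since the generic case involves only the diagonal terms $\sigma'=\sigma$, $\tau'=\tau$ and the $n=1$ case has a pointwise vanishing integrand. Those parts of your proof stand as written.
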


\begin{proof} Assume $n\ge2$.
\begin{align*}
-\Lambda_{k\ell}&\cdot\Sec_\mu^\uparrow\big(e_{k},e_{\ell} \big)= \int_{\M}\Big(|\nabla e_{k}|^2\, |\nabla e_{\ell}|^2-|\langle \nabla e_{k},\nabla e_{\ell}\rangle|^2
\Big)\,d\mu\\
&=
2^{-2n}\sum_{\sigma,\tau,\sigma',\tau'\in\{-1,1\}^n} s_k^\sigma\, s_k^{-\sigma'} \,s_\ell^\tau\, s_\ell^{-\tau'}
\Big[ \langle k^\sigma,k^{\sigma'}\rangle
  \langle \ell^\tau,\ell^{\tau'}\rangle
  -\langle k^\sigma,\ell^{\tau}\rangle
  \langle k^{\sigma'},\ell^{\tau'}\rangle
  \Big]\,\one_{k^\sigma+\ell^\tau=k^{\sigma'}+\ell^{\tau'}}\\
  &=2^{-2n}\sum_{\sigma,\tau,\sigma',\tau'\in\{-1,1\}^n}\Big[ \langle k^\sigma,k^{\sigma'}\rangle\langle \ell^\tau,\ell^{\tau'}\rangle
  -\langle k^\sigma,\ell^{\tau}\rangle
  \langle k^{\sigma'},\ell^{\tau'}\rangle
  \Big]\\
  &\quad\cdot
\prod_{p=1}^n
\bigg[\one_{\sigma'_p=\sigma_p,\tau'_p=\tau_p}+\Big(\one_{k_p=-\ell_p\not=0}\one_{\sigma_p\tau_p=1}-\one_{k_p=\ell_p\not=0}\one_{\sigma_p\tau_p=-1}\Big)\,
\one_{\sigma'_p=-\sigma_p,\tau'_p=-\tau_p}\\
&\qquad\qquad-i\,\sigma_p\one_{k_p=0}\one_{\sigma'_p=-\sigma_p,\tau'_p=\tau_p}
-i\,\tau_p\one_{\ell_p=0}\one_{\sigma_p'=\sigma_p,\tau_p'=-\tau_p}
-\sigma_p\tau_p\one_{k_p=0,\ell_p=0}\, \one_{\sigma_p'=-\sigma_p,\tau_p'=-\tau_p}\bigg].
\end{align*}
For generic $k,\ell$, we have that $\sigma'=\sigma, \tau'=\tau$ and thus
$\langle k^\sigma,k^{\sigma'}\rangle\langle \ell^\tau,\ell^{\tau'}\rangle
=|k^\sigma|^2\, |\ell^\tau|^2=|k^2\, |\ell|^2$. Furthermore,
\begin{align*}2^{-2n}\sum_{\sigma,\tau\in\{-1,1\}^n}
\langle k^\sigma,\ell^{\tau}\rangle^2
 &=2^{-n}\sum_{\sigma\in\{-1,1\}^n}
\langle k^\sigma,\ell\rangle^2\\
&=2^{-n}\sum_{\sigma\in\{-1,1\}^n}\bigg(\sum_p\sigma_pk_p\ell_p\bigg)\bigg(\sum_q\sigma_qk_q\ell_q\bigg)\\
&=2^{-n}\sum_{\sigma\in\{-1,1\}^n}\sum_p\sigma^2_pk^2_p\ell^2_p=\sum_p\sigma^2_pk^2_p\ell^2_p.
\end{align*}
\end{proof}

\begin{theorem}\label{lower-bound} For all  $k\not=\ell$,
$$\Sec_\mu^\uparrow\big(e_{k},e_{\ell} \big)\ge -\Big(\frac32\Big)^{n}.$$
For generic $k,\ell$,
$$\Sec_\mu^\uparrow\big(e_{k},e_{\ell} \big)\ge -1.$$
\end{theorem}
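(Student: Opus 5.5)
We work directly from the integral representation
$$-\Lambda_{k\ell}\,\Sec_\mu^\uparrow(e_k,e_\ell)=\int_{\mathbb T^n}\Big(|\nabla e_k|^2|\nabla e_\ell|^2-\langle\nabla e_k,\nabla e_\ell\rangle^2\Big)\,d\mu$$
(Corollary for $\M=\mathbb T^n$, $n\ge2$; for $n=1$ the lifted part vanishes by Proposition \ref{1d-hell} and there is nothing to prove). The quantity $\Lambda_{k\ell}=(4+|k|^2)(4+|\ell|^2)$ is computed in the Lemma above. Thus the claim reduces to the estimate
$$\int_{\mathbb T^n}\Big(|\nabla e_k|^2|\nabla e_\ell|^2-\langle\nabla e_k,\nabla e_\ell\rangle^2\Big)d\mu\le \Big(\tfrac32\Big)^n(4+|k|^2)(4+|\ell|^2).$$

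\textbf{Generic case.} By Theorem \ref{torus-lifted} the integral equals $|k|^2|\ell|^2-\langle k^2,\ell^2\rangle$. This is at most $|k|^2|\ell|^2$, and $|k|^2|\ell|^2\le(4+|k|^2)(4+|\ell|^2)$, which gives $\Sec^\uparrow\ge-1$.

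\textbf{General case.} We drop the negative term and bound $\int|\nabla e_k|^2|\nabla e_\ell|^2\,d\mu$ using the product structure. Write $e_k(x)=\prod_p e_{k_p}(x_p)$, so that
$$|\nabla e_k|^2=\sum_p k_p^2\,\tilde e_{k_p}(x_p)^2\prod_{q\ne p}e_{k_q}(x_q)^2,$$
where $\tilde e_{k_p}$ is the $\sqrt2\sin$ or $\sqrt2\cos$ partner of $e_{k_p}$. Expanding the product $|\nabla e_k|^2|\nabla e_\ell|^2$ gives a double sum over $p,p'$. Each summand factorizes over the coordinates, and every one-dimensional factor has the form $\int g^2h^2\,d\mu_1$ with $g,h\in\{\sqrt2\cos(\cdot),\sqrt2\sin(\cdot),1\}$ of frequencies $k_q,\ell_q$.

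The key one-dimensional estimate is
$$\int_{\mathbb T} g^2h^2\,\frac{dx}{2\pi}\le \frac32 .$$
Indeed, $4\cos^2a\cos^2b=(1+\cos2a)(1+\cos2b)$, whose mean is $1+\tfrac12\,\one_{\pm}$, where $\one_{\pm}$ indicates a resonance $2a=\pm2b$ in frequency. The maximum $\tfrac32$ occurs when the frequencies coincide, e.g.\ $\int 4\cos^4$ has mean $\tfrac32$. The factors involving the constant function $e_0=1$ give exactly $1$.

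Combining, the full sum is bounded by $(\tfrac32)^n\sum_{p,p'}k_p^2\ell_{p'}^2=(\tfrac32)^n|k|^2|\ell|^2$, and we conclude as in the generic case. One caveat: when $k_p=0$ the derivative factor vanishes, so those summands contribute nothing. These are just bookkeeping checks.

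\textbf{Main obstacle.} The delicate point is organizing the expansion so that each coordinate contributes at most $\tfrac32$ regardless of which of $\cos/\sin$ appears and of resonances $k_q=\pm\ell_q$. Handling it by the identity $\sin^2=1-\cos^2$-type means, as above, avoids the full $\sigma,\tau$ bookkeeping of Theorem \ref{torus-lifted}.
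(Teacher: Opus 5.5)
Your proof is correct, and its skeleton matches the paper's. Both arguments discard $-\int\langle\nabla e_k,\nabla e_\ell\rangle^2\,d\mu\le 0$, show $\int|\nabla e_k|^2|\nabla e_\ell|^2\,d\mu\le(\frac32)^n|k|^2|\ell|^2$, and then use $|k|^2|\ell|^2\le\Lambda_{k\ell}$. In both, the generic case is read off from Theorem~\ref{torus-lifted}.

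\textbf{How the routes differ.} The difference lies in how the middle inequality is proved.
\begin{itemize}
\item The paper expands in the complex exponentials $f_{k^\sigma}$ and carries the resonance factors $t_{\sigma_p,\tau_p,\sigma'_p,\tau'_p,k_p,\ell_p}$. It then iterates a one-coordinate evaluation to reach the closed form $|k|^2|\ell|^2\prod_q\big[1+\frac12\one_{k_q=-\ell_q}-\frac12\one_{k_q=\ell_q}\big]$.
\item You expand over pairs $(p,p')$ in the real product basis. Everything then reduces to the one-dimensional means $\int g^2h^2\,d\mu_1\in\{\frac12,1,\frac32\}$.
\end{itemize}

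\textbf{What your route buys.} It is more elementary, and it is also more reliable. It shows that the factor from a resonant coordinate depends on $(p,p')$. For $k_q=\ell_q\ne0$, the factor is $\frac32$ if $q\notin\{p,p'\}$ or $q=p=p'$, and $\frac12$ otherwise.

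Hence the paper's closed form cannot be exact. In its iteration, the resonant terms $\sigma'_q=-\sigma_q$, $\tau'_q=-\tau_q$ should carry $(-k_q^2+A_1)(-\ell_q^2+A_2)$, not $(k_q^2+A_1)(\ell_q^2+A_2)$. For instance, take $k=(1,2)$, $\ell=(1,3)$. Your expansion gives $\int|\nabla e_k|^2|\nabla e_\ell|^2\,d\mu=\frac32+\frac92+2+54=62$, whereas the closed form gives $25$.

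What survives is the bound by $(\frac32)^n|k|^2|\ell|^2$. That bound is all the theorem needs, and your argument proves it cleanly. With all coordinate factors equal to $1$, your argument also gives the generic case directly, without Theorem~\ref{torus-lifted}.

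\textbf{What the paper's route buys.} Its Fourier bookkeeping is the same machinery needed for the twisted part (Theorem~\ref{torus-twisted}), where $(4-\Delta)^{-1}$ acts diagonally on the $f_k$.
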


\begin{proof} In the generic case, the claim is obvious. In the general case, we have to estimate
\begin{align*}\Delta:=&2^{-2n}\sum_{\sigma,\tau,\sigma',\tau'\in\{-1,1\}^n} \prod_{p=1}^n t_{\sigma_p,\tau_p,\sigma_p',\tau_p',k_p,\ell_p} \cdot\langle k^\sigma,k^{\sigma'}\rangle\langle \ell^\tau,\ell^{\tau'}\rangle
\end{align*}
with 
\begin{align*}
t_{\sigma_p,\tau_p,\sigma_p',\tau_p',k_p,\ell_p}&:=
\one_{\sigma'_p=\sigma_p,\tau'_p=\tau_p}+\Big(\one_{k_p=-\ell_p\not=0}\one_{\sigma_p\tau_p=1}-\one_{k_p=\ell_p\not=0}\one_{\sigma_p\tau_p=-1}\Big)\,\one_{\sigma'_p=-\sigma_p,\tau'_p=-\tau_p}\\
&-i\,\sigma_p\one_{k_p=0}\one_{\sigma'_p=-\sigma_p,\tau'_p=\tau_p}
-i\,\tau_p\one_{\ell_p=0}\one_{\sigma_p'=\sigma_p,\tau_p'=-\tau_p}
-\sigma_p\tau_p\one_{k_p=0,\ell_p=0}\, \one_{\sigma_p'=-\sigma_p,\tau_p'=-\tau_p}.
\end{align*}
To do so, for any $q\in\{1,\ldots,n\}$ we decompose the sum and the product into $q$-dependent and $q$-independent terms:
\begin{align*}
\Delta&=
2^{-2(n-1)}
\sum_{\sigma_q,\tau_q,\sigma_q',\tau_q'\in\{-1,1\}, q\not=p} \prod_{p\not=q} t_{\sigma_p,\tau_p,\sigma_p',\tau_p',k_p,\ell_p}\\
&\qquad\cdot
\Bigg[\frac14
\sum_{\sigma_q,\tau_q,\sigma_q',\tau_q'\in\{-1,1\}} 
t_{\sigma_q,\tau_q,\sigma_q',\tau_q',k_q,\ell_q}\,
\langle k^\sigma,k^{\sigma'}\rangle\langle \ell^\tau,\ell^{\tau'}\rangle\Bigg]\\
&=2^{-2(n-1)}
\sum_{\sigma_q,\tau_q,\sigma_q',\tau_q'\in\{-1,1\}, q\not=p} \prod_{p\not=q} t_{\sigma_p,\tau_p,\sigma_p',\tau_p',k_p,\ell_p}\\
&\qquad\cdot
\Bigg[\frac14
\sum_{\sigma_q,\tau_q,\sigma_q',\tau_q'\in\{-1,1\}} 
t_{\sigma_q,\tau_q,\sigma_q',\tau_q',k_q,\ell_q}\,
\bigg(\sigma_q\sigma'_qk_q^2+\sum_{r\not=q}\sigma_r\sigma'_rk_r^2\bigg)\cdot
\bigg(\tau_q\tau'_q\ell_q^2+\sum_{r\not=q}\tau_r\tau'_r\ell_r^2\bigg)
\Bigg].
\end{align*}
The expression in the last line can be rewritten as 
\begin{align*}
&\frac14\sum_{\sigma_q,\tau_q,\sigma_q',\tau_q'\in\{-1,1\}} 
t_{\sigma_q,\tau_q,\sigma_q',\tau_q',k_q,\ell_q}\,
\big(\sigma_q\sigma'_qk_q^2+A_1\big)\,
\big(\tau_q\tau'_q\ell_q^2+A_2\big)\\
&=\frac14\sum_{\sigma_q,\tau_q\in\{-1,1\}} \bigg[\big(k_q^2+A_1\big)\,
\big(\ell_q^2+A_2\big)\cdot\Big[1{+\one_{k_q=-\ell_q\not=0}\one_{\sigma_q\tau_q=1}-\one_{k_q=\ell_p\not=0}\one_{\sigma_q\tau_q=-1}}-\sigma_q\tau_q\one_{k_q=0,\ell_q=0}
\Big]
\\
&\qquad -i\,\sigma_q\one_{k_q=0} A_1(\ell_q^2+A_2)-i\,\tau_q\,\one_{\ell_q=0}(k_q^2+A_1)A_2\bigg]\\
&=\big(k_q^2+A_1\big)\,
\big(\ell_q^2+A_2\big)\,\Big[1+\frac12\one_{k_q=-\ell_q}-\frac12\one_{k_q=\ell_q}\Big]
\end{align*}
Iterating this w.r.t. $q$ yields
\begin{align*}\Delta&=\bigg(\sum_qk_q^2\bigg)\,
\bigg(\sum_q\ell_q^2\bigg)\,\prod_q
\Big[1+\frac12\one_{k_q=-\ell_q}-\frac12\one_{k_q=\ell_q}\Big]
\le
\Big(\frac32\Big)^{n} |k|^2\, |\ell|^2.\end{align*}
This proves the claim.
\end{proof}

\subsubsection{The twisted part}
For $k,\ell\in\Z^n$ consider
$$\tilde F_{k\ell}:=\frac12\Big(\nabla^2f_k\cdot\nabla f_\ell-\nabla^2f_\ell\cdot\nabla f_k\Big)
=\frac{-i}2(k-\ell)\, \langle k,\ell \rangle\,e^{i\langle k+\ell,x\rangle}$$
and
$$\tilde\eta_{k\ell}=-(4-\Delta)^{-1}\big(\textrm{div}\, \tilde F_{k\ell}\big)=
\frac{-1}2
\frac{|k|^2-|\ell|^2}{4+|k+\ell|^2}\, \langle k,\ell \rangle\,e^{i\langle k+\ell,x\rangle}.
$$
Then
\begin{align*}F_{k\ell}&:=\frac12\Big(\nabla^2e_k\cdot\nabla e_\ell-\nabla^2e_\ell\cdot\nabla e_k\Big)\\
&=
\frac1{2^{n}}\sum_{\sigma,\tau\in\{-1,1\}^n}s_k^\sigma\,s_\ell^\tau\, \tilde F_{k^\sigma\ell^\tau}\\
&=\frac{-i}{2^{n+1}}\sum_{\sigma,\tau\in\{-1,1\}^n}s_k^\sigma\,s_\ell^\tau\,\big(k^\sigma-\ell^\tau\big) \langle k^\sigma,\ell^\tau \rangle\,e^{i\langle k^\sigma+\ell^\tau,x\rangle},
\end{align*}
\begin{align*}
\textrm{div}\,F_{k\ell}&:=\frac1{2^{n+1}}\sum_{\sigma,\tau\in\{-1,1\}^n}s_k^\sigma\,s_\ell^\tau\,\big(|k^\sigma|^2-|\ell^\tau|^2\big) \langle k^\sigma,\ell^\tau \rangle\,e^{i\langle k^\sigma+\ell^\tau,x\rangle},
\end{align*}
and
\begin{align*}\eta_{k\ell}&:=-(4-\Delta)^{-1}\big(\textrm{div}\, F_{k\ell}\big)\\
&=\frac{-1}{2^{n+1}}\sum_{\sigma,\tau\in\{-1,1\}^n}s_k^\sigma\,s_\ell^\tau\
\frac{|k|^2-|\ell|^2}{4+|k^\sigma+\ell^\tau|^2}\, \langle k^\sigma,\ell^\tau \rangle\,e^{i\langle k^\sigma+\ell^\tau,x\rangle}.
\end{align*}
Thus
\begin{align*}
\int F\,\bar F\,dm
&= \frac1{2^{2n+2}}\sum_{\sigma,\tau,\sigma',\tau'\in\{-1,1\}^n}s_k^\sigma\,s_\ell^\tau\,\overline{s_k^{\sigma'}}\,\overline{s_\ell^{\tau'}}\,\big\langle k^\sigma-\ell^\tau, k^{\sigma'}-\ell^{\tau'}\big\rangle \langle k^\sigma,\ell^\tau \rangle\, \langle k^{\sigma'},\ell^{\tau'} \rangle\,
\one_{
k^\sigma+\ell^\tau=k^{\sigma'}+\ell^{\tau'}}
\end{align*}
and
\begin{align*}
\int \ \textrm{div}F\,\bar \eta \,dm
&= \frac{-1}{2^{2n+2}}\sum_{\sigma,\tau,\sigma',\tau'\in\{-1,1\}^n}s_k^\sigma\,s_\ell^\tau\,\overline{s_k^{\sigma'}}\,\overline{s_\ell^{\tau'}}\,
\frac{(|k|^2-|\ell|^2)^2}{4+|k^\sigma+\ell^\tau|^2}\,
\langle k^\sigma,\ell^\tau \rangle\, \langle k^{\sigma'},\ell^{\tau'} \rangle\,
\one_{
k^\sigma+\ell^\tau=k^{\sigma'}+\ell^{\tau'}}
\end{align*}
hence
\begin{align*}
\delta_{k\ell}^2&:=\int F\,\bar F\,dm+\int \ \textrm{div}F\,\bar \eta \,dm\\
&= \frac1{2^{2n+2}}\sum_{\sigma,\tau,\sigma',\tau'\in\{-1,1\}^n}s_k^\sigma\,s_\ell^\tau\,\overline{s_k^{\sigma'}}\,\overline{s_\ell^{\tau'}}\,\bigg[\big\langle k^\sigma-\ell^\tau, k^{\sigma'}-\ell^{\tau'}\big\rangle
-\frac{(|k|^2-|\ell|^2)^2}{4+|k^\sigma+\ell^\tau|^2}
\bigg]\langle k^\sigma,\ell^\tau \rangle\, \langle k^{\sigma'},\ell^{\tau'} \rangle\,
\one_{
k^\sigma+\ell^\tau=k^{\sigma'}+\ell^{\tau'}}
\end{align*}

\begin{theorem}\label{torus-twisted}
\begin{align*}
\Sec_\mu^\nabla(e_k,e_\ell)
&=\frac{3}{2^{2n+2}}\,\frac1
 {(4+|k|^2)\cdot(4+|\ell|^2)}\\
 &\cdot\sum_{\sigma,\tau,\sigma',\tau'\in\{-1,1\}^n}
\bigg[\big\langle k^\sigma-\ell^\tau, k^{\sigma'}-\ell^{\tau'}\big\rangle
-\frac{(|k|^2-|\ell|^2)^2}{4+|k^\sigma+\ell^\tau|^2}
\bigg]\langle k^\sigma,\ell^\tau \rangle\, \langle k^{\sigma'},\ell^{\tau'} \rangle\\
&\qquad\cdot
\prod_{p=1}^n
\bigg[\one_{\sigma'_p=\sigma_p,\tau'_p=\tau_p}+\Big(\one_{k_p=-\ell_p\not=0}\one_{\sigma_p\tau_p=1}-\one_{k_p=\ell_p\not=0}\one_{\sigma_p\tau_p=-1}\Big)\,
\one_{\sigma'_p=-\sigma_p,\tau'_p=-\tau_p}\\
&\qquad\qquad-i\,\sigma_p\one_{k_p=0}\one_{\sigma'_p=-\sigma_p,\tau'_p=\tau_p}
-i\,\tau_p\one_{\ell_p=0}\one_{\sigma_p'=\sigma_p,\tau_p'=-\tau_p}
-\sigma_p\tau_p\one_{k_p=0,\ell_p=0}\, \one_{\sigma_p'=-\sigma_p,\tau_p'=-\tau_p}\bigg].
\end{align*}
\end{theorem}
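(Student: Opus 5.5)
The plan is to derive the formula from the twisted-part theorem for absolutely continuous measures, i.e. the representation
\[
\Sec_\mu^\nabla(\varphi,\psi)=\frac3\Lambda\inf_\eta\int\big[|F-\nabla\eta|^2+4\eta^2\big]\,d\mu
\]
with $F=\frac12(\nabla^2\varphi\nabla\psi-\nabla^2\psi\nabla\varphi)$. We take $\varphi=e_k$, $\psi=e_\ell$ and $\rho\equiv(2\pi)^{-n}$ on $\mathbb T^n$. That proof was written for $\R^n$ with compactly supported data, but its arguments are local Taylor expansions plus the Brenier--McCann structure and elliptic regularity. So the first step is to note that it transfers verbatim to the flat torus: no boundary, constant density, everything smooth, and $\mathrm{div}_\rho=\mathrm{div}$, $\Delta_\rho=\Delta$. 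The normalising factor is given by the preceding Lemma, $\Lambda_{k\ell}=(4+|k|^2)(4+|\ell|^2)$ for $k\neq\ell$, which produces the prefactor $\frac{3}{(4+|k|^2)(4+|\ell|^2)}$.

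Next, by part (ii) of that theorem, the infimum equals $\int|F|^2d\mu-\int\big[|\nabla\eta_0|^2+4\eta_0^2\big]d\mu$ with $\eta_0=-(4-\Delta)^{-1}\mathrm{div}F$. Integrating by parts gives $\int(|\nabla\eta_0|^2+4\eta_0^2)\,d\mu=\int\eta_0(4-\Delta)\eta_0\,d\mu=-\int \mathrm{div}F\,\eta_0\,d\mu$. Hence the infimum is $\int F\bar F\,d\mu+\int\mathrm{div}F\,\bar\eta\,d\mu=\delta^2_{k\ell}$. Complex conjugates appear because we expand in the complex exponentials $f_k$; since $F$ and $\eta$ are real, nothing changes.

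To compute $\delta^2_{k\ell}$, expand $e_k=2^{-n/2}\sum_\sigma s_k^\sigma f_{k^\sigma}$ and use bilinearity of $F$ in $(\varphi,\psi)$, which gives the sum over $\tilde F_{k^\sigma\ell^\tau}$ displayed above. The identities
\[
\tilde F_{k\ell}=-\tfrac i2(k-\ell)\langle k,\ell\rangle f_{k+\ell},\qquad \mathrm{div}\,\tilde F_{k\ell}=\tfrac12(|k|^2-|\ell|^2)\langle k,\ell\rangle f_{k+\ell}
\]
follow by direct differentiation. Here $\nabla f_k=ikf_k$ and $\nabla^2 f_k=-k\otimes k\,f_k$, and $|k^\sigma|=|k|$. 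The multiplier $(4-\Delta)^{-1}$ acts on $f_m$ as division by $4+|m|^2$.

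Inserting these into $\int F\bar F$ and $\int \mathrm{div}F\,\bar\eta$ produces quadruple sums over $\sigma,\tau,\sigma',\tau'$. Orthogonality $\int f_m\overline{f_{m'}}\,d\mu=\one_{m=m'}$ gives the indicator $\one_{k^\sigma+\ell^\tau=k^{\sigma'}+\ell^{\tau'}}$. One point is easy to overlook: in the $\eta$-term the denominator is $4+|k^{\sigma'}+\ell^{\tau'}|^2$, but on the support of the indicator it equals $4+|k^\sigma+\ell^\tau|^2$. Finally, the coefficient $s_k^\sigma s_\ell^\tau\overline{s_k^{\sigma'}}\,\overline{s_\ell^{\tau'}}\one_{\dots}$ is replaced by the product formula of the preceding Proposition. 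Collecting prefactors gives $2^{-n}\cdot 2^{-2}$ from $F$ and $2^{-n}$ from the normalisation of $e_k$, i.e. the overall factor $2^{-2n-2}$, which is the claim.

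The main obstacle is the first step: justifying that the $\R^n$ twisted-part theorem, and hence the projection characterisation of $\eta$ in the transport-cost expansion, holds on $\mathbb T^n$. In particular, the optimal $\HK$ map $T_t$ must exist with a smooth potential $\theta_t=t\xi+t^2\eta+O(t^3)$. I would argue this via the Pinning Lemma and McCann's theorem on compact manifolds for the cone, noting that for small $t$ all displacements are far below the cut locus and the $\pi/2$ threshold. Everything else is bookkeeping.
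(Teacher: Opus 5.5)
You follow the paper's derivation step for step: the twisted-part formula with constant density, $\eta_0=-(4-\Delta)^{-1}\mathrm{div}F$, identifying the infimum with $\delta_{k\ell}^2$, Fourier expansion and orthogonality. Up to the last step this is fine. Your remark that the $\R^n$ theorem must first be transferred to $\mathbb T^n$ addresses something the paper passes over silently.

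\textbf{The gap is in the final substitution.} You replace $s_k^\sigma s_\ell^\tau\overline{s_k^{\sigma'}}\,\overline{s_\ell^{\tau'}}\one_{k^\sigma+\ell^\tau=k^{\sigma'}+\ell^{\tau'}}$ by the product formula of the preceding Proposition, and that formula has the wrong sign on the correlated/anticorrelated term. Since $|s_m|=1$, the coefficient in each coordinate is $s_{k_p}^{\sigma_p-\sigma_p'}s_{\ell_p}^{\tau_p-\tau_p'}$.
\begin{itemize}
\item For $k_p=\ell_p\neq0$, the off-diagonal solutions of the indicator are $\sigma_p'=-\sigma_p$, $\tau_p'=-\tau_p$, $\tau_p=-\sigma_p$. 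The coefficient is $s_{k_p}^{2\sigma_p}s_{k_p}^{-2\sigma_p}=1$.
\item For $k_p=-\ell_p\neq0$, the off-diagonal solutions are $\sigma_p'=-\sigma_p$, $\tau_p'=-\tau_p$, $\tau_p=\sigma_p$. The coefficient is $s_{k_p}^{2\sigma_p}s_{-k_p}^{2\sigma_p}=(-i)^{\pm2}=-1$.
\end{itemize}
So the factor should be $\bigl(\one_{k_p=\ell_p\neq0}\one_{\sigma_p\tau_p=-1}-\one_{k_p=-\ell_p\neq0}\one_{\sigma_p\tau_p=1}\bigr)$, the negative of the displayed one. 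The case analysis in the $n=1$ Lemma interchanges the two cases; the terms for $k_p=0$ or $\ell_p=0$ are correct. As a quick sanity check, the stated sign gives $\int|\nabla e_1|^4\,d\mu=\frac12$ on $\mathbb T^1$ instead of the true value $\frac32$.

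Consequently your argument, like the paper's, proves the displayed identity only when no coordinate satisfies $k_p=\pm\ell_p\neq0$, for instance in the generic case. Outside that case the statement itself is wrong. Take $n=1$, $k=1$, $\ell=-1$:
\begin{itemize}
\item Directly, $e_1=\sqrt2\cos x$ and $e_{-1}=-\sqrt2\sin x$, so $F=\frac12(e_1''e_{-1}'-e_{-1}''e_1')=\cos^2x+\sin^2x\equiv1$. Hence $\mathrm{div}F=0$, $\eta_0=0$, and $\Sec^\nabla_\mu(e_1,e_{-1})=\frac3{25}\int F^2\,d\mu=\frac3{25}$.
\item In the displayed formula the bracket is $(\sigma+\tau)(\sigma'+\tau')$ and the weight is $\sigma\tau\sigma'\tau'$. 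The diagonal terms give $8$. The two cross terms ($\sigma=\tau$, $\sigma'=\tau'=-\sigma$) give $-8$ times the coefficient.
\item With the stated sign the total is $0$. With the corrected sign it is $16$, hence $\frac{3}{16}\cdot\frac1{25}\cdot16=\frac3{25}$.
\end{itemize}
To close the gap, evaluate the coefficient yourself rather than quoting the Proposition. The identity you then obtain differs from the stated formula by the sign of the $k_p=\pm\ell_p\neq0$ term.
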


\begin{corollary} In the generic case where $k_p\not=\pm\ell_p$  and $k_p\ell_p\not=0$ ($\forall p$),
\begin{align*}
\delta_{k\ell}^2&=\frac1{ 2^{2n+2}}\sum_{\sigma,\tau\in\{-1,1\}^n}\bigg[\big|k^\sigma-\ell^\tau\big|^2
-\frac{(|k|^2-|\ell|^2)^2}{4+|k^\sigma+\ell^\tau|^2}
\bigg]\langle k^\sigma,\ell^\tau \rangle^2\\
&=\frac1{ 2^{n+2}}\sum_{\sigma\in\{-1,1\}^n}\bigg[\big|k^\sigma-\ell\big|^2
-\frac{(|k|^2-|\ell|^2)^2}{4+|k^\sigma+\ell|^2}
\bigg]\langle k^\sigma,\ell \rangle^2\\
&= \frac1{2^{n}}\sum_{\sigma\in\{-1,1\}^n}
\tilde\delta^2(k^\sigma,\ell)
\end{align*}
with 
\begin{align*}
\tilde\delta^2(k,\ell)&:= \frac14\bigg[\big|k-\ell\big|^2
-\frac{(|k|^2-|\ell|^2)^2}{4+|k+\ell|^2}
\bigg]\langle k,\ell \rangle^2\\
&=
\Big[|k-\ell|^2+|k|^2\,|\ell|^2-\langle k,\ell\rangle^2\Big]\frac{\langle k,\ell \rangle^2}{
4+|k+\ell|^2}.
\end{align*}
\end{corollary}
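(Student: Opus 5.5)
The plan is to start from the general expression for $\delta_{k\ell}^2$ derived just before Theorem~\ref{torus-twisted}. In the generic case the product of indicator factors from the Proposition collapses, and the rest is a symmetry argument and one elementary vector identity.

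\emph{Step 1 (collapse of the quadruple sum).} Fix a coordinate $p$ and assume $k_p\neq\pm\ell_p$ and $k_p\ell_p\neq0$. Then every term in the $p$-th factor of the Proposition vanishes except $\one_{\sigma'_p=\sigma_p,\tau'_p=\tau_p}$. The indicators $\one_{k_p=\pm\ell_p\neq0}$, $\one_{k_p=0}$ and $\one_{\ell_p=0}$ are all zero. Hence
\[
s_k^\sigma\,s_\ell^\tau\,\overline{s_k^{\sigma'}}\,\overline{s_\ell^{\tau'}}\,\one_{k^\sigma+\ell^\tau=k^{\sigma'}+\ell^{\tau'}}=\one_{\sigma'=\sigma,\tau'=\tau}.
\]
Substituting this, the sum over $(\sigma,\tau,\sigma',\tau')$ becomes a sum over $(\sigma,\tau)$. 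The inner product $\langle k^\sigma-\ell^\tau,k^{\sigma'}-\ell^{\tau'}\rangle$ becomes $|k^\sigma-\ell^\tau|^2$, and the product $\langle k^\sigma,\ell^\tau\rangle\langle k^{\sigma'},\ell^{\tau'}\rangle$ becomes $\langle k^\sigma,\ell^\tau\rangle^2$. This is the first line of the claim.

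\emph{Step 2 (reduction to a single sum).} For $\sigma,\tau\in\{-1,1\}^n$ write $\sigma\tau$ for the componentwise product. Componentwise sign flips are isometries of $\R^n$, and $k^\sigma=(k^{\sigma\tau})^\tau$. It follows that
\[
|k^\sigma-\ell^\tau|=|k^{\sigma\tau}-\ell|,\qquad |k^\sigma+\ell^\tau|=|k^{\sigma\tau}+\ell|,\qquad \langle k^\sigma,\ell^\tau\rangle=\langle k^{\sigma\tau},\ell\rangle,
\]
while $|k^\sigma|=|k|$ and $|\ell^\tau|=|\ell|$. For each fixed $\tau$, the map $\sigma\mapsto\sigma\tau$ is a bijection of $\{-1,1\}^n$. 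So the $\tau$-sum contributes $2^n$ identical copies of the $\sigma$-sum, which turns $2^{-(2n+2)}$ into $2^{-(n+2)}$. Since $2^{-(n+2)}=2^{-n}\cdot\tfrac14$, the third line is just the definition of $\tilde\delta^2$.

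\emph{Step 3 (the closed form of $\tilde\delta^2$).} This is the only step requiring an idea. Write $a:=k-\ell$ and $b:=k+\ell$. Then $|k|^2-|\ell|^2=\langle a,b\rangle$, and
\[
|k-\ell|^2(4+|k+\ell|^2)-(|k|^2-|\ell|^2)^2=4|a|^2+\big(|a|^2|b|^2-\langle a,b\rangle^2\big).
\]
The bracket is the Gram determinant $|a\wedge b|^2$. Since $a\wedge b=(k-\ell)\wedge(k+\ell)=2\,k\wedge\ell$, it equals $4\big(|k|^2|\ell|^2-\langle k,\ell\rangle^2\big)$. Dividing by $4(4+|k+\ell|^2)$ gives the stated expression.

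I expect no real obstacle. The points that need care are checking that the factor $\sigma_p\tau_p\one_{k_p=\ell_p=0}$ and the imaginary terms indeed drop out under the generic assumption, and tracking the powers of $2$ in Step 2.
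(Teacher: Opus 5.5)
Your proposal is correct and follows the route the paper takes implicitly (the corollary is stated without proof there). The generic assumption leaves only $\one_{\sigma'=\sigma,\tau'=\tau}$ in the Proposition, the isometry $x\mapsto x^\tau$ turns the double sum into $2^n$ copies of a single sum as in the proof of Theorem~\ref{torus-lifted}, and the closed form of $\tilde\delta^2$ rests on the identity $|k-\ell|^2|k+\ell|^2-(|k|^2-|\ell|^2)^2=4\big(|k|^2|\ell|^2-\langle k,\ell\rangle^2\big)$, which your Gram-determinant argument establishes with the powers of $2$ tracked correctly.
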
 

\begin{remark}
$\frac1{2^{n}}\sum_{\sigma\in\{-1,1\}^n}\langle k^\sigma,\ell \rangle^2=\langle k^2,\ell^2 \rangle$.
\end{remark}

\begin{corollary} In the generic case,
\begin{align*}
\Sec_\mu^\nabla(e_k,e_\ell)
&=\frac{3}{2^{n+2}}\,\sum_{\sigma}
\bigg[| k^\sigma-\ell|^2
-
\frac{\big(|k|^2-|\ell|^2
\big)^2}{4+|k^\sigma+\ell|^2}
\bigg]
\, \frac{
 \langle k^\sigma,\ell\rangle^2}
 {(4+|k|^2)\cdot(4+|\ell|^2)}\\
&
= \frac{1}{2^n}\,\sum_{\sigma} S^\nabla(k^\sigma,\ell)
\end{align*}
with 
$$S^\nabla(k,\ell):=\frac34\,
\bigg[| k-\ell|^2
-
\frac{\big(|k|^2-|\ell|^2
\big)^2}{4+|k+\ell|^2}
\bigg]
\, \frac{
 \langle k,\ell\rangle^2}
 {(4+|k|^2)\cdot(4+|\ell|^2)}.
$$
\end{corollary}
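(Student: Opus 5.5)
The plan is to specialize Theorem \ref{torus-twisted}, equivalently the representation $\Sec_\mu^\nabla(e_k,e_\ell)=\frac{3}{\Lambda_{k\ell}}\,\delta_{k\ell}^2$ with $\Lambda_{k\ell}=(4+|k|^2)(4+|\ell|^2)$ and $\delta^2_{k\ell}$ as in the preceding Corollary, to the generic case. First I identify which quadruples $(\sigma,\tau,\sigma',\tau')$ survive in the product of indicator brackets. For generic $k,\ell$ (i.e.\ $k_p\neq\pm\ell_p$ and $k_p\ell_p\neq0$ for all $p$), every coordinate factor reduces to $\one_{\sigma'_p=\sigma_p,\tau'_p=\tau_p}$. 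This is exactly the generic case of the one-dimensional Lemma, applied coordinatewise through the Proposition for general $n$: the terms involving $\one_{k_p=\pm\ell_p\neq0}$, $\one_{k_p=0}$ and $\one_{\ell_p=0}$ all vanish. Hence the fourfold sum collapses to the diagonal $\sigma'=\sigma$, $\tau'=\tau$. There the bracket $\langle k^\sigma-\ell^\tau,k^{\sigma'}-\ell^{\tau'}\rangle$ becomes $|k^\sigma-\ell^\tau|^2$, and the product $\langle k^\sigma,\ell^\tau\rangle\langle k^{\sigma'},\ell^{\tau'}\rangle$ becomes $\langle k^\sigma,\ell^\tau\rangle^2$. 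This yields the first line of the Corollary for $\delta^2_{k\ell}$, with prefactor $2^{-2n-2}$.

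Next I reduce the double sum over $(\sigma,\tau)$ to a single sum over $\sigma$. The coordinatewise sign flip $\tau$ is an isometry of $\R^n$, and $(k^\sigma)^\tau=k^{\sigma\tau}$, where $\sigma\tau$ is the coordinatewise product. Therefore $|k^\sigma-\ell^\tau|=|k^{\sigma\tau}-\ell|$, $|k^\sigma+\ell^\tau|=|k^{\sigma\tau}+\ell|$ and $\langle k^\sigma,\ell^\tau\rangle=\langle k^{\sigma\tau},\ell\rangle$, while $|k^\sigma|=|k|$ and $|\ell^\tau|=|\ell|$. Substituting $\sigma\mapsto\sigma\tau$ makes the summand independent of $\tau$, so the $\tau$-sum contributes a factor $2^n$. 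The prefactor becomes $2^{-n-2}$, which gives the second line.

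Finally I multiply by $3/\Lambda_{k\ell}$ and regroup the constants as $\frac{3}{2^{n+2}}=\frac1{2^n}\cdot\frac34$, which gives both claimed expressions with $S^\nabla$. The main, and only, real obstacle is the bookkeeping in the first step. I must check carefully that genericity kills every off-diagonal term of the product, and in particular that no cross terms in which some coordinates are diagonal and others are not can survive. Since each factor is individually $\one_{\sigma'_p=\sigma_p,\tau'_p=\tau_p}$ in the generic case, the product forces $\sigma'=\sigma$ and $\tau'=\tau$ globally, so this check should go through directly. I also need to confirm that the phases $s_k^\sigma\,\overline{s_k^{\sigma}}=1$ cancel, which holds since $|s_k^\sigma|=1$.
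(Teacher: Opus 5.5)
Your proposal is correct and follows essentially the paper's route. In the generic case each coordinate factor of the indicator product in Theorem~\ref{torus-twisted} reduces to $\one_{\sigma'_p=\sigma_p,\tau'_p=\tau_p}$, so the sum collapses to the diagonal; the substitution $\sigma\mapsto\sigma\tau$ (coordinate sign flips being isometries) then turns $\frac{1}{2^{2n+2}}\sum_{\sigma,\tau}$ into $\frac{1}{2^{n+2}}\sum_{\sigma}$, exactly as in the preceding Corollary for $\delta_{k\ell}^2$, and multiplying by $3/\Lambda_{k\ell}$ while using $|k^\sigma|=|k|$ yields both expressions.
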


\subsubsection{Summing up}

\begin{theorem} In the generic case,
\begin{align*}\Sec_\mu(e_k,e_\ell)&=\Sec_\mu^\uparrow(e_k,e_\ell)+
\Sec_\mu^\nabla(e_k,e_\ell)\\
&= \frac{1}{2^n}\,\sum_{\sigma} S(k^\sigma,\ell)
\end{align*}
with
\begin{align*}S(k,\ell)&:=S^\uparrow(k,\ell)+S^\nabla(k,\ell)\\
&=\frac{1}{(4+|k|^2)\,(4+|\ell|^2)}
\bigg(-|k|^2\, |\ell|^2+\langle k^2,\ell^2\rangle
+\frac34\,
\bigg[| k-\ell|^2
-
\frac{\big(|k|^2-|\ell|^2
\big)^2}{4+|k+\ell|^2}
\bigg]
\, 
 \langle k,\ell\rangle^2 \bigg).\end{align*}
\end{theorem}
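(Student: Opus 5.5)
The statement follows by adding the two generic-case formulas already established. I would proceed in three steps, all of them bookkeeping.

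\emph{Step 1: the decomposition.} Since $e_k,e_\ell\in\C^\infty(\mathbb T^n)$ are smooth and $\mu$ is the normalized volume measure, Theorem \ref{def-dec-scal}(i) gives $\Sec_\mu=\Sec_\mu^\uparrow+\Sec_\mu^\nabla$, provided both limits exist. For the lifted part this is Theorem \ref{lift-cone}; the torus is flat, so the corollary following it applies. For the twisted part this is Theorem \ref{torus-twisted}. Both results also supply the common normalization $\Lambda_{k\ell}=(4+|k|^2)(4+|\ell|^2)$, which holds for $k\neq\ell$ and hence in the generic case.

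\emph{Step 2: rewrite the lifted part as an average.} Theorem \ref{torus-lifted} gives, for generic $k,\ell$,
\[
\Sec_\mu^\uparrow(e_k,e_\ell)=-\frac{|k|^2|\ell|^2-\langle k^2,\ell^2\rangle}{(4+|k|^2)(4+|\ell|^2)} .
\]
The numerator depends on $k$ only through the squares $k_p^2$, and so does the denominator. Both are therefore invariant under $k\mapsto k^\sigma$ for every $\sigma\in\{-1,1\}^n$. Hence this expression equals its own average $\frac1{2^n}\sum_\sigma S^\uparrow(k^\sigma,\ell)$, where
\[
S^\uparrow(k,\ell):=\frac{-|k|^2|\ell|^2+\langle k^2,\ell^2\rangle}{(4+|k|^2)(4+|\ell|^2)} .
\]

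\emph{Step 3: add the twisted part.} The second corollary after Theorem \ref{torus-twisted} gives $\Sec_\mu^\nabla(e_k,e_\ell)=\frac1{2^n}\sum_\sigma S^\nabla(k^\sigma,\ell)$ in the generic case. Adding the two averages termwise, and using again that $(4+|k^\sigma|^2)=(4+|k|^2)$ so the denominators agree, produces exactly the displayed $S(k,\ell)$.

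The only point needing care is that the generic-case reduction used in the twisted corollary is legitimate. There the indicator $\one_{k^\sigma+\ell^\tau=k^{\sigma'}+\ell^{\tau'}}$ forces $\sigma'=\sigma$ and $\tau'=\tau$ coordinatewise, because $k_p\neq\pm\ell_p$ and $k_p\ell_p\neq0$; this is the generic case of the one-dimensional lemma. After that, the double sum over $(\sigma,\tau)$ collapses to a single sum over $\sigma$ by the substitution $k^\sigma,\ell^\tau\mapsto k^{\sigma\tau},\ell$, which preserves $|k-\ell|$-type expressions up to relabeling. This is already contained in the cited corollary, so no genuine obstacle remains.
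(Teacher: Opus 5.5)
Your proposal is correct and follows the paper's route. The paper gives no separate argument here: the theorem is just the generic-case formula of Theorem~\ref{torus-lifted} added to the generic-case corollary of Theorem~\ref{torus-twisted}. Your observation that the lifted term depends on $k$ only through the squares $k_p^2$, and therefore equals its own $\sigma$-average, is exactly what lets the two sums combine termwise.

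One small omission is $n=1$, where Theorem~\ref{torus-lifted} gives $\Sec_\mu^\uparrow=0$ rather than the displayed formula. This changes nothing, because $|k|^2|\ell|^2=\langle k^2,\ell^2\rangle$ when $n=1$, so your $S^\uparrow$ vanishes there as well.
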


\begin{theorem}\label{universal bounds} For arbitrary $k,\ell$ with $k\not=\ell$,
$$-\Big(\frac32\Big)^{n}\le \Sec_\mu\big(e_{k},e_{\ell} \big)\le \frac32\, 4^n\, \Big(|k|^2+|\ell|^2\Big).$$
\end{theorem}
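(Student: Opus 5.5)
The two inequalities are of different nature, so I treat the two parts of the decomposition $\Sec_\mu(e_k,e_\ell)=\Sec_\mu^\uparrow(e_k,e_\ell)+\Sec_\mu^\nabla(e_k,e_\ell)$ separately, using Theorem \ref{def-dec-scal}.

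\emph{Lower bound.} Since $\Sec_\mu^\nabla\ge0$ by Theorem \ref{def-dec-scal}(ii), it suffices to bound the lifted part from below. For $n\ge2$ this is exactly Theorem \ref{lower-bound}, which gives $\Sec_\mu^\uparrow(e_k,e_\ell)\ge-(3/2)^n$. For $n=1$ we have $\Sec_\mu^\uparrow=0$ by Theorem \ref{torus-lifted}, resp.\ Proposition \ref{1d-hell}, and $0\ge-3/2$. Hence $\Sec_\mu(e_k,e_\ell)\ge-(3/2)^n$.

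\emph{Upper bound.} The lifted part on the torus is $-\frac1\Lambda\int(|\nabla e_k|^2|\nabla e_\ell|^2-\langle\nabla e_k,\nabla e_\ell\rangle^2)\,d\mu\le0$ by the Cauchy--Schwarz inequality, or it vanishes if $n=1$. So it suffices to show
\[
\Sec_\mu^\nabla(e_k,e_\ell)\le \tfrac32\,4^n\,(|k|^2+|\ell|^2).
\]
Take the variational formula and choose $\eta=0$ as competitor:
\[
\Sec_\mu^\nabla\le\frac{3}{\Lambda_{k\ell}}\int|F_{k\ell}|^2\,d\mu, \qquad \Lambda_{k\ell}=(4+|k|^2)(4+|\ell|^2).
\]
This is legitimate for $\mu$ the normalized volume on $\mathbb T^n$, because the derivation of the twisted-part formula carries over verbatim to the flat torus with constant density. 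Next bound $|F_{k\ell}|$ pointwise. Write $F_{k\ell}=\frac12(\nabla^2e_k\nabla e_\ell-\nabla^2e_\ell\nabla e_k)$, with $e_k=\prod_p e_{k_p}(x_p)$ and each factor bounded by $\sqrt2$. Then
\[
|\partial_pe_k|\le 2^{n/2}|k_p|, \qquad |\partial_p\partial_qe_k|\le2^{n/2}|k_p||k_q|,
\]
so $|\nabla e_k|\le2^{n/2}|k|$ and $\|\nabla^2e_k\|_{HS}\le2^{n/2}|k|^2$. Therefore
\[
|F_{k\ell}|\le\tfrac12\,2^n\bigl(|k|^2|\ell|+|\ell|^2|k|\bigr),
\]
and squaring gives $|F_{k\ell}|^2\le\frac{4^n}{4}\,|k|^2|\ell|^2(|k|+|\ell|)^2\le\frac{4^n}{2}\,|k|^2|\ell|^2(|k|^2+|\ell|^2)$. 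Integrating against the probability measure $\mu$ and dividing by $\Lambda_{k\ell}\ge|k|^2|\ell|^2$ yields
\[
\Sec_\mu^\nabla\le\tfrac32\,4^n(|k|^2+|\ell|^2).
\]
The case $k=0$ or $\ell=0$ is consistent, since then $F_{k\ell}=0$.

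\emph{Main obstacle.} The only delicate point is the constant bookkeeping in the pointwise bounds on $\nabla e_k$ and $\nabla^2e_k$. The bound $2^{n/2}$ per factor is crude but sufficient. An alternative is to bound the exact expression of Theorem \ref{torus-twisted} termwise, dropping the negative fraction and estimating each $t$-factor by $1$ in modulus, but this gives worse constants. The factors $4^n$ absorb the slack in either approach. I would also double-check that the upper bound does not need linear independence beyond $k\ne\ell$, which holds since $\Lambda_{k\ell}>0$.
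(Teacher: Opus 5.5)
Your proof is correct and gives exactly the claimed constant. The lower bound is argued as in the paper: $\Sec_\mu^\nabla\ge0$ together with Theorem~\ref{lower-bound}. The upper bound also follows the paper's skeleton ($\Sec_\mu^\uparrow\le0$, then $\Lambda_{k\ell}\ge|k|^2|\ell|^2$); what differs is how you control $\Sec_\mu^\nabla$.

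The paper starts from the explicit Fourier expansion of Theorem~\ref{torus-twisted}. It discards the terms containing $(|k|^2-|\ell|^2)^2/(4+|k^\sigma+\ell^\tau|^2)$ and bounds each product of indicator factors by $1$ in modulus. It then estimates each of the $16^n$ summands by $(|k|+|\ell|)^2|k|^2|\ell|^2$.

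You instead insert the competitor $\eta=0$ into the variational formula and bound $|F_{k\ell}|$ pointwise, using sup-norm bounds on $\nabla e_k$ and $\nabla^2 e_k$. Both routes reach the same intermediate estimate $\Sec_\mu^\nabla(e_k,e_\ell)\le 3\cdot 4^{n-1}(|k|+|\ell|)^2$. So your aside that the termwise route gives worse constants is not accurate.

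What your version buys is economy and transparency:
\begin{enumerate}
\item It avoids the bookkeeping over the conjugate frequencies $k^\sigma,\ell^\tau$.
\item It makes clear why the negative terms may be dropped at all. Individually they have no sign: the indicator products take values in $\{0,\pm1,\pm i\}$, and $\langle k^\sigma,\ell^\tau\rangle\langle k^{\sigma'},\ell^{\tau'}\rangle$ can be negative. The justification is only global: their total equals $\int \mathrm{div}F_{k\ell}\,\eta_{k\ell}\,d\mu=-\int \mathrm{div}F_{k\ell}\,(4-\Delta)^{-1}\mathrm{div}F_{k\ell}\,d\mu\le 0$. This is precisely your choice $\eta=0$ in disguise.
\end{enumerate}

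The paper's route has the merit of working inside the same exact formula that drives its generic-case and divergence computations. Both arguments rely equally on transferring the Euclidean twisted-part formula to the flat torus, which the paper also does without comment.
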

\begin{proof} Since, $\Sec_\mu^\nabla\ge0$,
the lower bound follows from the lower bound for $\Sec_\mu^\uparrow$
as stated in Theorem \ref{lower-bound}. On the other hand,   Theorem \ref{torus-twisted} implies that
\begin{align*}
\Sec_\mu^\nabla(e_k,e_\ell)
&=\frac{3}{2^{2n+2}}\,\frac1
 {(4+|k|^2)\cdot(4+|\ell|^2)}\\
 &\cdot\sum_{\sigma,\tau,\sigma',\tau'\in\{-1,1\}^n}
\bigg[\big\langle k^\sigma-\ell^\tau, k^{\sigma'}-\ell^{\tau'}\big\rangle
-\frac{(|k|^2-|\ell|^2)^2}{4+|k^\sigma+\ell^\tau|^2}
\bigg]\langle k^\sigma,\ell^\tau \rangle\, \langle k^{\sigma'},\ell^{\tau'} \rangle\\
&\qquad\cdot
\prod_{p=1}^n
\bigg[\one_{\sigma'_p=\sigma_p,\tau'_p=\tau_p}+\Big(\one_{k_p=-\ell_p\not=0}\one_{\sigma_p\tau_p=1}-\one_{k_p=\ell_p\not=0}\one_{\sigma_p\tau_p=-1}\Big)\,\one_{\sigma'_p=-\sigma_p,\tau'_p=-\tau_p}\\
&\qquad\qquad-i\,\sigma_p\one_{k_p=0}\one_{\sigma'_p=-\sigma_p,\tau'_p=\tau_p}
-i\,\tau_p\one_{\ell_p=0}\one_{\sigma_p'=\sigma_p,\tau_p'=-\tau_p}
-\sigma_p\tau_p\one_{k_p=0,\ell_p=0}\, \one_{\sigma_p'=-\sigma_p,\tau_p'=-\tau_p}\bigg]\\
&\le\frac{3}{2^{2n+2}}\,\frac1
 {(4+|k|^2)\cdot(4+|\ell|^2)}
 \cdot\sum_{\sigma,\tau,\sigma',\tau'\in\{-1,1\}^n}
\bigg|\big\langle k^\sigma-\ell^\tau, k^{\sigma'}-\ell^{\tau'}\big\rangle
\cdot \langle k^\sigma,\ell^\tau \rangle\, \langle k^{\sigma'},\ell^{\tau'} \rangle\bigg|\\
&\le\frac{3}{2^{2n+2}}\,\frac1
 {|k|^2\cdot |\ell|^2}
 \cdot\sum_{\sigma,\tau,\sigma',\tau'\in\{-1,1\}^n}
\big( |k^\sigma|+|\ell^\tau|\big)\cdot
\big(|k^{\sigma'}| + |\ell^{\tau'}|\big)
\cdot |k^\sigma|\, |\ell^\tau|\, |k^{\sigma'}|\,|\ell^{\tau'}| \\
&=3 \cdot 2^{2n-2}\big(|k|+|\ell|\big)^2.
\end{align*}
Since $\Sec_\mu^\uparrow\le0$,
this implies
the upper bound.
\end{proof}
\subsection{Some particular cases}
\subsubsection{Vertical  case}
\begin{proposition}Assume $k=0$.
Then for all $\ell\not=0$,
$$\Sec_\mu(e_k,e_\ell)=0.$$
\end{proposition}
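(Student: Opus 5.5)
Since $k=0$ we have $e_k=e_0\equiv 1$, so $\nabla e_k=0$ and $\nabla^2 e_k=0$. The plan is to use the decomposition $\Sec_\mu=\Sec_\mu^\uparrow+\Sec_\mu^\nabla$ of Theorem \ref{def-dec-scal} and show that each part vanishes, working directly from the explicit formulas rather than the sign-product expansions of Theorems \ref{torus-lifted} and \ref{torus-twisted}.

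First, the normalisation. By the orthogonality lemma for $(e_k)$ we have $\Lambda_{0\ell}=(4+0)(4+|\ell|^2)>0$ for $\ell\neq0$, so the pair is linearly independent and both formulas are meaningful.

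For the lifted part (with $n\ge2$), Theorem \ref{lift-cone} gives an integrand proportional to $|\nabla e_0|^2|\nabla e_\ell|^2-\langle\nabla e_0,\nabla e_\ell\rangle^2$. This vanishes identically, using the convention that dependent pairs contribute $0$. For $n=1$, Proposition \ref{1d-hell} gives $\Sec^\uparrow=0$ directly.

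For the twisted part I would invoke the twisted-part theorem, transferred to the torus as is done in Theorem \ref{torus-twisted}, with $\rho$ constant. It gives
\[
\Sec^\nabla_\mu=\frac{3}{\Lambda}\inf_\eta\int\big[|F-\nabla\eta|^2+4\eta^2\big]\,d\mu,
\qquad
F=\tfrac12\big(\nabla^2e_0\nabla e_\ell-\nabla^2e_\ell\nabla e_0\big).
\]
Both terms of $F$ vanish, so $F=0$. Taking $\eta=0$ shows the infimum is $0$, and nonnegativity then gives $\Sec^\nabla_\mu=0$.

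The main obstacle is that the twisted-part theorem was proved for $\R^n$ with compactly supported smooth $\rho$, so I must check that the proof goes through on $\mathbb T^n$. Compactness and constant density make the elliptic regularity and Taylor steps only easier, and the paper itself uses the torus version in Theorem \ref{torus-twisted}.

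As a sanity check, I would confirm that the formula of Theorem \ref{torus-twisted} with $k=0$ gives $0$. Every summand contains the factor $\langle k^\sigma,\ell^\tau\rangle=0$, so the sum vanishes.

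Adding the two parts gives $\Sec_\mu(e_0,e_\ell)=0$. Intuitively, $e_0$ generates pure mass rescaling $\mu\mapsto(1+2t)^2\mu$, a cone direction, and the cone has vanishing curvature in planes containing the radial direction.
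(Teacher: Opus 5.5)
Your proof is correct and follows the paper's argument. Like the paper, you split $\Sec_\mu$ into its lifted and twisted parts and show that each vanishes. The paper cites Theorem \ref{torus-lifted}, Proposition \ref{1d-hell} and Theorem \ref{torus-twisted}, where every term carries a factor $\langle k^\sigma,k^{\sigma'}\rangle$ or $\langle k^\sigma,\ell^\tau\rangle$ that is zero for $k=0$. You instead read the same vanishing off the general formulas via $\nabla e_0=0$ and $\nabla^2e_0=0$, hence $F=0$, and the paper transfers the twisted-part formula to $\mathbb T^n$ in exactly the way you flag.
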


Hence, in particular,
$$\Ric(e_k,e_k):=\|e_k\|^2_{H^1(\mu)}\cdot \sum_{\ell\not=k}\Sec_\mu(e_k,e_\ell)=0.$$
\begin{proof} If $k=0$ then $\Sec^\uparrow_\mu(e_k,e_\ell)=0$ according to Theorem \ref{torus-lifted} and Proposition \ref{1d-hell},
$\Sec^\nabla_\mu(e_k,e_\ell)=0$ according to Theorem \ref{torus-twisted}. 
\end{proof}

\subsubsection{1-dimensional case}
\begin{proposition} Assume $n=1$.
Then for all $k,\ell\in\Z\setminus\{0\}$, $k\not=\pm\ell$,
$$\Sec_\mu(e_k,e_\ell)=3k^2\ell^2 \frac{ 4k^2+4\ell^2+k^4+6k^2\ell^2+\ell^4}{(4+k^2)(4+\ell^2)(4+(k-\ell)^2)(4+(k+\ell)^2)}>0.$$
Hence, for every $k\not=0$,
$$\lim_{\ell\to\pm\infty}\Sec_\mu(e_k,e_\ell)=\frac{3k^2}{4+k^2}\ \in \ \bigg[\,\frac35,\,3\,\bigg)$$
and, in particular,
$$\Ric(e_k,e_k):=\|e_k\|^2_{H^1(\mu)}\cdot \sum_{\ell\not=k}\Sec_\mu(e_k,e_\ell)=+\infty.$$
\end{proposition}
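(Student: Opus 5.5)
The plan is to combine the decomposition $\Sec_\mu=\Sec_\mu^\uparrow+\Sec_\mu^\nabla$ with the generic-case formulas for the torus, specialised to $n=1$, and then simplify the resulting rational expression by hand. In dimension one the lifted part vanishes, by Theorem \ref{torus-lifted} (case $n=1$) or equivalently Proposition \ref{1d-hell}. For $k,\ell\in\Z\setminus\{0\}$ with $k\neq\pm\ell$, the pair is generic in the sense of the corollaries of Theorem \ref{torus-twisted}. Hence
\begin{equation*}
\Sec_\mu(e_k,e_\ell)=\Sec^\nabla_\mu(e_k,e_\ell)=\frac12\Big(S^\nabla(k,\ell)+S^\nabla(-k,\ell)\Big).
\end{equation*}

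First I simplify $S^\nabla(k,\ell)$ in one dimension. Since $(k^2-\ell^2)^2=(k-\ell)^2(k+\ell)^2$, the bracket becomes
\begin{equation*}
(k-\ell)^2\Big[1-\frac{(k+\ell)^2}{4+(k+\ell)^2}\Big]=\frac{4(k-\ell)^2}{4+(k+\ell)^2}.
\end{equation*}
Using $\langle k,\ell\rangle^2=k^2\ell^2$, this gives
\begin{equation*}
S^\nabla(k,\ell)=\frac{3k^2\ell^2(k-\ell)^2}{(4+k^2)(4+\ell^2)(4+(k+\ell)^2)}.
\end{equation*}
Averaging over $\pm k$ means putting the two fractions $\frac{(k-\ell)^2}{4+(k+\ell)^2}$ and $\frac{(k+\ell)^2}{4+(k-\ell)^2}$ over the common denominator $(4+(k-\ell)^2)(4+(k+\ell)^2)$. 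The numerator is then $4[(k-\ell)^2+(k+\ell)^2]+(k-\ell)^4+(k+\ell)^4$, which equals $8(k^2+\ell^2)+2(k^4+6k^2\ell^2+\ell^4)$. Halving it yields exactly the stated formula. Positivity is immediate, since every factor is positive when $k\ell\neq0$.

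For the limit, fix $k$ and let $|\ell|\to\infty$. The numerator behaves like $3k^2\ell^2\cdot\ell^4$ and the denominator like $(4+k^2)\,\ell^2\cdot\ell^2\cdot\ell^2$, so the quotient tends to $3k^2/(4+k^2)$. On $|k|\ge1$ this quantity is increasing in $k^2$, equals $3/5$ at $|k|=1$, and stays strictly below $3$, which gives the range $[3/5,3)$.

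For the Ricci statement, the sum $\sum_{\ell\neq k}\Sec_\mu(e_k,e_\ell)$ has all terms nonnegative. The finitely many non-generic indices $\ell=0$ and $\ell=-k$ are covered by $\Sec^\uparrow_\mu=0$ together with $\Sec^\nabla_\mu\ge0$ (Theorem \ref{def-dec-scal}(ii)), or by the vertical case for $\ell=0$. The generic terms converge to a positive constant, so the series diverges and $\Ric(e_k,e_k)=+\infty$ because $\|e_k\|^2_{H^1(\mu)}=4+k^2>0$.

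The only step requiring care is the algebraic simplification of the $\sigma$-average, in particular verifying $(k-\ell)^4+(k+\ell)^4=2(k^4+6k^2\ell^2+\ell^4)$. One must also make sure that the genericity hypotheses of the corollary ($k\neq\pm\ell$, $k\ell\neq0$) are exactly the ones assumed here, so that no extra cross terms from the product formula survive.
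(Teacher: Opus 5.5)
Your proposal is correct and follows essentially the paper's proof. The lifted part vanishes for $n=1$, the twisted part is the average of $S^\nabla(\pm k,\ell)$ from the generic-case formula, and the simplification via $(k^2-\ell^2)^2=(k-\ell)^2(k+\ell)^2$ and a common denominator gives the stated expression. Your handling of the limit and of the divergence of $\Ric(e_k,e_k)$, including the non-generic indices $\ell=0$ and $\ell=-k$, spells out details the paper leaves implicit.
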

\begin{proof} Obviously $\Sec^\uparrow_\mu(e_k,e_\ell)=0$ and 
\begin{align*}
\frac43\Sec^\nabla_\mu(e_k,e_\ell)&=\frac{k^2\ell^2}{2(4+k^2)(4+\ell^2)}\bigg[
(k-\ell)^2-\frac{(k^2-\ell^2)^2}{4+(k+\ell)^2}+(k+\ell)^2-\frac{(k^2-\ell^2)^2}{4+(k-\ell)^2}\bigg]\\
&=\frac{k^2\ell^2}{2(4+k^2)(4+\ell^2)}\bigg[
\frac{4(k-\ell)^2}{4+(k+\ell)^2}+\frac{4(k+\ell)^2}{4+(k-\ell)^2}\bigg]\\
&=4k^2\ell^2 \frac{ 4k^2+4\ell^2+k^4+6k^2\ell^2+\ell^4}{(4+k^2)(4+\ell^2)(4+(k-\ell)^2)(4+(k+\ell)^2)}.
\end{align*}
\end{proof}
\subsubsection{Super-orthogonal case}
\begin{proposition}Assume $n\ge 2$, $k,\ell\not=0$, and $k_p\ell_p=0$ ($\forall p=1,\ldots,n$). Then
$$\Sec_\mu(e_k,e_\ell)=-\frac{|k|^2\,|\ell|^2}{(4+|k|^2)\,(4+|\ell|^2)}\ \in \ \bigg(-1,-\frac1{25}\bigg].$$
\end{proposition}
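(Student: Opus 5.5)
The plan is to split $\Sec_\mu(e_k,e_\ell)=\Sec_\mu^\uparrow(e_k,e_\ell)+\Sec_\mu^\nabla(e_k,e_\ell)$ and handle each part with the explicit formulas of Theorems \ref{torus-lifted} and \ref{torus-twisted}. The hypothesis $k_p\ell_p=0$ for all $p$ forces, in every coordinate, $k_p=0$ or $\ell_p=0$. Since $k,\ell\neq0$, the pair is linearly independent and $k\neq\pm\ell$. The normalization $\Lambda_{k\ell}=(4+|k|^2)(4+|\ell|^2)$ then holds as in the torus lemma.

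\textbf{Twisted part.} I would show that $F_{k\ell}=\frac12(\nabla^2e_k\nabla e_\ell-\nabla^2e_\ell\nabla e_k)$ vanishes identically; then the infimum defining $\Sec_\mu^\nabla$ is attained at $\eta=0$ with value $0$. Because $e_k$ is a product of one-variable factors, $\nabla e_k$ and $\nabla^2 e_k$ only involve the coordinates $p\in\supp k:=\{p:k_p\neq0\}$. Likewise $\nabla e_\ell$ only involves $\supp\ell$, which is disjoint from $\supp k$. Hence $\nabla^2e_k\nabla e_\ell$ contracts the block $\supp k$ of $\nabla^2 e_k$ against a vector supported on $\supp\ell$, which gives zero. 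The same holds for $\nabla^2e_\ell\nabla e_k$. Equivalently, in Theorem \ref{torus-twisted} every term carries the factor $\langle k^\sigma,\ell^\tau\rangle=\sum_p\sigma_p\tau_pk_p\ell_p=0$. Either way $\Sec_\mu^\nabla(e_k,e_\ell)=0$. This argument applies verbatim on the torus, since the twisted part formula was derived for smooth densities and the torus computation was already given.

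\textbf{Lifted part.} Since $n\ge2$, the torus is flat and Theorem \ref{lift-cone} (in the form of its corollary) gives
$$\Sec^\uparrow_\mu=-\frac1{\Lambda}\int\big[|\nabla e_k|^2|\nabla e_\ell|^2-\langle\nabla e_k,\nabla e_\ell\rangle^2\big]d\mu.$$
The inner product vanishes pointwise by the disjoint supports. Moreover $|\nabla e_k|^2$ depends only on $x_{\supp k}$ and $|\nabla e_\ell|^2$ only on $x_{\supp\ell}$. Since $\mu$ is a product measure, the integral factorizes as
$$\int|\nabla e_k|^2d\mu\cdot\int|\nabla e_\ell|^2d\mu=|k|^2|\ell|^2.$$
This yields the stated formula directly, and avoids the sign-product bookkeeping of Theorem \ref{torus-lifted} in the degenerate case.

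\textbf{Range.} We have $-\frac{|k|^2|\ell|^2}{(4+|k|^2)(4+|\ell|^2)}=-g(|k|^2)g(|\ell|^2)$ with $g(a)=a/(4+a)$, which is increasing and takes values in $[1/5,1)$ for integers $a\ge1$. Hence the value lies in $(-1,-1/25]$, with equality at $|k|=|\ell|=1$.

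The main obstacle is justifying that the product structure of $e_k$ and the disjointness of supports really kill all cross terms. This includes the degenerate indicator terms with factors $-i\sigma_p$ in the proposition on sign products, if one insists on using the Fourier formulas. Working directly with real derivatives as above sidesteps this, so I would use that route.
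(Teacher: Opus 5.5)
Your proof is correct. It follows the paper's overall structure: you split into lifted and twisted parts, and you kill the twisted part because $\langle k^\sigma,\ell^\tau\rangle=0$ for all sign choices (equivalently, as you note, $F_{k\ell}\equiv0$ pointwise, so $\eta=0$ is optimal). You then show that the lifted part equals $-|k|^2|\ell|^2/\Lambda_{k\ell}$.

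The difference is in how you evaluate the lifted part. The paper reads it off the Fourier-sum formula of Theorem~\ref{torus-lifted}. There one has to check that the degenerate terms carrying $-i\sigma_p$, $-i\tau_p$ or $-\sigma_p\tau_p$ cancel after summation over the free sign; these terms are present in every coordinate, since $k_p=0$ or $\ell_p=0$ for each $p$. What survives are the diagonal terms $\sigma'=\sigma$, $\tau'=\tau$, each contributing $|k|^2|\ell|^2$.

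You instead go back to the flat-case corollary of Theorem~\ref{lift-cone} and evaluate the integral directly. Since $\langle\nabla e_k,\nabla e_\ell\rangle$ vanishes pointwise, and $|\nabla e_k|^2$ and $|\nabla e_\ell|^2$ depend on disjoint coordinate blocks, they are independent under the product measure $\mu$.

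Your route is shorter, more transparent, and does not depend on the sign-product bookkeeping. The paper's route has the advantage of being a specialization of a single formula that also covers the generic and mixed cases. You also supply the monotonicity argument for the range $(-1,-\frac1{25}]$, which the paper leaves implicit.
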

\begin{proof} Under the given assumption, $\langle k^\sigma,\ell^\tau\rangle=0$ for all $\sigma,\tau\in\{-1,1\}^n$. Thus according to Theorem \ref{torus-twisted},
$\Sec^\nabla_\mu(e_k,e_\ell)=0$, and according to Theorem \ref{torus-lifted},
\begin{align*}\Sec_\mu^\uparrow\big(e_{k},e_{\ell} \big)&=
-\frac{1}{4^n\, (4+|k|^2)\,(4+|\ell|^2)}
\sum_{\sigma,\tau,\sigma',\tau'\in\{-1,1\}^n} \langle k^\sigma,k^{\sigma'}\rangle\langle \ell^\tau,\ell^{\tau'}\rangle
    \\
  &\quad\cdot
\prod_{p=1}^n
\bigg[\one_{\sigma'_p=\sigma_p,\tau'_p=\tau_p}+\Big(\one_{k_p=-\ell_p\not=0}\one_{\sigma_p\tau_p=1}-\one_{k_p=\ell_p\not=0}\one_{\sigma_p\tau_p=-1}\Big)\,\one_{\sigma'_p=-\sigma_p,\tau'_p=-\tau_p}\\
&\qquad\quad-i\,\sigma_p\one_{k_p=0}\one_{\sigma'_p=-\sigma_p,\tau'_p=\tau_p}
-i\,\tau_p\one_{\ell_p=0}\one_{\sigma_p'=\sigma_p,\tau_p'=-\tau_p}
-\sigma_p\tau_p\one_{k_p=0,\ell_p=0}\, \one_{\sigma_p'=-\sigma_p,\tau_p'=-\tau_p}\bigg]\\
&=-\frac{1}{ (4+|k|^2)\,(4+|\ell|^2)}\, |k|^2\,|\ell|^2.
\end{align*}
\end{proof}

\subsection{Divergence}
For convenience, let us now restrict to the case $n=2$.
\begin{theorem}\label{tor-div} For all sufficiently large $C,L$ and  all $k=(i,j)\in\Z^2$ with $|i|,|j|\ge C$,
$$\sum_{\ell\in \Z^2, |\ell|\le L}\Sec_\mu(e_k,e_\ell)\ge \frac1{1000} \,C^2\,L^2.$$
\end{theorem}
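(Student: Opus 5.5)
Fix $k=(i,j)$ with $|i|,|j|\ge C$. The plan is to split the sum over $\ell\in\Z^2$, $|\ell|\le L$, into generic and non-generic $\ell$, to use the explicit generic formula $\Sec_\mu(e_k,e_\ell)=\frac14\sum_\sigma S(k^\sigma,\ell)$, and to show that for most $\ell$ the twisted part dominates the lifted part by an amount of order $|k|^2$.

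\emph{Non-generic $\ell$.} By Theorem \ref{universal bounds}, every term satisfies $\Sec_\mu(e_k,e_\ell)\ge-(3/2)^2$. The non-generic $\ell$ (those with $\ell_p\in\{0,\pm k_p\}$ for some $p$) lie on at most six lines, so there are $O(L)$ of them. Their total contribution is therefore $\ge -cL$, which is negligible against $C^2L^2$ once $L$ is large.

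\emph{Generic $\ell$.} We have $\Sec^\uparrow\ge -1$. The aim is a lower bound for $\frac14\sum_\sigma S^\nabla(k^\sigma,\ell)$ of size $\gtrsim |k|^2/|\ell|^{0}$ on a positive proportion of the disc. Use the second representation
\[
S^\nabla(k,\ell)=\frac{3\,[|k-\ell|^2+|k|^2|\ell|^2-\langle k,\ell\rangle^2]\,\langle k,\ell\rangle^2}{(4+|k|^2)(4+|\ell|^2)(4+|k+\ell|^2)},
\]
obtained from $\tilde\delta^2$. Drop $|k-\ell|^2\ge0$ and keep $|k|^2|\ell|^2\sin^2\theta$. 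Restrict to the region $A$ of $\ell$ with $|\ell|\ge 2|k|$ and with the angle between $\ell$ and $k^\sigma$ in $[\pi/6,\pi/3]$ for at least one $\sigma$. This region is a union of sectors of fixed total angular measure, so $\#A\ge c_0L^2$ for $L\gg|k|$. On $A$, $\langle k^\sigma,\ell\rangle^2\ge\frac14|k|^2|\ell|^2$ and $4+|k^\sigma+\ell|^2\le 4+4|\ell|^2$, while $(4+|k|^2)\le 2|k|^2$ and $4+|\ell|^2\le 2|\ell|^2$. Combining these gives $S^\nabla(k^\sigma,\ell)\ge c_1|k|^2$ with an absolute constant $c_1$ (roughly $3/64$). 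Since all $S^\nabla\ge0$, it follows that $\Sec_\mu(e_k,e_\ell)\ge\frac{c_1}{4}|k|^2-1$ on $A$. On the remaining generic $\ell$, use only $\Sec\ge -1$, which contributes at least $-\pi L^2-O(L)$.

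\emph{Summing up.} The total is at least
\[
c_0L^2\Big(\tfrac{c_1}{4}|k|^2-1\Big)-\pi L^2-O(L)\ge \Big(\tfrac{c_0c_1}{4}\cdot 2C^2-1-\pi\Big)L^2-O(L).
\]
For $C,L$ large this is at least $\frac1{1000}C^2L^2$, provided $c_0c_1/2>1/1000$ with some margin.

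\emph{Main obstacle.} The hardest step is making the constants explicit enough to reach $1/1000$. A secondary point is that the regime $|\ell|\ge2|k|$ requires $L\gg C$, while ``sufficiently large $C,L$'' is meant uniformly in $k$, including $|k|$ comparable to $L$. To cover that case, I would instead take $A$ to be a fixed-angle sector with $|\ell|\in[L/2,L]$, where $|k^\sigma+\ell|\le |k|+L$. The same estimate then gives $S^\nabla\gtrsim |k|^2|\ell|^2/(|\ell|^2+|k|^2)\cdot\frac{|\ell|^2}{(|k|+|\ell|)^2}$, which is still $\gtrsim\min(|k|^2,L^2)\ge C^2$ when $|k|\le L$ (respecting that the sum runs only over $|\ell|\le L$). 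I would verify the angular-count constant $c_0$ concretely, for instance with sectors of half-angle $\pi/12$ around directions at angle $\pi/4$ from $k$, and lattice-point counting.
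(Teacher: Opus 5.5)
Your proof is correct, and it takes a different and simpler route than the paper. The paper averages $S(k,\ell)$ over $\ell\mapsto-\ell$ and over $(p,q)\mapsto(p,-q),(q,p),(q,-p)$, and merges the lifted term with part of the twisted term. Before normalization the resulting negative part is $-\frac12(i^2+j^2)(p^2+q^2)$, which is $O(1)$ after dividing by $4(4+|k|^2)(4+|\ell|^2)$. The paper then bounds it crudely by $-i^2j^2(p^2+q^2)$, which after normalization has the same order $i^2j^2/(i^2+j^2)$ as the twisted gain. This forces it into the near-axis sectors $|p/q|\ge5$ or $|p/q|\le\frac15$. There the surviving twisted term $\frac{12}{5}\,i^2j^2\frac{(p^2-q^2)^2}{p^2+q^2}$ wins only by the factor $\frac{16}{15}$. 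That step also needs $p^2+q^2\ge 4(4+|k|^2)$, i.e.\ $|\ell|$ large relative to $|k|$, just like your $|\ell|\ge2|k|$. All remaining $\ell$ in the box are absorbed by the universal bound $-\frac94$.

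You instead keep the lifted part at its true size, $\Sec^\uparrow\ge-1$ on generic pairs. You also use that every summand $S^\nabla(k^\sigma,\ell)$ is nonnegative by the $\tilde\delta^2$ representation. So you need only one favourable $\sigma$ on a $k$-adapted sector, where $S^\nabla(k^\sigma,\ell)$ is essentially $3|k|^2\sin^2\theta\cos^2\theta$. This avoids symmetrization and any delicate competition between the two parts, and leaves room in the constants. It even gives a lower bound of order $|k|^2L^2$, whereas the paper's per-term gain is of order $\min(i^2,j^2)$. In exchange, the paper's sectors do not depend on $k$.

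\medskip

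Two small points. First, with the bounds you list, $c_1$ comes out near $\frac{3}{256}$, not $\frac{3}{64}$; one factor $\frac14$ got lost. Using $\sin^2\theta\cos^2\theta\ge\frac{3}{16}$ on $[\frac\pi6,\frac\pi3]$ instead of $\frac14\cdot\frac14$ regains a factor $3$. Even so, with $c_0\approx\frac\pi6$ you still have $\frac{c_0c_1}{2}\approx3\cdot10^{-3}>10^{-3}$.

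Second, on uniformity in $k$: the inequality cannot hold when $C\gg L$. Testing the variational formula for $\Sec^\nabla$ with $\eta=\frac12\langle\nabla e_k,\nabla e_\ell\rangle$ gives $\Sec_\mu(e_k,e_\ell)\le 3\cdot2^n|\ell|^2$ for all $k$, so the left-hand side is $O(L^4)$. Thus ``sufficiently large'' must allow $L$ to depend on $C$. The paper's proof in fact only treats $L\ge L_0(k)$, and your main argument already covers that case. Your annulus variant for $|k|\le L$ goes beyond the paper, but the constants there are tighter, so the sharper $\frac{3}{16}$ bound is worth using.
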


\begin{proof} Recall that for generic $k=(i,j)$ and $\ell=(p,q)$,
\begin{align*}\Sec_\mu(e_k,e_\ell)&= \frac{1}{2^n}\,\sum_{\sigma} S(k^\sigma,\ell)
\end{align*}
with
\begin{align*}S(k,\ell)&:=\frac{1}{(4+|k|^2)\,(4+|\ell|^2)}
\bigg(-|k|^2\, |\ell|^2+\langle k^2,\ell^2\rangle
+\frac34\,
\bigg[| k-\ell|^2
-
\frac{\big(|k|^2-|\ell|^2
\big)^2}{4+|k+\ell|^2}
\bigg]
\, 
 \langle k,\ell\rangle^2 \bigg).\end{align*}
Then
\begin{align*}4S\big((i,j),(p,q)\big)&\cdot (4+i^2+j^2)(4+p^2+q^2)\\
&=
-4(iq-jp)^2+3\frac{(ip+jq)^2}{4+(i+p)^2+(j+q)^2}\big[4+(i-p)^2+(j-q)^2+(iq-jp)^2\big].
\end{align*}
Thus
with $S'\big((i,j),(p,q)\big):=\frac12 S\big((i,j),(p,q)\big)+\frac12 S\big((i,j),(-p,-q)\big)$, 
\begin{align*}4S'\big((i,j),(p,q)\big)&\cdot (4+i^2+j^2)(4+p^2+q^2)\\
&\ge
-4(iq-jp)^2+3(ip+jq)^2+ 3\frac{(ip+jq)^2(iq-jp)^2}{4+i^2+j^2+p^2+q^2}.
\end{align*}
Furthermore, with 
$S''\big((i,j),(p,q)\big):=\frac14 S'\big((i,j),(p,q)\big)+\frac14 S'\big((i,j),(p,-q)\big)+\frac14 S'\big((i,j),(q,p)\big)+\frac14 S'\big((i,j),(q,-p)\big)$
\begin{align*}4S''\big((i,j),(p,q)\big)&\cdot (4+i^2+j^2)(4+p^2+q^2)\\
&\ge- \frac12(i^2+j^2)(p^2+q^2)
+ 3\frac{(i^2-j^2)^2p^2q^2+i^2j^2(p^2-q^2)^2}{4+i^2+j^2+p^2+q^2}\\
&\ge -i^2j^2(p^2+q^2)+3\frac45\cdot i^2j^2\frac{(p^2-q^2)^2}{p^2+q^2} 
\end{align*}
for sufficiently large generic $p,q$,
and 
\begin{align*}4S''\big((i,j),(p,q)\big)&\cdot (4+i^2+j^2)(4+p^2+q^2)\\
&\ge -i^2j^2(p^2+q^2)+3\frac45\cdot  \frac49 i^2j^2\frac{(p+q)^4}{p^2+q^2} \ge\frac1{15}i^2j^2(p^2+q^2)
\end{align*}
if in addition
\begin{equation}\label{sector}
\Big| |p|-|q|\Big|\ge \frac23 \Big(|p|+|q|\Big).\end{equation}
The latter is equivalent to
$
\big|p/q\big|\ge 5$ or $\big|p/q\big|\le 1/5$,
and for sufficiently large $L\in\N$, 
$$\sharp\Big\{(p,q)\in \Z^2\cap[-L,L]^2: (p,q) \textrm{ satisfies }\eqref{sector}\Big\}\ge \frac16\cdot L^2.$$
Even more, $\sharp A_L\ge \frac16\cdot L^2$ for sufficiently large $L$
where
$$A_L:=\Big\{(p,q)\in \Z^2\cap[-L,L]^2: (p,q) \textrm{ satisfies \eqref{sector}, is generic,  and is sufficiently large}\Big\}.$$
Hence,
\begin{align*}
\sum_{\ell\in A_L}\Sec_\mu(e_k,e_\ell)&=
\sum_{(p,q)\in A_L}S''\big((i,j),(p,q)\big)\\
&\ge \sharp A_L\cdot 
\frac1{4(4+i^2+j^2)(4+p^2+q^2)}\frac1{15}i^2j^2(p^2+q^2)\\
&\ge\frac1{360}\, L^2\,\frac{p^2+q^2}{4+p^2+q^2}\cdot \frac{i^2j^2}{4+i^2+j^2}\\
&\ge\frac1{800} L^2\, C^2
\end{align*}
provided $|i|, |j|\ge C$. 
On the other hand, according to Theorem \ref{universal bounds} ,
\begin{align*}
\sum_{\ell\in [-L,L]^2\setminus A_L}\Sec_\mu(e_k,e_\ell)&\ge -(2L+1)^2\cdot \frac94 \ge -10\,L^2.
\end{align*}
Thus for sufficiently large $C$,
\begin{align*}
\sum_{\ell\in [-L,L]^2}\Sec_\mu(e_k,e_\ell)&\ge -(2L+1)^2\cdot \frac94 \ge \frac1{800} L^2\, C^2-10\,L^2\ge \frac1{1000} L^2\, C^2.
\end{align*}
\end{proof}

\begin{corollary}\label{NNC-torus} For $n\ge 2$, the geodesic space $(\Mea(\mathbb T^n), \HK)$ does not admit any upper or lower curvature bound in the sense of Alexandrov.

For $n=1$ it has nonnegative curvature in the sense of Alexandrov.
\end{corollary}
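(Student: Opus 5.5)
For $n\ge2$ the plan is to argue by contradiction via Theorem \ref{alex-vs-sec}, using the torus computations as test cases for lower and upper bounds separately. Remark \ref{alex-rem} (mass scaling) reduces everything to excluding nonnegative and nonpositive curvature in the sense of Alexandrov. Any bound with $\kappa>0$ or $\lambda<0$ implies one of these, and any bound with $\kappa<0$ or $\lambda>0$ self-improves to one of these.

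\emph{No nonnegative curvature.} Assume $(\Mea(\mathbb T^n),\HK)$ had curvature $\ge0$. Theorem \ref{alex-vs-sec}(i) then gives $\underline\Sec_\mu(e_k,e_\ell)\ge0$ for the normalized volume measure $\mu$ and every linearly independent pair. This contradicts the super-orthogonal case: take $k=(1,0,\dots,0)$, $\ell=(0,1,0,\dots,0)$, which is admissible since $n\ge2$. There $\Sec_\mu(e_k,e_\ell)=-\tfrac1{25}<0$. Alternatively, one can cite Theorem \ref{alex-lms}, since $\mathbb T^n$ does not have curvature $\ge1$ and $\dim\ge2$.

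\emph{No nonpositive curvature.} Assume curvature $\le0$. Theorem \ref{alex-vs-sec}(ii) gives $\overline\Sec_\mu(e_k,e_\ell)\le0$ for all pairs, so it suffices to exhibit one pair with strictly positive sectional curvature. For $n=2$ this follows from Theorem \ref{tor-div}: a sum of $\Sec_\mu(e_k,e_\ell)$ over $|\ell|\le L$ that is positive forces some summand to be positive.

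For general $n\ge2$ I would argue more directly, using the generic formula $S(k,\ell)$. The simplest route is to choose $k=(i,j,1,\dots,1)$ and $\ell$ similar, with the first two coordinates large and in the sector used in the proof of Theorem \ref{tor-div}, and to check positivity of $2^{-n}\sum_\sigma S(k^\sigma,\ell)$. The remaining coordinates contribute only bounded perturbations, dominated by the $i^2j^2$-growth of the twisted part.

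A cleaner alternative avoids the torus computation by imitating the $\R^n$ argument. Choose $\varphi,\psi$ supported in a small ball, where $\mathbb T^n$ is isometric to a Euclidean ball, and use the spatial rescaling of Theorem \ref{scal-space}. The rescaled measures remain supported in that ball, so the Euclidean formulas apply and the limit is the strictly positive Wasserstein twisted curvature $\Sec^*_\mu$. This contradicts $\le0$.

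\emph{Main obstacle.} The hardest point is making sure these torus or localized computations legitimately apply. The Euclidean theorem assumes $\M=\R^n$ and a smooth density, whereas Theorem \ref{alex-vs-sec} needs the HK-exponential curves to be genuine geodesics with the limit existing. For small supports and small $t$ this holds, since the curves are geodesics by the remark after the definition of $\exp^\Mea_\mu$. I would use the localization route, because it inherits the already proven $\R^n$ theorem verbatim.

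\emph{Case $n=1$.} Theorem \ref{alex-lms} applies directly, since $\dim\M=1$ gives nonnegative curvature in the sense of Alexandrov.
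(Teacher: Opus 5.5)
Your proposal is correct. For $n=1$, for excluding nonnegative curvature (your alternative via Theorem~\ref{alex-lms}), and for excluding nonpositive curvature when $n=2$ (Theorem~\ref{alex-vs-sec} together with Theorem~\ref{tor-div}), it follows the paper's argument after the reduction of Remark~\ref{alex-rem}.

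The real difference is how you exclude nonpositive curvature for general $n\ge2$. The paper simply cites Theorem~\ref{tor-div}, which is formulated and proved only for $n=2$. You instead transplant the $\R^n$ counterexample (Theorem~\ref{scal-space} with shrinking supports) into a small ball of $\mathbb T^n$.

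\emph{What each route buys.} Your route works uniformly in $n$. It rests only on the Euclidean formulas for $\Sec^\uparrow$ and $\Sec^\nabla$, not on the torus Fourier computations. This matters because the torus twisted-part formula is carried over from $\R^n$ in the paper without a separate proof, and your primary argument for the nonnegative part (the super-orthogonal value $-\tfrac1{25}$) also depends on it. The paper's route, by contrast, tests curvature at the uniform measure itself rather than at highly concentrated $\mu_\ell$. It also yields quantitative information: positivity on average and divergence of $\mathsf{Ric}_\mu$.

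\emph{Making the localization airtight.} Three points should be stated explicitly.
\begin{enumerate}
\item The ball must have radius $<\pi/2$, so that the torus distance coincides with the Euclidean distance on it; local isometry alone is not enough. Leave a margin so that $\exp^\Mea_{\mu_\ell}(s\varphi_\ell)$ stays inside the ball for small $s$.
\item $\HK$ between measures supported in the ball agrees in $\mathbb T^n$ and $\R^n$. This is because the cone formulation only involves $\d\wedge\pi$ on the supports.
\item You must choose $\varphi,\psi$ with $F$ not a gradient in $L^2(\rho)$, so that $\Sec^*_\mu>0$.
\end{enumerate}

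\emph{Your sketch (a).} Drop it. With $k=(i,j,1,\dots,1)$ and $\ell$ ``similar'', the pair is non-generic as soon as trailing coordinates coincide. Then the formula $2^{-n}\sum_\sigma S(k^\sigma,\ell)$ does not apply, and the claim that extra coordinates are bounded perturbations is unverified. If you want a torus-based argument for $n\ge3$, take $k,\ell$ supported in the first two coordinates instead. Then $\exp^\Mea_\mu$, the $H^1(\mu)$ norms and $\HK$ all reduce to $\mathbb T^2$, since $\mu$ is a product of probability measures and $\HK(\alpha\otimes m,\beta\otimes m)=\HK(\alpha,\beta)$. Theorem~\ref{tor-div} then applies directly.
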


\begin{proof} 
Again according to Remark \ref{alex-rem} it only remains to verify that the curvature of $(\Mea(\mathbb T^n), \HK)$ in the sense of Alexandrov is neither nonnegative nor nonpositive. The former assertion is included in Theorem \ref{alex-lms}, the latter follows from Theorems \ref{alex-vs-sec} and  \ref{tor-div}.
\end{proof}

\begin{corollary}
For all $k=(i,j)\in\Z^2$ with sufficiently large $|i|,|j|$,
$$\mathsf{Ric}_\mu(e_{k}, e_{k}):=\|e_k\|_{H^1(\mu)}^2\cdot\lim_{L\to\infty}\sum_{\ell\in\in \Z^2, |\ell|\le L}\Sec_\mu(e_{k},e_{\ell})=+\infty.$$
\end{corollary}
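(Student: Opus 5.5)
The plan is to derive the corollary directly from Theorem \ref{tor-div}, combined with the finiteness of $\|e_k\|_{H^1(\mu)}^2=4+|k|^2$ from the lemma on the Fourier basis. First, I fix the threshold $C_0$ and the scale $L_0$ given by Theorem \ref{tor-div}: for all $C\ge C_0$, $L\ge L_0$ and all $k=(i,j)$ with $|i|,|j|\ge C$, the lower bound $\sum_{|\ell|\le L}\Sec_\mu(e_k,e_\ell)\ge \frac1{1000}C^2L^2$ holds. One point of bookkeeping is that the theorem sums over $\ell$ in a region (in the proof, effectively $[-L,L]^2$), while the corollary sums over the disc $|\ell|\le L$. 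The discrepancy is harmless, because the disc of radius $L$ contains the square $[-L/\sqrt2,L/\sqrt2]^2$. I would also exclude the term $\ell=k$, for which $\Sec_\mu$ is undefined. It contributes at most one summand, and by Theorem \ref{universal bounds} every summand with $\ell\neq k$ is at least $-(3/2)^2$.

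Next, for fixed $k$ with $|i|,|j|\ge C_0$, set $C:=\min(|i|,|j|)\ge C_0$. This gives
\[
\sum_{|\ell|\le L}\Sec_\mu(e_k,e_\ell)\ \ge\ \frac1{1000}\,C_0^2\,L^2 - c
\]
for all large $L$, with $c$ an absolute constant absorbing the square-versus-disc adjustment and the excluded term. The right-hand side tends to $+\infty$ as $L\to\infty$. Multiplying by $\|e_k\|^2_{H^1(\mu)}=4+|k|^2>0$ preserves the divergence.

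The main obstacle is the meaning of $\lim_{L\to\infty}$. The partial sums need not be monotone, since negative summands occur, for instance in super-orthogonal directions. My approach is to interpret the limit as the limit of the partial sums over the discs $|\ell|\le L$. The lower bound above shows that the $\liminf$ of these partial sums is $+\infty$, so the limit exists in $[-\infty,+\infty]$ and equals $+\infty$. This gives $\mathsf{Ric}_\mu(e_k,e_k)=+\infty$.
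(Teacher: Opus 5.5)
Your argument is correct and matches the paper, which gives no separate proof and treats the corollary as an immediate consequence of Theorem \ref{tor-div} (so the $\liminf$ of the partial sums is $+\infty$) together with $\|e_k\|_{H^1(\mu)}^2=4+|k|^2>0$. One bookkeeping slip: if you pass from the square to the disc, the correction is not an absolute constant but of order $L^2$ (roughly $(\pi-2)L^2$ extra terms, each $\ge -\frac94$); it is still dominated by $\frac1{2000}C_0^2L^2$ once $C_0$ is large, and in any case Theorem \ref{tor-div} is already stated for $|\ell|\le L$, so no adjustment is needed.
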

 \subsection{Rescaling and Regularization}
 Already on the torus we see that ``typically'' the sectional curvature is not summable, that is,
 \begin{align*}\mathsf{Ric}_\mu(e_{k}, e_{k})&:=\|e_k\|_{H^1(\mu)}^2\cdot\sum_{\ell\in\in \Z^n\setminus\{k\}}\Sec_\mu(e_{k},e_{\ell})=+\infty\end{align*}
for most $k\in \Z^n$. To overcome this divergence, we can consider a rescaled version of the sectional curvature $\Sec_\mu(\varphi,\psi)$ where we replace the normalizing factor $\Lambda_\mu(\varphi,\psi)$ by its scaled version
$$\Lambda_{\mu,s}(\varphi,\psi):=\|\varphi\|^2_{H^{s}(\M,\mu)}\cdot  \|\psi\|^2_{H^{s}(\M,\mu)}-\langle \varphi,\psi\rangle^2_{H^{s}(\M,\mu)}$$
with $\|u\|_{H^{s}(\M,\mu)}:=\|(4-\Delta_\mu)^{s/2}u\|_{L^2(\mu)}$.
\begin{definition}
%
Let a Riemannian manifold $(\M,\g)$, a probability measure $\mu\in\Wass_2(\M)$,  an ON eigenbasis  $(\varphi_j)_{j\in\N_0}$ for the Laplacian on   $L^2(\M,\mu)$ with eigenvalues $(\lambda_j)_{j\in\N_0}$, and a number $s\ge0$
 be given.
 \begin{itemize}
 \item[(i)]
 Define the rescaled sectional curvature by 
 $$\Sec^{\Wass_2}_{\mu,s}(\varphi,\psi):=\frac{\Lambda_\mu(\varphi_{k},\varphi_{\ell})}{\Lambda_{\mu,s}(\varphi_{k},\varphi_{\ell})}\cdot
 \Sec^{\Wass_2}_\mu(\varphi,\psi)=
 \frac{1}{\lambda^{s-1}_k\,\lambda^{s-1}_\ell}\cdot
 \Sec^{\Wass_2}_\mu(\varphi,\psi)$$
and the rescaled Ricci curvature by 
$$\mathsf{Ric}^{\Wass_2}_{\mu,s}(\varphi_{j},\varphi_{j}):=\sum_{i=0, i\not=j}^\infty
\Sec^{\Wass_2}_{\mu,s}(\varphi_j,\varphi_i)$$
provided all the involved quantities exist and the sum converges. 

\item[(ii)]

Assume that the function
$$\Psi: z\mapsto 
\mathsf{Ric}_{\mu,z}(\varphi_{j},\varphi_{j}):=
\frac1{
(4+\lambda_j)^{z-1}} \sum_{k=0, k\not=j}^\infty \frac1{
(4+\lambda_k)^{z-1}}\,\Sec_{\mu}(\varphi_j,\varphi_k)
$$
is well-defined on $\{z\in{\mathbb C}: \Re(z)>L\}$ for some $L\in\R$ and admits a unique holomorphic extension to $z=1$. Then the $\zeta$-function regularization of the  Ricci curvature is defined as
$$\mathsf{Ric}_{\mu,\zeta}(\varphi_{j},\varphi_{j}):=\Psi(1).$$
\end{itemize}
\end{definition}

\begin{proposition} Let $\M$ be the $n$-dimensional torus, $\mu$ its normalized volume measure, and $(e_{k})_{k\in \Z^n}$ its eigenbasis for the Laplacian as considered above, and $\varphi_k:=(4+|k|^2)^{-1/2} e_k$. Then for every $k\in\Z^n$ and every $s>2+\frac n2$, the rescaled Ricci curvature
 $\mathsf{Ric}_{\mu,s}(\varphi_{k},\varphi_{k})$
 exists and is finite. 
 
 Furthermore, the defining series for $\Psi(z)= 
\mathsf{Ric}_{\mu,z}(\varphi_{j},\varphi_{j})$ converges absolutely and locally uniformly on   $\{z\in{\mathbb C}: \Re(z)>2+\frac n2\}$ and defines a holomorphic function there.
\end{proposition}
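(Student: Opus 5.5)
The plan is to bound every term in the defining series by a power of $|\ell|$ that grows polynomially. Once the rescaling weights are included, the series becomes absolutely convergent for $\Re z$ large enough.

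First, since $\Sec_\mu(a\varphi,b\psi)=\Sec_\mu(\varphi,\psi)$ by Theorem \ref{def-dec-scal}(iii), I would set $\varphi_k=(4+|k|^2)^{-1/2}e_k$ and note that $\Sec_\mu(\varphi_k,\varphi_\ell)=\Sec_\mu(e_k,e_\ell)$. The Laplacian eigenvalue attached to $\varphi_\ell$ is $\lambda_\ell=|\ell|^2$. For $\Re z>1$ the $\ell$-th term of $\Psi(z)$ then has modulus
$$(4+|k|^2)^{1-\Re z}\,(4+|\ell|^2)^{1-\Re z}\,\big|\Sec_\mu(e_k,e_\ell)\big|.$$
The rescaled Ricci curvature with real parameter $s$ is the same series evaluated at $z=s$, up to the harmless replacement of $\lambda$ by $4+\lambda$ in the normalization.

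Next I invoke the universal bounds of Theorem \ref{universal bounds}. They give
$$\big|\Sec_\mu(e_k,e_\ell)\big|\le \Big(\frac32\Big)^n+\frac32\,4^n\big(|k|^2+|\ell|^2\big)\le C_{n,k}\,(4+|\ell|^2)$$
for all $\ell\neq k$. Hence the $\ell$-th term is bounded by $C'_{n,k}\,(4+|\ell|^2)^{2-\Re z}$. Counting lattice points in shells, $\sharp\{\ell\in\Z^n:|\ell|\approx R\}\lesssim R^{n-1}$, so
$$\sum_{\ell\in\Z^n}(4+|\ell|^2)^{2-\Re z}<\infty$$
as soon as $2(\Re z-2)>n$, that is, $\Re z>2+\frac n2$. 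This yields existence and finiteness of $\mathsf{Ric}_{\mu,s}(\varphi_k,\varphi_k)$ for real $s>2+\frac n2$. For these $s$ the sum converges absolutely, so its value is independent of the order of summation.

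For holomorphy, each term $z\mapsto (4+|k|^2)^{1-z}(4+|\ell|^2)^{1-z}\Sec_\mu(e_k,e_\ell)$ is entire. On any half-plane $\{\Re z\ge 2+\frac n2+\varepsilon\}$ the terms are dominated by the summable, $z$-independent majorant $C'_{n,k}(4+|\ell|^2)^{-n/2-\varepsilon}$. The Weierstrass M-test then gives absolute and locally uniform convergence, and by Weierstrass's theorem the limit is holomorphic.

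The only delicate point is that Theorem \ref{universal bounds} is stated for all $k\neq\ell$, including non-generic pairs. I would rely on it directly rather than on the generic formula, so that no case distinction is needed. Existence of each individual $\Sec_\mu(e_k,e_\ell)$ is part of the earlier theorems.
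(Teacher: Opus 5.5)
Your proposal is correct and follows essentially the same route as the paper. Both arguments reduce $\Sec_\mu(\varphi_k,\varphi_\ell)$ to $\Sec_\mu(e_k,e_\ell)$ by scale invariance, use the two-sided bound of Theorem~\ref{universal bounds} to dominate each term by a constant times $(4+|\ell|^2)^{2-\Re z}$, and conclude from the fact that this lattice series is summable exactly when $\Re z>2+\frac n2$; your explicit Weierstrass M-test argument for holomorphy fills in a step the paper leaves implicit.
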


\begin{proof} By definition,
\begin{align*}
{\Sec_{\mu,s}(\varphi_{k},\varphi_{\ell})}&=
\frac1{(4+|k|^2)^{s-1}\cdot (4+|\ell|^2)^{s-1}}\cdot {\Sec_\mu(\varphi_{k},\varphi_{\ell})}.
\end{align*}
Together with the both-sided estimate
for $\Sec_\mu(\varphi_{k},\varphi_{\ell})=\Sec_\mu(e_{k},e_{\ell})$
 in Theorem \ref{universal bounds} this yields for each fixed $k\in\Z^n$ and any $s>1$,
\begin{align*}
\sum_{\ell\in\Z^n,\ell\not=k}\Big|\Sec_{\mu,s}(\varphi_{k},\varphi_{\ell})\Big|&\le
\sum_{\ell\in\Z^n,\ell\not=k}
\frac1{(4+|k|^2)^{s-1}\cdot (4+|\ell|^2)^{s-1}}\cdot
\Big|\Sec_\mu(\varphi_{k},\varphi_{\ell})\Big|\\
&\le  \frac32\, 4^n\,
\sum_{\ell\in\Z^n}
\frac1{(4+|k|^2)^{s-1}\cdot (4+|\ell|^2)^{s-1}}\cdot
 \Big(|k|^2+|\ell|^2\Big)\\
 &=C \, \sum_{\ell\in\Z^n}
\frac{|k|^2+|\ell|^2}{ (4+|\ell|^2)^{s-1}}.\end{align*}
The latter is finite if and only if
$s>2+\frac{n}2$.
\end{proof}

The lower bound on $s$ in the previous results is not sharp. 
We expect that convergence of the rescaled Ricci curvature 
$\mathsf{Ric}_{\mu,s}(\varphi_{k},\varphi_{k})$ holds for every $s>1+\frac n2$.

\subsection{Kantorovich-Wasserstein geometry for the torus}

\subsubsection{The setting} 

Now let us have a closer look on the Kantorovich-Wasserstein space $(\Wass_2(\mathbb T^n),\W_2)$ for the torus $\mathbb T^n$, $n\ge2$. As before, let $(e_k)_{k\in\Z^n}$ denote the Fourier basis of $L^2(\M,\mu)$ for $\M=\mathbb T^n$ and $\mu$ = normalized volume measure on $\M$.
Then the family $(e_k)_{k\in\Z^n\setminus\{0\}}$ is a complete orthogonal basis of 
$\T^{\sf ana}_\mu\Wass_2:=\overline{\big\{\nabla\varphi: \ \varphi\in \C^\infty_c(\M)\big\}}^{L^2(\mu)}$.
For $k\not=\ell$,
\begin{align*} \Lambda^\Wass_{k\ell}&:=|\nabla e_{k}|_{L^2(\mu)}^2\cdot|\nabla e_{\ell}|_{L^2(\mu)}^2-\langle \nabla e_{k},\nabla e_{\ell}\rangle_{L^2(\mu)}^2
=|k|^2\cdot|\ell|^2.
\end{align*}

\subsubsection{The sectional curvature}
\begin{theorem}\label{torus-Wass} For all $k,\ell \in \Z^n\setminus\{0\}$ with $k\not=\ell$,
\begin{align*}
\Sec_\mu^\Wass(e_k,e_\ell)
&=\frac{3}{2^{2n+2}}\,\frac1
 {|k|^2\cdot |\ell|^2}\\
 &\cdot\sum_{\sigma,\tau,\sigma',\tau'\in\{-1,1\}^n}
\bigg[\big\langle k^\sigma-\ell^\tau, k^{\sigma'}-\ell^{\tau'}\big\rangle
-\frac{(|k|^2-|\ell|^2)^2}{|k^\sigma+\ell^\tau|^2}
\bigg]\langle k^\sigma,\ell^\tau \rangle\, \langle k^{\sigma'},\ell^{\tau'} \rangle\\
&\qquad\cdot
\prod_{p=1}^n
\bigg[\one_{\sigma'_p=\sigma_p,\tau'_p=\tau_p}+\Big(\one_{k_p=-\ell_p\not=0}\one_{\sigma_p\tau_p=1}-\one_{k_p=\ell_p\not=0}\one_{\sigma_p\tau_p=-1}\Big)\,
\one_{\sigma'_p=-\sigma_p,\tau'_p=-\tau_p}\\
&\qquad\qquad-i\,\sigma_p\one_{k_p=0}\one_{\sigma'_p=-\sigma_p,\tau'_p=\tau_p}
-i\,\tau_p\one_{\ell_p=0}\one_{\sigma_p'=\sigma_p,\tau_p'=-\tau_p}
-\sigma_p\tau_p\one_{k_p=0,\ell_p=0}\, \one_{\sigma_p'=-\sigma_p,\tau_p'=-\tau_p}\bigg].
\end{align*}
\end{theorem}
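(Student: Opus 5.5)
On the flat torus the lifted part of the $\W_2$-curvature vanishes ($\Sec^{\mathbb T^n}_x\equiv0$), so Theorem~\ref{sec-sec} reduces $\Sec^\Wass_\mu(e_k,e_\ell)$ to its twisted part. I would obtain the twisted part from Theorem~\ref{sec-proj}, transferred from $\R^n$ to $\mathbb T^n$. The proof of Theorem~\ref{sec-proj} is purely local: non-optimal map $U_t$, polar factorization, Taylor expansion, almost-orthogonality. It only uses that $\mu$ has a smooth positive density and that the Kantorovich potential depends smoothly on $t$. Both hold on the compact torus with $\rho\equiv(2\pi)^{-n}$, with periodic smooth test functions replacing $\C^\infty_c$. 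The result is
$$\Sec^\Wass_\mu(\nabla e_k,\nabla e_\ell)=\frac{3}{\Lambda^\Wass_{k\ell}}\,\delta^2(F_{k\ell}),\qquad F_{k\ell}=\tfrac12\big(\nabla^2e_k\nabla e_\ell-\nabla^2e_\ell\nabla e_k\big),$$
where $\Lambda^\Wass_{k\ell}=|k|^2|\ell|^2$. Here I use the symmetrized field of the $\HK$-section rather than $\nabla^2e_\ell\nabla e_k$. This is legitimate because the two differ by $\tfrac12\nabla\langle\nabla e_k,\nabla e_\ell\rangle$, which is a gradient, so $\delta$ is unchanged (Remark after Theorem~\ref{sec-proj}(i)).

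Next I compute $\delta^2(F)$ exactly as in the $\HK$ computation before Theorem~\ref{torus-twisted}, with $4-\Delta$ replaced by $-\Delta$. By part (iii) of the same Remark, $\delta^2(F)=\int|F|^2d\mu-\int|\nabla\eta|^2d\mu$ with $\eta=\Delta^{-1}\mathrm{div}F$. Since $\mu$ is uniform, $\Delta_\rho=\Delta$ is diagonal in the Fourier modes $f_m$, with eigenvalue $-|m|^2$. I then insert the expansions already written down for $F_{k\ell}$ and $\mathrm{div}F_{k\ell}$. The coefficient of $e^{i\langle k^\sigma+\ell^\tau,x\rangle}$ in $\mathrm{div}F$ is $\frac{1}{2^{n+1}}s_k^\sigma s_\ell^\tau(|k|^2-|\ell|^2)\langle k^\sigma,\ell^\tau\rangle$, because $|k^\sigma|=|k|$ and $|\ell^\tau|=|\ell|$. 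Hence the projection has coefficient $\frac{-1}{2^{n+1}}s_k^\sigma s_\ell^\tau\frac{|k|^2-|\ell|^2}{|k^\sigma+\ell^\tau|^2}\langle k^\sigma,\ell^\tau\rangle$. Using $\int|\nabla\eta|^2=-\int\mathrm{div}F\,\bar\eta$ and Parseval, I get $\delta^2(F)$ as the quadruple sum over $\sigma,\tau,\sigma',\tau'$ of $s_k^\sigma s_\ell^\tau\overline{s_k^{\sigma'}}\,\overline{s_\ell^{\tau'}}\,\one_{k^\sigma+\ell^\tau=k^{\sigma'}+\ell^{\tau'}}$ times the bracket in the statement. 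The sign convention of the formula for $\delta^2_{k\ell}$ is reused verbatim. Finally I substitute the Proposition that evaluates this phase/indicator product coordinatewise, and multiply by $3/(|k|^2|\ell|^2)$.

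The main obstacle is the mode $m=k^\sigma+\ell^\tau=0$, where $\Delta^{-1}$ is undefined. In that case $k^\sigma=-\ell^\tau$, so $|k|=|\ell|$ and the numerator $(|k|^2-|\ell|^2)^2$ vanishes. The corresponding term is of the form $0/0$ and must be read as $0$: the constant mode of $\mathrm{div}F$ is zero, so nothing is projected there. I would state this convention explicitly. The second thing to check carefully is that the proof of Theorem~\ref{sec-proj} really goes through on $\mathbb T^n$. There, small perturbations $\Id+t\nabla\varphi$ of the identity are optimal, and regularity of the Kantorovich potential follows from Caffarelli/elliptic theory on the torus. Justifying the smooth $t$-dependence of $\theta_t$ is the step I would treat most carefully. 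I would argue via the implicit function theorem for the Monge--Amp\`ere operator at $\theta=0$, where it linearizes to the invertible $\Delta$ on mean-zero functions.
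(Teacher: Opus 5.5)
Your reduction is the route the paper intends. The paper gives no separate proof; the statement is the $\HK$ torus computation with $4+|m|^2$ replaced by $|m|^2$ and $\Lambda$ by $|k|^2|\ell|^2$. Everything up to the unsimplified phase sum is right, including your convention at $k^\sigma+\ell^\tau=0$, which the paper leaves unaddressed. (Strictly, $\nabla^2e_\ell\nabla e_k=-F_{k\ell}+\frac12\nabla\langle\nabla e_k,\nabla e_\ell\rangle$, so the two fields differ by a sign and a gradient; $\delta$ is unaffected.)

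The step that fails is the last one. The Proposition you substitute has the signs of the correlated and anticorrelated terms interchanged.
\begin{itemize}
\item If $k_p=\ell_p\neq0$, the constraint is $\sigma_p+\tau_p=\sigma_p'+\tau_p'$. On $\sigma_p'=-\sigma_p$, $\tau_p'=-\tau_p$ (which forces $\sigma_p\tau_p=-1$) the phase is $s_{k_p}^{2(\sigma_p+\tau_p)}=+1$.
\item If $k_p=-\ell_p\neq0$, the constraint is $\sigma_p-\tau_p=\sigma_p'-\tau_p'$. On the flipped pair (which forces $\sigma_p\tau_p=1$) the phase is $(s_{k_p}s_{-k_p})^{2\sigma_p}=(-i)^{\pm2}=-1$.
\end{itemize}
The case analysis in the proof of the one-dimensional Lemma attaches each computation to the wrong case. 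You can check this on $\mathbb T^1$: the full phase sum equals $\int|2e_ke_\ell|^2\,d\mu$. This is $6$ for $k=\ell=1$ and $2$ for $k=1,\ell=-1$, whereas the Lemma yields $2$ and $6$.

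Hence the displayed formula is false as soon as a coordinate with $k_p=\pm\ell_p\neq0$ contributes, and no proof of it as stated is possible. Take $k=(1,0)$, $\ell=(-1,0)$, which reduces to $k=1$, $\ell=-1$ on $\mathbb T^1$, with $\Lambda=1$. Then $F_{k\ell}=(1,0)$ is constant, hence $L^2(\mu)$-orthogonal to all gradients, and $\Sec^\Wass_\mu(e_k,e_\ell)=3\,\delta^2(F_{k\ell})=3$.

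This can also be checked by hand, without Otto's formula. On $\mathbb T^1$, $p_t$ and $q_t$ are the images of $\mu$ under $T(x)=x-\sqrt2\,t\sin x$ and $S(x)=x-\sqrt2\,t\cos x$. The coupling $(T(x),S(x-t^2))_{x\sim\mu}$ costs $2t^2-t^4+O(t^8)$, while $t^2\|\nabla e_k-\nabla e_\ell\|^2_{L^2(\mu)}=2t^2$. So the curvature is at least $3$.

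The displayed formula instead gives $\frac3{16}(8-8)=0$. The $-8$ comes from the two flipped anticorrelated pairs; with the correct sign one gets $\frac3{16}(8+8)=3$.

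What your argument actually establishes is the formula with the unsimplified factor $s_k^\sigma s_\ell^\tau\overline{s_k^{\sigma'}}\,\overline{s_\ell^{\tau'}}\,\one_{k^\sigma+\ell^\tau=k^{\sigma'}+\ell^{\tau'}}$. Equivalently, it is the displayed formula with $\one_{k_p=-\ell_p\neq0}\one_{\sigma_p\tau_p=1}-\one_{k_p=\ell_p\neq0}\one_{\sigma_p\tau_p=-1}$ replaced by its negative. The two versions agree whenever no coordinate has $k_p=\pm\ell_p\neq0$, in particular in the generic case.
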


\begin{corollary} In the generic case where $k_p\not=\pm\ell_p$  and $k_p\ell_p\not=0$ ($\forall p$),

\begin{align*}
\Sec_\mu^\Wass(e_k,e_\ell)
= \frac{1}{2^n}\,\sum_{\sigma} S^\Wass(k^\sigma,\ell)
\end{align*}
with
\begin{align*}S^\Wass(k,\ell)&:=\frac34\,
\bigg[| k-\ell|^2
-
\frac{\big(|k|^2-|\ell|^2
\big)^2}{|k+\ell|^2}
\bigg]
\, \frac{
 \langle k,\ell\rangle^2}
 {|k|^2\, |\ell|^2}\\
 &=3\,\frac{ \langle k,\ell\rangle^2}{|k+\ell|^2}\, 
 \bigg[1-
 \frac{
 \langle k,\ell\rangle^2}
 {|k|^2\, |\ell|^2}\bigg].
\end{align*}
\end{corollary}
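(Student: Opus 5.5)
The plan is to specialize Theorem \ref{torus-Wass} to the generic case, exactly as was done for the twisted part of $\HK$ in the corollary following Theorem \ref{torus-twisted}.

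First I simplify the product of indicator brackets. Under the genericity assumption $k_p\neq\pm\ell_p$ and $k_p\ell_p\neq0$ for all $p$, every term in the $p$-th factor vanishes except $\one_{\sigma'_p=\sigma_p,\tau'_p=\tau_p}$. This is the generic case of the one-dimensional lemma, applied coordinatewise. The quadruple sum therefore collapses to the diagonal $\sigma'=\sigma$, $\tau'=\tau$, and gives
$$\Sec^\Wass_\mu(e_k,e_\ell)=\frac{3}{2^{2n+2}\,|k|^2|\ell|^2}\sum_{\sigma,\tau\in\{-1,1\}^n}\bigg[|k^\sigma-\ell^\tau|^2-\frac{(|k|^2-|\ell|^2)^2}{|k^\sigma+\ell^\tau|^2}\bigg]\langle k^\sigma,\ell^\tau\rangle^2 .$$
Genericity also guarantees $k^\sigma+\ell^\tau\neq0$, so no denominator vanishes.

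Next I remove the $\tau$-sum. Componentwise multiplication by $\tau$ is an isometry of $\R^n$ and maps $\ell^\tau\mapsto\ell$ and $k^\sigma\mapsto k^{\sigma\tau}$. Hence $|k^\sigma\pm\ell^\tau|=|k^{\sigma\tau}\pm\ell|$ and $\langle k^\sigma,\ell^\tau\rangle=\langle k^{\sigma\tau},\ell\rangle$, while $|k|$ and $|\ell|$ are unchanged. Reindexing $\sigma\tau\to\sigma$ shows that each $\tau$ contributes the same sum over $\sigma$. This produces a factor $2^n$, and the prefactor becomes $\frac{1}{2^n}\cdot\frac34\cdot\frac{1}{|k|^2|\ell|^2}$. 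That is precisely $\frac1{2^n}\sum_\sigma S^\Wass(k^\sigma,\ell)$ with the first expression for $S^\Wass$.

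Finally, the second expression for $S^\Wass$ follows from an elementary identity, applied to each $k^\sigma$ in place of $k$. Using $|k-\ell|^2|k+\ell|^2=(|k|^2+|\ell|^2)^2-4\langle k,\ell\rangle^2$, I get
$$|k-\ell|^2|k+\ell|^2-(|k|^2-|\ell|^2)^2=4\big(|k|^2|\ell|^2-\langle k,\ell\rangle^2\big).$$
Hence the bracket equals $4(|k|^2|\ell|^2-\langle k,\ell\rangle^2)/|k+\ell|^2$. Multiplying by $\frac34\langle k,\ell\rangle^2/(|k|^2|\ell|^2)$ gives $3\frac{\langle k,\ell\rangle^2}{|k+\ell|^2}\big[1-\frac{\langle k,\ell\rangle^2}{|k|^2|\ell|^2}\big]$.

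The only step needing care is the first one: checking that in the generic case no off-diagonal combination of $(\sigma',\tau')$ survives in any coordinate. This is already settled by the case analysis in the one-dimensional lemma, so I expect no real obstacle.
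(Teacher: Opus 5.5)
Your proposal is correct and follows essentially the paper's route: the corollary is read off from Theorem~\ref{torus-Wass} in the same way as the generic-case corollary after Theorem~\ref{torus-twisted}, by collapsing the indicator product to $\sigma'=\sigma$, $\tau'=\tau$ and absorbing the $\tau$-sum via $(k^\sigma,\ell^\tau)\mapsto(k^{\sigma\tau},\ell)$ to get the factor $2^n$. The second expression then follows from the identity $|k-\ell|^2|k+\ell|^2-(|k|^2-|\ell|^2)^2=4(|k|^2|\ell|^2-\langle k,\ell\rangle^2)$, exactly as you wrote.
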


\subsubsection{Some examples}

\begin{example} Assume $n=1$.
Then for all $k,\ell\in\Z\setminus\{0\}$, $k\not=\pm\ell$,
$$\Sec^\Wass_\mu(e_k,e_\ell)=0.$$
\end{example}

\begin{example} Assume $k_p\ell_p=0$ ($\forall p=1,\ldots,n$). Then
$$\Sec^\Wass_\mu(e_k,e_\ell)=0.$$
\end{example}

\subsubsection{Divergence}
\begin{theorem} Assume $n=2$. Then for all  $k=(i,j)\in\Z^2\setminus\{(0,0)\}$,
\begin{align*}\mathsf{Ric}^\Wass_\mu(e_{k}, e_{k})&:=\|\nabla\varphi_k\|_{L^2}^2\cdot\sum_{\ell\in\in \Z^2\setminus\{0,k\}}\Sec^\Wass_\mu(e_{k},e_{\ell})=+\infty\end{align*}
\end{theorem}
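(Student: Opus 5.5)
Since $\M=\mathbb T^2$ is flat, the lifted part of the Wasserstein curvature vanishes. By Theorem \ref{sec-proj} (whose proof is local and applies verbatim on the torus with $\rho\equiv(2\pi)^{-2}$), every term is therefore of the form
$$\Sec^\Wass_\mu(e_k,e_\ell)=\frac{3}{|k|^2|\ell|^2}\,\delta^2\big(\nabla^2 e_\ell\nabla e_k\big)\ \ge0 .$$
So the series defining $\mathsf{Ric}^\Wass_\mu(e_k,e_k)$ has nonnegative terms, and $\|\nabla e_k\|^2_{L^2}=|k|^2>0$. It therefore suffices to exhibit an infinite set $A\subset\Z^2$ and a constant $c=c(k)>0$ with $\Sec^\Wass_\mu(e_k,e_\ell)\ge c$ for all $\ell\in A$.

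\emph{Generic $k$} (both $k_1,k_2\neq0$). I take $A$ to be the set of generic $\ell$ (no coordinate zero, $\ell_p\neq\pm k_p$) lying in a fixed cone $\{\ell:\ \tfrac14\le|\cos\angle(\ell,k)|\le\tfrac34\}$. This set is infinite, since it contains a positive proportion of the lattice points in large balls. The corollary to Theorem \ref{torus-Wass} gives $\Sec^\Wass_\mu=\tfrac14\sum_\sigma S^\Wass(k^\sigma,\ell)$ with
$$S^\Wass(k,\ell)=3\,\frac{\langle k,\ell\rangle^2}{|k+\ell|^2}\Big[1-\frac{\langle k,\ell\rangle^2}{|k|^2|\ell|^2}\Big]\ge0 .$$
Hence $\Sec^\Wass_\mu(e_k,e_\ell)\ge\tfrac14 S^\Wass(k,\ell)$. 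In the cone, $\langle k,\ell\rangle^2\ge\tfrac1{16}|k|^2|\ell|^2$, $|k+\ell|^2\le 2|\ell|^2$ for $|\ell|\ge|k|$, and the bracket is at least $7/16$. This yields $S^\Wass(k,\ell)\ge c\,|k|^2$ uniformly in $\ell$, and the sum diverges.

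\emph{Non-generic $k$}, say $k=(i,0)$ with $i\neq0$ (the case $k=(0,j)$ is symmetric). Here the explicit formula carries the imaginary correction terms of Theorem \ref{torus-Wass}, and I would rather bypass them. Since $F:=\nabla^2e_\ell\nabla e_k$ is explicit, I use the duality lower bound
$$\delta(F)\ \ge\ \frac{|\langle F,G\rangle_{L^2(\mu)}|}{\|G\|_{L^2(\mu)}}\qquad\text{for every divergence-free } G,$$
valid because divergence-free fields are orthogonal to all gradients. I take $G=\nabla^\perp(e^{i\langle m,x\rangle})=i\,m^\perp e^{i\langle m,x\rangle}$, where $m=k^\sigma+\ell^\tau$ is one of the frequencies occurring in $F$. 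For $\ell=(p,q)$ with $p,q\ne0$ and $p\neq\pm i$, these frequencies are pairwise distinct. Pairing therefore isolates a single term, and one obtains
$$|\langle F,G\rangle|/\|G\|\ \approx\ |\langle k^\sigma,\ell^\tau\rangle|\cdot\frac{|\langle \ell^\tau,m^\perp\rangle|}{|m|}\quad\text{up to fixed constants } 2^{-n}.$$
Restricting $\ell$ to a cone where $|p|\asymp|q|\asymp|\ell|$ makes both factors comparable to $|i|\,|\ell|$. Thus $\delta^2(F)\ge c\,|k|^2|\ell|^2$ and $\Sec^\Wass_\mu(e_k,e_\ell)\ge 3c>0$ on an infinite set, which again gives divergence. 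The same duality argument also covers the generic case, so one could use it throughout.

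I expect the main obstacle to be this non-generic bookkeeping, namely checking that for the chosen $\ell$ the frequencies $k^\sigma+\ell^\tau$ really are distinct, and that the coefficient of $e^{i\langle m,x\rangle}$ in $F$ has the claimed size, without cancellation between the $s^\sigma_k$ phases. If a cancellation does occur, I would instead pick the frequency $m=k+\ell$ with a sign pattern for which the real and imaginary parts of $s_k^\sigma s_\ell^\tau$ cannot cancel, which is feasible since only $4\times4$ phase combinations need to be checked.
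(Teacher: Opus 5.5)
Your proof is correct. In the generic case it is essentially the paper's argument. Both rely on the closed form of $S^\Wass$ from the corollary to Theorem~\ref{torus-Wass} and on nonnegativity of every term: the base is flat, so $\Sec^\Wass_\mu=\Sec^\nabla_\mu\ge0$. The paper symmetrizes over $\ell\mapsto-\ell$ and $q\mapsto-q$ to get a non-summable lower bound over all generic $\ell$, while you keep a single $\sigma$-term on a cone. There is a small slip: $|k+\ell|^2\le 2|\ell|^2$ needs something like $|\ell|\ge 4|k|$; for $|\ell|\ge|k|$ use $4|\ell|^2$, which only changes the constant.

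The real difference is the degenerate case. The paper evaluates the full formula of Theorem~\ref{torus-Wass}, including the imaginary correction terms. For $k=(0,j)$ this gives the exact value $3\frac{p^2q^2j^2}{p^2+q^2}\cdot\frac{p^2+q^2+j^2}{(p^2+q^2+j^2)^2-4q^2j^2}$, which the paper then evaluates along $\ell_m=(mj,mj)$.

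You avoid that bookkeeping with the duality bound $\delta(F)\ge|\langle F,G\rangle|/\|G\|$ for a single divergence-free Fourier mode, and this works. The cancellation you worry about cannot happen. Write $e_k=\sqrt2\cos(k_1x_1)$ or $\sqrt2\sin(k_1x_1)$ directly. Then each of the eight frequencies $m=(\pm k_1\pm p,\pm q)$ carries exactly one term, with coefficient $-\tfrac{i}{2\sqrt2}\,s\,\langle k^\sigma,\ell^\tau\rangle\,\ell^\tau$ and $|s|=1$.

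One correction to your heuristic. The two inner products $|\langle k^\sigma,\ell^\tau\rangle|=|k_1p|$ and $|\langle \ell^\tau,m^\perp\rangle|=|k_1q|$ are of order $|k_1|\,|\ell|$. The ratio $|\langle\ell^\tau,m^\perp\rangle|/|m|$, however, is only of order $|k_1|$. This still gives $\delta^2(F)\ge c\,k_1^4|\ell|^2\ge c\,|k|^2|\ell|^2$, so your conclusion $\Sec^\Wass_\mu(e_k,e_\ell)\ge 3c$ stands.

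What each route buys:
\begin{itemize}
\item Your route needs only the variational formula $\Sec=\frac{3}{\Lambda}\delta^2$ and orthogonality of Fourier modes, not the phase combinatorics of Theorem~\ref{torus-Wass}, and it treats both cases uniformly.
\item In two dimensions your route is even lossless. The part of a Fourier coefficient orthogonal to $m$ is exactly its $m^\perp$-component, so summing $|\langle F,G_m\rangle|^2/\|G_m\|^2$ over all eight modes reproduces the paper's exact degenerate-case expression (after swapping coordinates).
\item The paper's route gives exact values rather than just lower bounds.
\end{itemize}
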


\begin{proof}
(i) Let us first consider the nondegenerate case where $ij\not=0$. 
We derive a lower estimate for 
$$\sum_{p,q\in\Z\setminus\{0\}, p\not=\pm i, q\not=\pm j}\Sec^\Wass_\mu(e_{i,j},e_{p,q})=
\sum_{p,q\in\Z\setminus\{0\}, p\not=\pm i, q\not=\pm j}S^\Wass\big((i,j),(p,q)\big)$$
where
\begin{align*}S^\Wass\big((i,j),(p,q)\big)
 &=3\,\frac{(ip+jq)^2}{(i+p)^2+(j+q)^2}\, 
 \bigg[1-
 \frac{
 (ip+jq)^2}
 {(i^2+j^2)(p^2+q^2)}\bigg]\\
 &=
 3\,\frac{(ip+jq)^2\, (iq-jp)^2}{[(i+p)^2+(j+q)^2]\,(i^2+j^2)(p^2+q^2)}.
\end{align*}
Consider
$$
S'\big((i,j),(p,q)\big):=\frac12S^\Wass\big((i,j),(-p,-q)\big)
+\frac12S^\Wass\big((i,j),(p,q)\big)
$$ and 
$$S''\big((i,j),(p,q)\big):=\frac12S'\big((i,j),(p,q)\big)
+\frac12S'\big((i,j),(p,-q)\big).$$
Then
$$S'\big((i,j),(p,q)\big)\ge 3\,\frac{(ip+jq)^2\, (iq-jp)^2}{[i^2+j^2+p^2+q^2]\,(i^2+j^2)(p^2+q^2)}$$ and
$$S''\big((i,j),(p,q)\big)\ge 3\,\frac{(i^2-j^2)^2p^2q^2+i^2j^2(p^2-q^2)^2
}{[i^2+j^2+p^2+q^2]\,(i^2+j^2)(p^2+q^2)}.$$
Thus
\begin{align*}
\sum_{\ell\in\in \Z^2\setminus\{0,k\}}\Sec^\Wass_\mu(e_{k},e_{\ell})&\ge
\sum_{p,q\in\Z\setminus\{0\}, p\not=\pm i, q\not=\pm j}S^\Wass\big((i,j),(p,q)\big)\\
&=\sum_{p,q\in\Z\setminus\{0\}, p\not=\pm i, q\not=\pm j}S''\big((i,j),(p,q)\big)\\
&\ge \sum_{p,q\in\Z\setminus\{0\}, p\not=\pm i, q\not=\pm j}3\,\frac{(i^2-j^2)^2p^2q^2+i^2j^2(p^2-q^2)^2
}{[i^2+j^2+p^2+q^2]\,(i^2+j^2)(p^2+q^2)}\\
&=+\infty.
\end{align*}

\medskip

(ii) Now let us consider the degenerate case where $ij=0$ but $(i,j)\not=(0,0)$.  Without restriction, assume $i=0, j\not=0$. 
According to Theorem \ref{torus-Wass}, for $k=(0,j)$ and $\ell=(p,q)$ with $p,q,j\not=0$ and $q\not=\pm j$,
\begin{align*}
\Sec_\mu^\Wass(e_k,e_\ell)
&=\frac{3}{2^{6}}\,\frac{1}
 {j^2(p^2+q^2)}\\
 &\cdot\sum_{\sigma,\tau,\sigma',\tau'\in\{-1,1\}^2}
\bigg[\tau_1\tau'_1p^2+\sigma_2\sigma_2'j^2+\tau_2\tau_2'q^2+(\sigma_2\tau_2'+\tau_2\sigma_2')jq\\
&\qquad\qquad \qquad\qquad
-\frac{(p^2+q^2-j^2)^2}{p^2+(\sigma_2j+\tau_2q)^2}
\bigg]
 \sigma_2\tau_2\sigma_2'\tau_2'\, j^2q^2\\
&\qquad\qquad\cdot
 \Big[\one_{\sigma'_1=\sigma_1,\tau'_1=\tau_1}-i\sigma_1 \one_{\sigma'_1=-\sigma_1,\tau'_1=-\tau_1}\Big]\,
 \one_{\sigma'_2=\sigma_2,\tau'_2=\tau_2}\\
 &=\frac{3}{4}\,\frac{q^2}
 {p^2+q^2}\,\bigg[p^2+q^2+j^2-\frac12\sum_{\sigma_2\in\{-1,1\}}\frac{(p^2+q^2-j^2)^2}{p^2+(\sigma_2j+q)^2}\Bigg]\\
&=3\frac{p^2q^2j^2}{p^2+q^2}\cdot\frac{p^2+q^2+j^2}{(p^2+q^2+j^2)^2-4q^2j^2}.
\end{align*}
Thus with $\ell_m:=(mj,mj)$,
\begin{align*}
\Sec_\mu^\Wass(e_k,e_\ell)
&\ge \frac38j^2\cdot \frac{4m^4+2m^2}{4m^4+1}\ge \frac38j^2
\end{align*}
and 
\begin{align*}
\sum_{\ell\in\in \Z^2\setminus\{0,k\}}\Sec^\Wass_\mu(e_{k},e_{\ell})\ge
\sum_{m=2}^\infty\Sec^\Wass_\mu(e_{k},e_{\ell_m})=+\infty.
\end{align*}
\end{proof}

\subsubsection{Rescaling and Regularization}

Similarly as for $(\Mea,\HK)$, the sectional curvature for $(\Wass_2,\W_2)$   is not summable, that is,
 \begin{align*}\mathsf{Ric}^{\Wass_2}_\mu(e_{k}, e_{k})&:=\|\nabla e_k\|_{L^2(\mu)}^2\cdot\sum_{\ell\in\in \Z^n\setminus\{k\}}\Sec^{\Wass_2}_\mu(e_{k},e_{\ell})=+\infty\end{align*}
for all $k\in \Z^n\setminus\{0\}$. To overcome this divergence, we again consider rescaled versions of  sectional and Ricci curvature. 
\begin{definition}
Let a Riemannian manifold $(\M,\g)$, a probability measure $\mu\in\Wass_2(\M)$,  an ON eigenbasis  $(\varphi_j)_{j\in\N_0}$ for the Laplacian on   $L^2(\M,\mu)$ with eigenvalues $(\lambda_j)_{j\in\N_0}$, and a number $s\ge0$
 be given.
 Define the rescaled sectional curvature by 
 $$\Sec^{\Wass_2}_{\mu,s}(\varphi,\psi):=\frac{\Lambda_\mu(\varphi_{k},\varphi_{\ell})}{\Lambda_{\mu,s}(\varphi_{k},\varphi_{\ell})}\cdot
 \Sec^{\Wass_2}_\mu(\varphi,\psi)=
 \frac{1}{\lambda^{s-1}_k\,\lambda^{s-1}_\ell}\cdot
 \Sec^{\Wass_2}_\mu(\varphi,\psi)$$
and the rescaled Ricci curvature by 
$$\mathsf{Ric}^{\Wass_2}_{\mu,s}(\varphi_{j},\varphi_{j}):=\sum_{i=1, i\not=j}^\infty
\Sec^{\Wass_2}_{\mu,s}(\varphi_j,\varphi_i)$$
provided all the involved quantities exist and the sum converges. 

Assume that the function
$$\Psi: z\mapsto 
\mathsf{Ric}^{\Wass_2}_{\mu,z}(\varphi_{j},\varphi_{j}):=
\frac1{
\lambda_j^{z-1}} \sum_{k=1, k\not=j}^\infty \frac1{
\lambda_k^{z-1}}\,\Sec_{\mu}(\varphi_j,\varphi_k)
$$
is well-defined on $\{z\in{\mathbb C}: \Re(z)>L\}$ for some $L\in\R$ and admits a unique holomorphic extension to $z=1$. Then the $\zeta$-regularized  Ricci curvature is defined as
$$\mathsf{Ric}^{\Wass_2}_{\mu,\zeta}(\varphi_{j},\varphi_{j}):=\Psi(1).$$
\end{definition}

\begin{proposition} Let $\M$ be the $n$-dimensional torus and $\mu$ its normalized volume measure. Then for every $k\in\Z^n\setminus\{0\}$ and every $s>2+\frac n2$, the rescaled Ricci curvature
 $\mathsf{Ric}_{\mu,s}(\varphi_{k},\varphi_{k})$
 exists and is finite. 
 
 More generally,  the sum defining 
 $\mathsf{Ric}_{\mu,z}(\varphi_{k},\varphi_{k})$ converges absolutely and locally uniformly in $\{z\in\mathbb C: \Re(z)>2+\frac n2\}$.

\end{proposition}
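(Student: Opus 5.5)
The plan mirrors the proof of the analogous proposition for $(\Mea,\HK)$: bound $\Sec^\Wass_\mu(e_k,e_\ell)$ by a polynomial in $|k|,|\ell|$ and sum against the rescaling weights. The rescaled Ricci curvature is
$$\mathsf{Ric}_{\mu,z}(\varphi_k,\varphi_k)=\frac1{|k|^{2(z-1)}}\sum_{\ell\in\Z^n\setminus\{0,k\}}\frac1{|\ell|^{2(z-1)}}\,\Sec^\Wass_\mu(e_k,e_\ell),$$
since the Laplace eigenvalues are $\lambda_\ell=|\ell|^2$ and $\Sec$ is scale invariant, so $\Sec_\mu(\varphi_k,\varphi_\ell)=\Sec^\Wass_\mu(e_k,e_\ell)$. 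The case $\ell=0$ is excluded because $\nabla e_0=0$. The argument has three steps.

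\textbf{Step 1: a universal bound.} We show $|\Sec^\Wass_\mu(e_k,e_\ell)|\le C_n\,(|k|^2+|\ell|^2)$ for $k\neq\ell$, $k,\ell\neq0$, starting from Theorem \ref{torus-Wass}.
\begin{itemize}
\item There is no lifted part, because $\mathbb T^n$ is flat. So $\Sec^\Wass_\mu=\Sec^\nabla_\mu\ge0$, and only an upper bound is needed.
\item Bound each factor of the product over $p$ by $1$ in modulus. Each such factor has at most one nonzero indicator term, and every nonzero term has modulus $1$.
\item Bound $|\langle k^\sigma-\ell^\tau,k^{\sigma'}-\ell^{\tau'}\rangle|\,|\langle k^\sigma,\ell^\tau\rangle|\,|\langle k^{\sigma'},\ell^{\tau'}\rangle|\le(|k|+|\ell|)^2|k|^2|\ell|^2$.
\item The $\frac{(|k|^2-|\ell|^2)^2}{|k^\sigma+\ell^\tau|^2}$ term is the main obstacle. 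Unlike the $\HK$ case, its denominator has no $+4$ and can vanish when $k^\sigma=-\ell^\tau$. In that case $|k|=|\ell|$, so the numerator also vanishes. We read such terms as $0$, consistent with $\eta=-\Delta^{-1}\mathrm{div}F$ having no zero mode.
\item Otherwise $|k^\sigma+\ell^\tau|^2\ge1$ on the lattice. Then $(|k|^2-|\ell|^2)^2/|k^\sigma+\ell^\tau|^2=\big(\langle k^\sigma+\ell^\tau,\,k^\sigma-\ell^\tau\rangle\big)^2/|k^\sigma+\ell^\tau|^2\le|k^\sigma-\ell^\tau|^2\le(|k|+|\ell|)^2$ by Cauchy--Schwarz.
\item Together with $\Lambda^\Wass_{k\ell}=|k|^2|\ell|^2$, this gives $\Sec^\Wass_\mu(e_k,e_\ell)\le 3\cdot 4^{n}(|k|+|\ell|)^2\le C_n(|k|^2+|\ell|^2)$.
\end{itemize}

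\textbf{Step 2: summation.} For fixed $k$ and $\Re z=x$,
$$\sum_{\ell\neq0,k}\Big|\frac{\Sec^\Wass_\mu(e_k,e_\ell)}{|k|^{2(z-1)}|\ell|^{2(z-1)}}\Big|\le C_{n,k}\sum_{\ell\ne0}\frac{|k|^2+|\ell|^2}{|\ell|^{2(x-1)}}.$$
The right side is comparable to $\int_{|y|\ge1}|y|^{4-2x}\,dy$, which is finite iff $2x-4>n$, i.e. $x>2+\frac n2$. This gives existence and finiteness for real $s>2+\frac n2$.

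\textbf{Step 3: holomorphy.} On compact subsets of $\{\Re z>2+\frac n2\}$, the real part of $z$ is bounded below by some $x_0>2+\frac n2$. The bound from Step 2 at $x_0$ is then a uniform summable majorant. Each term is entire in $z$, so Weierstrass' theorem gives absolute, locally uniform convergence and a holomorphic sum.
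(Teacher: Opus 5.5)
Your proof is correct and is essentially the argument the paper gives for the analogous $\HK$ proposition. You prove a universal bound $|\Sec_\mu(e_k,e_\ell)|\le C_n(|k|^2+|\ell|^2)$, compare the weighted lattice sum with $\int_{|y|\ge1}|y|^{4-2\Re z}\,dy$, and add a Weierstrass M-test for holomorphy. The paper states this Wasserstein version without proof, and your Step 1 correctly supplies the missing bound from Theorem \ref{torus-Wass}, including the terms with $k^\sigma=-\ell^\tau$, where $|k|=|\ell|$ forces $\mathrm{div}\,F\equiv0$. Alternatively, the projection identity $\delta^2=\|F\|^2-\|\nabla\eta\|^2\le\|F\|^2$ lets you drop that term entirely, as the paper does in its $\HK$ bound.
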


\begin{remark} Similar calculations have been carried out and analogous observations have been made  in an unpublished work \cite{Ricci-on-Wasserstein} for the sectional curvature 
of the space $(\Wass_2(\R^n),\W_2)$ at the point $\mu=\mathcal N(0,1)$, the standard Gaussian measure on $\R^n$.
\end{remark}

\def\cprime{$'$} \def\cprime{$'$} \providecommand{\MR}[1]{}
\providecommand{\bysame}{\leavevmode\hbox to3em{\hrulefill}\thinspace}
\providecommand{\MR}{\relax\ifhmode\unskip\space\fi MR }
\providecommand{\MRhref}[2]{%
  \href{http://www.ams.org/mathscinet-getitem?mr=#1}{#2}
}
\providecommand{\href}[2]{#2}

%

\begin{thebibliography}{CPSV18b}

\bibitem[BBI01]{BBI}
D.~Burago, I.U.D. Burago, and S.~Ivanov, \emph{A course in metric geometry},
  Graduate Studies in Mathematics, American Mathematical Society, 2001.

\bibitem[Bis23]{Bisson}
Angelina Bisson, \emph{An exposition of the curvature of warped product
  manifolds}, CSUSB Scholar works, https://scholarworks.lib.csusb.edu/etd/1810
  (2023).

\bibitem[CPSV18a]{CPSV1}
Lenaic Chizat, Gabriel Peyr{\'e}, Bernhard Schmitzer, and Fran{{c}}ois-Xavier
  Vialard, \emph{An interpolating distance between optimal transport and
  {F}isher--{R}ao metrics}, Foundations of Computational Mathematics
  \textbf{18} (2018), no.~1, 1--44.

\bibitem[CPSV18b]{CPSV2}
\bysame, \emph{Unbalanced optimal transport: Dynamic and {K}antorovich
  formulations}, Journal of Functional Analysis \textbf{274} (2018), no.~11,
  3090--3123.

\bibitem[DPST25]{DST-infimal}
Nicol{\`o} De~Ponti, Giacomo~Enrico Sodini, and Luca Tamanini, \emph{The
  infimal convolution structure of the {H}ellinger-{K}antorovich distance},
  arXiv preprint arXiv:2503.12939 (2025).

\bibitem[DS25]{schiavo2025hellinger}
Lorenzo {Dello Schiavo} and Giacomo~Enrico Sodini, \emph{The
  {H}ellinger-{K}antorovich metric measure geometry on spaces of measures},
  arXiv preprint arXiv:2503.07802 (2025).

\bibitem[GGV25]{gallouet2025regularity}
Thomas Gallou{\"e}t, Roberta Ghezzi, and Francois-Xavier Vialard,
  \emph{Regularity theory and geometry of unbalanced optimal transport},
  Journal of Functional Analysis \textbf{289} (2025), no.~7, 111042.

\bibitem[Gig11]{Gigli11}
Nicola Gigli, \emph{On the inverse implication of {B}renier-{M}c{C}ann theorems
  and the structure of {$(\mathscr P_2(M),W_2)$}}, Methods Appl. Anal.
  \textbf{18} (2011), no.~2, 127--158. \MR{2847481 (2012h:49090)}

\bibitem[Gig12]{Giglimemo}
\bysame, \emph{Second order analysis on {$(\mathcal P_2(M),W_2)$}}, Mem. Amer.
  Math. Soc. \textbf{216} (2012), no.~1018, xii+154. \MR{2920736}

\bibitem[GS18]{Ricci-on-Wasserstein}
Maria Gordina and Karl-Theodor Sturm, \emph{Sectional and {R}icci curvature on
  the {W}asserstein space}, Unpublished preprint (2018).

\bibitem[GV18]{gallouet2018camassa}
Thomas Gallou{\"e}t and Francois-Xavier Vialard, \emph{The {C}amassa--{H}olm
  equation as an incompressible {E}uler equation: A geometric point of view},
  Journal of Differential Equations \textbf{264} (2018), no.~7, 4199--4234.

\bibitem[KMV16]{kondratyev2016new}
Stanislav Kondratyev, L{\'e}onard Monsaingeon, and Dmitry Vorotnikov, \emph{A
  new optimal transport distance on the space of finite {R}adon measures},
  Advances in Differential Equations \textbf{21} (2016), no.~11-12, 1117--1164.

\bibitem[KV19]{kondratyev2019spherical}
Stanislav Kondratyev and Dmitry Vorotnikov, \emph{Spherical
  {Hellinger}--{K}antorovich gradient flows}, SIAM Journal on Mathematical
  Analysis \textbf{51} (2019), no.~3, 2053--2084.

\bibitem[LM26]{laschos2026evolutionary}
Vaios Laschos and Alexander Mielke, \emph{Evolutionary variational inequalities
  on the {H}ellinger-{K}antorovich and spherical {H}ellinger-{K}antorovich
  spaces}, Communications in Partial Differential Equations (2026), 1--43.

\bibitem[LMS16]{LMS-optimal}
Matthias Liero, Alexander Mielke, and Giuseppe Savar{\'e}, \emph{Optimal
  transport in competition with reaction: The {H}ellinger--{K}antorovich
  distance and geodesic curves}, SIAM Journal on Mathematical Analysis
  \textbf{48} (2016), no.~4, 2869--2911.

\bibitem[LMS18]{LMS-inventiones}
\bysame, \emph{Optimal entropy-transport problems and a new
  {H}ellinger--{K}antorovich distance between positive measures}, Inventiones
  mathematicae \textbf{211} (2018), no.~3, 969--1117.

\bibitem[LMS23]{LMS-fine}
\bysame, \emph{Fine properties of geodesics and geodesic $\lambda$-convexity
  for the {H}ellinger--{K}antorovich distance}, Archive for Rational Mechanics
  and Analysis \textbf{247} (2023), no.~6, 112.

\bibitem[Lot08]{Lott07}
John Lott, \emph{Some geometric calculations on {W}asserstein space}, Comm.
  Math. Phys. \textbf{277} (2008), no.~2, 423--437. \MR{2358290 (2009b:58014)}

\bibitem[Lot17]{lott2017tangent}
\bysame, \emph{On tangent cones in {W}asserstein space}, Proceedings of the
  American Mathematical Society \textbf{145} (2017), no.~7, 3127--3136.

\bibitem[O'N83]{ONeill}
Barrett O'Neill, \emph{Semi-riemannian geometry with applications to
  relativity}, vol. 103, Academic press, 1983.

\bibitem[Ott01]{Ot01}
F.~Otto, \emph{The geometry of dissipative evolution equations: the porous
  medium equation}, Comm. Partial Differential Equations \textbf{26} (2001),
  no.~1-2, 101--174. \MR{1842429 (2002j:35180)}

\end{thebibliography}
%
\end{document}